\newtheorem{theorem}{Theorem}[section]
\newtheorem{lemma}[theorem]{Lemma}
\newtheorem{corollary}[theorem]{Corollary}
\newtheorem{proposition}[theorem]{Proposition}
\newtheorem{observation}[theorem]{Observation}
\theoremstyle{definition}
\newtheorem{definition}[theorem]{Definition}
\newtheorem{question}[theorem]{Question}
\newtheorem{example}[theorem]{Example}
\newtheorem{remark}[theorem]{Remark}
\newtheorem{problem}[theorem]{Problem}
\newtheorem{notation}[theorem]{Notation}
\numberwithin{equation}{section}
\numberwithin{equation}{section}
\newcommand{\ord}{{\rm ord}}
\newcommand{\sord}{{\rm sord}}
\newcommand{\im}{{\rm im}}
\newcommand{\htt}{{\rm ht}}
\newcommand{\iLim}{\varprojlim}
\newcommand{\treet}{\mathcal{T}_{\mathcal{M}\it 3}}
\newcommand{\treecon}{\mathcal{T}_\mathcal{C} }
\newcommand{\treend}{\mathcal{T}_\mathcal{CE} }
\DeclareMathOperator{\diam}{diam}
\begin{document}

\title[Projective Fra\"{\i}ss\'{e} limits of trees]{Projective Fra\"{\i}ss\'{e} limits of trees with confluent epimorphisms}

\author[W. J. Charatonik]{W\l odzimierz J. Charatonik}

\address{W. J. Charatonik and R. P. Roe, Department of Mathematics and Statistics\\
         Missouri University of Science and Technology\\
         400 W 12th St\\
         Rolla MO 65409-0020}
         \email{rroe@mst.edu}

\author[A. Kwiatkowska]{Aleksandra Kwiatkowska}
\address{A. Kwiatkowska, Institut f\"{u}r Mathematische Logik und Grundlagenforschung, Universit\"{a}t  M\"{u}nster,  
Einsteinstrasse 62,
48149  M\"{u}nster,
Germany {\bf{and}} 
Instytut Matematyczny, Uniwersytet Wroc{\l}awski,  pl. Grunwaldzki 2/4, 50-384 Wroc{\l}aw, Poland, }
\email{kwiatkoa@uni-muenster.de}

\author[R. P. Roe]{Robert P. Roe}

\author[S. Yang]{Shujie Yang}
\address{S. Yang, Institut f\"{u}r Mathematische Logik und Grundlagenforschung, Universit\"{a}t  M\"{u}nster,  
Einsteinstrasse 62,
48149  M\"{u}nster,
Germany }
\email{syang2@uni-muenster.de}

\thanks{A. K. and S. Y. were funded by the Deutsche Forschungsgemeinschaft (DFG, German Research Foundation) under Germany’s Excellence Strategy EXC 2044–390685587, Mathematics Münster: Dynamics–Geometry–Structure and by CRC 1442 Geometry: Deformations and Rigidity. Additionally, S. Y. was funded by China Scholarship Council(CSC) 202204910109.} 

\date{\today}
\subjclass[2020]{03C98, 54D80, 54E40, 54F15, 54F50}

\keywords{Fra\"{\i}ss\'e limit, topological graph, confluent, monotone, light}

\begin{abstract}
We continue the study of projective Fra\"{\i}ss\'e limits developed by Irwin-Solecki and  Panagiotopoulos-Solecki by investigating families of epimorphisms between finite trees and finite rooted trees. Ideas of monotone, confluent, and light mappings from continuum theory as well as several properties of continua are modified so as to apply them to topological graphs. As the topological realizations of the projective Fra\"{\i}ss\'e limits we obtain the dendrite $D_3$, the Mohler-Nikiel universal dendroid, as well as new, interesting continua for which we do not yet have topological characterizations. 
\end{abstract}

\maketitle

\section{Introduction and Definitions}

\subsection{Introduction}

In \cite{Pseudo}, T. Irwin and S. Solecki introduced the idea of a projective Fra\"{\i}ss\'e limit as a dualization of the injective Fra\"{\i}ss\'e limit from model theory. In that paper they constructed the pseudo-arc as the topological realization of a projective Fra\"{\i}ss\'e limit of a certain family of finite graphs and epimorphisms between members of the class.  Subsequently, D. Barto\v sov\' a and A. Kwiatkowska, \cite{B-K}, W. Kubiś and A. Kwiatkowska, \cite {kubis}  and A. Panagiotopoulos and S. Solecki, \cite {Menger}, extended these ideas to study, repectively, the Lelek fan,   the Poulsen simplex, and the Menger curve as projective Fra\"{\i}ss\'e limits. Further recent results on the applications of the Fra\"{\i}ss\'e theory to continuum theory are in \cite{Bar-Kub}, \cite{B-C}, \cite{Co-Kw}, \cite{SI}, \cite{LW}.

A continuum is a compact connected (metric) space. In continuum theory, a continuous map $f\colon X\to Y$ between continua is monotone if preimages of subcontinua are subcontinua. 
It is confluent if for each subcontinuum $Q$ of $Y$ each component of $f^{-1}(Q)$ is mapped onto $Q$
by $f$. It is light if the preimage of each point is discrete. We adapt those ideas of monotone, confluent, and light mappings from continuum theory as well as several properties of continua (such as hereditary indecomposability, dendroids, dendrites) so as to apply them to topological graphs.

In the article we undertake a systematic study of graphs, trees, and rooted trees with monotone, light, and confluent epimorphisms. We develop tools for studying projective Fra\"{\i}ss\'e limits of families of such epimorphisms and continua obtained as their topological realizations.

In Section 3, we first notice that all finite trees with monotone maps do not amalgamate (Example \ref{no-4od}). However, when we put restrictions on the order of vertices we obtain the well-known Ważewski dendrite $D_3$. The main results of  Section 3 are Theorems \ref{amalgamation-monotone} and \ref{D3}.
\begin{theorem}
The topological realization of the projective Fra\"{\i}ss\'e limit of finite trees having all vertices of order less than or equal to 3 with monotone epimorphisms is the Ważewski dendrite $D_3$
\end{theorem}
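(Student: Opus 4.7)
The plan proceeds in three stages: first verify the projective Fra\"iss\'e conditions on the class $\mathcal{C}$ of finite trees with all vertices of order at most $3$ together with monotone epimorphisms, then analyze the topological realization of the limit, and finally appeal to the known topological characterization of the W\-a\.zewski dendrite $D_3$. The joint projection property for $\mathcal{C}$ is immediate, since any tree maps monotonically onto an edge (or more generally onto any of its quotient trees obtained by collapsing subtrees to points). The amalgamation property is exactly the content of Theorem \ref{amalgamation-monotone}, which I would take as given at this point; it yields the projective Fra\"iss\'e limit $\mathbb{T}$, a compact topological graph equipped with a closed edge relation $R_\mathbb{T}$.

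Next I would study $|\mathbb{T}| = \mathbb{T}/R_\mathbb{T}$. Since monotone maps between trees have connected preimages, the equivalence classes of $R_\mathbb{T}$ are connected, and the quotient is a continuum realized as the inverse limit of the finite trees in $\mathcal{C}$ under monotone bonding maps. Monotone inverse limits of trees are dendroids (uniquely arcwise connected hereditarily unicoherent continua), so $|\mathbb{T}|$ is at least a dendroid. To promote this to a dendrite I would establish local connectedness by a standard Fra\"iss\'e refinement argument: the amalgamation property lets us refine any approximating tree $T_n$ by a tree whose monotone projection to $T_n$ has fibers of arbitrarily small diameter, so small clopen sets in $\mathbb{T}$ descend to small connected sets in $|\mathbb{T}|$, which yields local connectedness.

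With $|\mathbb{T}|$ a dendrite, it remains to check (i) every branch point has order exactly $3$ and (ii) the branch points are dense. For (i), the ceiling of $3$ on vertex order in $\mathcal{C}$ passes to the limit: if some $x \in |\mathbb{T}|$ were of order $\geq 4$, then four disjoint arcs at $x$ would force, at some sufficiently fine finite stage, a vertex of order $\geq 4$, contradicting the definition of $\mathcal{C}$. For (ii), given a nonempty open set $U \subseteq |\mathbb{T}|$, lift $U$ to a clopen set meeting the fiber of some vertex $v$ of an approximating tree $T$; using the amalgamation property, build a refinement $T' \to T$ in which $v$ is split so as to create a vertex of order exactly $3$ nearby. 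Projective universality of $\mathbb{T}$ realizes this refinement in the limit, producing a branch point of $|\mathbb{T}|$ inside $U$.

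Finally, I would invoke the topological characterization of $D_3$ (a dendrite in which every branch point has order $3$ and whose branch points are dense is homeomorphic to $D_3$) to conclude. The main obstacle I expect is not the existence of the Fra\"iss\'e limit, which is routine once Theorem \ref{amalgamation-monotone} is available, but rather the passage from the combinatorial limit $\mathbb{T}$ to the topological properties of $|\mathbb{T}|$: establishing local connectedness and density of order-$3$ branch points simultaneously, while ruling out any branch point of order exceeding $3$, is the step where the interaction between the monotone-map framework and the $\leq 3$ restriction must be handled most carefully.
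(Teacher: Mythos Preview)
Your overall strategy mirrors the paper's: establish the Fra\"iss\'e limit via Theorem~\ref{amalgamation-monotone}, show the realization is a dendrite, verify density of ramification points and the order bound, then invoke the characterization of $D_3$. However, your argument for (i)---that no point of $|\mathbb{T}|$ has order $\geq 4$---has a genuine gap. You claim that four arcs at $x$ would force a vertex of order $\geq 4$ at some finite stage, but this is not automatic: the fiber $\varphi^{-1}(x)$ in $\mathbb{T}$ may consist of two adjacent vertices $p_1, p_2$ (an edge, since equivalence classes have at most two elements once transitivity is established). When you project to a finite tree $G$, the four lifted arcs meet in $\{f_G(p_1), f_G(p_2)\}$, and it is entirely possible for two arcs to exit at $f_G(p_1)$ and two at $f_G(p_2)$, giving both vertices order exactly $3$ with no contradiction. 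The paper handles this by first refining to a tree $G$ in which \emph{no two adjacent vertices both have order $3$} (achieved by subdividing every edge), so that the pair $\{f_G(p_1), f_G(p_2)\}$ can support at most three external branches in total; only then does the fourth arc yield a contradiction. Without this extra step your argument does not close.

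A smaller omission: you do not verify that the edge relation on $\mathbb{T}$ is transitive, which is needed before the quotient $|\mathbb{T}|$ is even a compact metric space. The paper checks this via Theorem~\ref{transitive} by constructing, for any path $a$--$b$--$c$ in a tree $G$, a monotone refinement in $\treet$ that separates every such triple.
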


The main result of Section 4 is:
\begin{theorem}
Finite connected graphs with confluent epimorphisms form a projective Fra\"{\i}ss\'e family. 
\end{theorem}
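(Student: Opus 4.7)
The plan is to verify the defining conditions of a projective Fra\"{\i}ss\'{e} family: countability up to isomorphism, the joint projection property (JPP), and the amalgamation property (AP). Countability is immediate, and JPP reduces to AP because the one-vertex graph (a single vertex with its reflexive loop) receives a confluent epimorphism from every finite connected graph; hence amalgamating two such graphs over the one-vertex graph gives a common confluent source. Everything therefore hinges on AP.

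For AP, given confluent epimorphisms $f \colon B \to A$ and $g \colon C \to A$, the construction I would use is the graph-theoretic pullback $D_0 = B \times_A C$: vertices are the pairs $(b, c)$ with $f(b) = g(c)$, and edges are the pairs $((b_1, c_1), (b_2, c_2))$ such that $b_1 b_2$ is an edge in $B$ and $c_1 c_2$ is an edge in $C$. I would then let $D$ be any connected component of $D_0$. The coordinate projections $\pi_B \colon D \to B$ and $\pi_C \colon D \to C$ automatically satisfy $f \circ \pi_B = g \circ \pi_C$, and the content of the proof is to show that each of them is a confluent epimorphism.

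The main technical step is a walk-lifting lemma. Given an edge $uu'$ in $B$ with images $a = f(u), a' = f(u')$ and any vertex $(u, c) \in D$, applying confluence of $g$ to the connected two-vertex subgraph on $\{a, a'\}$ in $A$ produces a walk in $C$ from $c$ to some $c' \in g^{-1}(a')$ that stays inside $g^{-1}(\{a, a'\})$. I then lift this $C$-walk vertex-by-vertex into $D$ by pairing each traversed vertex $v$ with $u$ when $g(v) = a$ and with $u'$ when $g(v) = a'$; consecutive pairs form a $D$-edge because in the first coordinate one uses either the edge $uu'$ (when the $g$-images differ) or the reflexive loop (when they coincide). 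Iterating this lift along a path inside a connected subgraph $Q \subseteq B$ shows that the $D$-component of any $(b_0, c_0) \in \pi_B^{-1}(Q)$ surjects onto $Q$, giving confluence of $\pi_B$; the special case $Q = B$ yields vertex- and edge-surjectivity onto $B$. The argument for $\pi_C$ is symmetric.

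The chief obstacle is that confluence is strictly weaker than an open or path-lifting condition: from $c \in g^{-1}(a)$ there need not exist a single $C$-edge into $g^{-1}(a')$, so the $C$-walk may dwell inside $g^{-1}(a)$ for several steps before crossing. The reflexive-loop convention for graphs is precisely what allows such dwelling steps to lift, since staying over $u \in B$ relies on the reflexive loop at $u$. Once AP is in hand, closure under composition of confluent epimorphisms and the existence of identity epimorphisms are routine and complete the verification.
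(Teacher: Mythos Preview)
Your argument is correct and follows essentially the same approach as the paper: both use the fibered product $B\times_A C$ (what the paper calls the \emph{standard amalgamation procedure}), show that confluence of $f$ and $g$ forces each coordinate projection to be confluent, and then pass to a connected component. The paper packages your walk-lifting step slightly differently, first isolating an edge-by-edge characterization of confluence (Proposition~\ref{confluent-edges}) and then using it to give a very short proof that confluence of $f$ implies confluence of $g_0$ (Proposition~\ref{standard-confluent}); your ``dwelling inside $g^{-1}(a)$ before crossing'' is exactly the connected set $R$ appearing in condition~(2) of that characterization, so the two arguments are the same in content.
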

The topological realization of the obtained projective Fra\"{\i}ss\'e limit as far as we know has not been investigated in continuum theory, before this work. A detailed study of that continuum is in the continuation of this article \cite{CKR}.
In contrast, finite connected graphs with monotone epimorphisms also form a projective Fra\"{\i}ss\'e family, and the topological realization of the limit is the well known Menger universal curve \cite {Menger}.

In Sections 5-10, we investigate in detail rooted trees with confluent epimorphism. Inverse limits of continua that are trees with confluent bonding maps were studied, for example, in \cite{CCP}, \cite{OP}, \cite{CP}. Our investigations are most importantly motivated by the universal Molher-Nikiel dendroid  \cite{Dendroid}   constructed as the inverse limit of continua that are rooted trees with confluent bonding maps, see Section~\ref{M-NSec}. 
The Mohler-Nikiel dendroid is universal for the class of smooth dendroids. It has a closed
set of endpoints and is such that each point is either an endpoint or a ramification point.  It is proved in \cite{CCP} that a dendroid is an absolute retract for hereditarily unicoherent continua if and only if it can be embedded as a retract into the Mohler-Nikiel universal dendroid.

Rooted trees with all monotone epimorphisms, unlike connected graphs with all monotone epimorphisms, do not amalgamate (see Example \ref{not-order-pres}). Any confluent epimorphism between rooted trees can be written as a composition of monotone and light confluent epimorphisms (see Theorem \ref{m-l-factorization} and Remark \ref{m-l-factorization-rem}). Furthermore, any light confluent epimorphism can be further written as a composition of elementary light confluent epimorphisms (see Definition \ref{def:elc} and Corollary \ref{elemenlightc}). Light confluent epimorphisms between rooted trees do amalgamate, but inverse limits in this class  
will have trivial topological realizations (the transitive closure of the edge relation will consist of a single equivalence class).  It turns out that by taking all light confluent epimorphisms, but only some monotone ones, we obtain two interesting projective Fra\"{\i}ss\'e families. One of them has the Mohler-Nikiel continuum  as the topological realization, and the other one, which we denote by $|\mathbb{T}_{\mathcal{ C}}|$, we believe is different from any known continua.

The majority of the exotic continua that continuum theorists have discovered result from a researcher constructing a continuum to answer a specific question. For example the famous pseudo-arc was constructed by Knaster, \cite{Knaster}, to show that there does exist a continuum with the property that every non-degenerate subcontinuum is indecomposable.  Of course, it turned out that this continuum has many other interesting properties and it is still the subject of continued research. As another example, the Mohler-Nikiel universal smooth dendroid discussed in this article was constructed to show the existence of a dendroid having a closed set of endpoints and all other points being ramification points, answering a question of Krasinkiewicz \cite{Dendroid}. On the other hand, some continua occur naturally, for example, the Mandelbrot set, which originates from a dynamical system and has also generated much research. The continuum $|\mathbb{T}_{\mathcal{ C}}|$ of this article is in this latter group of continua, it arises naturally out of the study of projective Fraïssé families.

An epimorphism $f\colon S\to T$ is simple confluent if the rooted tree $S$ is obtained from the rooted tree $T$ by repeating the following 2 operations:  take an edge and split it into two edges (simple-monotone epimorphism) or take a vertex and a component above it and double it (elementary light confluent epimorphism). An epimorphism $f\colon S\to T$ is simple*-confluent if $S$ is obtained from $T$ by repeating the following 3 operations:  the two operations as above, and additionally we can pick a vertex and attach a new edge to it (see Definitions \ref{def:sc} and \ref{def:sstarc}).

We summarize the main results of Sections 5-10 below.
\begin{theorem}
\begin{enumerate}
\item If $A,B,C$ are rooted trees, $f\colon B\to A$ is light confluent, and $g\colon C\to A$ is confluent, then there is a rooted tree $D$, confluent $f_0\colon D\to B$, and light confluent $g_0\colon D\to C$ with $f\circ f_0=g\circ g_0$ (Theorem \ref{onelight}).
\item Simple confluent epimorphisms form a projective Fra\"{\i}ss\'e family (denoted later as $\treend$, see Theorem \ref{confluent-projective-family}).
\item Simple*-confluent epimorphisms form a projective Fra\"{\i}ss\'e family (denoted later as $\treecon$, see Theorem \ref{confluent-projective-family2}).
\end{enumerate}
\end{theorem}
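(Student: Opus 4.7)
The plan is to reduce (1) to two sub-amalgamations via the monotone--light factorization of Theorem \ref{m-l-factorization} and Remark \ref{m-l-factorization-rem}. Write $g = g_\ell \circ g_m$ with $g_m\colon C \to C_1$ monotone confluent and $g_\ell\colon C_1 \to A$ light confluent. First I would amalgamate the two light confluent epimorphisms $f$ and $g_\ell$, producing a rooted tree $D_1$ together with light confluent maps $h\colon D_1\to B$ and $k\colon D_1\to C_1$ satisfying $f\circ h = g_\ell\circ k$. Then I would pull the monotone $g_m$ back along $k$, producing a rooted tree $D$, a monotone epimorphism $\pi\colon D\to D_1$, and a light confluent epimorphism $g_0\colon D\to C$ with $k\circ\pi = g_m\circ g_0$. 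Setting $f_0 := h\circ\pi$ yields a confluent map $D\to B$ (composition of a light confluent and a monotone epimorphism), and a straightforward diagram chase gives
\[
f \circ f_0 \;=\; f \circ h \circ \pi \;=\; g_\ell \circ k \circ \pi \;=\; g_\ell \circ g_m \circ g_0 \;=\; g \circ g_0.
\]

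For the light-versus-light stage, I would decompose $f$ and $g_\ell$ as compositions of elementary light confluent epimorphisms via Corollary \ref{elemenlightc} and amalgamate one elementary step at a time. The atomic case is a \emph{tensor product} of two doublings above a common vertex of $A$: if $f$ doubles one component above $v$ and $g_\ell$ doubles another (possibly the same) component above $v$, the amalgamated tree applies both doublings compatibly, each projection being a short composition of elementary light confluent epimorphisms. An induction on the total number of elementary doublings then delivers the general case. For the monotone pullback stage, every monotone epimorphism between finite rooted trees is a composition of simple-monotone ones, so it suffices to subdivide a single edge $e \in C_1$; lightness of $k$ ensures that $k^{-1}(e)$ is a disjoint union of edges each mapping homeomorphically onto $e$, and subdividing each uniformly produces $D$ and the required maps.

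Parts (2) and (3) then follow by applying (1) inside the families $\treend$ and $\treecon$. Countability of objects and morphisms is immediate. For the joint projection property, given rooted trees $A$ and $B$, project both onto the one-vertex tree by iterated doublings of the full tree and amalgamate. For the amalgamation property, given two simple (respectively simple*-)confluent epimorphisms $f\colon B\to A$ and $g\colon C\to A$, factor $f = f_\ell \circ f_m$ by the monotone--light factorization, invoke (1) on $f_\ell$ against $g$, and finally pull back $f_m$ as in the monotone stage above. Every operation used in the construction---edge subdivision, elementary doubling of a component above a vertex, and, in the case of $\treecon$, attaching a new edge at a vertex---is one of the elementary operations defining Definitions \ref{def:sc} and \ref{def:sstarc}, so the resulting $f_0$ and $g_0$ lie in the corresponding family.

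The main obstacle is Stage~1, the light-versus-light amalgamation. One must specify the tensor-product construction precisely when elementary doublings happen at nested vertices or interact with previously applied doublings, and verify that each projection decomposes as a composition of elementary light confluent epimorphisms so that the induction closes. Once this is codified cleanly, the monotone pullback is routine and parts (2) and (3) follow by fitting the building blocks together within the allowed elementary operations.
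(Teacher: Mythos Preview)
Your approach to (1) differs substantially from the paper's and contains a genuine gap. The paper does not factor $g$ at all: it simply takes the fiber product $D=\{(b,c):f(b)=g(c)\}$ and proves directly that $D$ is a rooted tree, using lightness of $f$ in an essential way to rule out cycles (any putative cycle would force two $\leq_B$-incomparable preimages of the same vertex to be adjacent, contradicting lightness). The projections then inherit confluence and lightness from Propositions~\ref{light} and~\ref{standard-confluent}. This is a one-shot argument with no induction and no decomposition of either map.

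Your detour through the monotone--light factorization fails at the monotone stage. The assertion ``every monotone epimorphism between finite rooted trees is a composition of simple-monotone ones'' is false: simple-monotone epimorphisms (compositions of splitting edge maps) preserve the set of end vertices, but a general monotone epimorphism need not. For instance, take $C$ to be the tripod with root $r$, child $x$, and two leaves $b_1,b_2$ above $x$, and let $g$ collapse $\{x,b_1,b_2\}$ to a single vertex $a$ over the edge $A=\langle r_A,a\rangle$. This $g$ is monotone and confluent, its light part is the identity, and its monotone part has two end vertices in the domain but one in the codomain, so it cannot be a composition of splitting edge maps. Your edge-subdivision pullback therefore does not cover the cases you need. (Your appeal to Corollary~\ref{elemenlightc} is also delicate, since in the paper that corollary is a by-product of Theorem~\ref{simple=special}, proved after Theorem~\ref{onelight}.)

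For (2) and (3), your JPP argument also breaks: the family consists of rooted trees with at least two vertices, so the one-vertex tree is not available, and in any case no simple-confluent epimorphism can map an edge to a point (lightness forbids it, and splitting edge maps enlarge rather than shrink). The paper instead exhibits, for any two trees $B,C$, a common \emph{regular} rooted tree of sufficient height and branching that maps onto each by an explicit simple-confluent epimorphism (Theorem~\ref{Joint-Proj-pro}). For AP the paper does a clean three-case split---light/light via (1), simple-monotone/simple-monotone and mixed via explicit elementary constructions (Lemmas~\ref{2monot}, \ref{monoandlight})---rather than factoring one side and invoking (1) against a general confluent map.
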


In Theorems \ref{simple=special} and  \ref{simple=specialstar} we obtain intrinsic characterisations of simple confluent and simple*-confluent epimorphisms. They are interesting on their own, but we will also need them to show that epimorphisms between topological graphs, obtained as  inverse limits,  in
 $\treend$  or in $\treecon$ are well defined.

Main continuum theory consequences are as follows.

\begin{theorem}
\begin{enumerate}
\item The topological realization of the projective Fra\"{\i}ss\'e limit of $\treend$ is the Mohler-Nikiel universal dendroid (Theorem \ref{toprealMN})
\item The topological realization $|\mathbb{T}_{\mathcal{ C}}|$ of the projective Fra\"{\i}ss\'e limit of $\treecon$  is a dendroid
with a dense
set of endpoints and such that each point is either an endpoint or a ramification point (Theorem 
\ref{summerizing-confluent}).
\end{enumerate}

\end{theorem}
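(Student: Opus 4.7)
The plan is to handle both parts by analyzing the topological realization via the combinatorial structure of the simple confluent and simple*-confluent epimorphisms, then invoking the characterization of the Mohler--Nikiel dendroid for (1) and directly verifying the property list for (2).

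First, I would establish that $|\mathbb{T}|$ is a dendroid for $\mathbb{T}$ the Fra\"{\i}ss\'e limit of either family. Since the limit is obtained as an inverse limit of finite rooted trees along confluent epimorphisms, and since confluent inverse limits of trees are known to be dendroids (as in \cite{CCP}, \cite{CP}), the topological realization, obtained by quotienting by the transitive closure of the edge relation, should inherit this structure once one verifies that the edge relation descends to a closed equivalence relation. Arcs in $|\mathbb{T}|$ are recovered as coherent selections of paths between projected points in the approximating trees, and hereditary unicoherence follows from confluence, since intersections of subcontinua project to connected sets at every level.

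Next, I would classify points of $|\mathbb{T}|$ as endpoints or ramification points in terms of their threads $(x_n)$ with $x_n$ a vertex of $T_n$. Using the explicit operations permitted in the generating families (edge-splitting, doubling by Corollary \ref{elemenlightc}, and, for $\treecon$, edge-attachment), a point is an endpoint iff cofinally many $x_n$ are leaves, and is a ramification point iff cofinally many $x_n$ have order at least three. The projective extension property of the Fra\"{\i}ss\'e limit together with the availability of the doubling operation then forces the dichotomy: any non-endpoint thread can be amalgamated against an elementary light-confluent doubling at a late stage, producing order at least three cofinally often, so every non-endpoint is a ramification point.

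For part (1), I would identify $|\treend|$ with the Mohler--Nikiel dendroid by matching the characterizing properties from \cite{Dendroid}: a smooth dendroid with a closed set of endpoints in which every non-endpoint is a ramification point, together with the appropriate universality. Smoothness is inherited from the common root structure across the inverse system, with the image of the roots serving as the smoothness point. Closedness of the endpoint set follows because $\treend$ forbids edge-attachment, so a non-leaf vertex remains a non-leaf under all confluent extensions, making ``eventually a non-leaf'' an open condition. For part (2), the edge-attachment operation available in $\treecon$ yields density of endpoints: given any point $x$ of $|\mathbb{T}_{\mathcal{ C}}|$ and any basic open neighbourhood $U$ of $x$, the projective extension property allows a sufficiently late amalgamation attaching a new edge that produces a leaf whose image lies in $U$, hence an endpoint in $U$.

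The main obstacle will be verifying in sufficient detail that the topological realization is genuinely a dendroid after quotienting, and pinning down the precise combinatorial characterization of which threads represent endpoints versus ramification points, especially in $\treecon$ where successive edge-attachments can convert interior vertices into endpoints at later stages. Once these structural facts are secured, smoothness, closedness or density of endpoints, and the comparison with the Mohler--Nikiel characterization follow by standard Fra\"{\i}ss\'e arguments exploiting the extension property and the explicit list of allowed operations.
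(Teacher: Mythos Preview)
Your plan for part (2) is roughly in line with the paper's approach: use the extension property together with the adding-edge operation to produce endpoints in any neighbourhood, and use the doubling operation to show that non-endpoints are ramification points. The paper carries this out in more detail than you sketch (see Theorem~\ref{only-rampt-or-endpt}, Lemma~\ref{limit-dense-endpts}, Theorem~\ref{rooted-end}, and Propositions~\ref{endpts-to-endpts} and~\ref{ram-to-ram}), but the ideas are the same.

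Part (1), however, has a genuine gap. You propose to ``identify $|\treend|$ with the Mohler--Nikiel dendroid by matching the characterizing properties from \cite{Dendroid}.'' There is no such characterization. The Mohler--Nikiel dendroid is defined only by an explicit inverse-limit construction, and it is \emph{not} known to be characterized among smooth dendroids by having a closed set of endpoints and all other points ramification points (or by any other intrinsic list of properties). The paper states this explicitly just before the proof of Theorem~\ref{toprealMN}, and even records finding such a characterization as an open problem. Consequently the paper's proof does something quite different from what you outline: it builds a concrete Fra\"{\i}ss\'e sequence $\{A_n,f_n\}$ using the double-splitting-edge and multiplying-branches operations (Theorem~\ref{fraisse}), embeds this sequence in a two-parameter directed system $\{A_{nk}\}$, passes to topological realizations row by row to obtain trees $|\mathbb{L}_n|$, and then verifies that the induced maps $g_n^*\colon |\mathbb{L}_{n+1}|\to |\mathbb{L}_n|$ are precisely MN-maps for an explicit dense sequence in $[0,1)$, so that the inverse limit is literally a Mohler--Nikiel construction. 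Without a characterization theorem to invoke, some argument of this explicit type is unavoidable.
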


\subsection{Background}

We start with some basic definitions and background about projective Fra\"{\i}ss\'e families and limits.

A {\it graph} is an ordered pair $A=( V(A), E(A))$, where $E(A)\subseteq V(A)^2$ is a reflexive and symmetric relation on $V(A)$. The elements of $V(A)$ are called {\it vertices} of graph $A$ and elements of $E(A)$ are called {\it edges}. 
Given vertices $a$ and $b$ in a graph $A$, where $a \not =b$, we will use  $\langle a, b\rangle$ to denote an edge in $A$. Note that since $E(A)$ is reflexive there is always an edge from a vertex in $A$ to itself, such an edge will be called a {\it degenerate} edge.  When it is clear from context that $x$ is a vertex we will use $ x\in A$ for $x\in V(A)$.

Given two graphs $A$ and $B$ a function $f\colon V(A)\to V(B)$ is a {\it homomorphism} if it maps edges to edges, i.e.  $\langle a,b\rangle \in E(A)$ implies $\langle f(a),f(b)\rangle \in E(B)$.  
A homomorphism $f$ is an {\it epimorphism} between graphs if it is moreover surjective
on both vertices and edges. An {\it isomorphism} is an injective epimorphism.

By a {\it rooted graph} we mean a graph $G$ with a distinguished vertex $r_G$. 
 An epimorphism $f\colon G \to H$, between rooted graphs, is an epimorphism between corresponding (unrooted) graphs, such that $f(r_G) = r_H$.

\begin{definition}
A {\it topological graph} ({\it rooted topological graph}) $K$ is a graph $( V(K),E(K))$ (respectively, rooted graph $( V(K),E(K), r_K)$, where $r_K$ is the distinguished vertex), whose domain $V(K)$ is a 0-dimensional, compact, second-countable (thus metrizable) space and $E(K)$ is a closed, reflexive and symmetric  subset of $ V(K)^2$. 
We require that all epimorphisms between topological graphs are continuous.

\end{definition}
Topological graphs and rooted topological graphs are examples of topological $\mathcal{L}$-structures. For a general definition of a topological $\mathcal{L}$-structure see \cite{Pseudo}.

\begin{definition}\label{definition-order}
A {\it tree} $T$ is a finite graph  such that for every two distinct vertices $a,b \in T$ there is a unique finite sequence
$v_0=a,v_1, \dots , v_n=b$ of vertices in $T$ such that for every $i\in \{0,1,\dots, n-1\}$ we have $\langle v_i,v_{i+1}\rangle\in E(T)$
and $v_j\ne v_i$ for $j\neq i$. 
Let $n$ be a natural number, a vertex $p\in T$ has {\it order}  $n$ ($\ord(p)=n$) if there are $n$ non-degenerate edges in $T$ that contain $p$. If $\ord(p)=1$ then $p$ is an {\it end vertex}, $\ord(p)=2$ then $p$ is an {\it ordinary vertex}, and if $\ord(p)\ge 3$ then $p$ a {\it ramification vertex}. 

\end{definition}

The theory of projective Fra\"{\i}ss\'e limits was developed in \cite{Pseudo} and further refined in \cite{Menger}. We literally recall their definitions here. We will restrict to the case of finite (rooted) graphs.

\begin{definition}\label{definition-Fraisse}
Let $\mathcal{F}$ be a nonempty family of  finite graphs  or a family of finite rooted graphs with a fixed family of epimorphisms among
the structures in $\mathcal{F}$.  We say that $\mathcal{F}$ is a projective Fra\"{\i}ss\'e family if
\begin{enumerate}
\item $\mathcal{F}$ is countable up to isomorphism, that is, any sub-collection of pairwise
non-isomorphic structures of $\mathcal{F}$ is countable;
\item epimorphisms in $\mathcal{F}$ are closed under composition and each identity map is in $\mathcal{F}$;
\item for $B,C \in  \mathcal{F}$ there exist $D\in \mathcal{F}$ and epimorphisms $f$ and $g$ in $\mathcal{F}$ such that $f\colon  D \to B$ and $g\colon  D \to C$;
and
\item for every two epimorphisms $f\colon B \to A$ and $g\colon C \to A$ in $\mathcal{F}$ there exist epimorphisms 
$f_0\colon D \to B$ and $g_0\colon D\to C$ in $\mathcal{F}$ such that  $f \circ f_0 = g \circ g_0$, i.e. the diagram (D1) commutes.
\end{enumerate}
\begin{equation}\tag{D1}
\begin{tikzcd}
&B\arrow{ld}[swap]{f}\\
A&&D\arrow[lu,swap,dotted,"f_0"] \arrow[ld,dotted,"g_0"]\\
&C\arrow[lu,"g"]
\end{tikzcd}
\end{equation}
We will refer to property (3) as the joint projection property (JPP) and property (4) as the projective amalgamation property (AP).
\end{definition}

\begin{notation}
Given a sequences $F_n$ of finite (rooted) graphs and epimorphisms $\alpha_n\colon F_{n+1} \to F_n$, which are called bonding maps, we denote the inverse sequence by $\{F_n,\alpha_n\}$. For $m > n$ we let $\alpha_n^m= \alpha_n\circ \dots \circ \alpha_{m-1}$ and note that $\alpha_n^{n+1} = \alpha_n$. Given an inverse sequence $\{F_n,\alpha_n\}$ the associated inverse limit space is the subspace of the product space $\Pi F_i$ determined by $\{(x_1,x_2,\ldots) : x_i \in F_i \text{ and } x_i = \alpha_i(x_{i+1})\}$ and is denoted as ${\bf F}=\iLim\{F_n,\alpha_n\}$.   Further, we denote the canonical projection from the inverse limit space ${\bf F}$ to the $n$th factor space $F_n$ by $\alpha_n^{\infty}$. Note that if $x,y \in \iLim\{F_n,\alpha_n\}$ then the metric $d(x,y) = \sum_{i=1}^\infty d_i(x_i,y_i)/2^i$, where $d_i$ is the discrete metric on $F_i$, is compatible with the topology of the product space. So, in particular, if $a \in F_n$ then 
$\diam((\alpha^\infty_n)^{-1}  (a))\le 1/2^{n-1}$.
\end{notation}

The family $\mathcal F$ of finite graphs or finite rooted graphs and epimorphisms is enlarged to a family $\mathcal F^\omega$ which includes all (rooted) topological graphs obtained as inverse limits of (rooted) graphs in $\mathcal F$ with bonding maps from the family of epimorphisms.  If ${\bf G}=\iLim\{G_n,\alpha_n\} \in \mathcal F^\omega$ and $a=(a_n)$ and $b=(b_n)$ are elements of ${\bf G}$ then $\langle a,b\rangle$ is an edge in ${\bf G}$ if and only if for each $n$, $\langle a_n, b_n\rangle$ is an edge in $G_n$. An epimorphism $h$ between a (rooted) topological graph ${\bf G}=\iLim\{G_n,\alpha_n\}$ in $\mathcal F^\omega$ and  $A\in \mathcal F$ is in the family $\mathcal F^\omega$ if and only if there is an $m$ and an epimorphism  $h'\colon G_m \to A$, $h'\in \mathcal F$, such that $h= h'\circ \alpha_m^\infty$. Finally, if $K$ and $L$ are in $\mathcal F^\omega$ an epimorphism $h\colon L \to K$ is in the family $\mathcal F^\omega$ if and only if for any  $A \in \mathcal F$ and any epimorphism $g\colon K \to A$ in $\mathcal F^\omega$, $g\circ h \in \mathcal F^\omega$. 

\begin{lemma}\label{ladder}
Let $\{F_n,\alpha_n\}$ be an inverse sequence of finite (rooted) graphs, where each $\alpha_n$ is an epimorphism. Let ${\bf F}$ denote its inverse limit.
\begin{enumerate}
\item If $A$ is a finite (rooted) graph and $f\colon {\bf F}\to A$ is an epimorphism, then there is $n$ and an epimorphism $\gamma\colon F_n\to A$ such that $f=\gamma\circ\alpha^\infty_n$.
\item Suppose that $\{G_m,\beta_m\}$ is another inverse sequence of finite (rooted) graphs, where each $\beta_m$ is an epimorphism, such that ${\bf F}$ is isomorphic to its inverse limit.
Then there are subsequences $(n_i)$ and $(m_j)$ and there are epimorphisms $g_i\colon F_{n_i}\to G_{m_i}$ and $h_i\colon G_{m_{i+1}}\to F_{n_i}$ such that
$\alpha^{n_{i+1}}_{n_i}=h_ig_{i+1}$ and $\beta^{m_{i+1}}_{m_i}=g_ih_i$.
\end{enumerate}

\end{lemma}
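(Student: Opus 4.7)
The plan is to derive (1) from zero-dimensionality of $\mathbf{F}$ together with finiteness of $A$, and then to bootstrap (2) from (1) via an alternating back-and-forth construction.

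For (1), the key observation is that $\{f^{-1}(a):a\in A\}$ is a finite clopen partition of $\mathbf{F}$, since $A$ is discrete and $f$ is continuous, while the cylinder partitions $\{(\alpha_n^\infty)^{-1}(\{x\}):x\in F_n\}$ become arbitrarily fine as $n\to\infty$ by the mesh estimate $\diam((\alpha_n^\infty)^{-1}(x))\le 1/2^{n-1}$ recorded in the notation paragraph. A standard compactness argument then produces some $n$ for which the cylinder partition refines the $f$-partition, so $f$ factors uniquely as $\gamma\circ\alpha_n^\infty$ for a function $\gamma\colon F_n\to A$. I would then verify that $\gamma$ is an epimorphism: vertex-surjectivity is automatic from that of $f$, and for edge-surjectivity one lifts an edge $\langle a,b\rangle\in E(A)$ to an edge $\langle x,y\rangle\in E(\mathbf{F})$ by edge-surjectivity of $f$ and then projects by $\alpha_n^\infty$, using the fact that edges in $\mathbf{F}$ project coordinatewise to edges in each $F_n$. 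In the rooted case the root is preserved because $\gamma(r_{F_n})=\gamma(\alpha_n^\infty(r_{\mathbf{F}}))=f(r_{\mathbf{F}})=r_A$.

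For (2), let $\pi_n\colon\mathbf{F}\to F_n$ and $\rho_m\colon\mathbf{F}\to G_m$ denote the two families of projections induced by the given isomorphism. The scheme is to alternate applications of (1) between the two sequences. Set $m_0=1$, apply (1) to the $F$-sequence and the epimorphism $\rho_{m_0}$ to obtain $n_0$ and $g_0\colon F_{n_0}\to G_{m_0}$ with $\rho_{m_0}=g_0\circ\pi_{n_0}$. Then apply (1) to the $G$-sequence and $\pi_{n_0}$, choosing the resulting index $m_1>m_0$, to get $h_0\colon G_{m_1}\to F_{n_0}$ with $\pi_{n_0}=h_0\circ\rho_{m_1}$. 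Combining these with $\rho_{m_0}=\beta_{m_0}^{m_1}\circ\rho_{m_1}$ and invoking surjectivity of $\rho_{m_1}$ yields $\beta_{m_0}^{m_1}=g_0\circ h_0$. Iterating, one alternately extracts $g_{i+1}$ via (1) applied to $\rho_{m_{i+1}}$ in the $F$-sequence (with $n_{i+1}>n_i$) and $h_{i+1}$ via (1) applied to $\pi_{n_{i+1}}$ in the $G$-sequence (with $m_{i+2}>m_{i+1}$), then deduces $\alpha_{n_i}^{n_{i+1}}=h_i\circ g_{i+1}$ and $\beta_{m_i}^{m_{i+1}}=g_i\circ h_i$ from the analogous surjectivity argument at each stage. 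Ensuring strict monotonicity of the chosen indices is harmless, because any factorization through $F_n$ yields one through any $F_{n'}$ with $n'\ge n$ after post-composing with $\alpha_n^{n'}$.

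The main obstacle is really the compactness/refinement step in (1); once the factorization $f=\gamma\circ\alpha_n^\infty$ is in hand, verifying that $\gamma$ is an epimorphism and carrying out the inductive construction for (2) are essentially bookkeeping. A minor technical point worth flagging is that for (2) one applies (1) to two distinct presentations of the same limit space, so one must be careful that the projections $\pi_n$ and $\rho_m$ are both genuine epimorphisms of topological graphs — which they are, as coordinate projections of inverse limits of epimorphisms.
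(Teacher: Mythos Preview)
Your proposal is correct and follows essentially the same approach as the paper's proof: for (1), find $n$ large enough that the cylinder partition refines the $f$-partition and factor, and for (2), alternate applications of (1) between the two inverse sequences. The only cosmetic difference is that the paper dispatches the verification that $\gamma$ is an epimorphism by citing \cite[Lemma~2.1]{Pseudo}, whereas you spell out the edge-lifting argument directly; both are fine.
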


\begin{proof}
For (1), take any $n$ such that any element of the partition $\{(\alpha^\infty_n)^{-1}(x)\colon x\in F_n\}$ is contained in some element of the partition $\{f^{-1}(x)\colon x\in A\}$. 
For $z\in F_n$ we let $\gamma(z)=f(t)$ for any $t\in (\alpha^\infty_n)^{-1}(z)$. 
Then $f=\gamma\circ\alpha^\infty_n$. Since $f$ and $\alpha^\infty_n$ are epimorphism, by Lemma 2.1 in \cite{Pseudo}, $\gamma$ is an epimorphism.
For (2), if we already constructed $(g_i)$ and $(h_i)$ for $i<i_0$, to get $g_{i_0}$ apply part (1) to $f_1=\beta^\infty_{m_{i_0}}$ and get $n_{i_0}$ and $g_{i_0}$ such that
$f_1=g_{i_0}\circ\alpha^\infty_{n_{i_0}}$. Next, we apply part (1) to $f_2=\alpha^\infty_{n_{i_0}}$ and get $m_{{i_0+1}}$ and $h_{i_0}$ such that 
$f_2=h_{i_0}\circ\beta^\infty_{m_{i_0+1}}$. Those $g_i$ and $h_i$ are as required.
\end{proof}

We need conditions that imply whether or not an epimorphism $h\colon {\bf F}\to A$ belongs to $\mathcal{F}^\omega$
does not depend on the choice of the inverse limit representation of ${\bf F}$.
Note that if ${\bf F}=\iLim\{F_n,\alpha_n\}$, then each $\alpha^\infty_n$ is in $\mathcal{F}^\omega$.

Lemma \ref{ladder} motivates the following definition.
Let $\mathcal F$ be a family of finite (rooted) graphs with a possibly restricted family of epimorphisms. We say that $\mathcal{F}$ is {\it consistent} if for any topological graph ${\bf F}$ in $\mathcal F^\omega$, which can be represented by isomorphic inverse limits $\iLim \{F_n,\alpha_n\}$ and $\iLim \{G_n,\beta_n\} $, where $\alpha_n, \beta_n\in\mathcal F$, there are subsequences $(n_i)$ and $(m_j)$ and  epimorphisms $g_i\colon F_{n_i}\to G_{m_i}$, $h_i\colon G_{m_{i+1}}\to F_{n_i}$ with $h_i, g_i\in\mathcal F$ such that
$\alpha^{n_{i+1}}_{n_i}=h_ig_{i+1}$, $\beta^{m_{i+1}}_{m_i}=g_ih_i$,
$\beta^\infty_{m_{i}}=g_{i}\circ\alpha^\infty_{n_{i}}$,
and $\alpha^\infty_{n_{i}}=h_{i}\circ\beta^\infty_{m_{i+1}}$.

The consistency of $\mathcal{F}$ guarantees that the definition of an epimorphism in $\mathcal{F}^\omega$ does not depend on the inverse limit representation of a structure. Indeed, suppose that $\iLim \{F_n,\alpha_n\}$ and $\iLim \{G_n,\beta_n\} $ are isomorphic, where $\alpha_n, \beta_n\in\mathcal F$, and let
 $h\colon \iLim\{F_n,\alpha_n\}\to A$ be an epimorphims in $\mathcal{F}^\omega$.
  Let $m$ and
$h'\colon F_m \to A$ with $h'\in \mathcal F$, be such that $h= h'\circ \alpha_m^\infty$. 
Take any $n_i\geq m$  and consider $h_i\colon G_{m_{i+1}}\to F_{n_i}$. Then $h= h'\circ \alpha^{n_i}_m\circ h_i\circ \beta^\infty_{m_{i+1}}$ and $h'\circ \alpha^{n_i}_m\circ h_i\in\mathcal F$.
Moreover, the isomorphism induced by $(g_i)$ (equivalently, by $(h_i)$) is the identity.

\begin{corollary}\label{uniqueness}
Let $\mathcal F$ be a  family of epimorphisms between 
finite (rooted)  graphs.
    Suppose that for all epimorphisms  $f$, $g$, and $\phi$  such that $f\circ \phi=g$  we have that if   $g\in \mathcal{F}$, then $f\in\mathcal F$.
    Then $\mathcal{F}$ is consistent.
\end{corollary}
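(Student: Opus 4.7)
The plan is to produce the required maps by a direct appeal to Lemma \ref{ladder}(2), and then to upgrade them to $\mathcal{F}$-epimorphisms using the hypothesis of the corollary.

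First I would apply Lemma \ref{ladder}(2) to the two given presentations $\iLim\{F_n,\alpha_n\}$ and $\iLim\{G_n,\beta_n\}$ of ${\bf F}$. This produces subsequences $(n_i)$ and $(m_i)$ together with epimorphisms $g_i\colon F_{n_i}\to G_{m_i}$ and $h_i\colon G_{m_{i+1}}\to F_{n_i}$ satisfying $\alpha^{n_{i+1}}_{n_i}=h_i\circ g_{i+1}$ and $\beta^{m_{i+1}}_{m_i}=g_i\circ h_i$. Moreover, the construction in the proof of Lemma \ref{ladder}(2) obtains each $g_{i_0}$ (respectively $h_{i_0}$) via an application of Lemma \ref{ladder}(1) to $\beta^\infty_{m_{i_0}}$ (respectively $\alpha^\infty_{n_{i_0}}$), and so delivers the stronger relations $\beta^\infty_{m_i}=g_i\circ\alpha^\infty_{n_i}$ and $\alpha^\infty_{n_i}=h_i\circ\beta^\infty_{m_{i+1}}$ essentially for free. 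Taken together, all four commutativity requirements from the definition of consistency are satisfied.

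It then remains to show that $g_i,h_i\in\mathcal{F}$. Here I would exploit the standing assumption that the epimorphisms of $\mathcal{F}$ are closed under composition (Definition \ref{definition-Fraisse}(2)), so that the composite bonding maps $\alpha^{n_{i+1}}_{n_i}=\alpha_{n_i}\circ\cdots\circ\alpha_{n_{i+1}-1}$ and $\beta^{m_{i+1}}_{m_i}=\beta_{m_i}\circ\cdots\circ\beta_{m_{i+1}-1}$ both lie in $\mathcal{F}$. Given this, the hypothesis of the corollary applies twice: from $\alpha^{n_{i+1}}_{n_i}=h_i\circ g_{i+1}\in\mathcal{F}$ we obtain $h_i\in\mathcal{F}$, and from $\beta^{m_{i+1}}_{m_i}=g_i\circ h_i\in\mathcal{F}$ we obtain $g_i\in\mathcal{F}$, as needed.

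I do not foresee a serious obstacle. The only mildly delicate point is making sure that the commutativity of the $\infty$-projections actually drops out of Lemma \ref{ladder}(2); but since the $g_i$ and $h_i$ there are defined through Lemma \ref{ladder}(1), whose statement already asserts precisely this commutativity, the check is immediate. Everything else is a one-line application of the hypothesis.
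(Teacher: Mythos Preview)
Your proposal is correct and follows essentially the same approach as the paper: both invoke Lemma \ref{ladder}(2) to obtain the ladder maps $g_i,h_i$ and then use the factorization hypothesis applied to $\alpha^{n_{i+1}}_{n_i}=h_ig_{i+1}$ and $\beta^{m_{i+1}}_{m_i}=g_ih_i$ to conclude $h_i,g_i\in\mathcal F$. You are simply more explicit than the paper about the $\infty$-projection identities coming from the construction in Lemma \ref{ladder} and about needing closure under composition for the composite bonding maps.
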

\begin{proof}
Let ${\bf F}$ be represented as $\iLim\{F_n,\alpha_n\} $ and $\iLim \{G_n, \beta_n\}$, where $\alpha_n, \beta_n\in\mathcal F$.
It suffices to check that maps $g_i$ and $h_i$ constructed in Lemma \ref{ladder} are in $\mathcal F$. 
By our assumption, if $\alpha^{n_{i+1}}_{n_i}=h_ig_{i+1}$,
then since $\alpha^{n_{i+1}}_{n_i}$ is in $\mathcal F$, we get that $h_i$ is in $\mathcal F$. Similarly, if 
$\beta^{m_{i+1}}_{m_i}=g_ih_i$, then since $\beta^{m_{i+1}}_{m_i}$ is in  $\mathcal F$, we have that $g_i$ is in $\mathcal F$.

\end{proof}

In Section 8, we will need the following strengthening of Corollary \ref{uniqueness}.
\begin{corollary}\label{uniquegeneral}
Let  $\mathcal F_0$ and  $\mathcal F$ be families 
of epimorphisms between finite (rooted) graphs such that 
$\mathcal F_0\supseteq \mathcal F$. Suppose that for all epimorphisms $f$, $g$, and $\phi$  such that $f\circ \phi=g$  we have: (1) if   $g\in \mathcal{F}_0$, then $f\in\mathcal F_0$,
and (2) if $\phi\in\mathcal F_0$ and $g\in \mathcal{F}$, then $f\in\mathcal F$.  Then $\mathcal{F}$ is consistent.

\end{corollary}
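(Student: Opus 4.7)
The plan is to mimic the proof of Corollary \ref{uniqueness}, but run it in two passes. First I would upgrade all the auxiliary maps produced by Lemma \ref{ladder} into the larger family $\mathcal{F}_0$ using hypothesis (1), and then in a second pass I would use hypothesis (2) to push them down into $\mathcal{F}$.

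So, given a topological graph $\mathbf{F}$ represented as $\iLim\{F_n,\alpha_n\}$ and $\iLim\{G_n,\beta_n\}$ with $\alpha_n,\beta_n \in \mathcal{F}$, apply Lemma \ref{ladder}(2) to obtain subsequences $(n_i)$, $(m_i)$ and epimorphisms $g_i\colon F_{n_i}\to G_{m_i}$, $h_i\colon G_{m_{i+1}}\to F_{n_i}$ with $\alpha^{n_{i+1}}_{n_i}=h_i g_{i+1}$ and $\beta^{m_{i+1}}_{m_i}=g_i h_i$ (together with the compatibility $\beta^\infty_{m_i}=g_i\circ\alpha^\infty_{n_i}$ and $\alpha^\infty_{n_i}=h_i\circ\beta^\infty_{m_{i+1}}$, which come out of the construction). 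It remains to verify that every $g_i$ and $h_i$ lies in $\mathcal{F}$.

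\textbf{Pass 1 (into $\mathcal{F}_0$).} Since $\mathcal{F}\subseteq\mathcal{F}_0$ and both families consist of epimorphisms closed under composition, the bonding compositions $\alpha^{n_{i+1}}_{n_i}$ and $\beta^{m_{i+1}}_{m_i}$ belong to $\mathcal{F}_0$. Applying hypothesis (1) to the factorization $\alpha^{n_{i+1}}_{n_i}=h_i\circ g_{i+1}$ yields $h_i\in\mathcal{F}_0$, and applying it to $\beta^{m_{i+1}}_{m_i}=g_i\circ h_i$ yields $g_i\in\mathcal{F}_0$. Hence every $g_i,h_i$ is in $\mathcal{F}_0$.

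\textbf{Pass 2 (into $\mathcal{F}$).} Now use hypothesis (2). The composition $\alpha^{n_{i+1}}_{n_i}$ lies in $\mathcal{F}$, and in the factorization $\alpha^{n_{i+1}}_{n_i}=h_i\circ g_{i+1}$ the right factor $g_{i+1}$ is already known to be in $\mathcal{F}_0$ from Pass~1; so by (2) we conclude $h_i\in\mathcal{F}$. Likewise, from $\beta^{m_{i+1}}_{m_i}=g_i\circ h_i$ with $\beta^{m_{i+1}}_{m_i}\in\mathcal{F}$ and $h_i\in\mathcal{F}_0$, hypothesis (2) gives $g_i\in\mathcal{F}$. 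This is exactly the condition in the definition of consistency for $\mathcal{F}$.

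The only subtle point, and really the reason a two-pass argument is needed, is that hypothesis (2) has the hidden prerequisite $\phi\in\mathcal{F}_0$. A single-pass version using only (2) would be circular, since knowing $g_{i+1}\in\mathcal{F}_0$ is precisely what Pass~1 delivers. Once this bootstrapping is in place, the argument is a direct reprise of Corollary \ref{uniqueness}, and no new compatibility with the inverse limit representations has to be checked beyond what Lemma \ref{ladder} already supplies.
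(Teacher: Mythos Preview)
Your proof is correct and follows essentially the same approach as the paper's own proof: apply Lemma \ref{ladder}, use hypothesis (1) to get all $g_i,h_i\in\mathcal{F}_0$ (this is the paper's reference to ``as in the proof of Corollary \ref{uniqueness}''), and then use hypothesis (2) together with that information to upgrade to $g_i,h_i\in\mathcal{F}$. Your explicit articulation of why the two-pass structure is needed (hypothesis (2) requires $\phi\in\mathcal{F}_0$, which only Pass~1 provides) is a helpful clarification that the paper leaves implicit.
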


\begin{proof}
Let ${\bf F}$ be represented as $\iLim\{F_n,\alpha_n\} $ and $\iLim \{G_n, \beta_n\}$, where $\alpha_n, \beta_n\in\mathcal F$.
It suffices to check that maps $g_i$ and $h_i$ constructed in Lemma \ref{ladder} are in $\mathcal F$. Since $\mathcal{F}\subseteq \mathcal{F}_0$, as in the proof of  Corollary \ref{uniqueness}, by (1), we obtain that  for all $i$, $g_i, h_i\in \mathcal{F}_0$.
Since
$\alpha^{n_{i+1}}_{n_i}=h_ig_{i+1}$ and 
$\beta^{m_{i+1}}_{m_i}=g_ih_i$, for all $i$,
by  (2), we get in fact $g_i, h_i\in\mathcal F$.    
\end{proof}

In the proof of \cite[Theorem 2.4]{Pseudo} an inverse sequence satisfying certain properties, see definition below, was introduced and used to show the existence and uniqueness of the projective Fra\"{\i}ss\'e limit.

\begin{definition}\label{fund-seq-def}

Given a projective Fra\"{\i}ss\'e family $\mathcal F$ an inverse sequence $\{F_n,\alpha_n\}$ where $F_n\in {\mathcal F}$ and $\alpha_n\colon F_{n+1} \to F_n$ are epimorphisms in $\mathcal F$ is said to be a {\it Fra\"{\i}ss\'e sequence} for $\mathcal F$ if the following two conditions hold.

\begin{enumerate}
    \item For any $G\in {\mathcal F}$ there is an $n$ and an epimorphism in $\mathcal F$ from $F_n$ onto $G$;
    \item For any $n$, any pair $G, H \in {\mathcal F}$, and any epimorphisms $g\colon H \to G$ and $f\colon F_n \to G$ in $\mathcal F$ there exists $m >n$ and an epimorphism $h\colon F_m \to H$ in $\mathcal F$ such that $g\circ h = f\circ \alpha_n^m$  i.e the diagram (D2) commutes.
\end{enumerate}

\begin{equation}\tag{D2}
\begin{tikzcd}
F_n\arrow{d}[swap]{f}&F_m\arrow[d,dotted,"h"]\arrow{l}[swap]{\alpha^m_n} \\
G&\arrow[l,"g"]H
\end{tikzcd}
\end{equation}

\end{definition}

Note that the name Fra\"{\i}ss\'e sequence has not been standardized. Other names that have been used include generic sequence and fundamental sequence. Let us also remark, and we will use it in later sections, that we have the following characterization of a Fra\"{\i}ss\'e sequence.

\begin{proposition}\label{Fraisse equiv}
    Let  $\mathcal F$ be a projective Fra\"{\i}ss\'e family.  An inverse sequence $\{F_n,\alpha_n\}$, where $F_n\in {\mathcal F}$ and $\alpha_n\colon F_{n+1} \to F_n$ are epimorphisms in $\mathcal F$, 
    is  a  Fra\"{\i}ss\'e sequence if and only if for each $m$, $A\in \mathcal F$ and an epimorphism $\phi\colon A\to F_m$ in $\mathcal F$ there exist $n$ and an epimorphism $\psi\colon F_n\to A$ in $\mathcal F$ such that $\phi\circ \psi=\alpha^n_m$.
\end{proposition}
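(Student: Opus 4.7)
The plan is to prove the two implications separately, each reducing to a single application of one of the Fra\"{\i}ss\'e axioms combined with the hypothesis.

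For the ``only if'' direction, assume $\{F_n,\alpha_n\}$ is a Fra\"{\i}ss\'e sequence, and fix $m$, $A\in\mathcal F$, and an epimorphism $\phi\colon A\to F_m$ in $\mathcal F$. I would invoke Definition \ref{fund-seq-def}(2) with index $m$ in place of $n$, target $G=F_m$, source $H=A$, the arrow $g=\phi$, and $f=\id_{F_m}$. The identity lies in $\mathcal F$ by Definition \ref{definition-Fraisse}(2), so the hypotheses are satisfied. The conclusion produces some $n>m$ and an epimorphism $\psi\colon F_n\to A$ in $\mathcal F$ with $\phi\circ\psi=\id_{F_m}\circ\alpha_m^n=\alpha_m^n$, which is exactly the desired factorization.

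For the ``if'' direction, assume the factorization property and verify clauses (1) and (2) of Definition \ref{fund-seq-def} in turn. For clause (1), given $G\in\mathcal F$, I would first apply JPP (Definition \ref{definition-Fraisse}(3)) to $F_0$ and $G$ to obtain $D\in\mathcal F$ with epimorphisms $f_0\colon D\to F_0$ and $g_0\colon D\to G$ in $\mathcal F$. Applying the hypothesis with $m=0$, $A=D$, $\phi=f_0$ yields some $n$ and $\psi\colon F_n\to D$ in $\mathcal F$; then $g_0\circ\psi\colon F_n\to G$ is the required epimorphism. For clause (2), given $n$ and epimorphisms $f\colon F_n\to G$ and $g\colon H\to G$ in $\mathcal F$, I would first invoke AP (Definition \ref{definition-Fraisse}(4)) on $f$ and $g$ to obtain $D\in\mathcal F$ and epimorphisms $f_0\colon D\to F_n$ and $g_0\colon D\to H$ in $\mathcal F$ with $f\circ f_0=g\circ g_0$. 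Then apply the hypothesis with $m=n$, $A=D$, $\phi=f_0$ to produce $n'\ge n$ and $\psi\colon F_{n'}\to D$ in $\mathcal F$ with $f_0\circ\psi=\alpha_n^{n'}$. Setting $h:=g_0\circ\psi$ gives
\[
g\circ h=g\circ g_0\circ\psi=f\circ f_0\circ\psi=f\circ\alpha_n^{n'},
\]
which is the factorization demanded by Definition \ref{fund-seq-def}(2).

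The only technicality, and what I expect to be the main (but still minor) obstacle, is that Definition \ref{fund-seq-def}(2) requires a strict inequality $m>n$, while the hypothesis only guarantees $n'\ge n$. If one obtains $n'=n$, I would replace $n'$ by any $n''>n$ and $\psi$ by $\psi\circ\alpha_{n'}^{n''}$; the commutativity $f_0\circ\psi\circ\alpha_{n'}^{n''}=\alpha_n^{n'}\circ\alpha_{n'}^{n''}=\alpha_n^{n''}$ persists, and all intermediate maps remain in $\mathcal F$ because $\mathcal F$ is closed under composition by Definition \ref{definition-Fraisse}(2). With that small bookkeeping, both implications follow cleanly from the axioms.
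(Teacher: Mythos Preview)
Your argument is correct. The ``only if'' direction matches the paper's proof exactly (take $f=\id$ in Definition~\ref{fund-seq-def}(2)); for the converse the paper simply cites \cite[Proposition~2.3]{B-C}, whereas you supply the standard self-contained argument via JPP and AP, which is precisely what that reference contains.
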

\begin{proof}
Clearly a Fra\"{\i}ss\'e sequence has the property stated in the proposition; just take $f={\rm Id}$. The converse is proved in \cite[Proposition 2.3]{B-C}.
\end{proof}

An epimorphism $f \colon A \to B$ between (rooted) topological graphs is said to be an {\it $\varepsilon$-epimophism} if for every $x \in B$, $\diam(f^{-1}(x))<\varepsilon $.

The following theorem was proved in \cite{Pseudo} for the case when we take all epimorphisms, and then refined to the version below in \cite{Menger}.

\begin{theorem}\label{limit}
 Let $\mathcal {F}$ be a projective Fra\"{\i}ss\'e family with a fixed collection of epimorphisms between the structures of $\mathcal{F}$.  Then there exists a unique topological graph (rooted topological graph) $\mathbb{F}$ in $\mathcal{F}^\omega$ such that
\begin{enumerate}
    \item for each $A\in \mathcal{F}$, there exists an epimorphism in $\mathcal{F}^\omega$
    from $\mathbb{F}$ onto~$A$;
    \item for $A,B \in \mathcal{F}$ and epimorphisms $f\colon \mathbb{F} \to A$ and $g\colon B\to A$
    in $\mathcal{F}^\omega$
    there exists an epimorphism $h\colon \mathbb{F}\to B$
  in $\mathcal{F}^\omega$
    such that $f=g\circ h$.
    \item\label{refinement}
For every $\varepsilon>0$  there is $G\in \mathcal F$ and an epimorphism $f\colon \mathbb{F} \to G$  in $\mathcal{F}^\omega$
such that $f$ is an $\varepsilon$-epimophism.
\end{enumerate}
\end{theorem}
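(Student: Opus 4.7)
The plan is to construct $\mathbb{F}$ as the inverse limit of a Fra\"{\i}ss\'e sequence in the sense of Definition \ref{fund-seq-def}, to verify the three stated properties, and to establish uniqueness by a back-and-forth argument powered by property (2). The four defining clauses of a projective Fra\"{\i}ss\'e family—countability, closure of epimorphisms under composition, JPP, and AP—are precisely what the construction consumes.

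\emph{Construction of a Fra\"{\i}ss\'e sequence.} Since $\mathcal{F}$ is countable up to isomorphism, fix an enumeration with infinite repetition of all quadruples $(n,G,H,g\colon H\to G)$ with $g$ an epimorphism in $\mathcal{F}$. Start with any $F_1\in\mathcal{F}$. Having built $F_1,\dots,F_n$ together with bonding maps $\alpha_1,\dots,\alpha_{n-1}$ in $\mathcal{F}$, pick the next item of the enumeration; for every epimorphism $f\colon F_n\to G$ in $\mathcal{F}$ (finitely many, and JPP allows us to postpone the step if none exists yet), apply AP to the pair $(f,g)$ to extend the inverse sequence one level. Standard diagonal bookkeeping guarantees that both clauses of Definition \ref{fund-seq-def} are met.

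\emph{Existence and verification.} Let $\mathbb{F}=\iLim\{F_n,\alpha_n\}\in\mathcal{F}^\omega$. Property (1) is immediate from Definition \ref{fund-seq-def}(1): given $A\in\mathcal{F}$ and $\gamma\colon F_n\to A$ in $\mathcal{F}$, the composition $\gamma\circ\alpha_n^\infty$ lies in $\mathcal{F}^\omega$. Property (3) follows from the metric estimate $\diam((\alpha_n^\infty)^{-1}(a))\le 2^{1-n}$ recorded in the Notation paragraph, so $\alpha_n^\infty$ itself is an $\varepsilon$-epimorphism as soon as $2^{1-n}<\varepsilon$. For property (2), given $f\colon\mathbb{F}\to A$ in $\mathcal{F}^\omega$ and $g\colon B\to A$ in $\mathcal{F}$, the definition of $\mathcal{F}^\omega$ yields a factorisation $f=\gamma\circ\alpha_n^\infty$ with $\gamma\colon F_n\to A$ in $\mathcal{F}$; apply AP to $(\gamma,g)$ to obtain $D\in\mathcal{F}$ and $f_0\colon D\to F_n$, $g_0\colon D\to B$ in $\mathcal{F}$ with $\gamma\circ f_0=g\circ g_0$; then apply Definition \ref{fund-seq-def}(2) to $f_0$ and $\id_{F_n}$ to obtain $m>n$ and $h\colon F_m\to D$ in $\mathcal{F}$ with $f_0\circ h=\alpha_n^m$. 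The map $\psi:=g_0\circ h\circ\alpha_m^\infty$ lies in $\mathcal{F}^\omega$ and satisfies
\[ g\circ\psi = g\circ g_0\circ h\circ\alpha_m^\infty = \gamma\circ f_0\circ h\circ\alpha_m^\infty = \gamma\circ\alpha_n^m\circ\alpha_m^\infty = \gamma\circ\alpha_n^\infty = f. \]

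\emph{Uniqueness.} Suppose $\mathbb{F}'\in\mathcal{F}^\omega$ also satisfies (1) and (2), and write $\mathbb{F}'=\iLim\{G_m,\beta_m\}$. A back-and-forth construction alternately invoking property (2) on the $\mathbb{F}$-side and the $\mathbb{F}'$-side produces increasing indices $n_k,m_k$ and epimorphisms $\sigma_k\colon\mathbb{F}\to G_{m_k}$, $\tau_k\colon\mathbb{F}'\to F_{n_k}$ in $\mathcal{F}^\omega$ fitting into a commuting ladder in which $\sigma_{k+1}$ lifts $\sigma_k$ through $\beta_{m_k}^{m_{k+1}}$ and factors the commitment made to $\tau_k$ on the previous step, and symmetrically for $\tau_{k+1}$. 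Passing to the limit yields epimorphisms $\Sigma\colon\mathbb{F}\to\mathbb{F}'$ and $T\colon\mathbb{F}'\to\mathbb{F}$ in $\mathcal{F}^\omega$, and the ladder relations force $\Sigma\circ T$ and $T\circ\Sigma$ to agree with the identity at every level of the corresponding cofinal inverse system, hence globally. The main obstacle, and the only real subtlety in the whole proof, is precisely this bookkeeping: at each step one must first honour the factorisation commitment introduced at the previous step before recording a new one, to guarantee that $\Sigma$ and $T$ actually invert one another rather than together assembling into a nontrivial automorphism of the system.
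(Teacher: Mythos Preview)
Your argument is correct and follows the standard route; it is essentially the construction from \cite{Pseudo} and \cite{Menger} spelled out. The paper itself does not reprove this: it simply cites \cite[Theorem 3.1]{Menger} for existence, uniqueness, and properties (1)--(2), and then observes that (3) follows because $\mathbb{F}$ is realised as the inverse limit of a Fra\"{\i}ss\'e sequence, whence the canonical projections $\alpha_n^\infty$ are $\varepsilon$-epimorphisms by the metric estimate recorded in the Notation paragraph. Your proof of (3) is exactly that observation; the rest is a self-contained unpacking of the cited theorem, which the paper does not attempt.

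One small inefficiency: in your verification of (2), the detour through AP is unnecessary. Having factored $f=\gamma\circ\alpha_n^\infty$, you can apply Definition~\ref{fund-seq-def}(2) directly to the pair $(\gamma\colon F_n\to A,\ g\colon B\to A)$ to obtain $m>n$ and $h\colon F_m\to B$ with $g\circ h=\gamma\circ\alpha_n^m$, and then $\psi=h\circ\alpha_m^\infty$ works. Your route via an auxiliary amalgam $D$ is correct but adds a redundant step.
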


\begin{proof}
The uniqueness of $\mathbb F$ along with the first two conditions are precisely \cite[Theorem 3.1]{Menger}. For the third condition note that in the proof of \cite[Theorem 3.1]{Menger} it is shown that the projective Fra\"{\i}ss\'e limit $\mathbb F$ is the inverse limit of a Fra\"{\i}ss\'e sequence for $\mathcal F$.  Condition~(3) then follows from the definition of a metric on the inverse limit space.  
\end{proof}

The following follows from \cite[Theorem 2.4]{Pseudo}.

\begin{theorem}\label{fund-sequence}
Given a projective Fra\"{\i}ss\'e family $\mathcal F$, there exists a Fra\"{\i}ss\'e sequence $\{F_i,g_i\}$ for $\mathcal F$, and the inverse limit of any Fra\"{\i}ss\'e sequence for $\mathcal F$ is isomorphic to the projective Fra\"{\i}ss\'e limit $\mathbb F$.
\end{theorem}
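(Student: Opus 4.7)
The plan is to follow the classical back-and-forth template used to prove existence and uniqueness of Fra\"{\i}ss\'e limits, adapted to the projective (dual) setting as in \cite{Pseudo}. The result is really in two parts: (a) produce a Fra\"{\i}ss\'e sequence, and (b) show the inverse limit of any such sequence is isomorphic to the object $\mathbb F$ characterized by Theorem~\ref{limit}.

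For (a), enumerate all isomorphism types in $\mathcal F$ as $A_1,A_2,\dots$ (possible by clause (1) of Definition~\ref{definition-Fraisse}). Since each $F_n$ has only finitely many vertices, there are only countably many epimorphisms (up to isomorphism of the source) between any two fixed members of $\mathcal F$; so we can arrange all ``tasks'' of the form $(F,\,g\colon H\to G,\,f\colon F\to G)$ into a single list, with each task listed cofinally. Build $\{F_n,\alpha_n\}$ by induction: start with an arbitrary $F_1\in\mathcal F$; at stage $n$, handle the next pending task whose source is isomorphic to some $F_k$ with $k\le n$ by pre-composing with $\alpha_k^n$ so that it becomes a task on $F_n$, then apply the projective amalgamation property (AP) to obtain $F_{n+1}$ and $\alpha_n\colon F_{n+1}\to F_n$ completing the square. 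Interleave stages in which, using the joint projection property (JPP) with the next unused $A_i$, we enlarge $F_{n+1}$ so as to admit an epimorphism onto $A_i$; the resulting sequence is Fra\"{\i}ss\'e by Proposition~\ref{Fraisse equiv} or directly by the bookkeeping.

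For (b), let $\{F_n,\alpha_n\}$ be any Fra\"{\i}ss\'e sequence and $\mathbb F'=\iLim\{F_n,\alpha_n\}$. First verify that $\mathbb F'$ satisfies conditions (1) and (2) of Theorem~\ref{limit}. Condition (1) is immediate from clause (1) of Definition~\ref{fund-seq-def} composed with $\alpha_n^\infty$. For (2), given $f\colon\mathbb F'\to A$ in $\mathcal F^\omega$ and $g\colon B\to A$ in $\mathcal F$, use Lemma~\ref{ladder}(1) to find $n$ and $\gamma\colon F_n\to A$ with $f=\gamma\circ\alpha_n^\infty$, then apply clause (2) of Definition~\ref{fund-seq-def} to the pair $(\gamma,g)$ to get $m>n$ and $h\colon F_m\to B$ with $g\circ h=\gamma\circ\alpha_n^m$; then $h\circ\alpha_m^\infty\colon\mathbb F'\to B$ is the required lift. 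By the uniqueness clause of Theorem~\ref{limit}, $\mathbb F'\cong\mathbb F$.

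Alternatively, and more in the spirit of \cite{Pseudo}, one can prove (b) directly by a back-and-forth argument: given two Fra\"{\i}ss\'e sequences $\{F_i,\alpha_i\}$ and $\{G_j,\beta_j\}$, inductively construct subsequences and epimorphisms $u_k\colon G_{j_k}\to F_{i_k}$ and $v_k\colon F_{i_{k+1}}\to G_{j_k}$ whose consecutive compositions equal the relevant bonding maps, by alternately applying the defining property (2) of a Fra\"{\i}ss\'e sequence to $\{F_i\}$ and to $\{G_j\}$. Passing to inverse limits produces mutually inverse isomorphisms. The main technical obstacle in either approach is the bookkeeping: one must arrange that every amalgamation task gets fulfilled at some finite stage while simultaneously maintaining that the stage-$n$ structure remains in $\mathcal F$ and that maps are in the fixed collection of epimorphisms. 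Once this is set up, all required factorizations become routine consequences of (AP) and (JPP).
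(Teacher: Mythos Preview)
Your proposal is correct and follows the standard Irwin--Solecki argument. Note that the paper does not give its own proof of this theorem at all: it simply records that the statement follows from \cite[Theorem~2.4]{Pseudo}, so your sketch goes further than the paper does and essentially reconstructs the cited proof.
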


\begin{definition}
Given a (rooted) topological graph ${\bf K}$, if $E({\bf K})$ is also transitive then it is an equivalence relation and ${\bf K}$ is known as a {\it prespace}. The quotient space ${\bf K}/E({\bf K})$ is called the {\it topological realization} of ${\bf K}$ and is denoted by $|{\bf K}|$.
\end{definition}

\begin{theorem}
Each compact metric space is a topological realization of a topological graph.
\end{theorem}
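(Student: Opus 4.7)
The plan is to exhibit, for any compact metric space $X$, a topological graph $\mathbf{K}$ which is a prespace and whose quotient by $E(\mathbf{K})$ is homeomorphic to $X$. The natural candidate for the vertex set is the Cantor set, since the Cantor set continuously surjects onto every compact metric space.

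First I would fix a continuous surjection $q\colon C\to X$, where $C$ is the Cantor set (or any 0-dimensional, compact, second-countable space mapping onto $X$; the existence of such a $q$ is a standard theorem of Alexandroff--Hausdorff). Then I set $V(\mathbf{K})=C$ and declare
\[
E(\mathbf{K})=\{(x,y)\in C\times C: q(x)=q(y)\}.
\]
This relation is clearly reflexive and symmetric, and it is closed in $C\times C$ since $q$ is continuous and $X$ is Hausdorff (it is the preimage of the diagonal of $X\times X$ under $q\times q$). Hence $\mathbf{K}=(V(\mathbf{K}),E(\mathbf{K}))$ is a topological graph in the sense of the definition given earlier.

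Next I would observe that $E(\mathbf{K})$ is in fact the equivalence relation induced by $q$, so it is transitive. Therefore $\mathbf{K}$ is a prespace and the topological realization $|\mathbf{K}|=V(\mathbf{K})/E(\mathbf{K})$ is defined. The map $q$ factors through the quotient to give a continuous bijection $\bar q\colon |\mathbf{K}|\to X$. Since $|\mathbf{K}|$ is compact (as a continuous image of $C$) and $X$ is Hausdorff, $\bar q$ is a homeomorphism. This identifies $|\mathbf{K}|$ with $X$.

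There is essentially no obstacle: the only nontrivial ingredient is the Alexandroff--Hausdorff theorem on Cantor surjections, and the rest is a direct verification that the fibers of $q$ yield a closed equivalence relation making $\mathbf{K}$ a prespace with realization $X$. If one prefers to avoid citing Alexandroff--Hausdorff, one can instead embed $X$ into the Hilbert cube, take a 0-dimensional compact cover by a standard construction (e.g.\ via a tree of finite covers of diameter $\to 0$), and proceed in the same way.
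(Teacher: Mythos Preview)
Your proposal is correct and follows essentially the same approach as the paper: take a continuous surjection from the Cantor set onto $X$, and let the edge relation be the induced equivalence relation. You provide more detail than the paper does (explicitly checking closedness of $E(\mathbf{K})$ and the compact-to-Hausdorff bijection argument), but the idea is identical.
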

\begin{proof}
Let $X$ be a compact metric space and let $f\colon C\to X$ be a surjective mapping from the Cantor set $C$. Define a topological graph ${\bf K}$ by putting $V({\bf K})=C$ and $\langle a,b\rangle\in E({\bf K})$ if and only if $f(a)=f(b)$. Then ${\bf K}$ is a compact topological graph, $E({\bf K})$ is transitive, and $|{\bf K}|$ is homeomorphic to $X$.
\end{proof}

\section{Connectedness properties of topological graphs}

In this section we propose definitions of some connectedness properties of topological graphs analogous to
respective definitions for continua. We have decided to keep the terminology original to continuum theory.
Let us start with the definitions of connected and locally connected topological graphs as in \cite{Menger}.

\begin{definition}
  Given a topological graph $G$, a subset $S$ of $V(G)$ is {\it disconnected} if there are two nonempty disjoint closed subsets $P$ and $Q$ of $S$ such that $P\cup Q=S$ and if
  $a\in P$ and $b\in Q$, then $\langle a,b\rangle\notin E(G)$. A subset $S$ of $V(G)$ is {\it connected} if it is not disconnected.
  A graph $G$ is connected if $V(G)$ is connected.

\end{definition}

\begin{definition}
Let $F$ be a  topological graph, $A$ and $B$ be subgraphs of $F$ and $a,b\in F$.
 We say that $A$ is {\it adjacent} to $B$ if $A \cap B = \emptyset$ and there are $x\in A$ and $y \in B$ such that 
 $\langle x, y \rangle \in E(F)$. 
 We say that $a$ is {\it adjacent} to $A$ if $\{a\}$ is adjacent to $A$,  and similarly, $a$ is {\it adjacent} to $b$ if $\{a\}$ is adjacent to $\{b\}$. Two subgraphs are {\it non-adjacent} if they are not adjacent.
\end{definition}

\begin{proposition}\label{connected-image}
If $f\colon G\to H$ is an epimorphism between topological graphs and $G$ is connected, then $H$ is connected.
\end{proposition}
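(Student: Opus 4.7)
The plan is to argue by contrapositive: assume $H$ is disconnected and pull the disconnection back through $f$ to show $G$ is disconnected. This is the standard strategy for ``continuous surjections preserve connectedness,'' adapted to the graph-theoretic definition given in the paper.

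More concretely, suppose $V(H) = P \cup Q$ witnesses that $H$ is disconnected, so $P$ and $Q$ are nonempty, disjoint, closed, and there is no edge in $E(H)$ with one endpoint in $P$ and the other in $Q$. Set $P' = f^{-1}(P)$ and $Q' = f^{-1}(Q)$. Since $f$ is required to be continuous, $P'$ and $Q'$ are closed subsets of $V(G)$; since $f$ is surjective on vertices, both are nonempty; and $P' \cap Q' = \emptyset$ follows from $P \cap Q = \emptyset$, while $P' \cup Q' = V(G)$ follows from $P \cup Q = V(H)$.

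It remains to check the no-edge condition for $P'$, $Q'$ in $G$. Suppose $a \in P'$, $b \in Q'$, and $\langle a, b \rangle \in E(G)$. Because $f$ is a homomorphism, $\langle f(a), f(b) \rangle \in E(H)$ with $f(a) \in P$ and $f(b) \in Q$, contradicting the disconnection of $H$. Hence $P', Q'$ exhibit $V(G)$ as disconnected, contrary to hypothesis. Therefore $H$ must be connected.

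There is really no obstacle here: the only substantive point is making sure that both clauses of the ``topological graph'' structure (continuity for the closedness of preimages, homomorphism property for the edge preservation) are used, and that surjectivity on vertices gives nonemptiness of $P'$ and $Q'$. Note we do not need surjectivity on edges for this direction, though $f$ is assumed to be an epimorphism.
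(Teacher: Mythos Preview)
Your proof is correct and is essentially identical to the paper's own argument: both assume $H$ is disconnected via $P\cup Q$, pull back to $f^{-1}(P)\cup f^{-1}(Q)$, and use the homomorphism property to show no edge can cross between them, contradicting connectedness of $G$. If anything, you are slightly more explicit than the paper in invoking continuity for closedness and vertex-surjectivity for nonemptiness of the preimages.
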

\begin{proof}
Suppose $H$ is disconnected. Then there are two disjoint nonempty closed subsets $A$ and $B$ of $V(H)$ such that $V(H)=A\cup B$ and there are no edges between $A$ and $B$.
So $V(G)=f^{-1}(A)\cup f^{-1}(B)$ and $f^{-1}(A)\cap f^{-1}(B)=\emptyset$. Since $G$ is connected, there are vertices $a\in f^{-1}(A)$ and $b\in f^{-1}(B)$ such that
$\langle a,b\rangle\in E(G)$, and thus $\langle f(a),f(b)\rangle\in E(H)$, a contradiction.
\end{proof}

\begin{definition}
Given a topological graph $G$, a subset $S$ of $V(G)$, and a vertex $a\in S$, the {\it component of $S$ containing $a$} is the
largest connected subset $C$ of $S$ that contains $a$; in other words $C=\bigcup\{P\subseteq S\colon a\in P\ \text{ and }P \text{ is connected} \} $.
\end{definition}

\begin{definition}
A continuum $K$ is {\it hereditarily unicoherent} if for every two nonempty subcontinua $P$ and $Q$ of $K$ the
 intersection $P\cap Q$ is connected. The continuum $K$ is {\it unicoherent} if in addition $P \cup Q = K$.

 A topological graph $G$ is {\it hereditarily unicoherent} if for every two nonempty closed connected topological graphs, $P$ and $Q$, with $V(P) \subseteq V(G)$, $V(Q) \subseteq V(G)$, $E(P) \subseteq E(G)$, and $E(Q) \subseteq E(G)$, the intersection $P\cap Q=(V(P)\cap V(Q), E(P)\cap E(Q))$ is connected. The topological graph $G$ is {\it unicoherent} if in addition $V(P)\cup V(Q)=V(G)$.
 \end{definition}

The reason why we allow $E(P) \neq E(G)\cap V(P)^2$, and $E(Q) \neq E(G)\cap V(Q)^2$ in the definition of hereditary unicoherence, is that, for example, we do not want complete topological graphs to be dendrites (see Definition \ref{def:dendrite}).

\begin{notation}

If $A$ and $B$ are topological graphs by $A \subseteq B$ we mean $V(A) \subseteq V(B)$ and 
$\langle x,y\rangle \in E(A)$ if and only if $\langle x,y\rangle \in E(B)$, for $x,y\in V(A)$. If $A \subseteq B$ by $B\backslash A$ we mean a topological graph  $F \subseteq A$ such that $V(F)=V(B)\backslash V(A)$.

\end{notation}

\begin{definition}\label{arc-def}
  We say that a topological graph $G$ is an {\it arc} if it is connected and there are two vertices $a,b\in V(G)$ such that for every
  $x\in V(G)\setminus \{a,b\}$ the graph $G\setminus \{x\}$ is not connected. The vertices $a$ and $b$ are called {\it end vertices} of the arc and we say
  that $G$ is {\it joining} $a$ and~$b$. We denote the arc having end vertices $a$ and $b$ by $[a,b]$.
\end{definition}

In topology, there is only one, up to homeomorphism, metric arc, while for topological graphs we have finite arcs, countable arcs as well as uncountable arcs.

\begin{example}
  Let $G$ be the topological graph whose set of vertices is the Cantor ternary set, and the edges are defined by $\langle a,b\rangle\in E(G)$
  if and only if $a=b$ or $a$ and $b$ are end vertices of the same deleted interval. Then $G$  is an uncountable arc, whose topological realization is homeomorphic to $[0,1]$.
\end{example}

\begin{definition}
A topological graph $G$ is {\it locally connected} if it admits at every vertex a neighborhood base consisting of open connected sets.

A topological graph $G$ is called {\it arcwise connected} if for every two vertices $a,b\in V(G)$ there is an arc in $G$ joining $a$ and $b$.
  \end{definition}

\begin{definition}\label{def:dendrite}
A continuum that is hereditarily unicoherent and arcwise connected continuum is called a {\it dendroid}. A locally connected dendroid is a {\it dendrite}.

     A hereditarily unicoherent and arcwise connected topological graph is a {\it dendroid}. A locally connected dendroid is a {\it dendrite}.

\end{definition}

\begin{proposition}
  For a connected finite graph $G$ the following conditions are equivalent:
  \begin{enumerate}
    \item $G$ is a tree;
    \item  $G$ is unicoherent;
    \item $G$ is a dendroid;
    \item $G$ is a dendrite;
  \end{enumerate}
\end{proposition}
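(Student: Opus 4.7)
The plan is to establish the cycle of implications $(1) \Rightarrow (3) \Rightarrow (4) \Rightarrow (2) \Rightarrow (1)$.

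For $(1) \Rightarrow (3)$, arcwise connectedness follows from Definition~\ref{definition-order}: given distinct $a,b\in V(G)$, the induced subgraph on the vertices of the unique simple path $a=v_0,v_1,\dots,v_n=b$ is exactly the path (no chords exist in a tree), hence is connected; removing any interior $v_i$ leaves $\{v_0,\dots,v_{i-1}\}$ and $\{v_{i+1},\dots,v_n\}$ with no edge between them, so this subgraph is an arc from $a$ to $b$. For hereditary unicoherence I use the observation that any connected partial subgraph $R$ of a tree (with $E(R)\subseteq E(G)$) contains the unique $G$-path between any two of its vertices: a walk in $R$, which exists by connectedness of $R$, is also a walk in $G$, and after compression to a simple path it must coincide with the unique $G$-path, forcing its edges into $E(R)$. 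Applying this to both $P$ and $Q$, for any $x,y\in V(P)\cap V(Q)$ the unique $G$-path lies entirely in $P\cap Q$, so $P\cap Q$ is connected (with the empty intersection vacuously connected).

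The implications $(3) \Rightarrow (4)$ and $(4) \Rightarrow (2)$ are immediate. For the first: $V(G)$ is finite and hence discrete, so each $\{v\}$ is an open connected neighborhood of $v$, making $G$ locally connected. For the second: hereditary unicoherence (already built into the definition of a dendrite) is precisely unicoherence with the covering hypothesis $V(P)\cup V(Q)=V(G)$ dropped.

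The substantive step is $(2) \Rightarrow (1)$, which I argue contrapositively. If $G$ is connected but not a tree then $G$ contains a cycle, so some edge $e=\langle a,b\rangle$ is not a bridge, i.e., $G\setminus\{e\}$ is still connected. Let $P$ be the partial subgraph on vertex set $\{a,b\}$ whose only non-degenerate edges are $\langle a,b\rangle$ and $\langle b,a\rangle$, and let $Q=(V(G),\,E(G)\setminus\{\langle a,b\rangle,\langle b,a\rangle\})$. Both $P$ and $Q$ are connected, $V(P)\cup V(Q)=V(G)$, and $V(P)\cap V(Q)=\{a,b\}$ while $E(P)\cap E(Q)$ contains no non-degenerate edge. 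Hence $P\cap Q$ consists of two isolated vertices, witnessing failure of unicoherence.

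I expect the main technical point to be the hereditary unicoherence direction in $(1)\Rightarrow(3)$: one must handle partial subgraphs $R$ where $E(R)$ may be a proper subset of the induced edge set and still force $R$ to carry the unique tree-path between any pair of its vertices. The remaining implications reduce quickly to standard observations: trivial local connectedness of finite graphs, the nesting of unicoherence inside hereditary unicoherence, and the existence of a non-bridge edge in any connected non-tree.
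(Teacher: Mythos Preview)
Your proof is correct. The paper uses a different cycle, $(1)\Rightarrow(2)\Rightarrow(3)\Rightarrow(4)\Rightarrow(1)$, dismissing the first three implications as following ``from definitions'' and closing with the one-line observation that a finite dendrite cannot contain a cycle. Your cycle $(1)\Rightarrow(3)\Rightarrow(4)\Rightarrow(2)\Rightarrow(1)$ is actually cleaner: the paper's step $(2)\Rightarrow(3)$ does not literally follow from the definitions, since unicoherence is the \emph{weaker} notion (it only tests pairs $P,Q$ covering $V(G)$), whereas a dendroid requires hereditary unicoherence. Your route avoids this issue entirely, and your explicit non-bridge-edge construction for $\neg(1)\Rightarrow\neg(2)$ makes precise exactly how a cycle destroys even plain unicoherence---which is really the content behind the paper's ``cannot contain a cycle'' remark. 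The careful treatment of partial subgraphs (where $E(R)$ need not be induced) in your $(1)\Rightarrow(3)$ is also a genuine detail the paper omits.
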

\begin{proof}
  The implications $(1)\implies(2)\implies(3)\implies(4)$ follow from definitions. To see $(4)\implies(1)$ note that a finite dendrite cannot contain a cycle, so it is a tree.
\end{proof}

The following observations use the fact that if a subset of a topological graph ${\bf G}$
is connected then its topological realization, viewed as a subspace of $|{\bf G}|$,
is connected. Moreover, if $\pi\colon {\bf G}\to |{\bf G}|$ is the quotient map and $A\subseteq |{\bf G}|$ is connected, then $\pi^{-1}(A)$ is connected.

\begin{observation}
If ${\bf G}$ is a hereditarily unicoherent topological graph and $E({\bf G})$ is transitive, then its topological realization $|{\bf G}|$ is a hereditarily unicoherent continuum.
\end{observation}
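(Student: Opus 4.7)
I would reduce the problem to the stated observations using the quotient map $\pi\colon {\bf G}\to |{\bf G}|$. First, because $E({\bf G})$ is reflexive, symmetric (built into the definition of a topological graph), and transitive (by hypothesis), it is an equivalence relation; it is closed by definition of a topological graph. Hence $\pi$ is a quotient map by a closed equivalence relation on a compact metrizable space, so $|{\bf G}|$ is itself compact and metrizable. Assuming (as is implicit in the setup, since otherwise $|{\bf G}|$ could not be a continuum) that ${\bf G}$ is connected as a topological graph, the first half of the observation preceding the proposition yields that $|{\bf G}|=\pi(V({\bf G}))$ is a connected topological space, so $|{\bf G}|$ is a continuum.

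To verify hereditary unicoherence of $|{\bf G}|$, I would pick arbitrary subcontinua $P',Q'\subseteq|{\bf G}|$ and set $P:=\pi^{-1}(P')$ and $Q:=\pi^{-1}(Q')$. These are closed in $V({\bf G})$ by continuity of $\pi$, and connected as subsets of $V({\bf G})$ by the second half of the same observation. I would then view $P$ and $Q$ as closed subgraphs of ${\bf G}$ equipped with the edges inherited from $E({\bf G})$, so they become nonempty closed connected topological graphs in the sense of the paper.

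Now I would apply hereditary unicoherence of ${\bf G}$ to $P$ and $Q$: the intersection $P\cap Q$, whose vertex set is $V(P)\cap V(Q)=\pi^{-1}(P')\cap\pi^{-1}(Q')=\pi^{-1}(P'\cap Q')$ and whose edge set is $E(P)\cap E(Q)=E({\bf G})\cap(V(P)\cap V(Q))^2$, is a connected topological graph. Applying the first half of the observation once more to the connected subset $\pi^{-1}(P'\cap Q')$ of $V({\bf G})$, its image
\[
\pi\bigl(\pi^{-1}(P'\cap Q')\bigr)=P'\cap Q'
\]
is connected in $|{\bf G}|$. Since $P'$ and $Q'$ were arbitrary subcontinua, $|{\bf G}|$ is hereditarily unicoherent.

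The proof is essentially bookkeeping, shuttling the three different notions of connectedness (connected subset of $V({\bf G})$, connected subgraph of ${\bf G}$, connected subspace of $|{\bf G}|$) through the quotient map. The only point that requires a moment of care is that an induced subgraph on a connected vertex set is itself a connected topological graph, so that hereditary unicoherence of ${\bf G}$ applies to the preimage subgraphs $P$ and $Q$; this is immediate from the definitions of \emph{connected subset} and \emph{connected topological graph} given in the paper.
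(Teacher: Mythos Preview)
Your proof is correct and follows essentially the same approach as the paper, which does not give a detailed argument but simply records the two facts you use (connected subsets of $V({\bf G})$ push forward to connected subsets of $|{\bf G}|$, and connected subsets of $|{\bf G}|$ pull back to connected subsets of $V({\bf G})$) and leaves the routine verification to the reader. Your write-up is a faithful expansion of that sketch.
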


The quotient map takes arcs to arcs, therefore we have the following observation (compare with Lemma \ref{images-of-arcs}). 
\begin{observation}\label{arcs}
If a topological graph ${\bf G}$ is an arc and $E({\bf G})$ is transitive, then its topological realization $|{\bf G}|$ is an arc or a point.
\end{observation}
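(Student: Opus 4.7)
My plan is to verify that $|{\bf G}|$ is a continuum (or a point) and then apply the classical characterization of arcs as nondegenerate metric continua with exactly two non-cut points.

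First, $|{\bf G}|$ is compact and metrizable because $E({\bf G})$ is a closed equivalence relation on the compact metric space $V({\bf G})$, and it is connected since ${\bf G}$ is connected (Definition~\ref{arc-def}) and continuous images of connected spaces are connected. Thus $|{\bf G}|$ is either a single point (in which case we are done) or a nondegenerate continuum; in the latter case I would show that $\alpha = \pi(a)$ and $\beta = \pi(b)$ are the two non-cut points, where $\pi\colon {\bf G}\to |{\bf G}|$ is the quotient map.

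For the main step, fix $p \in |{\bf G}| \setminus \{\alpha, \beta\}$, set $C = \pi^{-1}(p)$, and pick $x \in C$ (so $x \neq a, b$). Using the arc property one writes $V({\bf G}) \setminus \{x\} = A_x \sqcup B_x$ with $a \in A_x$, $b \in B_x$, both closed in $V({\bf G})\setminus\{x\}$ and with no edges between them. The key observation is that every $E$-equivalence class $[q]$ with $q \neq p$ is a connected subgraph of ${\bf G}$ (any two vertices of $[q]$ are joined by a finite chain of $E$-edges, which stays inside $[q]$) that does not contain $x$, so $[q]$ lies entirely in $A_x$ or entirely in $B_x$. Consequently $A_x \setminus C$ and $B_x \setminus C$ are $E$-saturated, nonempty (they contain $a$ and $b$ respectively), closed in $V({\bf G}) \setminus C$, and without edges between, so their $\pi$-images form a disconnection of $|{\bf G}| \setminus \{p\}$. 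A parallel argument, using that ${\bf G} \setminus \{a\}$ is connected (extracted from the arc structure), shows $|{\bf G}| \setminus \{\alpha\}$ is connected, and similarly for $\beta$.

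The main obstacle is justifying that an interior vertex $x$ actually separates $a$ from $b$ in $V({\bf G}) \setminus \{x\}$, rather than merely yielding some a priori unrelated disconnection. This is the essence of the ``arc'' property beyond its bare Definition~\ref{arc-def}, and I expect to handle it by constructing the implicit linear order on $V({\bf G})$ (with $a$ as minimum and $b$ as maximum, defined via $y \prec z$ iff $y$ lies in the component of ${\bf G} \setminus \{z\}$ containing $a$) and showing that removing any interior vertex disconnects the two ends of the order.
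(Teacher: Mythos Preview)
Your approach is correct and is essentially the same as the paper's: the paper treats this as an observation, justified only by the one-line remark ``the quotient map takes arcs to arcs'' together with a reference to Lemma~\ref{images-of-arcs}, whose proof follows exactly the pattern you describe (pull back a non-end point, use that its preimage is connected, and push the resulting disconnection forward). You have simply spelled out the argument in detail, and you correctly flag the one technical point---that removing an interior vertex separates the two end vertices---which the paper takes for granted both here and in the proof of Lemma~\ref{images-of-arcs}.
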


\begin{observation}
If a topological graph ${\bf G}$ is arcwise connected and $E({\bf G})$ is transitive, then its topological realization $|{\bf G}|$ is arcwise connected.
\end{observation}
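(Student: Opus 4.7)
The plan is to reduce the claim to Observation \ref{arcs} applied to an arc of ${\bf G}$ running between chosen preimages. Let $\pi\colon {\bf G}\to |{\bf G}|$ denote the quotient map. Given distinct points $x,y\in|{\bf G}|$, I would pick $a\in\pi^{-1}(x)$ and $b\in\pi^{-1}(y)$, and then invoke arcwise connectedness of ${\bf G}$ to produce an arc $A=[a,b]\subseteq {\bf G}$. The goal is to show that the image of $A$ in $|{\bf G}|$ is an arc containing $x$ and $y$, so that the sub-arc inside this arc joining $x$ and $y$ serves as the required arc in $|{\bf G}|$.

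Next I would view $A$ as a topological graph in its own right, with edge relation $E(A)=E({\bf G})\cap V(A)^2$. Transitivity of $E({\bf G})$ is inherited by $E(A)$, so Observation \ref{arcs} applies and $|A|$ is either an arc or a point. The natural map $|A|\to\pi(V(A))\subseteq|{\bf G}|$ sending $[v]_{E(A)}\mapsto\pi(v)$ is a well-defined continuous surjection; injectivity follows because, for $v,v'\in V(A)$, transitivity of $E({\bf G})$ makes $v\,E({\bf G})\,v'$ equivalent to $\langle v,v'\rangle\in E({\bf G})\cap V(A)^2 = E(A)$. Since $|A|$ is compact and $|{\bf G}|$ is Hausdorff (as $E({\bf G})$ is a closed equivalence relation), this bijection is a homeomorphism. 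Hence $\pi(V(A))\subseteq|{\bf G}|$ is itself an arc or a point which contains $x$ and $y$; since $x\ne y$, it is an arc, and within a continuum-theoretic arc any two points are joined by a sub-arc.

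The proof is short, and the only point requiring a little care is the identification of $|A|$ with $\pi(V(A))$: one must verify that no two distinct $E(A)$-classes get merged in $|{\bf G}|$ via an edge-path through $V({\bf G})\setminus V(A)$, which is precisely where the transitivity of $E({\bf G})$ is used. Once this is in place, the statement follows immediately from Observation \ref{arcs} together with the elementary fact that any two points of a continuum-theoretic arc are joined by a sub-arc.
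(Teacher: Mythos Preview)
Your proof is correct and follows exactly the approach the paper intends: the paper leaves this observation unproved, treating it as an immediate consequence of Observation \ref{arcs} (the quotient map takes arcs to arcs), and your argument spells this out with appropriate care about identifying $|A|$ with $\pi(V(A))$. One small simplification: since $a,b$ are the end vertices of the arc $A$, their images $x=\pi(a)$ and $y=\pi(b)$ are the endpoints of the resulting arc in $|{\bf G}|$ (cf.\ Lemma \ref{images-of-arcs}), so the sub-arc step is not actually needed.
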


\begin{observation}\label{realization-dendroid}
If a topological graph ${\bf G}$ is a dendroid and $E({\bf G})$ is transitive, then its topological realization $|{\bf G}|$ is a dendroid.
\end{observation}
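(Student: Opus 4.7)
The plan is to assemble the statement from the two immediately preceding observations together with an argument that $|{\bf G}|$ is a continuum in the classical sense. Since a dendroid (in either the continuum or the topological graph sense) is by definition arcwise connected and hereditarily unicoherent, I would first invoke the previous observation that arcwise connectedness of ${\bf G}$ (with $E({\bf G})$ transitive) yields arcwise connectedness of $|{\bf G}|$, and then the observation that hereditary unicoherence of ${\bf G}$ (with $E({\bf G})$ transitive) yields hereditary unicoherence of $|{\bf G}|$. These two together are the content of the definition of a dendroid as a continuum, provided we also check that $|{\bf G}|$ is a compact connected metric space.

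For the continuum part, I would argue as follows. The prespace ${\bf G}$ is compact and metrizable by the definition of a topological graph, and the quotient map $\pi\colon{\bf G}\to|{\bf G}|$ is continuous and surjective, so $|{\bf G}|$ is compact. Metrizability of $|{\bf G}|$ follows because $E({\bf G})$ is closed, hence the quotient by the closed equivalence relation $E({\bf G})$ is Hausdorff, and a compact Hausdorff second-countable space is metrizable; alternatively, one can use that $\pi$ takes arcs to arcs (as recorded in Observation \ref{arcs}) and that arcwise connectedness already implies connectedness. For connectedness itself, since ${\bf G}$ is arcwise connected, it is connected as a topological graph, and Proposition \ref{connected-image} together with the fact that the quotient map is an epimorphism of prespaces (or simply continuity of $\pi$) gives that $|{\bf G}|$ is connected.

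Having established that $|{\bf G}|$ is a compact connected metric space, the two prior observations directly yield that $|{\bf G}|$ is arcwise connected and hereditarily unicoherent, and therefore a dendroid in the continuum sense. The only mild subtlety, and the one place that could require care, is making sure the two earlier observations apply verbatim without any change of hypothesis, i.e.\ that being a dendroid as a topological graph does imply the hypotheses of both observations separately; this is immediate from Definition~\ref{def:dendrite}. So the proof is essentially a one-line concatenation of the two preceding observations, and I do not anticipate any real obstacle beyond packaging the compactness/metrizability comment cleanly.
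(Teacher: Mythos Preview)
Your proposal is correct and matches the paper's intended argument: the observation is placed immediately after the observations that hereditary unicoherence and arcwise connectedness pass to the topological realization, and is meant to be read as their direct combination. Your added remarks on why $|{\bf G}|$ is a compact connected metric space are extra care the paper leaves implicit.
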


\begin{observation}\label{top-dendrite}
If a topological graph ${\bf G}$ is a dendrite and $E({\bf G})$ is transitive, then its topological realization $|{\bf G}|$ is a dendrite.
\end{observation}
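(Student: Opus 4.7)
The plan is to invoke Observation \ref{realization-dendroid} to see that $|{\bf G}|$ is already a dendroid continuum, so only local connectedness remains to be established in order to conclude that $|{\bf G}|$ is a dendrite. Since $E({\bf G})$ is a closed equivalence relation on the compact metric space $V({\bf G})$, the quotient $|{\bf G}|$ is Hausdorff and the quotient map $\pi\colon{\bf G}\to|{\bf G}|$ is closed; these two facts will be used throughout.

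To check local connectedness at a point $p\in|{\bf G}|$ inside an open neighborhood $U$, the strategy is to exhibit an open, graph-connected, and saturated subset $C\subseteq{\bf G}$ with $\pi^{-1}(p)\subseteq C\subseteq\pi^{-1}(U)$. Once such a $C$ is found, $\pi(C)$ will be open (because $C$ is saturated), topologically connected (because, by the observations recorded just before the statement, the image under $\pi$ of a graph-connected set is connected in the realization), contain $p$, and sit inside $U$. To build $C$, first use local connectedness of ${\bf G}$ to pick, for each $x\in\pi^{-1}(p)$, a graph-connected open $W_x\subseteq\pi^{-1}(U)$ around $x$, and extract a finite subcover $W_0:=W_{x_1}\cup\cdots\cup W_{x_n}$ by compactness of $\pi^{-1}(p)$; the crucial point is that $W_0$ itself is graph-connected, because transitivity of $E({\bf G})$ makes $\pi^{-1}(p)$ a clique under $E$ and hence graph-connected, and each $W_{x_i}$ meets it. Next, pass to the open saturation $W^{*}:=|{\bf G}|\setminus\pi({\bf G}\setminus W_0)$, which is open because $\pi$ is closed, contains $p$, and satisfies $\pi^{-1}(W^{*})\subseteq W_0$. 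Finally, take $C$ to be the graph-component of $\pi^{-1}(W^{*})$ that contains $\pi^{-1}(p)$.

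The hard part will be verifying that this $C$ has all three desired properties simultaneously. Graph-connectedness is by construction; openness follows from the standard fact that in a locally connected topological graph the graph-components of open sets are open; and saturatedness uses transitivity of $E({\bf G})$ one more time, namely that any $y\in\pi^{-1}(W^{*})$ with $\pi(y)=\pi(x)$ for some $x\in C$ is joined to $x$ by an edge, so $C\cup\{y\}$ remains graph-connected inside $\pi^{-1}(W^{*})$ and therefore $y\in C$ by maximality. Balancing the open-saturation operation against the requirement of staying graph-connected while inside $\pi^{-1}(U)$ is the sole technical subtlety; the rest of the argument is a routine translation between the graph-theoretic and topological languages developed in this section, and once $C$ is in hand the conclusion that $|{\bf G}|$ is a dendrite follows at once.
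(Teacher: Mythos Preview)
Your argument is correct and shares the paper's core idea: cover the fiber $\pi^{-1}(p)$ by open graph-connected sets inside $\pi^{-1}(U)$, observe that their union is graph-connected because $\pi^{-1}(p)$ is (being an $E$-clique by transitivity), and push this down to a connected neighborhood of $p$ in $|{\bf G}|$.

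The only difference is one of execution. The paper stops once it has a connected \emph{neighborhood} $\pi({\bf V})$ of $p$ (not necessarily open) and then invokes the general fact (Nadler, Exercise~5.22) that in a compact metric space, having arbitrarily small connected neighborhoods at every point already yields local connectedness. You instead add two extra steps --- passing to the open saturation $W^{*}$ and then to the graph-component $C$ of $\pi^{-1}(W^{*})$ --- in order to manufacture a set that is simultaneously open, connected, and saturated, so that $\pi(C)$ is genuinely open and connected. This buys you a self-contained proof with no external citation; the paper's version is shorter but outsources that last inference. The finite subcover you extract is harmless but unnecessary: the full union over $\pi^{-1}(p)$ would work just as well, which is what the paper does.
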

\begin{proof}
We show that if ${\bf G}$ is locally connected, then so is $|{\bf G}|$.
   Let $p\in U\subseteq |{\bf G}|$, where $U$ is open, and let $\pi\colon {\bf G}\to |{\bf G}|$ be the quotient map. We claim  there is an open connected ${\bf V} \subseteq {\bf G}$ such that $\pi^{-1}(p)\subseteq {\bf V}\subseteq \pi^{-1}(U)$.  To obtain such a ${\bf V}$ simply take the union of open connected neighborhoods of vertices of $\pi^{-1}(p)$, contained in $\pi^{-1}(U)$. This union is connected as $\pi^{-1}(p) $ is connected. Then $\pi({\bf V})$ is a connected neighborhood of $p$. Indeed, it contains an open set  $U'=\{x\in |{\bf G}|\colon \pi^{-1}(x)\subseteq {\bf V}\}$ and $p\in U'$. 
   This implies local connectedness of $|{\bf G}|$ (see for example Nadler~\cite{Nadler}, Exercise 5.22).
  
\end{proof}

The following theorem provides a sufficient condition  for a projective Fra\"{\i}ss\'e family to have a transitive set of edges in the projective Fra\"{\i}ss\'e limit.

\begin{theorem}\label{transitive}
Suppose that $\mathcal G$ is a projective Fra\"{\i}ss\'e family of graphs and for every $G\in \mathcal G$, for every $a,b,c\in V(G)$ such that
$\langle a,b\rangle\in E(G)$ and $\langle b,c\rangle\in E(G)$ there is a graph $H$ and an epimorphism $f^H_G\colon  H\to G$ such that for all vertices $p,q,r\in V(H)$ such that $f^H_G(p)=a$, $f^H_G(q)=b$, and $f^H_G(r)=c$ we have $\langle p,q\rangle\notin E(H)$ or $\langle q,r\rangle\notin E(H)$. Then there are no distinct vertices $a,b,c$ in the projective Fra\"{\i}ss\'e limit $\mathbb G$ such that the edges $\langle a,b\rangle$ and $\langle b,c \rangle$ are in $\mathbb G$ hence $\mathcal G$ has a transitive set of edges. 
\end{theorem}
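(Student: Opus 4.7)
The plan is to work with a concrete inverse-limit representation of $\mathbb G$ via a Fra\"{\i}ss\'e sequence, and show that the hypothesized separating epimorphism $f^H_G$ can be factored through some stage of the sequence, producing a contradiction with any purported edge-path of length two on three distinct vertices of $\mathbb G$.

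More precisely, by Theorem \ref{fund-sequence} I would fix a Fra\"{\i}ss\'e sequence $\{F_n,\alpha_n\}$ for $\mathcal G$ whose inverse limit is $\mathbb G$. Suppose for contradiction that there exist distinct $a=(a_n), b=(b_n), c=(c_n)\in \mathbb G$ with $\langle a,b\rangle, \langle b,c\rangle\in E(\mathbb G)$; by definition of edges in the inverse limit, $\langle a_n,b_n\rangle$ and $\langle b_n,c_n\rangle$ lie in $E(F_n)$ for every $n$. Since $a,b,c$ are pairwise distinct in $\mathbb G$, there is $n_0$ such that $a_{n_0}, b_{n_0}, c_{n_0}$ are pairwise distinct in $F_{n_0}$.

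Now I would apply the hypothesis to $G:=F_{n_0}\in\mathcal G$ and the vertices $a_{n_0},b_{n_0},c_{n_0}$ to obtain a graph $H\in \mathcal G$ and an epimorphism $f^H_G\colon H\to F_{n_0}$ with the stated separation property on preimages. Then I would invoke condition (2) of Definition \ref{fund-seq-def} (taking $n=n_0$, $G=F_{n_0}$, $f=\id_{F_{n_0}}$, and $g=f^H_G$) to produce $m>n_0$ and an epimorphism $h\colon F_m\to H$ with
\begin{equation*}
f^H_G\circ h=\alpha^m_{n_0}.
\end{equation*}
Setting $p:=h(a_m)$, $q:=h(b_m)$, $r:=h(c_m)$, we get $f^H_G(p)=a_{n_0}$, $f^H_G(q)=b_{n_0}$, $f^H_G(r)=c_{n_0}$, so by the hypothesis either $\langle p,q\rangle\notin E(H)$ or $\langle q,r\rangle\notin E(H)$. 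However, $\langle a_m,b_m\rangle$ and $\langle b_m,c_m\rangle$ are edges of $F_m$, and since $h$ is an epimorphism it preserves edges; thus $\langle p,q\rangle$ and $\langle q,r\rangle$ both belong to $E(H)$, a contradiction.

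Finally, transitivity follows: given any $\langle a,b\rangle, \langle b,c\rangle\in E(\mathbb G)$, the impossibility just established forces two of $a,b,c$ to coincide, and in each such case $\langle a,c\rangle\in E(\mathbb G)$ by reflexivity or by the remaining edge, so $E(\mathbb G)$ is transitive. The main technical point, and the only real obstacle, is to line up the Fra\"{\i}ss\'e extension diagram correctly so that the hypothesized $H$ appears downstream of $F_{n_0}$ inside the sequence.
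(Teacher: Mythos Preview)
Your proof is correct and follows essentially the same approach as the paper's: project $a,b,c$ to a finite stage where they are distinct, invoke the hypothesis to obtain the separating $H$, and then lift through the projective extension property to reach a contradiction with edge preservation. The only cosmetic difference is that the paper phrases the lifting step via property~(2) of Theorem~\ref{limit} (the abstract universal property of $\mathbb G$) rather than via the Fra\"{\i}ss\'e sequence as you do, but these are equivalent formulations of the same move.
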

\begin{proof}
Let $\mathcal G$ be a family that satisfies the assumptions of the theorem.
Suppose on the contrary that there are three vertices $a,b,c\in \mathbb G$ such that $\langle a,b\rangle, \langle b,c\rangle\in E(\mathbb G)$. Let a graph $G\in\mathcal G$ and an epimorphism $f_G\colon \mathbb G\to G$ be such that $f_G(a), f_G(b), \text{  and }f_G(c)$ are three distinct vertices of $G$. Then $\langle f_G(a),f_G(b)\rangle, \langle f_G(b),f_G(c)\rangle\in E(G)$, and thus, by our assumption, there is a graph
$H$ and an epimorphism $f^H_G\colon  H\to G$ such that for an epimorphism $f_H\colon \mathbb G\to H$ satisfying $f_G=f^H_G\circ f_H$ we have  $\langle f_H(a),f_H(b)\rangle\notin E(H)$ or $\langle f_H(b),f_H(c)\rangle\notin E(H)$. This contradicts the fact that $f_H$ maps edges to edges.
\end{proof}

\begin{theorem}\label{limit-of-hu}
If $\mathcal T$ is a projective Fra\"{\i}ss\'e family of  trees, and $\mathbb T$ is a projective Fra\"{\i}ss\'e
limit of $\mathcal T$, then $\mathbb T$ is hereditarily unicoherent.
\end{theorem}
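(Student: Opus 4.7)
My plan is a proof by contradiction using the tree structure at each finite level. Write $\mathbb{T}=\iLim\{F_n,\alpha_n\}$ with each $F_n\in\mathcal T$, fix nonempty closed connected subgraphs $P,Q$ of $\mathbb{T}$, and put $R:=P\cap Q$, which we may assume is nonempty. Set $P_n:=\alpha_n^\infty(P)$, $Q_n:=\alpha_n^\infty(Q)$, and $T_n:=P_n\cap Q_n$ as subgraphs of the tree $F_n$ (with vertex and edge sets the componentwise images). By Proposition~\ref{connected-image} each $P_n$ and $Q_n$ is connected. The crucial tree-theoretic fact I will use is that a connected subgraph of a finite tree is \emph{geodesically convex}: for any two of its vertices, the unique $F_n$-path between them lies entirely in the subgraph, because the walk provided by the subgraph's edges must, in a tree, retrace that unique path. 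Consequently $T_n$ contains the $F_n$-path between any two of its vertices and is therefore connected.

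The main obstacle is that the restricted bonding maps $T_{n+1}\to T_n$ need not be surjective: a vertex of $T_n$ may have preimages in $V(P_{n+1})$ and in $V(Q_{n+1})$ without a common preimage in $V(T_{n+1})$. I would repair this with a Mittag-Leffler stabilization, defining $T_n^*:=\bigcap_{m\geq n}\alpha_n^m(T_m)$ simultaneously on vertex sets and edge sets. Since $F_n$ is finite the decreasing intersection stabilizes, so $T_n^*=\alpha_n^{m(n)}(T_{m(n)})$ for some $m(n)$; thus each $T_n^*$ is itself a connected subgraph of $F_n$ (as the epimorphic image of the connected $T_{m(n)}$), and the induced bonding maps $T_{n+1}^*\to T_n^*$ are now surjective on both vertices and edges. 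A routine compactness argument in the spirit of Lemma~\ref{ladder} then identifies $R=\iLim T_n^*$ as topological graphs: a sequence $(x_n)$ with $x_n\in V(T_n)$ for all $n$ automatically satisfies $x_n\in\alpha_n^m(V(T_m))$ for every $m\geq n$ and hence $x_n\in V(T_n^*)$, and analogously for edges.

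To finish, suppose $R$ is disconnected, so $V(R)=A\sqcup B$ with nonempty disjoint closed $A,B$ admitting no edge of $E(R)=E(P)\cap E(Q)$ between them. Writing $A_n=\alpha_n^\infty(A)$, $B_n=\alpha_n^\infty(B)$, surjectivity gives $V(T_n^*)=A_n\cup B_n$, and for $n$ large enough $A_n\cap B_n=\emptyset$ (otherwise an inverse-limit argument would yield a common point of $A\cap B$). Connectedness of $T_n^*$ then forces an edge $\langle x_n,y_n\rangle\in E(T_n^*)$ with $x_n\in A_n$ and $y_n\in B_n$. Edge-surjectivity of the bonding maps, together with the observation that $A_{k+1}=\alpha_k^{-1}(A_k)\cap V(T_{k+1}^*)$ once $A_k\cap B_k=\emptyset$, lets me lift the edge inductively upward to a coherent sequence $\langle x_k,y_k\rangle\in E(T_k^*)$ with $x_k\in A_k$, $y_k\in B_k$ at every level, and project it downward similarly. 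The resulting element $(x,y)\in\iLim E(T_n^*)=E(R)$ is an edge with $x\in A$ and $y\in B$ (since $A$ is closed, $A=\iLim A_n$, and likewise for $B$), contradicting the assumed separation. The delicate point of the whole argument is the Mittag-Leffler step, which must be performed simultaneously on vertices and edges so that each $T_n^*$ remains a genuine topological graph for which the inverse limit in $\mathcal T^\omega$ coincides with $R$.
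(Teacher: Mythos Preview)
Your proof is correct and takes a genuinely different route from the paper's. The paper argues by contradiction at a \emph{single} finite level: it chooses clopen sets $A,B$ separating the two pieces of $P\cap Q$, then picks one $\varepsilon$-epimorphism $f_G\colon\mathbb T\to G$ fine enough that $f_G(A)$ and $f_G(B)$ remain disjoint and non-adjacent, and checks directly that $f_G(P)\cap f_G(Q)$ is a disconnected subset of the tree $G$, contradicting that finite trees are hereditarily unicoherent. Your approach instead works with the whole inverse system: you observe that a connected subgraph of a finite tree is geodesically convex, so each $T_n=P_n\cap Q_n$ is already connected; you then stabilize via Mittag--Leffler to force the bonding maps to be surjective on vertices \emph{and} edges, identify $R$ with $\iLim T_n^*$, and lift a crossing edge back to the limit to contradict the assumed separation. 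The paper's argument is shorter and leans on the metric $\varepsilon$-epimorphism machinery of Theorem~\ref{limit}; yours is purely combinatorial and makes the role of the tree structure (via geodesic convexity) explicit, at the cost of the Mittag--Leffler bookkeeping and the compactness identifications $V(R)=\iLim V(T_n^*)$, $E(R)=\iLim E(T_n^*)$, $A=\iLim A_n$. Both arguments ultimately rest on the same fact---that the intersection of two connected subtrees of a finite tree is connected---but you isolate it as a lemma while the paper only invokes it at the final line.
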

\begin{proof}

Suppose $\mathbb T$ is not hereditarily unicoherent and fix a metric $d$ on $\mathbb T$.  Then there exist closed connected subsets $P$ and $Q$ of $V(\mathbb T)$ such that $P \cap Q$ is not connected. Let $p \in P\backslash Q$ and $q \in Q\backslash P$. Clearly there are closed disjoint sets $A'$ and $B'$ in $\mathbb T$ such that they are not adjacent, $p,q\notin A'\cup B'$, $P\cap Q\subseteq A'\cup B'$, $A'\cap (P\cap Q)\neq\emptyset$, and $B'\cap (P\cap Q)\neq\emptyset$.  
Since $E(\mathbb{T})$ is closed and $\mathbb T$ is 0-dimensional, there exist clopen neighborhoods  $A$ of $A'$ and $B$ of $B'$, which satisfy the same properties as $A'$ and $B'$, and additionally $d(P\setminus (A\cup B), Q\setminus (A\cup B))>0$.

We show that there is $\varepsilon_1>0$ such that for any $\varepsilon_1$-epimorphism, $f\colon  \mathbb T \to F$, where $F\in\mathcal{T}$,    $f(A)$ and $f(B)$ disjoint and non-adjacent. Towards the contradiction, assume that for all $\varepsilon >0$ and for all $\varepsilon$-epimorphisms  $f_{\varepsilon}\colon \mathbb T \to F_{\varepsilon}$, where $F_{\varepsilon}\in\mathcal{T}$, $f_{\varepsilon}(A)$ and $f_{\varepsilon}(B)$ are adjacent. Let $\varepsilon_n \to 0$ and $f_n\colon \mathbb T \to F_n$ be an $\varepsilon_n$-epimorphism such that $\langle a_n,b_n\rangle \in E(F_n)$, $a_n\in f_n(A)$, and $b_n\in f_n(B)$. Since $f_n$ is an epimorphism there exists $a_n' \in f_n^{-1}(a_n)$ and $b_n' \in f_n^{-1}(b_n)$ such that $\langle a_n', b_n'\rangle \in E(\mathbb T)$. By compactness of $\mathbb T$ we may assume that $a_n' \to a$ and $b_n' \to b$, for some $a,b\in \mathbb T$. Since $E(\mathbb T)$ is closed, $\langle a, b\rangle \in E(\mathbb T)$. Because $f_n$ is an $\varepsilon_n$-epimophism we have that $d(a_n', A) < \varepsilon_n$ and $d(b_n', B) < \varepsilon_n$. So $a \in A$ and $b \in B$, which gives a contradiction. We fix such an $\varepsilon_1>0$.

Let $\varepsilon_1>\varepsilon>0$ be such that $d(A,B)>\varepsilon$ and $d(P\setminus (A\cup B), Q\setminus (A\cup B))>\varepsilon$.
By Condition \ref{refinement} of Theorem \ref{limit}, there exists a tree $G$ and an $\varepsilon$-epimophism, $f_G\colon  \mathbb T \to G $, such that $f_G(p) \not \in f_G(Q)$ and  $f_G(q)\not \in f_G(P)$.
In particular, $f_\epsilon(P) \not \subseteq f_\epsilon (Q)$ and $f_\varepsilon (Q) \not \subseteq f_\varepsilon(P)$.
The sets $f_G(P)$ and $f_G(Q)$ are clearly closed and connected. Moreover, 
$f_G(P) \cap f_G(Q)\subseteq f_G(A)\cup f_G(B)$. 
Indeed, if  $x\in P$ and $y\in Q$ are such that $f_G(x)=f_G(y)\notin f_G(A)\cup f_G(B)$, then $x\in P\setminus (A\cup B)$, $y\in Q\setminus (A\cup B)$, and hence $d(x, y)>\varepsilon$, which is impossible as $f_G$ is an $\varepsilon$-epimophism.
Let $H=f_G(A)\cap (f_G(P) \cap f_G(Q))$ and $K=f_G(B)\cap (f_G(P) \cap f_G(Q))$. Note that $H\supseteq f_G(A\cap P\cap Q)\neq\emptyset$ and $K\neq\emptyset$.  Then $H$ and $K$ are disjoint closed non-adjacent sets and $H \cup K = f_G(P) \cap f_G(Q)$ so $f_G(P) \cap f_G(Q)$ is not connected. This contradicts the fact that $G$ is a tree.

\end{proof}

\section{Monotone epimorphisms}

In this section we investigate a family of  trees with monotone epimorphisms. We show that this family, which we denote by $\mathcal T_{\mathcal M3}$, is a projective Fra\"{\i}ss\'e family  (Theorem~\ref{amalgamation-monotone}) and that the topological realization of $\mathcal T_{\mathcal M3}$ is homeomorphic to $D_3$, the standard universal denrite of order 3, also known as the Wa\. zewski denrite of order 3  (Theorem~\ref{D3}). 
Injective Fra\"{\i}ss\'e constructions of $D_3$ and other Ważewski dendrites were explored by Kwiatkowska \cite{K-D3} and Duchesne \cite{Du}. These constructions are very different from the one we are about to present.

First, we want to show an example that the family of all epimorphisms between trees, even trees with all vertices having order less than or equal to three, do not have amalgams that are connected graphs.
\begin{example}

There exist a triod $T$, arcs $I$ and $J$, epimorphisms $f\colon I\to T$ and $g\colon J\to T$, for which there is no connected graph $G$ and epimorphisms $f_0$ and $g_0$ such that the diagram below commutes.

\begin{equation}\tag{D2}
\begin{tikzcd}
&I\arrow{ld}[swap]{f}\\
T&&G\arrow[lu,swap,dotted,"f_0"] \arrow[ld,dotted,"g_0"]\\
&J\arrow[lu,"g"]
\end{tikzcd}
\end{equation}

The triod $T$ has the center $b$ and the end vertices $a$, $c$, and $d$; the arc $I$ has vertices $p_1,p_2,p_3,p_4$, and $p_5$, similarly
the arc  $J$ has vertices $q_1,q_2,q_3,q_4$, and $q_5$. The epimorphisms $f$ and $g$ are pictured below. Precisely, we have $f(p_1)=d$,
$f(p_2)=b$, $f(p_3)=c$, $f(p_4)=b$, and $f(p_5)=a$; similarly,  $g(q_1)=d$,
$g(q_2)=b$, $g(q_3)=a$, $g(q_4)=b$, and $g(q_5)=c$.

\begin{center}
\begin{tikzpicture}[scale=0.75]

  \draw (3,0) -- (3,3);
   \draw (3.7,-0.7) -- (3.7,2.3) -- (6,2.3);
   \draw (3.7,3.7) -- (6,3.7) -- (0,3.7);
  \draw (0,3) -- (3,3);
  \draw (6,3) -- (3,3);
\draw (6,2.3) arc (-90:90:0.7);

  \node at (3,-0.3) {$d$};

  \draw (6,3) -- (3,3);
  \node at (0,2.7) {$a$};
\node at (2.7,2.7) {$b$};
\node at (6,2.7) {$c$};
\node at (4,-0.7) {$p_1$};
\node at (4,2) {$p_2$};
\node at (7,3) {$p_3$};
\node at (3,4) {$p_4$};
\node at (0,4) {$p_5$};
\draw (3,3) circle (0.03);
\draw (3,3) circle (0.015);

\draw (3,0) circle (0.03);
\draw (3,0) circle (0.015);

\draw (0,3) circle (0.03);
\draw (0,3) circle (0.015);

\draw (6,3) circle (0.03);
\draw (6,3) circle (0.015);

\draw (3,3.7) circle (0.03);
\draw (3,3.7) circle (0.015);

\draw (0,3.7) circle (0.03);
\draw (0,3.7) circle (0.015);

\draw (6.7,3) circle (0.03);
\draw (6.7,3) circle (0.015);

\draw (3.7,2.3) circle (0.03);
\draw (3.7,2.3) circle (0.015);

\draw (3.7,-0.7) circle (0.03);
\draw (3.7,-0.7) circle (0.015);

  \draw (12,0) -- (12,3);
    \draw (9,3) -- (12,3);
  \draw (15,3) -- (12,3);
    \draw (9,3.7) -- (15,3.7);

    \node at (12,-0.3) {$d$};

  \node at (9,2.7) {$a$};
\node at (12.3,2.7) {$b$};
\node at (15,2.7) {$c$};

\draw (9,3.7) arc (90:270:0.7);

  \draw (9,2.3) -- (11.3,2.3) -- (11.3,-0.7);

  \node at (11,-0.7) {$q_1$};
\node at (11,2) {$q_2$};
\node at (8,3) {$q_3$};
\node at (12,4) {$q_4$};
\node at (15,4) {$q_5$};

\draw (12,3) circle (0.03);
\draw (12,3) circle (0.015);

\draw (12,0) circle (0.03);
\draw (12,0) circle (0.015);

\draw (9,3) circle (0.03);
\draw (9,3) circle (0.015);

\draw (15,3) circle (0.03);
\draw (15,3) circle (0.015);

\draw (15,3.7) circle (0.03);
\draw (15,3.7) circle (0.015);

\draw (12,3.7) circle (0.03);
\draw (12,3.7) circle (0.015);

\draw (8.3,3) circle (0.03);
\draw (8.3,3) circle (0.015);

\draw (11.3,2.3) circle (0.03);
\draw (11.3,2.3) circle (0.015);

\draw (11.3,-0.7) circle (0.03);
\draw (11.3,-0.7) circle (0.015);

\node at (3,-2) {$f\colon I\to T$};
\node at (12,-2) {$g\colon J\to T$};

\end{tikzpicture}
\end{center}

Suppose that there is a connected graph $G$ and epimorphisms $f_0\colon G\to I$ and $g_0\colon G\to J$ such that the diagram (D2) commutes. Let $x_0\in (f_0)^{-1}(p_1)$ and let
$x_0,x_1,\dots x_n$ be a sequence of vertices of $G$ such that:
\begin{enumerate}
    \item for each $i \in \{0,1,\dots, n-1\}$ we have $\langle x_i,x_{i+1}\rangle\in E(G)$;
    \item $x_0,x_1,\dots, x_{n-1}\in (f\circ f_0)^{-1}(\{b,d\})$;
    \item $x_{n}\notin (f\circ f_0)^{-1}(\{b,d\})$.
\end{enumerate}
Then we have
\begin{enumerate}
    \item $f_0(x_0)=p_1$,
    \item $f_0(x_0), f_0(x_1), \dots, f_0(x_{n-1})\in \{p_1,p_2\}$;
    \item $f_0(x_n)=p_3$;
    \item $g_0(x_0)=q_1$,
    \item $g_0(x_0), g_0(x_1), \dots, g_0(x_{n-1})\in \{q_1,q_2\}$;
    \item $g_0(x_n)=q_3$.
    \end{enumerate}
By conditions (3) and (6) we have that $f(f_0(x_n))=c$, while $g(g_0(x_n))=a$, so the diagram (D2) does not commute, a contradiction.

\end{example}

\begin{definition}
Given two topological graphs $G$ and $H$ an epimorphism $f\colon G\to H$  is called {\it monotone} if the preimage of a closed connected
subset of $H$ is a connected subset of $G$. 
\end{definition}

The following is an immediate consequence of the definition.
\begin{remark}\label{disjointimage}
 If $f\colon G\to H $ is a monotone epimorphism between topological graphs, $U,V\subseteq G$ are connected and disjoint, then $f(U)$ and $f(V)$ are disjoint.    
\end{remark}

\begin{lemma}
    Let $G$ and $H$ be topological graphs and let $f\colon G\to H$ be an epimorphism. If the preimage of every
vertex in $H$ is connected, then $f$ is monotone.
\end{lemma}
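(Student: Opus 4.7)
The plan is to argue by contrapositive. Fix a closed connected subset $K$ of $H$ and suppose, toward a contradiction, that $f^{-1}(K)$ is disconnected. Then there exist nonempty closed sets $P, Q \subseteq f^{-1}(K)$ with $P \cap Q = \emptyset$, $P \cup Q = f^{-1}(K)$, and no edges of $G$ between $P$ and $Q$. Since $K$ is closed in $H$ and $f$ is continuous, $f^{-1}(K)$ is closed in $G$, hence $P$ and $Q$ are themselves closed (therefore compact) subsets of $V(G)$. I will build a matching disconnection $\{K_P, K_Q\}$ of $K$ in $H$, contradicting the hypothesis that $K$ is connected.

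The key observation is that for each $y \in K$, the preimage $f^{-1}(y)$ is, by hypothesis, a nonempty connected subset of $P \cup Q$. If it met both $P$ and $Q$, then $(f^{-1}(y) \cap P) \cup (f^{-1}(y) \cap Q)$ would exhibit $f^{-1}(y)$ as disconnected, since $P$ and $Q$ are closed, disjoint, and non-adjacent in $G$. So $f^{-1}(y)$ lies entirely in $P$ or entirely in $Q$. Define $K_P = f(P)$ and $K_Q = f(Q)$. Surjectivity of $f$ on vertices together with the above dichotomy gives $K = K_P \cup K_Q$, while the fact that each fibre $f^{-1}(y)$ is nonempty and goes to only one side forces $K_P \cap K_Q = \emptyset$. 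Both $K_P$ and $K_Q$ are closed in $H$ because they are continuous images of the compact sets $P$ and $Q$.

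It remains to check that no edge of $H$ joins $K_P$ to $K_Q$. Suppose $\langle y_1, y_2\rangle \in E(H)$ with $y_1 \in K_P$ and $y_2 \in K_Q$. Here is where I use that $f$ is an epimorphism (surjective on edges, not merely on vertices): there exist $x_1, x_2 \in V(G)$ with $f(x_1)=y_1$, $f(x_2)=y_2$, and $\langle x_1, x_2\rangle \in E(G)$. Then $x_1 \in f^{-1}(y_1) \subseteq P$ and $x_2 \in f^{-1}(y_2) \subseteq Q$, producing an edge of $G$ between $P$ and $Q$, contrary to our choice of $P$ and $Q$. Thus $\{K_P, K_Q\}$ disconnects $K$, the desired contradiction.

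The only non-routine step is step two, the dichotomy $f^{-1}(y) \subseteq P$ or $f^{-1}(y) \subseteq Q$; everything else is bookkeeping in the definitions. The main subtlety worth flagging is that surjectivity of $f$ on edges (rather than just on vertices) is what makes the non-adjacency of $K_P$ and $K_Q$ follow from that of $P$ and $Q$; if $f$ were merely surjective on vertices the argument would fail at that last step.
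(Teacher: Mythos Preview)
Your proof is correct and follows essentially the same route as the paper's: assume $f^{-1}(K)$ splits as $P\cup Q$, use connectedness of each fibre $f^{-1}(y)$ to see it lies entirely in $P$ or entirely in $Q$, and then push forward to disconnect $K$ via $f(P)$ and $f(Q)$. Your write-up is in fact more thorough: the paper's proof checks only that $f(P)$ and $f(Q)$ are disjoint, nonempty, closed, and cover $K$, and does not explicitly verify the non-adjacency condition required by the paper's definition of ``disconnected''; you supply that missing step using edge-surjectivity of $f$.
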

\begin{proof}
    Suppose towards the contradiction that the preimage of a closed connected subset $C$ of $V(H)$ is not connected.
    Let $D,E\subseteq G$ be disjoint nonempty closed sets such that $f^{-1}(C)=D\cup E$. By the assumption, note that for every $c\in C$, we have $f^{-1}(c)\subseteq D$ or $f^{-1}(c)\subseteq E$. Therefore $f(D)$ and $f(E)$ are disjoint. Since they are closed  nonempty, and $C=f(D)\cup f(E)$, we get a contradiction with connectedness of $C$.
\end{proof}

\begin{lemma}\label{mon-comp}
Consider the epimorphisms $f\colon F\to G$ and $g\colon G\to H$ between topological graphs. 
\begin{itemize}
\item[(a)] If $f$ and $g$ are monotone, then so is $g\circ f$.
\item[(b)] If the composition $g\circ f\colon F\to H$ is monotone, then $g$ is monotone.
\end{itemize}
\end{lemma}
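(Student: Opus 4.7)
The plan for part (a) is a direct chase through the definitions. Given a closed connected subset $C$ of $V(H)$, monotonicity of $g$ yields that $g^{-1}(C)$ is connected, and continuity of $g$ yields that $g^{-1}(C)$ is closed in $V(G)$. Then applying monotonicity of $f$ to the closed connected set $g^{-1}(C)$ gives that $f^{-1}(g^{-1}(C)) = (g\circ f)^{-1}(C)$ is connected. So (a) should take essentially one line.

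For part (b), I would argue by contrapositive. Fix a closed connected $C \subseteq V(H)$ and suppose for contradiction that $g^{-1}(C)$ is disconnected, so $g^{-1}(C) = A\cup B$ with $A, B$ closed, disjoint, nonempty subsets of $V(G)$ with no edge of $G$ joining $A$ to $B$. The plan is to pull this separation back along $f$: set $A' = f^{-1}(A)$ and $B' = f^{-1}(B)$. Both are contained in $f^{-1}(g^{-1}(C)) = (g\circ f)^{-1}(C)$ and together cover it; they are closed (continuity of $f$), disjoint (since $A, B$ are disjoint), and nonempty (surjectivity of $f$ on vertices). The crucial point is that no edge of $F$ joins $A'$ to $B'$: if $\langle a',b'\rangle\in E(F)$ with $a'\in A', b'\in B'$, then $\langle f(a'),f(b')\rangle\in E(G)$ joins $A$ to $B$, contradicting the choice of $A,B$. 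Hence $(g\circ f)^{-1}(C)$ is disconnected, contradicting monotonicity of $g\circ f$, and so $g^{-1}(C)$ is connected.

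No serious obstacle appears; the argument is formal. The only thing to keep track of is that nonemptiness of $A'$ and $B'$ uses that $f$ is an epimorphism (surjective on vertices), not just a homomorphism. An alternative route for (b) would be to observe that $f$ restricts to an epimorphism from the subgraph on $f^{-1}(g^{-1}(C))$ onto the subgraph on $g^{-1}(C)$ (surjectivity on edges coming from $f$ being an epimorphism on the full graphs) and then invoke Proposition \ref{connected-image}; this is equivalent but slightly heavier than the separation argument above.
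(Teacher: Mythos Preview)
Your proof is correct. Part (a) matches the paper (which simply says ``clear''). For part (b), however, the paper takes a different route: rather than pulling back a separation, it first invokes the preceding lemma (that monotonicity follows once preimages of single vertices are connected), then observes for $c\in V(H)$ that $g^{-1}(c)=f\bigl((g\circ f)^{-1}(c)\bigr)$, and concludes by Proposition~\ref{connected-image} since $(g\circ f)^{-1}(c)$ is connected and epimorphic images of connected sets are connected. Your contrapositive-with-pullback argument is more self-contained---it does not need the reduction to points or Proposition~\ref{connected-image}---while the paper's argument is shorter once those tools are in hand and highlights the identity $g^{-1}(c)=f(h^{-1}(c))$ as the conceptual reason. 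The alternative you sketch at the end (restricting $f$ and citing Proposition~\ref{connected-image}) is essentially the paper's approach, modulo the reduction to single vertices.
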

\begin{proof}
Part (a) is clear. For part (b), write $h=g\circ f$ and let $c\in V(H)$. Then $g^{-1}(c)=f(h^{-1}(c))$. By the assumption, $h^{-1}(c)$ is connected. Since images by epimorphisms of connected sets are connected, $g^{-1}(c)$ is connected.
\end{proof}

The following example shows that the concept of monotone epimorphism for graphs is not exactly how continuum theory sees monotone maps between continua.

\begin{example}
 There is a monotone epimorphism from a cyclic graph onto an arc.

 Let $G$ be a complete graph with three vertices $a,b$, and $c$, i.e. a graph where
 $E(G)=V(G)^2$, and let $H$ be an arc with two vertices $p$ and $q$, and $E(H)=V(H)^2$. Define $f\colon G\to H$ by $f(a)=p$ and $f(b)=f(c)=q$. The reader can
 verify that $f$ is a monotone epimorphism.
\end{example}

The following result, which is well known in continuum theory, also holds in the setting of topological graphs and is used in Theorem~\ref{D3}.

\begin{lemma}\label{images-of-arcs}
If $f\colon G\to H$ is a monotone epimorphism between topological graphs and $G$ is an arc, then $H$ is an arc and the images of end vertices of $G$ are end vertices of  $H$.
\end{lemma}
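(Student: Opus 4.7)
My plan is to follow the classical continuum-theoretic argument, adapted to the topological-graph setting. Since $G$ is connected and $f$ is an epimorphism, Proposition~\ref{connected-image} immediately gives that $H$ is connected, so it remains to show that for every $y \in V(H) \setminus \{f(a), f(b)\}$, the graph $H \setminus \{y\}$ is disconnected; this will establish that $f(a), f(b)$ are end vertices of $H$.

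Fix such a $y$ and set $K = f^{-1}(y)$. Because $f$ is monotone, $K$ is a closed connected subset of $V(G)$, and since $y \notin \{f(a), f(b)\}$ we have $\{a,b\} \cap K = \emptyset$. The central step is to produce a partition $V(G) \setminus K = P \cup Q$ into nonempty, disjoint, non-adjacent, closed subsets with $a \in P$ and $b \in Q$. I would first pick some $x \in K$: by the arc property $V(G) \setminus \{x\} = P_x \cup Q_x$, with $P_x, Q_x$ nonempty, disjoint, closed, and non-adjacent. One then needs to argue that $a$ and $b$ must lie in different pieces (say $a \in P_x$, $b \in Q_x$)—for otherwise $Q_x$ would be a nonempty closed subset attached to $G$ only through $x$, and removing any vertex $z \in Q_x$ would fail to separate $a$ from $b$, contradicting the arc property applied to $z$. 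Having separated $a$ from $b$ by a single vertex in $K$, I would then enlarge the cut to $K$ by setting $P = P_x \setminus K$ and $Q = Q_x \setminus K$, using the fact that $K$ is connected and meets $\{x\}$ to verify that $P$ and $Q$ remain non-adjacent (any edge from $P$ to $Q$ would either already contradict non-adjacency of $P_x, Q_x$ or force $K$ to meet both sides of the cut, contradicting its connectedness).

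With such a partition in hand, the rest is routine. Let $P' = f(P)$ and $Q' = f(Q)$, which are closed subsets of $V(H)$ whose union with $\{y\}$ is $V(H)$. For disjointness, note that for any $p \in P$ the fiber $f^{-1}(f(p))$ is connected by monotonicity, meets $P$, and is disjoint from $K$ (since $f(p) \ne y$); hence it lies in $V(G) \setminus K = P \cup Q$, and by connectedness together with the non-adjacency of $P$ and $Q$ it is contained in $P$, so $f(p) \notin f(Q)$. For non-adjacency, any edge $\langle p', q' \rangle \in E(H)$ with $p' \in P'$ and $q' \in Q'$ lifts to an edge $\langle p, q \rangle \in E(G)$ with $f(p) = p'$ and $f(q) = q'$; since $p', q' \ne y$, neither $p$ nor $q$ lies in $K$, forcing $p \in P$, $q \in Q$ and contradicting the non-adjacency of $P$ and $Q$. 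Since $f(a) \in P'$ and $f(b) \in Q'$, both pieces are nonempty, so $H \setminus \{y\} = P' \cup Q'$ exhibits a disconnection. The same lifting argument, applied to $f(a)$ and $f(b)$, also shows $f(a) \ne f(b)$, completing the verification that $H$ is an arc with end vertices $f(a)$ and $f(b)$.

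The main obstacle is the structural step of partitioning $V(G) \setminus K$: the arc definition directly guarantees disconnection only upon removal of a single vertex, so extending this to a connected closed cut set $K$ and simultaneously placing $a, b$ on opposite sides needs care. The argument sketched above reduces this to a separate structural fact about arcs—that every non-endpoint vertex separates the two end vertices—and then uses connectedness of $K$ to glue. If more friction arises, one can exploit compactness and $0$-dimensionality of $V(G)$ to cover $K$ by a descending sequence of clopen neighborhoods and pass to a limit of single-vertex disconnections.
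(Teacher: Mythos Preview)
Your proposal is correct and follows the same route as the paper: disconnect $G \setminus f^{-1}(y)$ and push the disconnection forward to $H \setminus \{y\}$. The paper's version is much terser---it asserts the first step as immediate from the arc property and handles the second via Remark~\ref{disjointimage} (disjoint connected sets have disjoint images under a monotone map)---but the underlying argument is the same, and your careful treatment of the structural step (single-vertex cut enlarged to $K = f^{-1}(y)$) simply fills in what the paper leaves implicit.
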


\begin{proof}
Denote the end vertices of $G$ by $a$ and $b$. We need to show that every vertex in $H\setminus \{f(a),f(b)\}$ disconnects $H$. Let $y\in H\setminus \{f(a),f(b)\}$; then, since
$G$ is an arc the graph $G\setminus f^{-1}(y)$ is disconnected. Let $G\setminus f^{-1}(y)$ be the union of two disjoint graphs $G\setminus f^{-1}(y)=U\cup V$. Thus $H\setminus \{y\}=f(U)\cup f(V)$ and $f(U)\cap f(V)=\emptyset$ by Remark \ref{disjointimage}, so $H\setminus \{y\}$ is disconnected as needed.
\end{proof}

It is well known in continuum theory that the inverse limit of arcs with monotone maps is an arc. It is easy to see that the same result holds for graphs.
\begin{lemma}
 The inverse limit of finite arcs with monotone bonding epimorphisms is an arc.   
\end{lemma}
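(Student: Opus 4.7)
Let $\{G_n, \alpha_n\}$ be an inverse sequence of finite arcs with monotone bonding epimorphisms, let $a_n, b_n$ denote the end vertices of $G_n$, and set ${\bf G} = \iLim\{G_n, \alpha_n\}$. By Lemma \ref{images-of-arcs} each $\alpha_n$ carries end vertices to end vertices, so after relabeling we may assume $\alpha_n(a_{n+1})=a_n$ and $\alpha_n(b_{n+1})=b_n$, producing candidate end vertices $a=(a_n)$ and $b=(b_n)$ of ${\bf G}$. The plan is then to show (i) ${\bf G}$ is connected and (ii) for every $x\in V({\bf G})\setminus\{a,b\}$ the graph ${\bf G}\setminus\{x\}$ is disconnected.

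For (i), assume $V({\bf G})=A\sqcup B$ with $A,B$ nonempty, closed, and non-adjacent. Using the bound $\diam((\alpha_n^\infty)^{-1}(z))\le 2^{-(n-1)}$, pick $n$ with this diameter smaller than $d(A,B)>0$. Then $\alpha_n^\infty(A)$ and $\alpha_n^\infty(B)$ partition $V(G_n)$ into two disjoint nonempty closed sets, and by connectedness of $G_n$ an edge joins them. Because each $\alpha_k$ is surjective on edges, this edge lifts through the tower to an edge $\langle\tilde p,\tilde q\rangle\in E({\bf G})$; the diameter bound then forces $\tilde p\in A$ and $\tilde q\in B$, contradicting non-adjacency.

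For (ii), fix $x=(x_n)\neq a,b$, choose $n_0$ so that $x_n\neq a_n,b_n$ for all $n\ge n_0$, and for such $n$ let $U_n,V_n$ be the components of $G_n\setminus\{x_n\}$ containing $a_n$ and $b_n$. Monotonicity of $\alpha_n$ makes $\alpha_n^{-1}(U_n)$, $\alpha_n^{-1}(V_n)$, $\alpha_n^{-1}(x_n)$ pairwise disjoint connected sets partitioning $V(G_{n+1})$; since $a_{n+1}\in \alpha_n^{-1}(U_n)$ and $x_{n+1}\in \alpha_n^{-1}(x_n)$, we obtain $\alpha_n^{-1}(U_n)\subseteq U_{n+1}$, and symmetrically $\alpha_n^{-1}(V_n)\subseteq V_{n+1}$; iterating gives $\alpha_n^m(U_m)\subseteq U_n\cup\{x_n\}$ for $m\ge n\ge n_0$. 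Setting
\[
U=\bigcup_{n\ge n_0}(\alpha_n^\infty)^{-1}(U_n),\qquad V=\bigcup_{n\ge n_0}(\alpha_n^\infty)^{-1}(V_n),
\]
each summand is clopen (the $G_n$ are discrete), so $U,V$ are open; the containment above forces $U\cap V=\emptyset$; any $y\neq x$ eventually satisfies $\alpha_n^\infty(y)\neq x_n$, so $U\cup V=V({\bf G})\setminus\{x\}$, making both sets clopen in the subspace; and an edge $\langle u,v\rangle\in E({\bf G})$ with $u\in U,v\in V$ would at a sufficiently large level $n$ project to an edge between $U_n$ and $V_n$ in $G_n$, which does not exist. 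Since $a\in U$ and $b\in V$, this exhibits the required disconnection.

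The main obstacle is step (ii): the subtle point is showing that preimages of $U_n$ across levels do not ``leak'' through the preimage of $x_n$ into the $V$-side, so that $U$ and $V$ end up disjoint. Monotonicity is precisely the property needed for this, via the containment $\alpha_n^{-1}(U_n)\subseteq U_{n+1}$; without it the construction would collapse.
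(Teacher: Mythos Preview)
Your proof is correct. The paper does not actually prove this lemma; it merely states that the analogous fact is well known for continua and that ``it is easy to see that the same result holds for graphs,'' leaving the verification to the reader. Your argument supplies exactly those details: the connectedness of the limit via a standard lift-an-edge argument, and the separation at each interior vertex $x$ using that monotonicity forces $\alpha_n^{-1}(U_n)\subseteq U_{n+1}$ (so the two ``sides'' of $x_n$ are coherent across levels). One small remark: for the disjointness of $U$ and $V$ it is slightly cleaner to use the forward inclusion $(\alpha_n^m)^{-1}(U_n)\subseteq U_m$ (which you essentially prove) rather than $\alpha_n^m(U_m)\subseteq U_n\cup\{x_n\}$, since then membership in $U$ at some level propagates to all later levels and disjointness is immediate; but your version via the image containment works as well.
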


\begin{proposition}\label{monotone-projections}
If $\mathcal G$ is a projective Fra\"{\i}ss\'e family of graphs with monotone epimorphisms and $\mathbb G$ is a projective Fra\"{\i}ss\'e 
limit of $\mathcal G$, then for every $G \in \mathcal G$ any epimorphism $f_G\colon \mathbb G\to G$ is monotone.
\end{proposition}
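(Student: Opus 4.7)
The plan is to realize $\mathbb G$ explicitly as the inverse limit of a Fra\"{\i}ss\'e sequence, verify directly that the canonical projections are monotone, and then conclude using the definition of $\mathcal F^\omega$. Invoking Theorem \ref{fund-sequence}, I would represent $\mathbb G=\iLim\{F_n,\alpha_n\}$ as the inverse limit of a Fra\"{\i}ss\'e sequence in which every bonding map $\alpha_n$ lies in $\mathcal G$ and is therefore monotone.

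The heart of the argument is to show that each canonical projection $\alpha_m^\infty\colon \mathbb G\to F_m$ is monotone. By the lemma earlier in the section characterising monotonicity via connected preimages of vertices, it suffices to prove that $(\alpha_m^\infty)^{-1}(a)$ is connected for every $a\in V(F_m)$. I would identify this preimage with $\iLim_{k\ge m}\{(\alpha_m^k)^{-1}(a)\}$, the bonding maps being the restrictions of the $\alpha_k$. Each finite stage $(\alpha_m^k)^{-1}(a)$ is connected because $\alpha_m^k=\alpha_m\circ\alpha_{m+1}\circ\cdots\circ\alpha_{k-1}$ is a composition of monotone epimorphisms and is itself monotone by Lemma \ref{mon-comp}(a); a short diagram chase using surjectivity of each $\alpha_k$ shows that the restricted bonding maps are still surjective epimorphisms of finite graphs. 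It then remains to invoke the topological-graph analogue of the classical fact that the inverse limit of connected compacta under surjective bonding maps is connected. That analogue can be proved by contradiction: a clopen disconnection of the limit in which no edge joins the two pieces would, by 0-dimensionality of the vertex space and closedness of the edge relation, already be witnessed at a finite stage $F_k$, contradicting connectedness of that $F_k$.

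Once monotonicity of every $\alpha_m^\infty$ is available, the conclusion is immediate. Unpacking the definition of $\mathcal F^\omega$ given in the preliminaries, any epimorphism $f_G\colon \mathbb G\to G$ in $\mathcal F^\omega$ factors as $f_G=h'\circ\alpha_m^\infty$ for some $m$ and some $h'\colon F_m\to G$ belonging to $\mathcal G$, hence monotone. Since both factors are monotone, so is $f_G$ by Lemma \ref{mon-comp}(a). The main obstacle is really just the middle step, namely the connectedness argument for preimages under $\alpha_m^\infty$; after that, the proof is a routine factorisation combined with the composition lemma.
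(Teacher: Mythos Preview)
Your proposal is correct and follows essentially the same route as the paper: represent $\mathbb G$ via a Fra\"{\i}ss\'e sequence, show each $\alpha_m^\infty$ has connected point-preimages by pushing a hypothetical disconnection down to some finite stage $(\alpha_m^{n})^{-1}(x)$ (which is connected since $\alpha_m^n$ is a composition of monotone maps), and then factor $f_G=h'\circ\alpha_m^\infty$. The only cosmetic difference is that the paper makes the ``witnessed at a finite stage'' step concrete via the metric and $\varepsilon$-epimorphisms from Theorem~\ref{limit}(3), rather than appealing to 0-dimensionality and closedness of the edge relation; both formulations encode the same compactness argument.
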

\begin{proof}
Let $\{F_n,\alpha_n\}$ be a Fra\"{\i}ss\'e sequence for $\mathcal F$. Fix $m$ and let $x\in F_m$. We show that $T=(\alpha^\infty_m)^{-1}(x)$ is connected. By monotonicity of each $\alpha_m^n$, we have that for every $n>m$, $(\alpha^n_m)^{-1}(x)$ is connected. Suppose towards
the contradiction that $T$ is not connected. Then there exist nonempty closed disjoint sets $R$ and $S$ such that $R\cup S=T$. Choose $\varepsilon>0$ such that $d(a,b)>\varepsilon$
for every $a\in R$ and $b\in S$, where $d$ is a metric on $\mathbb G$. Let $n_0$ be large enough so that $\alpha^\infty_{n_0}$ is an $\varepsilon$-epimophism. Then 
 $\alpha^\infty_{n_0}(R)$ and $\alpha^\infty_{n_0}(S)$ are two disjoint nonempty closed subsets of $F_{n_0}$ such that $(\alpha^{n_0}_m)^{-1}(x)=\alpha^\infty_{n_0}(R)\cup \alpha^\infty_{n_0}(S)$, contrary to monotonicity of $\alpha_m^{n_0}$. This finishes the proof of monotonicity of each  $\alpha^\infty_{n}$.

Finally, there is an $m_0$ and a monotone epimorphism $f\colon F_{m_0} \to G$ such that $f_G= f\circ \alpha_{m_0}^\infty$.  As $\alpha_{m_0}^\infty $ is monotone, so is $f_G$.

\end{proof}

\begin{proposition}\label{arwise-connected-limits}
If $\mathcal T$ is a projective Fra\"{\i}ss\'e family of  trees with monotone epimorphisms, and $\mathbb T$ is a projective Fra\"{\i}ss\'e 
limit of $\mathcal T$, then $\mathbb T$ is arcwise connected.
\end{proposition}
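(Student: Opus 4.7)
The plan is to exhibit, for any two vertices $a,b\in\mathbb T$, an explicit arc in $\mathbb T$ joining them. Write $\mathbb T = \iLim\{F_n,\alpha_n\}$ for a Fra\"iss\'e sequence, with each $F_n$ a finite tree and each $\alpha_n$ monotone, and fix $a=(a_n)$, $b=(b_n)$. In each finite tree $F_n$ let $I_n = [a_n,b_n]$ denote the unique arc from $a_n$ to $b_n$. The idea is to form $I := \iLim\{I_n, \alpha_n|_{I_{n+1}}\}$, verify this is a legitimate inverse sequence of arcs with monotone bonding epimorphisms, and then identify the resulting inverse limit with a sub-arc of $\mathbb T$ containing $a$ and $b$.

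First I would check that $\alpha_n$ restricts to an epimorphism $I_{n+1}\to I_n$. Since $\alpha_n$ is monotone, $\alpha_n^{-1}(I_n)$ is a connected subgraph of $F_{n+1}$ containing $a_{n+1}$ and $b_{n+1}$, so (as $F_{n+1}$ is a tree) it contains $I_{n+1}$; hence $\alpha_n(I_{n+1})\subseteq I_n$. Conversely, by Proposition \ref{connected-image} the image $\alpha_n(I_{n+1})$ is connected in the tree $F_n$ and contains $a_n,b_n$, so it contains $I_n$. Thus $\alpha_n(I_{n+1})=I_n$.

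Second, I would verify that $\alpha_n|_{I_{n+1}}\colon I_{n+1}\to I_n$ is itself monotone. Given a connected set $C\subseteq I_n$, the preimage $\alpha_n^{-1}(C)$ is a subtree of $F_{n+1}$ by monotonicity of $\alpha_n$. Then $(\alpha_n|_{I_{n+1}})^{-1}(C)=\alpha_n^{-1}(C)\cap I_{n+1}$ is the intersection of two subtrees of a finite tree and is therefore itself a subtree (two subtrees of a tree either meet in a subtree or are disjoint, by unicoherence of trees), hence connected. With monotonicity in hand, the lemma about inverse limits of finite arcs with monotone bonding maps applies, and $I = \iLim\{I_n,\alpha_n|_{I_{n+1}}\}$ is an arc.

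Finally I would identify $I$ with a subgraph of $\mathbb T$. By construction $V(I)=\{(x_n)\in V(\mathbb T):x_n\in I_n \text{ for every }n\}$, and $\langle (x_n),(y_n)\rangle\in E(I)$ iff $x_n,y_n\in I_n$ and $\langle x_n,y_n\rangle\in E(F_n)$ for every $n$, which is exactly the restriction of $E(\mathbb T)$ to $V(I)$. Hence $I$ sits inside $\mathbb T$ as a sub-topological-graph, it is an arc, and it contains $a=(a_n)$ and $b=(b_n)$ since $a_n,b_n\in I_n$ for every $n$. This proves arcwise connectedness. The main subtle point is the monotonicity of the restriction $\alpha_n|_{I_{n+1}}$: in general a monotone epimorphism need not restrict to a monotone epimorphism on a subgraph, but here the fact that both $I_{n+1}$ and $\alpha_n^{-1}(C)$ are subtrees of the ambient finite tree $F_{n+1}$ is exactly what keeps the relevant intersections connected.
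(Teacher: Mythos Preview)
Your proof is correct and follows exactly the same approach as the paper: take the arcs $I_n=[a_n,b_n]$ in the factor trees and form their inverse limit. The paper's version is a terse two-line argument that simply asserts $\alpha_n(I_{n+1})=I_n$ by unique arcwise connectedness and then invokes the lemma on inverse limits of finite arcs; you have filled in the details the paper leaves implicit, namely that $\alpha_n|_{I_{n+1}}$ is surjective onto $I_n$, that it is monotone (via the intersection-of-subtrees argument), and that the resulting inverse limit sits inside $\mathbb T$ as a subgraph.
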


\begin{proof}
Let $\{T_n,\alpha_n\}$ be a Fra\"{\i}ss\'e sequence for $\mathbb T$ and take $a=(a_n), b=(b_n) \in \mathbb T$.
For each $n$ let $I_n$ be the arc joining $a_n$ and $b_n$. Since each tree $T_n$ is uniquely arcwise connected, we have $\alpha_n(I_{n+1})=I_n$. Therefore $\iLim\{I_n,\alpha_n| I_n\}$ is an arc joining $a$ and $b$.

\end{proof}

\begin{corollary}\label{dendrites}
If $\mathcal T$ is a projective Fra\"{\i}ss\'e family of trees with monotone epimorphisms, and $\mathbb T$ is a projective Fra\"{\i}ss\'e 
limit of $\mathcal T$, then $\mathbb T$ is a dendrite.
\end{corollary}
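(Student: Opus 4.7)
The plan is to unpack the definition of a dendrite as a locally connected dendroid and show each of the three ingredients (hereditary unicoherence, arcwise connectedness, local connectedness) separately, noting that the first two are already available from the excerpt.

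First, I would observe that Theorem~\ref{limit-of-hu} gives that $\mathbb T$ is hereditarily unicoherent and Proposition~\ref{arwise-connected-limits} gives that $\mathbb T$ is arcwise connected. In view of Definition~\ref{def:dendrite}, this already shows that $\mathbb T$ is a dendroid, so the only remaining content is that $\mathbb T$ is locally connected.

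For local connectedness, I would write $\mathbb T=\iLim\{T_n,\alpha_n\}$ where $\{T_n,\alpha_n\}$ is a Fra\"{\i}ss\'e sequence for $\mathcal T$, and fix a point $p=(p_n)\in\mathbb T$ together with an open neighborhood $U$ of $p$. Because each $T_n$ is a finite discrete graph, the sets of the form $W_n:=(\alpha_n^\infty)^{-1}(\{p_n\})$ are clopen in $\mathbb T$, and by the inverse-limit metric estimate $\diam(W_n)\le 1/2^{n-1}$ they form a neighborhood base of $p$; in particular $W_n\subseteq U$ for $n$ large. So it suffices to show that each $W_n$ is connected. This is exactly where Proposition~\ref{monotone-projections} enters: the projection $\alpha_n^\infty\colon\mathbb T\to T_n$ is monotone, and the single vertex $\{p_n\}$ (with its degenerate loop) is a closed connected subgraph of $T_n$, hence $W_n=(\alpha_n^\infty)^{-1}(\{p_n\})$ is connected in $\mathbb T$. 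This produces a connected clopen neighborhood of $p$ inside $U$, proving local connectedness.

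Combining the three properties, $\mathbb T$ is a locally connected dendroid, i.e., a dendrite, which completes the proof. I do not anticipate a real obstacle here: the heart of the argument is the straightforward observation that monotonicity of the bonding epimorphisms (and hence of the projections, by Proposition~\ref{monotone-projections}) upgrades the standard clopen basis in an inverse limit of finite graphs to a \emph{connected} clopen basis. The only point to be a little careful about is the justification that a single vertex counts as a closed connected subgraph, so that the definition of monotone epimorphism applies; this is immediate from the reflexivity of the edge relation.
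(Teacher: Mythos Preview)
Your proof is correct and follows the same overall structure as the paper's: hereditary unicoherence from Theorem~\ref{limit-of-hu}, arcwise connectedness from Proposition~\ref{arwise-connected-limits}, and then local connectedness. The only difference is in how local connectedness is established. The paper simply invokes \cite[Theorem~2.1]{Menger} as a black box, whereas you give a direct argument: the clopen cylinders $(\alpha_n^\infty)^{-1}(\{p_n\})$ form a neighborhood base, and by Proposition~\ref{monotone-projections} each projection $\alpha_n^\infty$ is monotone, so these cylinders are connected. Your route is more self-contained and makes explicit exactly where monotonicity is used; the paper's route is shorter but depends on an external reference. Substantively the two arguments are the same idea, and your version is essentially what the cited result unwinds to in this setting.
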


\begin{proof}
The limit $\mathbb T$ is arcwise connected by Proposition~\ref{arwise-connected-limits}, it is hereditarily unicoherent by Theorem \ref{limit-of-hu}, and it is locally connected by \cite[Theorem 2.1]{Menger}.
\end{proof}

The next example shows that it is not always possible to find an amalgamation of trees with monotone epimorphisms when the order of some of the ramification vertices is greater than 3.

\begin{example}\label{no-4od}
    Let $A$ be the 4-od with end vertices $a$, $b$, $c$, $d$ and order 4 ramification vertex $x$. Let $B$ be the tree having end vertices $a_B$, $b_B$, $c_B$, $d_B$ and order 3 ramification vertices $x_1$ and $x_2$ with edges $\langle a_B,x_1\rangle$, $\langle b_B,x_1\rangle$, $\langle x_1,x_2\rangle$, $\langle c_B,x_2\rangle$, and $\langle d_B,x_2\rangle$ and let $C$ be the tree having end vertices $a_C$, $b_C$, $c_C$, $d_C$ and order 3 ramification vertices $x'_1$ and $x'_2$ with edges $\langle a_C,x'_1\rangle$, $\langle c_C,x'_1\rangle$, $\langle x'_1,x'_2\rangle$, $\langle b_C,x'_2\rangle$, and $\langle d_C,x'_2\rangle$. Let $f\colon B \to A$ be the monotone epimorphism such that $f(x_1)=f(x_2)=x$ and otherwise maps corresponding letters. Finally, let $g\colon C \to A$ be the monotone epimorphism such that $g(x_1')=g(x_2')= x$ and otherwise maps corresponding letters.

\end{example}

\begin{center}
\begin{tikzpicture}[scale=0.6]

    \draw (-2,1) -- (0,-1);
    \draw (0,1) -- (-2,-1);
      \filldraw[black] (-1,0) circle (1pt);
      \filldraw[black] (-2,1) circle (1pt);
      \filldraw[black] (0,1) circle (1pt);
      \filldraw[black] (-2,-1) circle (1pt);
      \filldraw[black] (0,-1) circle (1pt);
      \node at (-0.5,0) {$x$};
    \node at (-2,1.3) {$a$};
    \node at (0,1.3) {$b$};
    \node at (-2,-1.3) {$c$};
    \node at (0,-1.3) {$d$};
    \node at (-1,-2) {$A$};

    \draw (3,5) -- (4,4) -- (4,3) -- (3,2);
    \draw (5,5) -- (4,4) -- (4,3) -- (5,2);
      \filldraw[black] (3,5) circle (1pt);
      \filldraw[black] (4,4) circle (1pt);
      \filldraw[black] (4,3) circle (1pt);
      \filldraw[black] (3,2) circle (1pt);
      \filldraw[black] (5,5) circle (1pt);
      \filldraw[black] (5,2) circle (1pt);
    \node at (3,5.3) {$a_B$};
    \node at (4.5,4) {$x_1$};
    \node at (4.5,3) {$x_2$};
    \node at (3,1.7) {$c_B$};
    \node at (5,5.3) {$b_B$};
    \node at (5,1.7) {$d_B$};
    \node at (4,1) {$B$};

    \draw (3,-5) -- (4,-4) -- (4,-3) -- (3,-2);
    \draw (5,-5) -- (4,-4) -- (4,-3) -- (5,-2);
      \filldraw[black] (3,-5) circle (1pt);
      \filldraw[black] (4,-4) circle (1pt);
      \filldraw[black] (4,-3) circle (1pt);
      \filldraw[black] (3,-2) circle (1pt);
      \filldraw[black] (5,-5) circle (1pt);
      \filldraw[black] (5,-2) circle (1pt);
    \node at (3,-5.3) {$b_C$};
    \node at (4.5,-4) {$x'_2$};
    \node at (4.5,-3) {$x'_1$};
    \node at (3,-1.7) {$a_C$};
    \node at (5,-5.3) {$d_C$};
    \node at (5,-1.7) {$c_C$};
    \node at (4,-6) {$C$};

    \draw (1,2) -- (2.5,3.5);
    \draw (1,2) -- (1.1,2.3);
    \draw (1,2) -- (1.3,2.1);
    \node at (1.5,3) {$f$};

    \draw (1,-2) -- (2.5,-3.5);
    \draw (1,-2) -- (1.1,-2.3);
    \draw (1,-2) -- (1.3,-2.1);
    \node at (1.5,-3) {$g$};

    \node at (9,0) {$D$};
    \draw [dashed] (7.5,2) -- (6,3.5);
    \draw (6,3.5) -- (6.1,3.2);
    \draw (6,3.5) -- (6.3,3.4);
    \node at (7.2,3) {$f_0$};

    \draw [dashed] (7.5,-2) -- (6,-3.5);
    \draw (6,-3.5) -- (6.1,-3.2);
    \draw (6,-3.5) -- (6.3,-3.4);
    \node at (7.2,-3) {$g_0$};

\end{tikzpicture}
\end{center}

Assume towards the contradiction that there is a tree $D$ and monotone epimorphisms $f_0\colon D \to B$ and $g_0\colon D \to C$ such that $f\circ f_0 = g \circ g_0$.
Let $a_D, b_D, c_D, d_D$ be vertices in $D$ which are mapped by $f\circ f_0$ to $a$, $b$, $c$, and $d$ respectively. Recall that we denote the arc joining vertices $x$ and $y$ by $[x,y]$. Since $f_0([a_D,b_D])$ is connected it contains $x_1$. If $f_0([a_D,b_D])$ contained  $x_2$ then $f_0^{-1}(x_1)$ would be disconnected, contradicting $f_0$ being monotone. Thus  $f_0([a_D,b_D])=\{a_B,x_1, b_B\}$. Similarly, $f_0([c_D,d_D])=\{c_B,x_2,d_B\}$,
$g_0([a_D,c_D])=\{a_C,x'_1, c_C\}$, and $g_0([b_D,d_D])=\{b_C,x'_2,d_C\}$.
Therefore $[a_D,b_D]\cap [c_D,d_D]=\emptyset$ and $[a_D,c_D]\cap [b_D,d_D]=\emptyset$. It is not hard to see that $[a_D,b_D]\cup [b_D,d_D]\cup [c_D,d_D]\cup [a_D,c_D]$ contains a cycle, which contradicts that $D$ is a tree.

Let $\treet$ be the family of trees having at least two vertices, such that all vertices have order $\leq 3$, with monotone epimorphisms.

\begin{theorem}\label{amalgamation-monotone}
$\treet$ has the JPP and AP.
\end{theorem}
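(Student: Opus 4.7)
\emph{Joint projection property.} Given $B,C\in\treet$, pick endpoints $b_0\in V(B)$ and $c_0\in V(C)$ and let $D$ be the disjoint union $B\sqcup C$ together with one additional non-degenerate edge $\langle b_0,c_0\rangle$. The vertices $b_0$ and $c_0$ pass from order $1$ to order $2$ while all other orders are unchanged, so $D\in\treet$. The map $f\colon D\to B$ equal to $\id_B$ on $B$ and constantly $b_0$ on $C$ (and $g\colon D\to C$ defined symmetrically) is a monotone epimorphism: the preimage of $b_0$ under $f$ is the connected set $\{b_0\}\cup V(C)$, joined via the new bridge, while every other preimage is a singleton.

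\emph{Amalgamation property.} Given monotone $f\colon B\to A$ and $g\colon C\to A$ in $\treet$, the plan is to construct $D$ fiber-by-fiber over $A$. For each $a\in V(A)$ set $B_a=f^{-1}(a)$ and $C_a=g^{-1}(a)$, which are subtrees of $B$ and $C$. For every edge $\langle a,a'\rangle\in E(A)$, the unique bridging edge of $B$ between $B_a$ and $B_{a'}$ determines attach vertices $b_{aa'}\in B_a$ and $b_{a'a}\in B_{a'}$, and likewise $c_{aa'}\in C_a$, $c_{a'a}\in C_{a'}$. I would construct, for each $a$, a tree $D_a\in\treet$ together with distinguished vertices $\{d_{aa'}\}_{a'\sim a}$ (at most three, since $a$ has order $\le 3$ in $A$) and monotone epimorphisms $D_a\to B_a$ and $D_a\to C_a$ sending $d_{aa'}$ to $b_{aa'}$ and $c_{aa'}$ respectively. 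Then $D:=\bigcup_a D_a$ together with bridge edges $\langle d_{aa'},d_{a'a}\rangle$ for each $\langle a,a'\rangle\in E(A)$ is a tree in $\treet$, and the epimorphisms $f_0\colon D\to B$, $g_0\colon D\to C$ are defined piecewise so that $f\circ f_0 = g\circ g_0$.

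The technical heart is an auxiliary ``marked-vertex'' amalgamation lemma: given trees $B',C'\in\treet$ each equipped with up to three marked vertices (with possible repetition) such that at every vertex the internal degree plus mark multiplicity is $\le 3$, there exists $D'\in\treet$ with corresponding marks and monotone epimorphisms $D'\to B'$, $D'\to C'$ respecting the marks and satisfying the same degree bound. I would prove this by direct construction according to the number $k\le 3$ of marks: for $k\le 2$ a suitably subdivided arc joining the marks, extended by the rest of $B'$ and $C'$ off to the side, suffices; for $k=3$ one needs a tree $D'$ with at most one order-$3$ ramification vertex. The main obstacle is the $k=3$ case, particularly when the three marks are arranged topologically differently in $B'$ versus $C'$ (for instance collinear on an arc in one tree but at the leaves of a tripod in the other). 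Here one must introduce a ramification vertex of order exactly $3$ in $D'$ and carefully verify that the three external bridges attached at the marks do not push any vertex above order $3$. This is precisely where the order-$\le 3$ hypothesis defining $\treet$ is used: the global bound of three on both $A$ and on the $B$-, $C$-multiplicities at attach vertices is exactly what makes the ramification-based construction fit within $\treet$. Example~\ref{no-4od} confirms this is sharp, as the corresponding lemma already fails when a single order-$4$ vertex is permitted.
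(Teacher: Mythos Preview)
Your JPP argument is fine (and essentially equivalent to the paper's, which identifies the two endpoints rather than bridging them with an edge).

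For the AP, your reduction to a fiber-wise ``marked-vertex'' lemma is correct: the bridge edges between fibers are unique by monotonicity plus acyclicity, the degree bookkeeping (internal degree plus mark multiplicity $\le 3$) is exactly what the hypothesis on $B$ and $C$ gives, and the reassembly of the $D_a$'s into $D$ produces a tree in $\treet$ with $f\circ f_0=g\circ g_0$. So the strategy is sound.

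The gap is that you have not actually proved the marked-vertex lemma. For $k\le 2$ the phrase ``extended by the rest of $B'$ and $C'$ off to the side'' hides nontrivial degree checks, and for $k=3$ you give no construction at all---you only name the difficulty. That case is where all the content lives: one must reconcile possibly different betweenness patterns of the three marks in $B'$ versus $C'$ (e.g.\ collinear in one, tripodal in the other, or collinear in both but with different middle marks) while simultaneously surjecting onto all of $B'$ and $C'$ and keeping every vertex of $D'$ at order $\le 3$. This is doable, but it requires a careful explicit construction, and it is essentially as much work as the whole theorem.

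The paper avoids this detour by working globally rather than fiber-by-fiber. It chooses a section $i_f\colon A\to B$ (taking each order-$3$ vertex $x$ to the unique ``center'' of $f^{-1}(x)$, i.e.\ the meeting point of the three outgoing arcs), does the same for $g$, and then builds $D$ edge-by-edge over $A$: for each $\langle a,b\rangle\in E(A)$ it interleaves the arc $[i_f(a),i_f(b)]\subseteq B$ with the arc $[i_g(a),i_g(b)]\subseteq C$ along a single new arc in $D$, attaching the (at most one) side-component hanging off each intermediate vertex. The order-$\le 3$ hypothesis is what guarantees both that the center is well-defined and that each intermediate vertex has at most one side-component to attach. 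Your modular approach and the paper's global one rest on the same combinatorial fact, but the paper's version packages it into an explicit one-shot construction rather than a lemma left to the reader.
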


\begin{proof}

For the proof of the JPP, let $B, C\in \treet$ and let $e_B$ and $e_C$ be fixed end vertices of $B$ and $C$ respectively.
Take a tree $D$ to be the disjoint union of $B$ and $C$ with the vertices $e_B$ and $e_C$ identified. Let $f\colon D \to B$ be the epimorphism that is the identity on $B$ and takes all vertices of $C$ to $e_B$.  Similarly define $g\colon D \to C$.  

For the proof of the AP, let $A,B,C$ be trees such that each vertex has order $\leq 3$, and let   $f\colon B\to A$  and $g\colon C\to A$ be monotone. 

We claim that if $x\in A$ is such that $\ord(x)=3$ and $a,b,c$ are all vertices adjacent to $x$, then for any $a_1\in f^{-1}(a)$, $b_1\in f^{-1}(b)$ and $c_1\in f^{-1}(c)$, there is a unique $x_1\in f^{-1}(x)$ such that any two of the arcs: $[a_1,x_1], [b_1,x_1], [c_1,x_1]$ intersect exactly in $x_1$. 
Indeed, let $x_a, x_b, x_c\in f^{-1}(x)$, $a'\in f^{-1}(a)$, $b'\in f^{-1}(b)$, and $c'\in f^{-1}(c)$ be such that $\langle x_a, a'\rangle,
\langle x_b, b'\rangle,
\langle x_c, b'\rangle\in E(B)$. By the monotonicity of $f$ there is a unique choice of such vertices. In the case that two or three of $x_a, x_b, x_c$ are equal, we take $x_1$ to be that common vertex. In the case when $x_a, x_b, x_c$ are pairwise different, there is, since $B$ is a tree, a unique minimal arc $q$ joining $x_c$ to a vertex in the arc $[x_a,x_b]$. The common vertex of $[x_a,x_b]$ and $q$ is the required $x_1$.
 Call such a vertex $x_1$ a {\it center} for $f$ and $x$.

We first focus on $f\colon B\to A$.
Now let $i_f\colon A\to B$ be the following injection. Given $x\in A$, if   $\ord(x)=3$ we let  $i_f(x)$ to be the center for $f$ and $x$, if $\ord(x)=2$, let $i_f(x)$ be any vertex in $f^{-1}(x)$, and if $\ord(x)=1$, let $i_f(x)$ be any end vertex in $f^{-1}(x)$. 
We analogously define $i_g$ for $g$.

Fix an edge $e=\langle a,b\rangle\in E(A)$ and let $$i_f(a)=a^B_0, a^B_1, a^B_2,\ldots, a^B_{m_B}, b^B_1, b^B_2,\ldots, b^B_{n_B}, b^B_{n_B+1}=i_f(b)$$ be the arc joining $i_f(a)$ and $i_f(b)$, where
$f(a^B_i)=a$, $i=1,2,\ldots, m_B$, and $f(b^B_j)=b$, $j=1,2,\ldots, n_B$. 
Then, for each $i\in\{1, \ldots, m_B\}$,  $B\setminus \{a^B_i\}$ has a component containing $i_f(a)$, a component containing $i_f(b)$, and possibly a third component.  If there is a third component denote it by $\hat{a}^B_i$, and note that  $f(\hat{a}^B_i)=a$. Furthermore, it can happen that $\ord(a)=2$ but $\ord(i_f(a))=3$. In that case there is a unique component $\hat{a}^B$ of $B\setminus \{i_f(a)\}$, not containing vertices from $i_f(A)$ such that $f(\hat{a}^B)=a$.
Let $x^B_i\in \hat{a}^B_i$ be such that $\langle a^B_i, x^B_i\rangle\in E(B)$, $i=1,2,\ldots, m_B$, in case  $\hat{a}^B_i\neq\emptyset$. Let $a^B\in\hat{a}^B$ be such that $\langle a^B, i_f(a)\rangle \in E(B)$, in case $\hat{a}^B\neq\emptyset$.
In a similar manner define components $\hat{b}^B_j$ for each $b_j$, $j=1,2,\ldots, n_B$.
Some of those components can be empty. This time we have that $f(\hat{b}_j^B)=b$. Let $y^B_j\in \hat{b}^B_j$ be such that $\langle 
b^B_j, y^B_j\rangle\in E(B)$, $j=1,2,\ldots, n_B$, in case  $\hat{b}^B_j\neq\emptyset$.

 We do a similar analysis for $g$ and $C$ and an edge $e=\langle a,b\rangle\in E(A)$. If 
 $i_g(a)=a^C_0, a^C_1, a^C_2,\ldots, a^C_{m_C}, b^C_1, b^C_2,\ldots, b^C_{n_C}, b^C_{n_C+1}=i_g(b)$ is the arc  joining $i_g(a)$ and $i_g(b)$, we define components $\hat{a}^C_i$, $\hat{b}^C_j$, and vertices $x^C_i$, $y^C_j$, analogously as above. Define also $\hat{a}^C$ and $a^C$.

 To get $D$ we just have to put together all those new components. Here is how we build $D$. 
To simplify the notation, below we will often write $a$ for $i_f(a)$ and for $i_g(a)$, where $a$ is a vertex in $A$.
 For every edge $e=\langle a,b\rangle\in E(A)$ we do the following. Replace $e$ by edges: $$\langle a, a^B_1 \rangle,\ldots, \langle a^B_{m_B}, a^C_1 \rangle, \ldots,
 \langle a^C_{m_C}, b^B_1 \rangle, \ldots,  \langle b^B_{n_B}, b^C_1\rangle, \ldots,
 \langle b^C_{n_C}, b\rangle.  $$
Then we attach  $\hat{a}^B_i$, $i=1,\ldots, m_B$, $\hat{a}^C_i$,   $i=1,\ldots, m_C$, as well as $\hat{b}^B_j$, $j=1,2,\ldots, n_B$, $\hat{b}^C_j$,  $j=1,2,\ldots, n_C$, in appropriate places. Specifically, take the disjoint union of all  nonempty
 $\hat{a}^B_i$, $\hat{a}^C_i$, $\hat{b}^B_j$,  $\hat{b}^C_j$, 
as well as the arc 
 $a, a^B_1,\ldots, a^B_{m_B}, a^C_1 , \ldots,
 a^C_{m_C}, b^B_1 , \ldots,  b^B_{n_B}, b^C_1, \ldots,
 b^C_{n_C}, b$ and add appropriate edges $\langle x^B_i , a^B_i \rangle$, $\langle x^C_i , a^C_i \rangle$,
 $\langle y^B_j , b^B_j \rangle$, $\langle y^C_j , b^C_j \rangle$.
 Next, for every $a\in A$ such that $\ord(a)=2$, take  a vertex $a'$ and an edge $\langle a,a'\rangle$. In case $\hat{a}^B$ was nonempty take it together with the edge $\langle a', a^B\rangle$. Similarly, if  $\hat{a}^C$ was nonempty take it together with the edge $\langle a', a^C\rangle$. This finishes the definition of $D$.
 
We now have to specify monotone epimorphisms $f_0\colon D\to B$ and $g_0\colon D\to C$ such that $f\circ f_0=g\circ g_0$. Let us first define $f_0$. For every edge  $e=\langle a,b\rangle\in E(A)$, let $f_0(\hat{a}_i^C)=f_0(a_i^C)=a_{m_B}^B$, $f_0(\hat{b}_j^C)=f_0(b_j^C)=b$, and for every $a\in V(A)$, let $f_0(\hat{a}^C)=f_0(a')=a$. We let $f_0$ to be the identity otherwise.
We now define $g_0$. For every edge  $e=\langle a,b\rangle\in E(A)$, let
$g_0(\hat{a}_i^B)=g_0(a_i^B)=a$, $g_0(\hat{b}_j^B)=g_0(b_j^B)=b_1^C$, and for every $a\in V(A)$, let $g_0(\hat{a}^B)=g_0(a')=a$. We let $g_0$ to be the identity otherwise. The maps $f_0,g_0$ are as required.

\end{proof}

The {\it Wa\. zewski denrite} $D_3$, see for example \cite{JJC-Wazewski},  is a dendrite characterized by the following conditions:
\begin{enumerate}
    \item each ramification point is of order 3;
    \item the set of ramification points is dense.
\end{enumerate}

We show that the topological realization of the projective Fra\"{\i}ss\'e limit of the family $\treet$ is homeomorphic to the dendrite $D_3$. We divide the proof into two steps.

\begin{theorem}\label{dense}
The projective Fra\"{\i}ss\'e limit of $\treet$ has transitive set of edges and the topological realization of the projective Fra\"{\i}ss\'e limit of $\treet$ is a dendrite with a dense set of ramification vertices.
\end{theorem}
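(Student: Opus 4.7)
My plan divides into three parts. The first part establishes transitivity of $E(\mathbb{T})$ via Theorem \ref{transitive}. Given $G\in\treet$ and distinct vertices $a,b,c\in G$ with $\langle a,b\rangle,\langle b,c\rangle\in E(G)$, let $H$ be obtained from $G$ by subdividing the edge $\langle b,c\rangle$: introduce a new vertex $b'$ and replace $\langle b,c\rangle$ by $\langle b,b'\rangle$ and $\langle b',c\rangle$. The map $f^H_G\colon H\to G$ sending $b'$ to $b$ and acting as the identity elsewhere is monotone (its only nontrivial fiber $\{b,b'\}$ is connected) and $H\in\treet$ (since $b'$ has order two and no other order changes). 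For any lifts $p,q,r$ of $a,b,c$ we have $p=a$, $r=c$, and $q\in\{b,b'\}$; if $q=b$ then $\langle q,r\rangle=\langle b,c\rangle\notin E(H)$, while if $q=b'$ then $\langle p,q\rangle=\langle a,b'\rangle\notin E(H)$. Theorem \ref{transitive} then gives transitivity of $E(\mathbb{T})$, and combined with Corollary \ref{dendrites} (which says that $\mathbb{T}$ is a dendrite as a topological graph), Observation \ref{top-dendrite} yields that $|\mathbb{T}|$ is a dendrite.

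For the density of ramification points, let $V\subseteq|\mathbb{T}|$ be non-empty open, pick $p\in V$ and $a\in\pi^{-1}(p)$, and use Theorem \ref{limit}(3) to choose $n$ so large that $f=\alpha_n^\infty\colon\mathbb{T}\to T_n$ is an $\varepsilon$-epimorphism with $f^{-1}(a_n)\subseteq\pi^{-1}(V)$, where $a_n=f(a)$. Build $H\in\treet$ together with a monotone epimorphism $h\colon H\to T_n$ and a vertex $b$ of order three in $H$ with $h(b)=a_n$: when $\ord(a_n)=3$ take $H=T_n$, $h=\id$, $b=a_n$; when $\ord(a_n)=2$ attach one new end-vertex $e$ via $\langle a_n,e\rangle$; when $\ord(a_n)=1$ attach two new end-vertices $e_1,e_2$ via $\langle a_n,e_1\rangle$ and $\langle a_n,e_2\rangle$. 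In each case $h^{-1}(a_n)$ is connected, so $h$ is monotone, and all orders in $H$ stay at most three. Theorem \ref{limit}(2) then produces an epimorphism $\tilde f\colon\mathbb{T}\to H$ with $h\circ\tilde f=f$, which is monotone by Proposition \ref{monotone-projections}, so $\tilde f^{-1}(b)\subseteq f^{-1}(a_n)\subseteq\pi^{-1}(V)$ and $K:=\pi(\tilde f^{-1}(b))\subseteq V$.

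The hard part is to show $K$ contains a ramification point of $|\mathbb{T}|$. Let $d_1,d_2,d_3$ be the neighbors of $b$ in $H$, let $D_i$ be the component of $H\setminus\{b\}$ containing $d_i$, and set $K_i=\pi(\tilde f^{-1}(D_i))$. I would first verify that $K_1,K_2,K_3$ are pairwise disjoint: any $E$-identification between $\tilde f^{-1}(D_i)$ and $\tilde f^{-1}(D_j)$ would, by transitivity of $E(\mathbb{T})$, yield an edge in $\mathbb{T}$ whose $\tilde f$-image is an edge in $H$ connecting vertices in distinct components of $H\setminus\{b\}$, which is impossible. Next, each $K_i$ meets $K$ because lifting the edge $\langle b,d_i\rangle$ of $H$ to an edge of $\mathbb{T}$ produces $\pi(x)=\pi(y)\in K\cap K_i$. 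Together, these force $|K|>1$: otherwise the single point of $K$ would lie in all three pairwise disjoint $K_i$. So $K$ is a non-degenerate subdendrite of $|\mathbb{T}|$ containing three pairwise disjoint attached subdendrites $L_i=K\cap K_i$. The main obstacle is then extracting from this configuration a point of order at least three in $|\mathbb{T}|$. By enlarging $H$ so that each $D_i$ has a vertex beyond $d_i$ (which ensures each $K_i$ has content outside $K$), I plan to consider the Steiner centre $t$ in $K$ of representatives $p_i\in L_i$: either $t$ is distinct from all $p_i$, so three arcs of $K$ meet at $t$ and its order in $|\mathbb{T}|$ is at least three; or $t$ equals some $p_i$ and lies in $L_i$, in which case the non-degenerate branch $K_i$ contributes at least one component of $|\mathbb{T}|\setminus\{t\}$ beyond the two coming from $K\setminus\{t\}$, again giving order at least three at $t$. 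Either way, $K\subseteq V$ contains a ramification point of $|\mathbb{T}|$, completing the proof.
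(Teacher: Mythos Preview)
Your transitivity argument and the reduction to Corollary~\ref{dendrites} / Observation~\ref{top-dendrite} are correct and essentially the same as the paper's (you subdivide one edge where the paper subdivides into three; both work). For density, however, you take a genuinely different route: the paper argues by contradiction, assuming an open connected $U\subseteq|\mathbb{T}|$ with no ramification point (hence $U\cong(0,1)$), then attaches a small ``Y'' inside a fiber to produce three continua $A,B,C\subseteq U$ with $A\cap B\cap C\neq\emptyset$ and none contained in the union of the other two --- a configuration impossible in an interval. Your approach is direct: build a vertex $b$ of order three over a fiber and locate a ramification point inside $K=\pi(\tilde f^{-1}(b))$.

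Your construction of $H$, the disjointness of the $K_i$, and the nondegeneracy of $K$ are all fine. The gap is in Case~2 of the Steiner-centre argument. You assert that ``the non-degenerate branch $K_i$ contributes at least one component of $|\mathbb{T}|\setminus\{t\}$ beyond the two coming from $K\setminus\{t\}$'', but this is exactly what needs proof. If $L_i=K_i\cap K$ were a nondegenerate arc and $K_i$ exited $K$ at a point of $L_i$ other than $t$, then the arc $[t,q_i]\subseteq K_i$ would overlap one of $[t,p_2],[t,p_3]$ on a nondegenerate subarc, and $t$ would have order two in $|\mathbb{T}|$, not three. So as written, Case~2 does not go through.

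The fix is to show that each $L_i$ is in fact a single point. This follows from hereditary unicoherence of $\mathbb{T}$ (Theorem~\ref{limit-of-hu}) together with the transitivity you already proved: if there were two distinct edges of $\mathbb{T}$ joining the disjoint connected sets $\tilde f^{-1}(d_i)$ and $\tilde f^{-1}(b)$, then suitably enlarging each set by one vertex of the other produces two closed connected subgraphs whose intersection is a two-point set with no edge between them (each vertex lies on at most one nondegenerate edge), contradicting hereditary unicoherence. Once $L_i=\{p_i\}$, your Case~2 is immediate: $[t,q_i]\subseteq K_i$ meets $K$ only in $t$, giving the third arc. The paper's contradiction argument buys you precisely the avoidance of this extra step.
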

\begin{proof}

 First, we show that the family $\treet$ satisfies the hypothesis of Theorem \ref{transitive} and thus has a transitive set of edges. Let $G\in \treet$  and $a,b,c\in V(G)$ such that $\langle a,b\rangle, \langle b,c\rangle \in E(G)$. Define $H\in \treet$  by $V(H)=V(G)\cup \{a',b'\}$, $E(H)=E(G)\cup\{\langle a,a'\rangle, \langle a',b'\rangle, \langle b',b\rangle\}\setminus\{\langle a,b\rangle\}$, and the epimorphism $f^{H}_G$ where $f^{H}_G(a')=a$, $f^{H}_G(b')=b$ and $f^{H}_G$ is the identity otherwise.  Then, for  $p\in\{a,a'\}$, $q\in \{b,b'\}$, and $r=c$ we have either $\langle p,q\rangle\not \in E(H) $ or $\langle q,r\rangle \not\in E(H)$.

The projective Fra\"{\i}ss\'e limit
$\mathbb D$ of $\treet$ is a dendrite by Corollary \ref{dendrites}, so its topological realization $|\mathbb D|$ is a (topological) dendrite by Observation \ref{top-dendrite}. It remains to show that the set of ramification points of $|\mathbb D|$ is dense.

Let us introduce the necessary notation. Let $\varphi\colon \mathbb D\to |\mathbb D|$ be the quotient mapping. To prove the density of the set of ramification points of $|\mathbb D|$ suppose on the contrary that $U$ is an open connected subset of $|\mathbb D|$ that contains no ramification point, i.e $U$ is homeomorphic to $(0,1)$. Then $\varphi^{-1}(U)$
is an open subset of $\mathbb D$. By Property (3) of Theorem \ref{limit}, there is a tree $G$, an epimorphism $f_G\colon \mathbb D\to G$ in $\treet$, and $a\in V(G)$ such that
$f_G^{-1}(a)\subseteq \varphi^{-1}(U)$.  Define a graph $H$ by putting $V(H)=V(G)\cup \{b,c,d, a',c',d'\}$ and $E(H)=E(G)\cup \{\langle a',b\rangle, \langle b,c'\rangle, \langle b,d'\rangle,
\langle a',a\rangle, \langle c,c'\rangle, \langle d,d'\rangle
\}$. Finally, define the monotone epimorphism $f^H_G\colon H\to G$ by
$$
f^H_G(x)=\begin{cases}  x \text{ if } x\in G\\
a \text{ if } x\in \{b,c,d, a',c',d'\}.
\end{cases}
$$
Let $f_H\colon \mathbb D\to H$ be an epimorphism such that $f_G=f^H_G\circ f_H$; note that by Proposition~\ref{monotone-projections} the epimorphism  $f_H$ is monotone.
Thus, the three sets $A=\varphi(f_H^{-1}(\{a, a', b\}))$,
 $B=\varphi(f_H^{-1}(\{b,c,c'\}))$, and  $C=\varphi(f_H^{-1}(\{b,d,d'\}))$ are continua that satisfy $ A\cup B\cup C\subseteq U$, $A\not\subseteq B\cup C$, $B\not\subseteq A\cup C$, $C\not\subseteq A\cup B$, and $A \cap B \cap C \not = \emptyset$. 
For example, $A\not\subseteq B\cup C$ follows from the fact that $\phi(f_H^{-1}(a))\subseteq A\setminus (B\cup C)$. 
 This contradicts the fact that $U$ is homeomorphic to $(0,1)$.

\end{proof}

\begin{theorem}\label{D3}
 The topological realization of the projective Fra\"{\i}ss\'e limit of $\treet$ is homeomorphic to the  dendrite $D_3$ .
\end{theorem}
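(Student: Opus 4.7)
The strategy is to combine Theorem~\ref{dense} with the Wa\.zewski characterization of $D_3$ as the unique dendrite in which ramification points are dense and every point has order at most $3$. Since density is already established by Theorem~\ref{dense}, it remains to show that no point of $|\mathbb{D}|$ has order $\ge 4$.

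First I verify that in $\mathbb{D}$ no vertex $v$ admits four arcs $[v,z_1],\ldots,[v,z_4]$ meeting pairwise only at $v$: for $f_n\colon\mathbb{D}\to T_n$ a sufficiently fine $\varepsilon$-epimorphism, if some $w\ne f_n(v)$ lay in two of the image arcs $[f_n(v),f_n(z_i)]$ and $[f_n(v),f_n(z_j)]$, then the monotone (hence connected) fiber $f_n^{-1}(w)$ would contain points of both $[v,z_i]$ and $[v,z_j]$, forcing it to contain $v$ by dendroidness of $\mathbb{D}$, contradicting $f_n(v)\ne w$. So the four image arcs pairwise meet only at $f_n(v)$, forcing order $\ge 4$ at $f_n(v)$ in $T_n$, contradicting $T_n\in\treet$. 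Since $E(\mathbb{D})$ is transitive (Theorem~\ref{dense}), the class $[p]:=\pi^{-1}(p)$ is a clique in $\mathbb{D}$, and this local bound gives $|[p]|\le 3$. A direct count of components of $\mathbb{D}\setminus[p]$---each attached at a unique vertex $w\in[p]$, with at most $3-(|[p]|-1)$ external attachments at $w$---yields $\ord(p)\le |[p]|(4-|[p]|)$, which is at most $3$ except when $|[p]|=2$ with exactly two external branches at each class member. This is the sole configuration that \emph{a priori} permits $\ord(p)=4$.

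The main obstacle is ruling out this remaining configuration. Assume $\ord(p)=4$ with $[p]=\{w_1,w_2\}$ and $\langle w_1,w_2\rangle\in E(\mathbb{D})$. Pick four arcs $A_1,\ldots,A_4$ meeting pairwise only at $p$, points $x_i\in A_i\setminus\{p\}$, and lifts $y_i\in\pi^{-1}(x_i)$ arranged so that the Steiner tree of $\{y_1,\ldots,y_4\}$ in the dendroid $\mathbb{D}$ has $w_1$ adjacent to the $y_1,y_2$-branches and $w_2$ to the $y_3,y_4$-branches (that $w_1,w_2\in\pi^{-1}(p)$ follows from the pairwise-meeting-at-$p$ assumption together with the projection of arcs to $|\mathbb{D}|$). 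Fix $n$ large enough that $u_i:=f_n(w_i)$ and $v_j:=f_n(y_j)$ are pairwise distinct, and let $a_1,b_1$ be the $T_n$-neighbors of $u_1$ on the $v_1$- and $v_2$-sides. Build $H\in\treet$ from $T_n$ by splitting $u_1$ into a two-vertex subtree $\{s_1,s_2\}$ with the edge $\langle s_1,s_2\rangle$, attaching the edges toward $a_1,b_1$ at $s_1$ and the edge to $u_2$ at $s_2$. Defining $h\colon H\to T_n$ by $h(s_1)=h(s_2)=u_1$ and the identity otherwise yields a monotone epimorphism in $\treet$.

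By Theorem~\ref{limit}(2) applied to $f_n$ and $h$, there is a monotone $\tilde f\colon\mathbb{D}\to H$ with $h\circ\tilde f=f_n$. Since $\tilde f(w_2)=u_2$ and $\langle s_2,u_2\rangle$ is the only edge of $H$ between $h^{-1}(u_1)=\{s_1,s_2\}$ and $u_2$, the edge $\langle w_1,w_2\rangle$ forces $\tilde f(w_1)=s_2$. By Lemma~\ref{images-of-arcs} the arcs $\tilde f([w_1,y_i])=[s_2,v_i]$ for $i=1,2$ in $H$ both pass through $s_1$, so $\tilde f^{-1}(s_1)$ meets both $[w_1,y_1]$ and $[w_1,y_2]$. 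By monotonicity $\tilde f^{-1}(s_1)$ is connected, and since $[w_1,y_1]\cap[w_1,y_2]=\{w_1\}$ in the dendroid $\mathbb{D}$, we obtain $w_1\in\tilde f^{-1}(s_1)$, contradicting $\tilde f(w_1)=s_2$. Therefore no point of $|\mathbb{D}|$ has order $\ge 4$, and the characterization of $D_3$ yields $|\mathbb{D}|\cong D_3$.
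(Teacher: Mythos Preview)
Your proof is correct and takes a genuinely different route from the paper's. Both reduce to ruling out $\ord(p)\ge 4$, and both rely on the monotone-fiber plus hereditary-unicoherence argument to control how lifted arcs project to finite trees. The paper's trick, however, is static: it projects to a single tree $G$ arranged so that no two adjacent vertices both have order $3$ (any tree in $\treet$ can be refined to such a $G$ by subdividing every edge). Then the four image arcs pairwise meet exactly in $\{f_G(p_1),f_G(p_2)\}$, and a direct pigeonhole count at these two adjacent vertices of $G$ yields the contradiction, handling $|[p]|=1$ and $|[p]|=2$ uniformly with no further case analysis. You instead first bound the order at each vertex of $\mathbb{D}$, isolate the $|[p]|=2$ configuration by counting, and then invoke the projective extension property dynamically: splitting $u_1$ into $\{s_1,s_2\}$ in $T_n$ forces $\tilde f(w_1)=s_2$ via the edge to $w_2$, while the two branches through $w_1$ force $w_1\in\tilde f^{-1}(s_1)$. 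Your argument is longer and requires more bookkeeping (the component counting, the Steiner-tree setup, the explicit construction of $H$), but it showcases the extension property (Theorem~\ref{limit}(2)) more concretely than the paper's proof. One small simplification available to you: Theorem~\ref{dense}, via Theorem~\ref{transitive}, already shows that no three distinct vertices of $\mathbb{D}$ form a $2$-path, so $|[p]|\le 2$ follows immediately and the $|[p]|=3$ case in your counting need not be considered at all.
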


\begin{proof}
By Theorem \ref{dense} the topological realization $|\mathbb D| $ of the projective Fra\"{\i}ss\'e limit of $\treet$ is a dendrite with a dense set of ramification points; it remains to show that every ramification point of $|\mathbb D| $ has order 3. Suppose on the contrary that there is a ramification point $p\in |\mathbb D|$ of order 4 or more. Then there are four arcs $A, B, C, D$ in $|\mathbb D|$ such that the intersection of any two of them is $\{p\}$.

Denote $\varphi^{-1}(p)=\{p_1,p_2\}$ with a possibility that $p_1=p_2$ and note that the sets $\varphi^{-1}(A)$, $\varphi^{-1}(B)$, $\varphi^{-1}(C)$, and $\varphi^{-1}(D)$ contain arcs $\alpha, \beta, \gamma, \delta$ in $\mathbb D$ and that the intersection of any two of these arcs is $\{p_1,p_2\}$.

Let $G$ be a tree and $f_G\colon \mathbb D\to G$ be a monotone epimorphism such that:
\begin{enumerate}
    \item $f_G(\alpha)$, $f_G(\beta)$, $f_G(\gamma)$, and $f_G(\delta)$ are non-degenerate subsets of $V(G)$;
    \item for any two adjacent vertices of $G$ at most one of them has order~3.
\end{enumerate}
We can achieve (2) since for any $G\in \treet$, if we split each edge of $G$ into two, that is, replace each edge $\langle a,b \rangle$ of $G$ by the edges $\langle a,x \rangle$ and $\langle x,b \rangle$ where $x$ is a ``new'' vertex, the resulting graph will have the required property.
By Lemma \ref{images-of-arcs} the sets  $f_G(\alpha)$, $f_G(\beta)$, $f_G(\gamma)$, and $f_G(\delta)$ are arcs.
We claim that the intersection of any two of them is $\{f_G(p_1),f_G(p_2)\}$. Suppose on the contrary that $r\in f_G(\alpha)\cap f_G(\beta)\setminus \{f_G(p_1),f_G(p_2)\}$; then
$f_G^{-1}(r)$ is a connected subset of $V(\mathbb D)$ that intersects both $\alpha\setminus \{p_1,p_2\}$ and  $\beta\setminus \{p_1,p_2\}$ contrary to hereditary unicoherence of $\mathbb D$. The claim contradicts the condition that both  $f_G(p_1)$ and $f_G(p_2)$ are points of order at most 3 and not both of them are of order 3.
\end{proof}

\begin{remark}
    The family of all  trees is cofinal in $\treet$. Indeed, for a  tree $T$ and a vertex $v$ of order $n>3$ we can find a  tree $T'$ obtained from $T$ by replacing $v$ with two vertices joined by an edge, such that one of the vertices has order 3 and the other has order $n-1$. This means that the family of all trees with monotone epimorphisms has the cofinal amalgamation property.
\end{remark}

Codenotti-Kwiatkowska \cite{Co-Kw} generalized Theorems \ref{amalgamation-monotone} and \ref{D3} and constructed all generalized Ważewski dendrites in a Fra\"{\i}ss\'e theoretic framework. 
On the one hand, this work was an inspiration for the article \cite{Co-Kw}. On the other hand, the proof of Theorem \ref{amalgamation-monotone} presented above is a simplification and correction of the one presented in an earlier version of this article and is inspired by the work in \cite{Co-Kw}.

\section{Confluent epimorphisms}\label{confluent}

Motivated by the definition of a confluent map between continua (see \cite{JJC-Confluent}) we give an analogous definition for confluent epimorphisms between graphs and develop various tools which will be used in studying them.

\begin{definition}
Given two topological graphs $G$ and $H$, an epimorphism $f\colon G\to H$  is called {\it confluent} if for every closed connected subset $Q$ of $V(H)$ and every component $C$ of $f^{-1}(Q)$ we have $f(C)=Q$. Equivalently, if for every closed connected subset $Q$ of $V(H)$ and every vertex $a\in V(G)$ such that $f(a)\in Q$ there is a connected set $C$ of $V(G)$ such that $a\in C$ and $f(C)=Q$. Clearly, every monotone epimorphism is confluent.
\end{definition}

\begin{observation}
If $f\colon X \to Y$ and $g\colon Y \to Z$ are confluent epimorphisms between topological graphs, then $g\circ f\colon X \to Z$ is a confluent epimorphism.
\end{observation}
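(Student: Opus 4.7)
The plan is to verify the definition of confluent epimorphism for $g\circ f$ directly. That $g\circ f$ is an epimorphism between topological graphs is routine: surjectivity on vertices and on edges is preserved under composition, and so is continuity. The content lies in the confluence condition.

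For that, I would fix a closed connected subset $Q$ of $V(Z)$ and a component $C$ of $(g\circ f)^{-1}(Q)=f^{-1}(g^{-1}(Q))$, and aim to show $(g\circ f)(C)=Q$. The first move is to push $C$ down one level: the image $f(C)$ is connected (epimorphisms send connected sets to connected sets, cf.\ Proposition \ref{connected-image}) and contained in the closed set $g^{-1}(Q)$, hence lies in a single component $D$ of $g^{-1}(Q)$. Because components of a closed subspace are closed in the ambient space, $D$ is a closed connected subset of $V(Y)$, which is the kind of set on which the confluence of $g$ can be invoked.

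The key observation is then that $C$ is in fact a component of the smaller set $f^{-1}(D)$, and not merely of $f^{-1}(g^{-1}(Q))$. Indeed, any connected subset of $f^{-1}(D)$ containing $C$ is a fortiori a connected subset of $f^{-1}(g^{-1}(Q))$ containing $C$, so maximality of $C$ in the larger set forces maximality in the smaller one. Once this is established, confluence of $f$ applied to the closed connected set $D$ gives $f(C)=D$, and confluence of $g$ applied to $Q$ (with $D$ a component of $g^{-1}(Q)$) gives $g(D)=Q$. Composing, $(g\circ f)(C)=g(f(C))=g(D)=Q$, as required.

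I do not expect any serious obstacle here: the argument is essentially a two-step unwinding of the definition. The only mildly delicate point is the component-of-a-component step, and even that is just the remark that connectedness and maximality pass correctly between the nested preimages $f^{-1}(D)\subseteq f^{-1}(g^{-1}(Q))$.
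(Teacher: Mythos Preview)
Your argument is correct and is exactly the natural direct verification; the paper itself states this result as an Observation and gives no proof, so there is nothing to compare against beyond noting that your proof supplies the routine details the paper omits.
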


\begin{proposition}\label{confluent-edges}
 Given two finite graphs $G$ and $H$ the following conditions are equivalent for an epimorphism $f\colon G\to H$:
\begin{enumerate}
  \item $f$ is  confluent;
  \item for every edge $P\in  E(H)$ and for every vertex $a\in V(G)$ such that $f(a)\in P$, there is an edge $E\in E(G)$ and a connected set $R\subseteq V(G)$ such that  $E\cap R\ne \emptyset$, $f(E)=P$, $a\in V(R)$, and $f(R)=\{f(a)\}$.
  \item  for every edge $P\in  E(H)$ and every component $C$ of $f^{-1}(P)$ there is an edge $E$ in $C$ such that $f(E)=P$.
\end{enumerate}
\end{proposition}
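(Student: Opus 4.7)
The plan is to prove the cyclic chain (1) $\Rightarrow$ (3) $\Rightarrow$ (2) $\Rightarrow$ (1). Throughout I focus on the case of a non-degenerate edge $P=\langle p,q\rangle$ with $p\ne q$, since for a degenerate edge $P=\langle p,p\rangle$ the reflexive loop at any appropriate vertex supplies both the required edge $E$ and trivially the singleton $R=\{a\}$.

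For (1) $\Rightarrow$ (3), I would fix a component $C$ of $f^{-1}(P)$. Confluence yields $f(C)=\{p,q\}$, so both $C\cap f^{-1}(p)$ and $C\cap f^{-1}(q)$ are disjoint, nonempty, and closed in $C$. The definition of a connected set then forces an edge of $G$ between them, and such an edge maps onto $P$.

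For (2) $\Rightarrow$ (1), given a closed connected $Q\subseteq V(H)$ and $a\in V(G)$ with $f(a)\in Q$, I would build a connected set $C'\ni a$ inside $f^{-1}(Q)$ with $f(C')=Q$ by induction along a spanning tree of $Q$ rooted at $f(a)$. Starting with $C_0=\{a\}$, at stage $k$ with $C_k$ already mapping onto $\{q_0,\ldots,q_k\}\subseteq Q$, pick a new vertex $q_{k+1}\in Q$ adjacent to some $q_j$ and a preimage $a'\in C_k$ of $q_j$. Applying (2) to the edge $\langle q_j,q_{k+1}\rangle$ and $a'$ supplies an edge $E$ with $f(E)=\langle q_j,q_{k+1}\rangle$ and a connected set $R\ni a'$ with $f(R)=\{q_j\}$ and $R\cap E\ne\emptyset$. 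Setting $C_{k+1}=C_k\cup R\cup E$ preserves connectedness since $a'\in C_k\cap R$ and $R\cap E\ne\emptyset$. After finitely many stages $f(C')=Q$, and hence the component of $f^{-1}(Q)$ containing $a$ maps onto $Q$.

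The main obstacle is (3) $\Rightarrow$ (2): condition (3) supplies an edge $E$ inside the component $C$ of $f^{-1}(P)$ containing $a$, but not obviously a connected set $R$ mapping to $\{f(a)\}$ that both contains $a$ and meets $E$. My plan is to let $R$ be the component of $C\cap f^{-1}(p)$ containing $a$ (writing $p=f(a)$) and argue that $R$ has an edge into $C\cap f^{-1}(q)$, which is nonempty by (3). If no such edge existed, then the connectedness of $C$ would produce an edge $\langle u,v\rangle$ with $u\in R$ and $v\in C\setminus R$; by hypothesis $f(v)\ne q$, so $v\in C\cap f^{-1}(p)\setminus R$ would be adjacent to $R$, contradicting the maximality of $R$ as a component of $C\cap f^{-1}(p)$. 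The resulting edge is the required $E$, and $R$ the required connected set.
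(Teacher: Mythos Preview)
Your argument is correct, but the cyclic order you chose differs from the paper's. The paper proves (1) $\Rightarrow$ (2) $\Rightarrow$ (3) $\Rightarrow$ (1): from confluence one picks a connected $C\ni a$ with $f(C)=P$, finds an edge $E\subseteq C$ onto $P$, and declares $R$ to be the component of $f^{-1}(f(a))$ containing $a$; then (2) $\Rightarrow$ (3) is immediate, and (3) $\Rightarrow$ (1) is a short contradiction (if some component $C$ of $f^{-1}(Q)$ misses a vertex $b\in Q$ adjacent to $f(C)$, the component of $f^{-1}(\langle a,b\rangle)$ inside $C$ would contain an edge hitting $b$). Your route (1) $\Rightarrow$ (3) $\Rightarrow$ (2) $\Rightarrow$ (1) trades that contradiction for an explicit inductive construction along a spanning tree of $Q$, which is slightly longer but more constructive. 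The real content in your write-up is the step (3) $\Rightarrow$ (2), where you carefully verify that the component $R$ of $C\cap f^{-1}(p)$ containing $a$ must touch an edge crossing to $f^{-1}(q)$; this is exactly the detail the paper leaves implicit in its (1) $\Rightarrow$ (2) step when asserting $E\cap R\ne\emptyset$, so your version is arguably more complete at that point.
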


\begin{proof}  (1) $\Rightarrow$ (2): Let $P\in E(H)$ and $a \in V(G)$ such that $f(a) \in P$. Then there exists a connected subset $C\subseteq G$ such that $a \in C$ and $f(C)= P$. Since $C$ is connected there is an edge $E\in C$ such that $f(E)=P$.  Let $R$ be the component of $f^{-1}(f(a))$ containing $a$. Then (2) is satisfied.

(2) $\Rightarrow$ (3): This is immediate.

(3) $\Rightarrow$ (1): Suppose towards the contradiction that $f$ is not confluent. Then there is a  connected set $Q \subseteq V(H)$ and a component $C$ of $f^{-1}(Q)$ such that $f(C) \subsetneq Q$. Since $H$ is connected there are vertices $a,b \in Q$ with $a \in f(C)$,  $b \in Q\setminus f(C)$, and $P=\langle a,b\rangle\in E(H)$. Let $D$ be the component of $f^{-1}(P)$ that intersects $C$. Then $D \subseteq C$. By (3), there is an edge $E \in C$ such that $f(E) = P$. Hence $b \in f(C)$, which gives a contradiction. 

\end{proof}

\begin{proposition}\label{composition-confluent}
Consider the epimorphisms $f\colon F\to G$ and $g\colon G \to H$ between topological graphs. If the composition $g\circ f\colon F\to H$ is confluent, then $g$ is confluent.
\end{proposition}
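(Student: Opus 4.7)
The plan is to verify the defining condition of confluence for $g$ directly from that of $g\circ f$, using that $f$ is surjective on vertices and that epimorphisms of topological graphs send connected sets to connected sets.

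More concretely, I would start with a closed connected subset $Q\subseteq V(H)$ and a vertex $a\in V(G)$ with $g(a)\in Q$, and try to produce a connected set $C\subseteq V(G)$ with $a\in C$ and $g(C)=Q$. Since $f\colon F\to G$ is an epimorphism it is surjective on vertices, so pick some $a'\in V(F)$ with $f(a')=a$. Then $(g\circ f)(a')=g(a)\in Q$, and by confluence of $g\circ f$ (using the equivalent form with vertices, which is stated right in the definition), there exists a connected set $C'\subseteq V(F)$ with $a'\in C'$ and $(g\circ f)(C')=Q$. The natural candidate is then $C:=f(C')$, which satisfies $a=f(a')\in C$ and $g(C)=g(f(C'))=(g\circ f)(C')=Q$.

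The only thing left to check is that $C=f(C')$ is connected as a subset of $V(G)$. This is a small generalisation of Proposition \ref{connected-image} to subsets rather than to the whole graph, but the proof is literally the same: if $f(C')=A\sqcup B$ with $A,B$ closed, disjoint, nonempty and non-adjacent in $G$, then $C'=(C'\cap f^{-1}(A))\sqcup (C'\cap f^{-1}(B))$ is a decomposition of $C'$ into disjoint nonempty closed sets, and since $f$ sends edges to edges, there can be no edge of $F$ joining a vertex over $A$ to a vertex over $B$, contradicting connectedness of $C'$.

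There is no real obstacle here; the only point requiring a brief explanation is the fact that epimorphisms of topological graphs preserve connectedness of arbitrary closed subsets, not merely of the whole structure, and I would either note that the proof of Proposition \ref{connected-image} adapts verbatim or simply quote it. Note that this argument does not require confluence of $f$, nor monotonicity; it uses only that $f$ is an epimorphism, which is why the result is phrased as a one-sided implication on the outer factor $g$.
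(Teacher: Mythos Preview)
Your proof is correct and follows essentially the same line as the paper's: pick a preimage $a'\in f^{-1}(a)$, use confluence of $g\circ f$ to find a connected $C'\ni a'$ with $(g\circ f)(C')=Q$, and set $C=f(C')$. The paper is a bit terser and does not spell out why $f(C')$ is connected, whereas you take the extra care to justify this via the obvious adaptation of Proposition~\ref{connected-image}; this is a reasonable addition but not a different idea.
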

\begin{proof}
Let $E$ be a closed connected subset of $H$, and let $a\in G$ be a vertex such that $g(a)\in E$. We need to find a closed connected subset of $S$ of $G$ that contains $a$ and such that $g(S)=E$. Since $g\circ f$ is confluent, there is a closed connected subset $C$ of $F$ that contains a vertex in $f^{-1}(a)$ and such that $(g\circ f)(C)=E$. Then 
$S=f(C)$ satisfies the requirements.
\end{proof}  

\begin{proposition}\label{confluent-projections}
If $\mathcal F$ is a projective Fra\"{\i}ss\'e family of graphs with confluent epimorphisms and $\mathbb F$ is a projective Fra\"{\i}ss\'e limit of $\mathcal F$, then
for every graph $G\in\mathcal F$ every epimorphism $f_G\colon \mathbb F\to G$ is confluent.
\end{proposition}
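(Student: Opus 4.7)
The plan is to reduce the statement to showing that each projection $\alpha_m^\infty\colon \mathbb F\to F_m$ from a Fra\"{\i}ss\'e sequence is confluent. Fix such a sequence $\{F_n,\alpha_n\}$ with inverse limit $\mathbb F$. By Lemma \ref{ladder}(1), any epimorphism $f_G\in\mathcal F^\omega$ factors as $f_G=f\circ\alpha_m^\infty$ with $f\in\mathcal F$, and $f$ is confluent by hypothesis. Since compositions of confluent epimorphisms are confluent (the observation following the definition of confluent), confluence of $\alpha_m^\infty$ will imply confluence of $f_G$.

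Fix $m$, a closed connected $Q\subseteq V(F_m)$, and $a=(a_n)\in\mathbb F$ with $a_m\in Q$. For each $n\ge m$ let $C_n$ be the component of $(\alpha_m^n)^{-1}(Q)$ that contains $a_n$. Since $\alpha_m^n$ is a composition of confluent epimorphisms, it is confluent, so $\alpha_m^n(C_n)=Q$. Moreover $\alpha_n(C_{n+1})\subseteq C_n$ because $\alpha_n(C_{n+1})$ is connected, is contained in $(\alpha_m^n)^{-1}(Q)$, and contains $a_n$. Finiteness of $F_n$ forces the decreasing chain $\{\alpha_n^k(C_k)\}_{k\ge n}$ to stabilize, so I define
\[
D_n=\bigcap_{k\ge n}\alpha_n^k(C_k)=\alpha_n^K(C_K)\quad\text{for }K\text{ large}.
\]
Each $D_n$ is therefore a connected subgraph of $F_n$, and a direct check yields $\alpha_n(D_{n+1})=D_n$ and $D_m=Q$. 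Set $D=\iLim\{D_n,\alpha_n|_{D_{n+1}}\}\subseteq\mathbb F$. Surjectivity of the bonding maps gives $\alpha_n^\infty(D)=D_n$ for all $n\ge m$, hence $\alpha_m^\infty(D)=Q$; and $a\in D$ since $a_n=\alpha_n^k(a_k)\in\alpha_n^k(C_k)$ for every $k\ge n$.

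The main step is showing that $D$ is connected as a subgraph of $\mathbb F$. Assume toward contradiction that $D=A\cup B$ with $A,B$ disjoint, closed, nonempty, and non-adjacent in $\mathbb F$. For each $n\ge m$, $D_n=\alpha_n^\infty(A)\cup\alpha_n^\infty(B)$; connectedness of $D_n$ forces, for each such $n$, either that these two images share a vertex of $F_n$ or that they are joined by an edge of $F_n$. Pass to a subsequence $(n_k)$ along which one alternative persists, and choose $x_k\in A$ and $y_k\in B$ whose $\alpha_{n_k}^\infty$-images either coincide (first case) or are the endpoints of an edge in $F_{n_k}$ (second case). By compactness extract limits $x_k\to x\in A$ and $y_k\to y\in B$. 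In the first case, for any fixed index $i$ one has $\alpha_i^\infty(x_k)=\alpha_i^\infty(y_k)$ as soon as $n_k\ge i$, so in the limit $\alpha_i^\infty(x)=\alpha_i^\infty(y)$ for every $i$, whence $x=y$, contradicting $A\cap B=\emptyset$. In the second case, since each $\alpha_i^{n_k}$ is a graph homomorphism, $\langle\alpha_i^\infty(x_k),\alpha_i^\infty(y_k)\rangle\in E(F_i)$ eventually for every $i$; closedness (in fact discreteness) of $E(F_i)$ then yields $\langle x,y\rangle\in E(\mathbb F)$, contradicting non-adjacency of $A$ and $B$. The principal difficulty is precisely this last step: converting a putative disconnection of $D$ in $\mathbb F$ into a finite-level disconnection, while being careful to distinguish the graph-theoretic notion of connectedness used here from mere topological connectedness of $D$ as a subspace.
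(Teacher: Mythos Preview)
Your proof is correct and takes a genuinely different route from the paper's. The paper works edge-by-edge: given an edge $\langle a_m,b_m\rangle\in E(F_m)$ and $a=(a_n)$ with $\alpha_m^\infty(a)=a_m$, it inductively constructs finite arcs $J_n\subseteq F_n$ with endpoint $a_n$ and $\alpha_m^n(J_n)=\{a_m,b_m\}$, using confluence of each $\alpha_n$ to extend; the inverse limit $J=\iLim\{J_n\}$ is then a connected set through $a$ mapping onto the edge, and confluence for arbitrary connected $Q\subseteq F_m$ follows (implicitly) from the edge criterion in Proposition~\ref{confluent-edges}, whose proof of $(3)\Rightarrow(1)$ still goes through when only the codomain is finite. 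You instead attack an arbitrary connected $Q$ directly: take the component $C_n$ of $(\alpha_m^n)^{-1}(Q)$ through $a_n$, stabilize the images $D_n=\alpha_n^K(C_K)$, and prove that $D=\iLim\{D_n\}$ is connected by a compactness argument on a putative separation.

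Each approach has its advantages. The paper's construction yields arcs, which is stronger information and mirrors the arguments used later for arcwise connectedness. Your argument is closer in spirit to the classical continuum-theoretic proof that inverse limits with confluent bonding maps have confluent projections, and it avoids any appeal to the finite-graph edge characterization. One small remark: the factorization $f_G=f\circ\alpha_m^\infty$ with $f\in\mathcal F$ comes from the \emph{definition} of $\mathcal F^\omega$ rather than from Lemma~\ref{ladder}(1); the latter only produces an arbitrary epimorphism $\gamma$, not one known to lie in $\mathcal F$. This does not affect the mathematics, since the intended reading of the proposition is that $f_G\in\mathcal F^\omega$, exactly as in the paper's own final paragraph.
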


\begin{proof}

We first show that the canonical projections $\alpha^\infty_m \colon \iLim\{F_n,\alpha_n\} \to F_m$ are confluent.
 Let $m$ and $\langle a_m, b_m\rangle\in E(F_m)$ be given. Fix $a=(a_n)\in \mathbb F$, so we have $\alpha^\infty_m(a)=a_m$.
We will find $b\in \mathbb F$ with $\alpha^\infty_m(b)=b_m$, such that there is an arc $J$ joining $a$ and $b$, for which $\alpha^\infty_m(J)=\{a_m,b_m\}$. We construct $J$ as the inverse limit of finite arcs. Let $J_m=\{a_m, b_m\}$, and for $k<m$,
let $J_k=\alpha^m_k(J_m)$. By the confluence of $\alpha_m$, there is an arc $J_{m+1}\subseteq F_{m+1}$ such that $a_{m+1}$ is an end vertex of $J_{m+1}$, $\alpha_m(J_{m+1})=J_m$, and $\alpha_m(b_{m+1})=b_m$, where $b_{m+1}$ is
the other end vertex of $J_{m+1}$. 
Suppose that we constructed an arc $J_n$, for some $n>m$, such that  $\alpha_m^n(J_{n})=J_m$, $a_n$ is an end vertex of $J_n$, and $\alpha_m^n(b_{n})=b_m$, where $b_n$ is
the other end vertex of $J_{n}$.  By the confluence of $\alpha_n$, there is an arc $J_{n+1}\subseteq F_{n+1}$ such that $a_{n+1}$ is an end vertex of $J_{n+1}$, $\alpha_n(J_{n+1})=J_n$, and $\alpha_n(b_{n+1})=b_n$, where $b_{n+1}$ is
the other end vertex of $J_{n+1}$. This finishes the construction of $\{J_n\}$ and $(b_n)$. We let $J=\iLim\{J_n,\alpha_n|J_n\}$ and $b=(b_n)$.

Finally, there is an $m_0$ and a confluent epimorphism $f\colon F_{m_0} \to G$ such that $f_G= f\circ \alpha_{m_0}^\infty$.  As $\alpha_{m_0}^\infty $ is confluent, so is $f_G$.

\end{proof}

\begin{definition}
An epimorphism $f\colon G\to H$ between graphs is called {\it light} if for any vertex $h\in V(H)$
and for any $a,b\in f^{-1}(h)$ we have $\langle a,b \rangle\notin E(G)$.

\end{definition}

\begin{definition}\label{standamal}
Recall the diagram (D1).

\begin{equation}\tag{D1}
\begin{tikzcd}
&B\arrow{ld}[swap]{f}\\
A&&D\arrow[lu,swap,dotted,"f_0"] \arrow[ld,dotted,"g_0"]\\
&C\arrow[lu,"g"]
\end{tikzcd}
\end{equation}
For given finite graphs $A,B,C$ and epimorphisms $f\colon B\to A$, $g\colon C\to A$ by {\it standard amalgamation procedure} we mean the graph $D$ defined by $V(D)=\{( b,c)\in B\times C:f(b)=g(c)\}$; $E(D)=\{\langle ( b,c), (b',c') \rangle:\langle b,b'\rangle\in E(B)  \text{ and } \langle c,c'\rangle\in E(C)\}$; and
$f_0(( b,c))=b$, $g_0(( b,c))=c$. Note that $f_0$ and $g_0$ are epimorphisms and $f\circ f_0=g\circ g_0$.
\end{definition}

\begin{proposition}\label{light}
Using the standard amalgamation procedure and the notation of diagram {\rm (D1)}, if the epimorphism $f$
is light, then the epimorphism $g_0$ is light.
\end{proposition}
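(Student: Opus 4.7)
The plan is to argue by contradiction using the explicit description of $D$ in the standard amalgamation. Suppose $g_0$ fails to be light, so there exist distinct vertices $(b,c), (b',c') \in V(D)$ with $g_0(b,c) = g_0(b',c')$ and $\langle (b,c),(b',c')\rangle \in E(D)$. Unpacking $g_0$ gives $c = c'$, and since the pairs are distinct, we conclude $b \neq b'$.

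Next I would unpack the edge relation in $D$. By the definition of $E(D)$ in the standard amalgamation procedure, $\langle (b,c),(b',c)\rangle \in E(D)$ forces $\langle b,b'\rangle \in E(B)$. On the other hand, the membership condition for $V(D)$ gives $f(b) = g(c) = f(b')$, so both $b$ and $b'$ lie in $f^{-1}(g(c))$. Thus $b, b'$ are two distinct vertices in a common fiber of $f$ connected by an edge of $B$, directly contradicting the lightness of $f$.

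There is essentially no obstacle here: the only subtle point is ensuring $b \neq b'$ before invoking the definition of light (since edges are reflexive, the degenerate case is always present but harmless, and the definition of light is to be read as applying to distinct preimages). Once distinctness is secured from $(b,c) \neq (b',c)$, the contradiction is immediate, and the proof is complete.
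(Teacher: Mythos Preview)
Your proof is correct and follows essentially the same approach as the paper's: both argue by contradiction, take two vertices in a fiber of $g_0$ joined by an edge, and observe that their first coordinates lie in a common fiber of $f$ and are joined by an edge in $B$. You are slightly more explicit about securing $b \neq b'$ before invoking lightness of $f$, which is a reasonable point of care given the reflexive edge convention.
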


\begin{proof}
Suppose $g_0$ is not light.  Then there is a vertex $h\in V(C)$ such that  $(a_1,h),(a_2,h)\in g_0^{-1}(h)$, and $\langle (a_1,h),(a_2,h) \rangle\in E(D)$.  Then, $\langle a_1,a_2\rangle \in E(B)$ and $a_1, a_2\in f^{-1}(g(h))$, contradicting $f$ being light.
\end{proof}

\begin{proposition}\label{monotone-implies-monotone}
Using the standard amalgamation procedure and the notation of the diagram {\rm (D1)}, if the epimorphism $f$
is monotone, then the epimorphism $g_0$ is monotone.
\end{proposition}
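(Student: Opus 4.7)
The plan is to invoke the earlier unnamed lemma (stated in Section~3) asserting that an epimorphism between topological graphs is monotone as soon as the preimage of every single vertex is connected. Thus it suffices to show, for each $c \in V(C)$, that $g_0^{-1}(c)$ is a connected subgraph of $D$.

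By the definition of the standard amalgamation we have $g_0^{-1}(c) = \{(b,c) \in V(D) : f(b) = g(c)\}$, whose vertex set is in bijection with the fiber $f^{-1}(g(c)) \subseteq V(B)$ via the projection $(b,c) \mapsto b$. Moreover, for two such vertices $(b,c)$ and $(b',c)$ the edge $\langle (b,c), (b',c) \rangle$ lies in $E(D)$ precisely when $\langle b, b' \rangle \in E(B)$ and $\langle c, c \rangle \in E(C)$. The latter holds automatically because $E(C)$ is reflexive. Hence the bijection above is in fact an isomorphism of graphs between $g_0^{-1}(c)$ (as a subgraph of $D$) and $f^{-1}(g(c))$ (as a subgraph of $B$).

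Since $f$ is monotone and $\{g(c)\}$ is a (trivially) connected subset of $V(A)$, the fiber $f^{-1}(g(c))$ is connected in $B$. Transporting this along the isomorphism above, we conclude that $g_0^{-1}(c)$ is connected in $D$, which is the required input to the lemma. The argument is essentially a routine verification; the only subtle point is recognizing that the reflexive self-loop $\langle c, c \rangle \in E(C)$ is exactly what ensures that the edges of $f^{-1}(g(c))$ all survive inside $g_0^{-1}(c)$. Beyond that observation, there is no real obstacle.
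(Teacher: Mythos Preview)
Your proof is correct and follows essentially the same approach as the paper: both identify $g_0^{-1}(c)$ with $f^{-1}(g(c))$ via the obvious bijection and invoke monotonicity of $f$. Your version is simply more explicit, spelling out why the bijection is a graph isomorphism (via reflexivity of $E(C)$) and citing the vertex-fiber criterion for monotonicity, whereas the paper compresses all of this into a single sentence.
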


\begin{proof}
Fix a vertex $c\in V(C)$ and consider $g_0^{-1}(c)=\{\langle x,c\rangle :f(x)=g(c)\}$. This set is isomorphic to
$f^{-1}(g(c))$ which is connected by monotonicity of $f$.
\end{proof}

\begin{proposition}\label{standard-confluent}
Using the standard amalgamation procedure and the notation of diagram {\rm (D1)}, if the epimorphism $f$ is confluent,  then the epimorphism $g_0$ is confluent.
\end{proposition}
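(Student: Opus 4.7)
The plan is to verify condition (2) of Proposition \ref{confluent-edges} for the epimorphism $g_0\colon D\to C$. Fix an edge $P=\langle c,c'\rangle\in E(C)$ and a vertex $(b,c)\in V(D)$ with $g_0(b,c)=c\in P$; we need to produce an edge $E\in E(D)$ and a connected set $R\subseteq V(D)$ such that $E\cap R\neq\emptyset$, $g_0(E)=P$, $(b,c)\in R$, and $g_0(R)=\{c\}$. (By symmetry, the case where the chosen vertex has second coordinate $c'$ reduces to this after relabeling $P$.)

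The key idea is to push the edge $P$ down to $A$ using $g$, apply confluence of $f$ in $A$, and then lift the resulting witnesses back up to $D$. Consider the (possibly degenerate) edge $g(P)=\langle g(c),g(c')\rangle\in E(A)$. Since $f$ is confluent and $f(b)=g(c)\in g(P)$, Proposition \ref{confluent-edges}(2) applied to $f$ produces an edge $E_B=\langle b_1,b_2\rangle\in E(B)$ and a connected set $R_B\subseteq V(B)$ with $b\in R_B$, $f(R_B)=\{g(c)\}$, $f(E_B)=g(P)$, and $E_B\cap R_B\neq\emptyset$. Relabel $b_1,b_2$ so that $f(b_1)=g(c)$ and $f(b_2)=g(c')$; if $g(c)\neq g(c')$ this is forced, and then $b_1\in R_B$ automatically (because $f(b_2)\notin f(R_B)$), while if $g(c)=g(c')$ we may choose the labeling so that $b_1\in R_B$.

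Now define
\[
R=\{(x,c):x\in R_B\}\subseteq V(D),\qquad E=\langle (b_1,c),(b_2,c')\rangle.
\]
The inclusion $R\subseteq V(D)$ holds because $f(x)=g(c)$ for every $x\in R_B$. The set $R$ is connected: any edge $\langle x,y\rangle$ in $R_B$ yields the edge $\langle (x,c),(y,c)\rangle$ in $E(D)$ via reflexivity of $\langle c,c\rangle\in E(C)$, so any disconnection of $R$ would project to a disconnection of $R_B$. The pair $E$ is an edge of $D$ because both endpoints lie in $V(D)$ (by the choice of $f(b_1)$ and $f(b_2)$) and $\langle b_1,b_2\rangle\in E(B)$, $\langle c,c'\rangle\in E(C)$. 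The remaining requirements are immediate: $g_0(R)=\{c\}$ by construction, $(b,c)\in R$ since $b\in R_B$, $g_0(E)=\langle c,c'\rangle=P$, and $(b_1,c)\in E\cap R$.

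I do not anticipate a real obstacle; the only mildly delicate point is the uniform treatment of the case $g(c)=g(c')$, where the edge $g(P)$ is degenerate in $A$. This is absorbed into the relabeling step above and costs nothing thanks to reflexivity of edges in the definition of a (topological) graph.
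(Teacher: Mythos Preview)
Your proof is correct and follows essentially the same route as the paper's: push the edge $\langle c,c'\rangle$ down to $A$ via $g$, use confluence of $f$ to find the edge $\langle b_1,b_2\rangle$ and connected set $R_B$ in $B$, then lift to $D$ via $R=R_B\times\{c\}$ and $E=\langle(b_1,c),(b_2,c')\rangle$. You are simply a bit more explicit than the paper about the relabeling that secures $b_1\in R_B$ and about the degenerate case $g(c)=g(c')$.
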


\begin{proof}
Let $\langle c,c'\rangle \in E(C)$ and $(b,c)\in V(D)$. By Proposition \ref{confluent-edges} we need to find an edge 
$\langle d_1,d_2\rangle \in E(D)$ that is mapped by $g_0$ onto  $\langle c,c'\rangle$ and a connected set $R$ such that 
$(b,c), d_1\in R$, and $g_0(R)=\{c\}$. Since $f$ is confluent, there are vertices $b_1,b_2\in B$ and a connected set $R_B$ such that $\langle b_1,b_2\rangle\in E(B)$, $b,b_1\in R_B$, $f(R_B)=\{f(b)\}=\{g(c)\}$, and $f(\langle b_1,b_2\rangle)=\langle g(c),g(c')\rangle$. It is enough to put $d_1=( b_1,c)$, $d_2=( b_2,c')$, and $R=R_B\times \{c\}$. 

\end{proof}

\begin{lemma}\label{restcomp}
    Let $f\colon B\to A$ be a confluent epimorphism between (not necessarily connected) graphs. Let $X\subseteq A$ be connected and let $Y$ be a component of $f^{-1}(X)$. Then $f|_Y$ is confluent.
\end{lemma}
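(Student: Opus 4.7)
The plan is to show that any closed connected set $Q\subseteq f(Y)$ and any component $C$ of $(f|_Y)^{-1}(Q)$ satisfy $f(C)=Q$, by promoting $C$ to a component $C'$ of $f^{-1}(Q)$ in the ambient graph $B$, applying the confluence of $f$ to $C'$, and then arguing $C'=C$. A preliminary remark is that, since $X$ is connected and $f$ is confluent, the component $Y$ of $f^{-1}(X)$ satisfies $f(Y)=X$, so the codomain of $f|_Y$ really is $X$.

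First I would fix $Q\subseteq X$ closed and connected and a component $C$ of $(f|_Y)^{-1}(Q)=Y\cap f^{-1}(Q)$. Choose any vertex $a\in C$ and let $C'$ be the component of $f^{-1}(Q)$ in $B$ that contains $a$. Since $f$ is confluent and $Q$ is closed and connected, $f(C')=Q$.

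The key step is to check that $C'\subseteq Y$, so that $C'\subseteq (f|_Y)^{-1}(Q)$ and hence $C'\subseteq C$ by maximality of $C$; combined with $C\subseteq C'$ this gives $C=C'$ and the desired equality $f(C)=Q$. To see $C'\subseteq Y$, note that $C'\subseteq f^{-1}(Q)\subseteq f^{-1}(X)$, and $C'\cap Y$ contains $a$. The union of two connected subsets of a graph that share a vertex is connected (if $C'\cup Y=P\sqcup R$ were a non-adjacent closed partition, then connectedness of $C'$ and of $Y$ would force each of them into a single piece, and their common vertex $a$ would force both into the same piece, making the other piece empty). Thus $C'\cup Y$ is a connected subset of $f^{-1}(X)$ meeting $Y$; by maximality of $Y$ as a component of $f^{-1}(X)$, we conclude $C'\cup Y\subseteq Y$, i.e.\ $C'\subseteq Y$.

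I do not foresee a serious obstacle: the only subtle point is the elementary fact that two connected subgraphs sharing a vertex have connected union, which follows directly from the graph-theoretic definition of connectedness given in Section~2. With that in hand the argument is a two-step application of confluence followed by a maximality comparison.
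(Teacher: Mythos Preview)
Your proof is correct but takes a different route than the paper. The paper uses the edge-level characterization of confluence from Proposition~\ref{confluent-edges}(2): given an edge $P\in E(X)$ and $a\in Y$ with $f(a)\in P$, it pulls back in $B$ an edge $E$ and a connected set $R\subseteq f^{-1}(f(a))$ witnessing confluence of $f$, then observes that $R$ (and hence $E$) must lie inside the component $Y$ since $R$ is connected and meets $Y$ inside $f^{-1}(X)$. Your argument instead works directly with the original definition of confluence: you take an arbitrary closed connected $Q\subseteq X$ and show that components of $(f|_Y)^{-1}(Q)$ coincide with components of $f^{-1}(Q)$, by the same ``connected set meeting $Y$ inside $f^{-1}(X)$ must lie in $Y$'' maximality step. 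The underlying idea is the same, but your version avoids the detour through the edge characterization and therefore applies verbatim to topological graphs, not only to finite ones; the paper's version is shorter once Proposition~\ref{confluent-edges} is in hand.
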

\begin{proof}

    Let $P\in E(X)$ and let $a\in Y$ be such that $f(a)\in V(P)$. Since $f$ is confluent, by Proposition \ref{confluent-edges}(2), there is an edge $E \subseteq E(B)$ and a connected set $R \subseteq V(B)$ such that $E \cap R \not = \emptyset$, $f(E)=P$, $a \in V(R)$, and $f(R)=\{f(a)\}$.  Since $R$ is connected, $f(R)=\{f(a)\}\subseteq V(P)\subseteq V(X)$, and $Y$ is a component of $f^{-1}(X)$, it follows that $R \subseteq Y$ and the edge $E$ is in $Y$. Then the connected set $R$ and the edge $E$ satisfy Condition (2) of Proposition \ref{confluent-edges} for $f_0$.

\end{proof}

Proposition \ref{standard-confluent} and Lemma \ref{restcomp} yields the following corollary.
\begin{corollary}\label{confluent-components}
Using the standard amalgamation procedure and the notation of diagram {\rm (D1)}, if the graphs $A,B,C$ are connected and the epimorphisms $f$ and $g$ are confluent, then for each component $P$ of the graph $D$ we have $f_0(P)=B$ and $g_0(P)=C$. Moreover, $f_0|_P$ and $g_0|_P$ are confluent.
\end{corollary}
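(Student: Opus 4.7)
The plan is to apply Proposition~\ref{standard-confluent} in both directions (swapping the roles of $f,g$) to get confluence of both $f_0$ and $g_0$, then feed this into Lemma~\ref{restcomp} with $X=B$ and $X=C$ respectively.

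First I would observe that the standard amalgamation procedure is symmetric in the pairs $(f,f_0)$ and $(g,g_0)$: the vertex set $\{(b,c): f(b)=g(c)\}$ and the edge relation are symmetric, and swapping the two coordinates interchanges $f_0$ with $g_0$. Hence Proposition~\ref{standard-confluent} applied as stated gives that $g_0$ is confluent (from confluence of $f$), while the same proposition applied after swapping the roles gives that $f_0$ is confluent (from confluence of $g$). So both $f_0\colon D\to B$ and $g_0\colon D\to C$ are confluent epimorphisms.

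Next, let $P$ be any component of $D$. Since $B$ is connected and $f_0^{-1}(B)=D$, the component $P$ is in particular a component of $f_0^{-1}(B)$. By confluence of $f_0$ applied to the connected set $B\subseteq V(B)$ and the component $P$ of its preimage, we conclude $f_0(P)=B$. The same argument, using connectedness of $C$ and confluence of $g_0$, yields $g_0(P)=C$.

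Finally, for the ``moreover'' clause I would directly invoke Lemma~\ref{restcomp}. Take $f_0\colon D\to B$ (confluent, as established above), the connected subset $X:=B$ of $V(B)$, and the component $Y:=P$ of $f_0^{-1}(B)=D$. The lemma then asserts that $f_0|_P$ is confluent. Symmetrically, Lemma~\ref{restcomp} applied to $g_0$, $X:=C$, and $Y:=P$ gives confluence of $g_0|_P$. No step here is the main obstacle; the content is entirely in the earlier Proposition~\ref{standard-confluent} and Lemma~\ref{restcomp}, and the corollary is essentially bookkeeping that combines them with the symmetry of the standard amalgamation.
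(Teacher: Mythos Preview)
Your proposal is correct and matches the paper's approach exactly: the paper simply states that the corollary follows from Proposition~\ref{standard-confluent} and Lemma~\ref{restcomp}, and you have spelled out precisely how. The only thing you add is making explicit the symmetry of the standard amalgamation and the use of confluence applied to the full connected target to get $f_0(P)=B$, $g_0(P)=C$; this is exactly the intended unpacking.
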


\begin{corollary}\label{confluent-amalgamation}
The family of finite connected graphs with confluent epimorphisms is a projective Fra\"{\i}ss\'e family.
\end{corollary}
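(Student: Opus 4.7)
The strategy is to check the four conditions of Definition \ref{definition-Fraisse} in order, with the amalgamation property being the only one requiring the machinery we have just developed.

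Condition (1) is immediate: there are only countably many finite graphs up to isomorphism. Condition (2) follows from the observation that the identity is trivially confluent and that the composition of two confluent epimorphisms between topological graphs is confluent (stated right after the definition of confluence).

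For the joint projection property (3), let $B,C$ be finite connected graphs and take $A$ to be the one-vertex graph (with its degenerate loop). The constant maps $B\to A$ and $C\to A$ are epimorphisms and, since $A$ has only the trivial closed connected subset, they are vacuously confluent. Applying the amalgamation property below to these two maps then furnishes a finite connected graph $D$ with confluent epimorphisms onto $B$ and onto $C$, so (3) reduces to (4).

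For the amalgamation property (4), suppose $f\colon B\to A$ and $g\colon C\to A$ are confluent epimorphisms between finite connected graphs. Form the standard amalgamation $D$ of Definition \ref{standamal}, together with the canonical epimorphisms $f_0\colon D\to B$ and $g_0\colon D\to C$ satisfying $f\circ f_0=g\circ g_0$. The graph $D$ need not itself be connected, but this is precisely the situation covered by Corollary \ref{confluent-components}: every component $P$ of $D$ satisfies $f_0(P)=B$ and $g_0(P)=C$, and the restrictions $f_0|_P\colon P\to B$ and $g_0|_P\colon P\to C$ are confluent epimorphisms. Choosing any such component $P$, we obtain a finite connected graph together with confluent epimorphisms to $B$ and $C$ whose compositions with $f$ and $g$ agree, which is exactly the amalgamation diagram (D1). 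The only mildly delicate point is to notice that $P$ still carries all the necessary structure after restriction, but this is automatic since $f_0|_P$ and $g_0|_P$ remain surjective on both vertices and edges by the corollary. Thus $\mathcal F$ satisfies (4), completing the verification that finite connected graphs with confluent epimorphisms form a projective Fra\"{\i}ss\'e family.
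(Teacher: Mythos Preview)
Your proof is correct and follows essentially the same approach as the paper: the paper's proof is a terse two sentences noting that AP follows from Corollary \ref{confluent-components} and that JPP reduces to AP by taking $A$ to be a single vertex, and you have filled in precisely these details along with the routine verification of conditions (1) and (2).
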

\begin{proof}
The AP follows from
Corollary \ref{confluent-components}.    The JPP for graphs $B$ and $C$ follows from AP if $A$, in the notation of diagram {\rm (D1)}, is a graph with only one vertex.
\end{proof}

We finish this section with the following decomposition results.

\begin{proposition}\label{m-l-factorization}
Given an epimorphism $f\colon G\to H$ between graphs, there is a graph $M$ and epimorphisms
$m\colon G\to M$, $l\colon M\to H$ such that $f=l\circ m$,  $m$ is monotone, and $l$ is light. 
\end{proposition}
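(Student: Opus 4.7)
The plan is to carry out the classical monotone--light factorization of continuum theory, adapted to the combinatorial setting of graphs. Define a relation $\sim$ on $V(G)$ by declaring $a\sim b$ iff $f(a)=f(b)$ and $a,b$ lie in the same connected component of the fiber $f^{-1}(f(a))$. This is an equivalence relation, so let $V(M):=V(G)/\!\sim$, and define the edge relation
\[
E(M):=\bigl\{\langle [a],[b]\rangle : \text{there exist } a'\in[a],\ b'\in[b]\text{ with } \langle a',b'\rangle\in E(G)\bigr\},
\]
which is reflexive and symmetric. Let $m\colon G\to M$ be the quotient map $a\mapsto [a]$, and define $l\colon M\to H$ by $l([a]):=f(a)$; this is well-defined since $a\sim b$ implies $f(a)=f(b)$, and clearly $f=l\circ m$.

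Next I would check that $m$ and $l$ are epimorphisms. Surjectivity of $m$ on vertices is immediate, and surjectivity on edges is forced by the definition of $E(M)$. For $l$, surjectivity on vertices comes from surjectivity of $f$, and if $\langle h_1,h_2\rangle\in E(H)$, pick $\langle a,b\rangle\in E(G)$ with $f(a)=h_1$, $f(b)=h_2$ (which exists since $f$ is an epimorphism); then $\langle [a],[b]\rangle\in E(M)$ maps via $l$ to $\langle h_1,h_2\rangle$. That $l$ is a homomorphism is built into the definition of $E(M)$.

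Then I would verify the two defining properties. For monotonicity of $m$: the fiber $m^{-1}([a])$ is precisely the equivalence class of $a$, which by definition of $\sim$ is exactly a component of $f^{-1}(f(a))$, hence connected; by the lemma stated earlier (``preimages of vertices connected implies monotone''), $m$ is monotone. For lightness of $l$: suppose $[a]\neq[b]$ with $l([a])=l([b])=h$ and $\langle [a],[b]\rangle\in E(M)$. Then there exist $a'\in[a]$, $b'\in[b]$ with $\langle a',b'\rangle\in E(G)$, and $f(a')=f(b')=h$, so $a'$ and $b'$ lie in the same component of $f^{-1}(h)$; this forces $a'\sim b'$, contradicting $[a]\neq[b]$.

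There is not much obstacle in the finite combinatorial case; the construction is essentially forced. The only point requiring a small amount of care is distinguishing degenerate from non-degenerate edges in the lightness argument (since every vertex has a loop, the conclusion $\langle a',b'\rangle\notin E(G)$ in the definition of light is read with $a'\neq b'$). If one wished to extend the factorization to topological graphs one would additionally have to verify that $\sim$ has closed graph so that $V(M)$ inherits a compact metrizable $0$-dimensional topology and $E(M)$ is closed, but for the statement as given no such verification is required.
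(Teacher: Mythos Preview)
Your proof is correct and follows essentially the same approach as the paper: you define the same equivalence relation $\sim$ on $V(G)$, construct $M$ as the quotient with the induced edge set, and take $m$ and $l$ to be the quotient and induced maps. The paper's proof is in fact only a sketch (``one can verify that the conclusions of the proposition are satisfied''), whereas you supply the verifications in full.
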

\begin{proof}
Define an equivalence relation $\sim$ on $V(G)$ by $x\sim y$ if and only if $f(x)=f(y)$ and $x$ and $y$ are in the same
component of $f^{-1}(f(x))$. Putting $V(M)=V(G)/\!\!\!\sim$, $m\colon V(G)\to V(M)$ as the projection,
$\langle [x]_\sim ,[y]_{\sim} \rangle\in E(M)$
if and only if there are $x' \in [x]_\sim, y' \in [y]_{\sim}$ such that $\langle x',y'\rangle\in E(G)$, and $l([x]_\sim)=f(x)$, one can verify that the conclusions of the proposition are satisfied.
\end{proof}

\begin{corollary}\label{m-l-factorization-confluent}
Under the hypothesis of Proposition \ref{m-l-factorization}, if the epimorphism $f$ is confluent, then both $m$ and $l$ are confluent.
\end{corollary}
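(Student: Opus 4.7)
The corollary factors cleanly into two independent observations, and both are already essentially on the page.

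First, for the monotone factor $m\colon G\to M$: by the remark immediately following the definition of confluent epimorphism, every monotone epimorphism is confluent, so there is nothing further to check for $m$. Concretely, if $Q\subseteq V(M)$ is closed and connected, then $m^{-1}(Q)$ is connected by monotonicity, so its unique component is $m^{-1}(Q)$ itself, which surjects onto $Q$.

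Second, for the light factor $l\colon M\to H$: the composition $l\circ m=f$ is confluent by hypothesis, so Proposition \ref{composition-confluent} applies and gives that $l$ is confluent. This is the only place where we actually use that $f$ is confluent.

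So the ``proof'' is really just two one-line appeals to results already stated: monotonicity$\Rightarrow$confluence for the first factor, and Proposition \ref{composition-confluent} for the second. There is no genuine obstacle; the whole point of having introduced Proposition \ref{composition-confluent} and the observation that monotone implies confluent is to make statements like this immediate. One could alternatively verify confluence of $l$ directly from the explicit description of $M$ (vertices are components of fibers of $f$, edges are induced), using Proposition \ref{confluent-edges}: given an edge $P=\langle l([x]_\sim),l([y]_\sim)\rangle$ in $H$ and any vertex $[a]_\sim\in l^{-1}(l([x]_\sim))$, pick $a'\in[a]_\sim$ and use confluence of $f$ to produce an edge in $G$ over $P$ together with a connected witness set, then push forward by $m$; but this is strictly a rederivation of Proposition \ref{composition-confluent} in the special case at hand and is not needed.
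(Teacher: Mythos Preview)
Your proof is correct and matches the paper's own argument exactly: $m$ is confluent because it is monotone, and $l$ is confluent by Proposition~\ref{composition-confluent} applied to $f=l\circ m$. The alternative direct verification you sketch is unnecessary, as you yourself note.
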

\begin{proof}
   The epimorphism $m$ is confluent since it is monotone, while $l$ is confluent by Proposition \ref{composition-confluent}.
\end{proof}

\section{Rooted trees}\label{rooted-trees}

The following example shows that confluent epimorphisms on (unrooted) trees do not behave well. Because of this we concentrate on families of rooted trees and develop several results that will be used later.

\begin{example} \label{no-confluent}Let $A$ consist of just one edge $\langle 0,1\rangle$, $B$ be an arc with three vertices $\{a,b,c\}$ and two edges $\langle a,c\rangle$, $\langle c,b\rangle$, and $C$ be again an arc with three vertices
$\{p,q,r\}$ and two edges $\langle p,r\rangle$, $\langle r,q\rangle$. Let $f\colon B\to A$ be such that $f(a)=f(b)=0$
and $f(c)=1$ and let $g\colon C\to A$ be such that $g(p)=g(q)=1$ and $g(r)=0$, see figure below.
It is easy to see that $f$ and $g$ are confluent epimorphisms.

\begin{center}
\begin{tikzpicture}[scale=1]

  \draw (0,1) -- (0,2);
  \filldraw[black] (0,1) circle (1pt);
   \filldraw[black] (0,2) circle (1pt);
  \draw (1.5,2) -- (2,3) -- (2.5,2);
     \filldraw[black] (1.5,2) circle (1pt);
     \filldraw[black] (2,3) circle (1pt);
    \filldraw[black] (2.5,2) circle (1pt);
  \draw (1.5,1) -- (2,0) -- (2.5,1);
     \filldraw[black] (1.5,1) circle (1pt);
    \filldraw[black] (2,0) circle (1pt);
    \filldraw[black] (2.5,1) circle (1pt);
 \node at (-0.3,2) {\scriptsize 1};
 \node at (-0.3,1) {\scriptsize 0};
 \draw (1.5,2.5) --  (0.3,1.7);
 \draw  (0.3,1.7) --(0.4,1.9);
  \draw  (0.3,1.7) --(0.5,1.7);
  \node at (1.5,1.8) {\scriptsize a};
  \node at (2.5,1.8) {\scriptsize b};
\node at (2,3.2) {\scriptsize c};
 \draw (1.5,0.5) --  (0.3,1.3);
 \draw  (0.3,1.3) --(0.5,1.3);
  \draw  (0.3,1.3) --(0.4,1.1);
 \draw [dotted] (3.7,1.8) --  (2.5,2.5);
 \draw  (2.5,2.5) --(2.7,2.5);
  \draw  (2.5,2.5) --(2.6,2.3);
  \node at (4,1.5) { D};

 \draw [dotted] (3.7,1.2) --  (2.5,0.5);
 \draw  (2.5,0.5) --(2.7,0.5);
  \draw  (2.5,0.5) --(2.6,0.7);

 \node at (0.9,2.5) {$f$};
 \node at (0.9,0.5) {$g$};
  \node at (3.3,2.5) {$f_0$};
 \node at (3.3,0.5) {$g_0$};

  \node at (1.5,1.2) {\scriptsize p};
  \node at (2.5,1.2) {\scriptsize q};
\node at (2,-0.2) {\scriptsize r};

\end{tikzpicture}
\end{center}

Note that the standard amalgamation procedure gives the graph $D$ to be the cycle $( c,p)$, $( a,r)$, $( c,q)$, $( b,r)$, $( c,p)$ hence not a tree. 

In fact, no amalgamation of the trees $A$, $B$, and $C$ yields a tree.  To see this assume there is an amalgamation of $A$, $B$, and $C$, that is, there is  tree $D$ and confluent epimorphisms $f_0\colon D\to B$ and $g_0\colon D \to C$ such that $g\circ g_0 = f\circ f_0$.

Without loss of generality, $f_0$ and $g_0$  are light confluent. Indeed, 
let $\sim$ be the  equivalence relation on $D$ given by $p\sim q$ if and only if the arc joining $p$ and $q$ is contained in a  component of $(f\circ f_0)^{-1}(a)$, for some $a\in A$. Take $D'=D/\!\!\sim$, $f'_0=f_0/\!\!\sim\colon D'\to B$ and $g'_0=g_0/\!\!\sim\colon D'\to C$ induced from $f_0$, $g_0$, and $\sim$. As $f$ and $g$ are light,  $f'_0$ and $g'_0$ are well defined and light. Moreover, $D'$ is still a tree, $f'_0$, $g'_0$ are confluent, and  $g\circ g'_0 = f\circ f'_0$.

But now, by confluence of $f$ and $g$, the tree $D$ is such that any vertex is adjacent to at least two other vertices.
Indeed, for any $x\in D$, either $f_0(x)=c$ or $g_0(x)=r$, and each of $c$ and $r$ is adjacent to two other vertices.
Therefore $D$ is an infinite tree. A contradiction.

\end{example}

We will see that rooted trees with simple-confluent epimorphisms as well as with simple*-confluent epimorphisms form projective Fra\"{\i}ss\'e families (see Section 8).

\begin{definition}

By a {\it rooted tree} we mean a rooted graph which is a tree.
Given a rooted tree $T$,
we define an order $\le_T$ by $x\le_T y$ if the arc containing the root of $T$, $r_T$, and $y$ contains $x$.  By $a <_T b$ we mean $b \not \le_T a$. If there is no confusion we will leave off the subscript $T$. In the remainder of this article whenever we have an edge  $\langle a, b\rangle$ in a rooted tree we will mean that $a < b$.

\end{definition}

\noindent {\bf{Assumption.}}
We require epimorphisms $f\colon T\to S$ between rooted trees to be {\it order-preserving}, that is, if $a,b \in T$ with $a \le b$ then $f(a) \le f(b)$. For rooted trees we do not include the root in the set of end vertices.

\smallskip

Epimorphisms between rooted trees need not be confluent as the following example shows.

\begin{example}
Let $S$ be rooted tree which is a triod with vertices $\{A,B,C,D\}$, edges $\{\langle A,B\rangle,\langle B,C\rangle, \langle B,D\rangle\} $, and $A$ is the root of $T$.  
Let $T$ be another rooted tree with vertices $\{a, b_1,b_2,c, d_1,d_2\}$, edges $\{\langle a,b_1\rangle, \langle a,b_2\rangle, \langle b_1,c\rangle, 
\langle b_1,d_1\rangle, \langle b_2,d_2\rangle\}$, and $a$ is the root of $S$.  Let $f\colon T \to S$ be given by $f(a)=A$, $f(b_1)=f(b_2)= B$, $f(c)=C$, 
and $f(d_1)=f(d_2)=D$.  Then $f$ is an order-preserving epimorphism on rooted trees but is not confluent.

\begin{center}

\begin{tikzpicture}[scale=0.5]

  \draw (0,4) -- (1,2);
  \filldraw[black] (0,4) circle (1pt);
  \filldraw[black] (1,2) circle (1pt);
  \draw (2,4) -- (1,2);
  \node at (0.4,4) {\scriptsize $c$};
  \node at (2.4,4) {\scriptsize $d_1$};  
  \filldraw[black] (1,0) circle (1pt);
\filldraw[black] (2,4) circle (1pt);
  \node at (1.4,0) {\scriptsize $a$};
    \node at (1.4,2) {\scriptsize $b_1$};
\node at (2.4,2) {\scriptsize $b_2$};
\draw (1,2) -- (1,0); 
\draw (3,4) -- (1,0); 
\filldraw[black] (2,2) circle (1pt);  
\filldraw[black] (3,4) circle (1pt); 
\node at (3.4,4) {\scriptsize $d_2$};  
\draw (4,2) -- (6,2) -- (5.6, 2.4); 
 \draw (6,2) -- (5.6, 1.6);  
 
\draw (7,4) -- (8,2) -- (8,0);
\draw (8,2) -- (9,4);  
\filldraw[black] (7,4) circle (1pt); 
\filldraw[black] (8,2) circle (1pt); 
\filldraw[black] (8,0) circle (1pt); 
\filldraw[black] (9,4) circle (1pt); 
\node at (8.4,0) {\scriptsize $A$};
 \node at (8.4,2) {\scriptsize $B$};
 \node at (7.4,4) {\scriptsize $C$};
 \node at (9.4,4) {\scriptsize $D$}; 
 \node at (1,-1) {\scriptsize $T$};
 \node at (8,-1) {\scriptsize $S$}; 
\end{tikzpicture}
\end{center}
\end{example}

The following example shows that the standard amalgamation procedure, when applied to rooted trees, does not necessarily yield a tree.

\begin{example}\label{rooted-standard-not-tree}
Let $A$ consist of a single vertex, $B$ and $C$ be edges, where a vertex in each edge is designated as the root. Then the standard amalgamation procedure gives a complete graph on four vertices.
\end{example}

\begin{definition} \label{chain-def}

By a {\it chain} in an  rooted tree $G$ we mean sequence $a_1, a_2, \dots a_n$ of vertices of $G$ such that:

\begin{enumerate}
\item $a_1=r_G$;
\item $a_1\le a_2\le \dots \le a_n$.
\end{enumerate}
A chain is {\it proper} if moreover, it satisfies the condition
\begin{enumerate}
\setcounter{enumi}{2}
    \item $a_1\ne a_2\ne \dots \ne a_n$.
\end{enumerate}

A {\it branch} is a maximal proper chain.
\end{definition}

\begin{observation}\label{monotone-relative}
If $f\colon G\to H$ is an epimorphism between rooted trees and $C$ is a chain in $G$, then $f|_C$ is monotone.
\end{observation}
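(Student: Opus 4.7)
The plan is to invoke the lemma that says an epimorphism whose point preimages are connected is monotone. So I would aim to show that for every vertex $y\in f(C)$ the set $(f|_C)^{-1}(y)=f^{-1}(y)\cap C$ is a connected subgraph of $C$.

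First, I would clarify the structure of a chain. Writing $C=(a_1,a_2,\ldots,a_n)$ with $a_1=r_G$ and $a_1\le a_2\le\cdots\le a_n$, all the vertices of $C$ lie on the unique arc in $G$ from $r_G$ to $a_n$, since in a rooted tree the set $\{x:x\le a_n\}$ is totally ordered by $\le$ and equals the vertex set of that arc. Hence $C$, viewed as a subgraph, is a sub-arc of $G$ linearly ordered by $\le$.

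Next, since $f$ is order-preserving, $f(a_1)\le f(a_2)\le\cdots\le f(a_n)$, so $f(C)$ is a chain in $H$ lying on a single arc through $r_H=f(r_G)$. Now fix $y\in f(C)$ and take any two indices $i\le j$ with $f(a_i)=f(a_j)=y$. For every $k$ with $i\le k\le j$ the chain condition gives $a_i\le a_k\le a_j$, so applying the order-preserving map $f$ yields $y=f(a_i)\le f(a_k)\le f(a_j)=y$, forcing $f(a_k)=y$. Thus the preimage $f^{-1}(y)\cap C$ is an ``interval'' of consecutive vertices in the linearly ordered arc $C$, and such an interval is obviously a connected subgraph.

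Applying the lemma stated right after the definition of monotone epimorphism (the one asserting that connected preimages of vertices imply monotonicity) to $f|_C\colon C\to f(C)$ then yields monotonicity. There is no real obstacle here; the only thing to be careful about is the convention that the chain $C$ is understood together with the induced sub-arc structure in $G$, which is forced by the total order on its vertices in a rooted tree.
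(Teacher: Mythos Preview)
Your argument is correct and is exactly the natural justification; the paper states this result as an Observation without proof, and the order-preserving property of epimorphisms between rooted trees (the Assumption preceding the Observation) together with antisymmetry of $\le_H$ forces each fibre $(f|_C)^{-1}(y)$ to be an interval of the totally ordered chain $C$, hence connected, so the cited lemma on connected point preimages applies.
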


\begin{proposition}\label{arwise-connected}
If $\mathcal T$ is a projective Fra\"{\i}ss\'e family of rooted trees, and $\mathbb T$ is a projective Fra\"{\i}ss\'e 
limit of $\mathcal T$, then $\mathbb T$ is arcwise connected.
\end{proposition}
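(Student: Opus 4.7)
The plan is to show $\mathbb{T}$ is arcwise connected in two steps: first, build a canonical arc from a distinguished basepoint $r$ to any point of $\mathbb{T}$; then use hereditary unicoherence to splice two such arcs into an arc between arbitrary points.

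Let $\{T_n,\alpha_n\}$ be a Fra\"{\i}ss\'e sequence for $\mathcal{T}$, so $\mathbb{T}=\iLim\{T_n,\alpha_n\}$. Since each $\alpha_n$ is order-preserving and sends $r_{T_{n+1}}$ to $r_{T_n}$, the sequence $r=(r_{T_n})$ is a well-defined point of $\mathbb{T}$. Given $a=(a_n)\in\mathbb{T}$, let $I_n^a\subseteq T_n$ be the chain from $r_{T_n}$ to $a_n$, which is the unique arc in $T_n$ joining these vertices. By Observation \ref{monotone-relative} the restriction $\alpha_n|_{I_{n+1}^a}$ is monotone, so by Lemma \ref{images-of-arcs} its image is an arc with endpoints $r_{T_n}$ and $a_n$; uniqueness of arcs in the tree $T_n$ forces $\alpha_n(I_{n+1}^a)=I_n^a$. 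The lemma that an inverse limit of finite arcs with monotone bonding epimorphisms is an arc then yields an arc $\gamma^a=\iLim\{I_n^a,\alpha_n|_{I_{n+1}^a}\}\subseteq\mathbb{T}$ from $r$ to $a$, degenerate exactly when $a=r$.

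Given any two points $a,b\in\mathbb{T}$, form $\gamma^a$ and $\gamma^b$ as above. By Theorem \ref{limit-of-hu}, whose proof only uses the $\varepsilon$-refinement of Theorem \ref{limit} together with each $T_n$ being a tree (and hence applies verbatim in the rooted-tree setting), $\mathbb{T}$ is hereditarily unicoherent; in particular $\gamma^a\cap\gamma^b$ is connected. Being a closed connected subset of the arc $\gamma^a$ containing the endpoint $r$, it is a subarc of $\gamma^a$, and likewise of $\gamma^b$, with $r$ as one endpoint. Let $p$ be its other endpoint. The subarc of $\gamma^a$ from $p$ to $a$ and the subarc of $\gamma^b$ from $p$ to $b$ then meet only at $p$, and their union is an arc from $a$ to $b$ in $\mathbb{T}$.

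The main obstacle I anticipate is justifying the surjectivity $\alpha_n(I_{n+1}^a)=I_n^a$, which is precisely what casts $\gamma^a$ as the inverse limit of an inverse sequence of arcs with monotone bonding epimorphisms; here Observation \ref{monotone-relative} and Lemma \ref{images-of-arcs} have to be combined carefully. A secondary point is checking that the glued union of the two terminal subarcs really is an arc, which follows because off $p$ the two pieces lie in the disjoint sets $\gamma^a\setminus\gamma^b$ and $\gamma^b\setminus\gamma^a$, so the natural parametrization from $a$ through $p$ to $b$ is injective.
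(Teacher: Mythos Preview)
Your proof is correct and its core is identical to the paper's: you build the arc from the root $r$ to an arbitrary point $a$ as the inverse limit of the chains $I_n^a$, using Observation \ref{monotone-relative} to see the restricted bonding maps are monotone. The paper stops there, simply exhibiting the arc from $r_{\mathbb T}$ to any $x\in\mathbb T$ and declaring arcwise connectedness. You go further and explicitly splice two root-arcs into an arc between arbitrary $a,b$ via hereditary unicoherence (Theorem \ref{limit-of-hu}); this extra paragraph is a genuine completion of what the paper leaves implicit, and your justification that Theorem \ref{limit-of-hu} applies verbatim in the rooted setting is correct.
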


\begin{proof}
Let $\{T_n,\alpha_n\}$ be a Fra\"{\i}ss\'e sequence for $\mathbb T$ and take $x=(x_n) \in \mathbb T$. For any $n$ let $I_n$ be the proper chain in $T_n$ from $r_{T_n}$ to $x_n$.  By Observation \ref{monotone-relative}, we have that $\mathbb I=\iLim\{I_n,\alpha_n|_{I_n}\}$ is an arc in $\mathbb T$ joining $x$ and $r_{\mathbb T}$.

\end{proof}

By Theorem \ref{limit-of-hu} and Proposition \ref{arwise-connected} we get the following corollary.

\begin{corollary}\label{dendroid}
If $\mathcal T$ is a projective Fra\"{\i}ss\'e family of rooted trees, and $\mathbb T$ is a projective Fra\"{\i}ss\'e
limit of $\mathcal T$, then $\mathbb T$ is a dendroid.
\end{corollary}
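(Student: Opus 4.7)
The plan is to verify directly that $\mathbb T$ satisfies the two conditions in Definition \ref{def:dendrite}, namely that it is hereditarily unicoherent and arcwise connected. Both ingredients are essentially already packaged as earlier results, so the ``proof'' is really just an assembly.

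First, I would invoke Proposition \ref{arwise-connected}, which is stated exactly in the setting of a projective Fra\"{\i}ss\'e family of rooted trees, to conclude that $\mathbb T$ is arcwise connected. Concretely, any two elements of $\mathbb T$ are connected to the root $r_{\mathbb T}$ by arcs obtained as inverse limits of the proper chains from $r_{T_n}$ to the $n$th coordinates in a Fra\"{\i}ss\'e sequence $\{T_n,\alpha_n\}$, and concatenating two such arcs gives an arc between any two points.

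Second, I would apply Theorem \ref{limit-of-hu} to conclude hereditary unicoherence. The one point to justify is that $\mathcal T$, regarded as a family of (unrooted) finite trees together with graph epimorphisms (ignoring the root), still satisfies the hypotheses of Theorem \ref{limit-of-hu}: it is a projective Fra\"{\i}ss\'e family, and the inverse limit $\mathbb T$ as a topological graph does not depend on whether we remember the distinguished vertex. The proof of Theorem \ref{limit-of-hu} used only the $\varepsilon$-epimorphism refinement from Theorem \ref{limit}(\ref{refinement}) together with the fact that each factor $F_n$ is a tree, neither of which involves the root. Hence that argument goes through verbatim and yields hereditary unicoherence of $\mathbb T$.

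Combining the two gives the conclusion. I do not foresee an obstacle; the only thing one has to be a little careful about is that Definition \ref{def:dendrite} for topological graphs allows subgraphs $P,Q$ with $E(P),E(Q)$ strictly smaller than the induced edge sets, but the proof of Theorem \ref{limit-of-hu} already produces $P$ and $Q$ of this very general form. So no additional work is needed and the corollary follows by citing Proposition \ref{arwise-connected} and Theorem \ref{limit-of-hu}.
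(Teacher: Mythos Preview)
Your proposal is correct and matches the paper's approach exactly: the paper simply states that the corollary follows from Theorem~\ref{limit-of-hu} and Proposition~\ref{arwise-connected}. Your additional remark that the root plays no role in the proof of Theorem~\ref{limit-of-hu} is a reasonable clarification, but nothing more is needed.
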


Recall the following definition for continua.
\begin{definition}
A continuum $X$ which is a dendroid is said to be a {\it smooth dendroid} if there exists a point $p\in X$ such that if $x_n\in X$ is a sequence of points that converges to a point $x\in X$ then the sequence of arcs $[p,x_n]$ converges to the arc $[p,x]$.  This is equivalent to saying the order $\le$, defined by $x\le y$ if every arc joining $p$ and $y$ contains $x$, is closed. 
\end{definition}

We now give an analogous definition for a rooted topological graph to be a smooth dendroid.

\begin{definition}
A rooted topological graph $X$ with the root $r_X$ is called a {\it smooth dendroid} if $X$ is a dendroid according to Definition \ref{def:dendrite} and the order $\le$, defined by $x\le y$ if every arc joining $r_X$ and $y$ contains $x$, is closed.
\end{definition}

\begin{observation}
If a rooted topological graph $G$ is a smooth dendroid and $E(G)$ is transitive, then its topological realization $|G|$ is a smooth dendroid in the topological sense.
\end{observation}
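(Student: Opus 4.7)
The plan is to verify the two conditions for $|G|$ to be a smooth dendroid in the continuum sense. First, $|G|$ is a (topological) dendroid by Observation \ref{realization-dendroid}, since $G$ is a dendroid and $E(G)$ is transitive. All the remaining work goes into showing that the order $\le$ on $|G|$, rooted at $p = \pi(r_G)$ where $\pi \colon G \to |G|$ is the quotient map, is closed.

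I would proceed by a sequential limit argument. Suppose $x_n \to x$ and $y_n \to y$ in $|G|$ with $x_n \le y_n$ for all $n$; the goal is to deduce $x \le y$. For each $n$ choose any lift $\tilde{y}_n \in \pi^{-1}(y_n)$ and consider the arc $[r_G, \tilde{y}_n]$ in $G$. Its $\pi$-image is a continuum of $|G|$ containing both $p$ and $y_n$, so by unique arcwise connectedness of the dendroid $|G|$ it contains the arc $[p, y_n]_{|G|}$, which in turn contains $x_n$. Hence there exists $\tilde{x}_n \in [r_G, \tilde{y}_n]$ with $\pi(\tilde{x}_n) = x_n$, giving $\tilde{x}_n \le_G \tilde{y}_n$. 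By compactness of $G$, pass to a subsequence along which $\tilde{x}_n \to \tilde{x}$ and $\tilde{y}_n \to \tilde{y}$; continuity of $\pi$ yields $\pi(\tilde{x}) = x$ and $\pi(\tilde{y}) = y$, while the smoothness of $G$ (closedness of $\le_G$) gives $\tilde{x} \le_G \tilde{y}$, so $\tilde{x} \in [r_G, \tilde{y}]$.

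The final step, which I expect to be the main obstacle, is converting $\tilde{x} \in [r_G, \tilde{y}]$ in $G$ into $x \in [p, y]_{|G|}$ in the realization. For this I would invoke Observation \ref{arcs} on the sub-topological-graph $[r_G, \tilde{y}]$, whose edge relation is the restriction of the transitive $E(G)$: the image $\pi([r_G, \tilde{y}])$ is either a single point or an arc in $|G|$. If it is a point, then $p = y$, hence $x = p = y$ and $x \le y$ trivially. If it is an arc, the end vertices $r_G$ and $\tilde{y}$ map to the two endpoints of the realization arc, so $\pi([r_G, \tilde{y}])$ is an arc of $|G|$ with endpoints $p$ and $y$; unique arcwise connectedness of $|G|$ then forces this arc to coincide with $[p, y]_{|G|}$, and therefore $x = \pi(\tilde{x}) \in [p, y]_{|G|}$. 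The delicate point is this endpoint identification for the realization, which is where transitivity of $E(G)$ and the dendroid structure of $|G|$ jointly come in.
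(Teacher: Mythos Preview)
The paper states this observation without proof, so there is nothing to compare against; your argument is a correct way to supply the details. The dendroid part is Observation~\ref{realization-dendroid}, and your lifting/limit argument for closedness of the order is sound: the key facts you use---that in a dendroid every subcontinuum containing two points contains the arc between them, and that $G$ is uniquely arcwise connected---are standard consequences of hereditary unicoherence.

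Two small remarks. First, the point you flag as ``delicate'' is less so than you suggest: the quotient map $\pi$ has connected point-preimages (they are $E(G)$-equivalence classes), hence is monotone, and monotone continuous images of arcs are arcs (or points) with the endpoints going to the endpoints; this is the continuum-theoretic analogue of Lemma~\ref{images-of-arcs} and is what the paper alludes to just before Observation~\ref{arcs}. Second, your whole argument can be repackaged as the single statement that $\le_{|G|} \,=\, (\pi\times\pi)(\le_G)$: one inclusion is your lifting step, the other is your final endpoint step, and closedness then follows immediately since $\pi\times\pi$ is a closed map (continuous from compact to Hausdorff) and $\le_G$ is closed by hypothesis.
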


\begin{proposition}\label{smooth-dendroid}
If $\mathcal T$ is a projective Fra\"{\i}ss\'e family of rooted trees, and $\mathbb T$ is a projective Fra\"{\i}ss\'e 
limit of $\mathcal T$, then $\mathbb T$ is a smooth dendroid.
\end{proposition}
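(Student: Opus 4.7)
The plan is to reduce smoothness to a coordinate-wise condition inherited from the finite rooted trees in the defining inverse system. Write $\mathbb T = \iLim\{T_n, \alpha_n\}$ with each $T_n$ a finite rooted tree and each $\alpha_n$ an order-preserving epimorphism in $\mathcal T$. By Corollary~\ref{dendroid}, $\mathbb T$ is already a dendroid, so only the closedness of the order $\le$ remains to be verified.

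My first step is to introduce an auxiliary relation $\preceq$ on $V(\mathbb T)$ by declaring $x = (x_n) \preceq y = (y_n)$ if and only if $x_n \le_n y_n$ for every $n$; the order-preserving property of each $\alpha_n$ makes this consistent, and I claim $\preceq$ coincides with the dendroid order $\le$. For the direction $\le \,\Rightarrow\, \preceq$, given $x \le y$, I would invoke the arc $J_y = \iLim\{I_n^y, \alpha_n|_{I_n^y}\}$ from $r_{\mathbb T}$ to $y$ built as in Proposition~\ref{arwise-connected}, where $I_n^y$ is the proper chain in $T_n$ from $r_{T_n}$ to $y_n$ and each $\alpha_n|_{I_n^y}$ is monotone by Observation~\ref{monotone-relative}. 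Since $x$ must lie on every arc from $r_{\mathbb T}$ to $y$, in particular $x \in J_y$, so $x_n \in I_n^y$, i.e.\ $x_n \le_n y_n$, for all $n$.

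For the converse $\preceq \,\Rightarrow\, \le$, I would first show that $J_y$ is in fact the unique arc in $\mathbb T$ from $r_{\mathbb T}$ to $y$. Given two such arcs $A_1, A_2$, hereditary unicoherence (Theorem~\ref{limit-of-hu}) gives that $A_1 \cap A_2$ is closed and connected, containing both endpoints. A direct application of Definition~\ref{arc-def} shows that no proper closed connected subgraph of an arc can contain both endpoints: any omitted intermediate vertex disconnects the arc into two pieces separating the endpoints, and this same disconnection restricts to any subgraph containing them. Hence $A_1 \cap A_2 = A_1 = A_2 = J_y$. Now any $x$ with $x \preceq y$ satisfies $x_n \in I_n^y$ for all $n$, so $x \in J_y$, and therefore $x$ lies on the unique arc from $r_{\mathbb T}$ to $y$, yielding $x \le y$.

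Once the identification $\le\,=\,\preceq$ is in place, closedness is essentially automatic. Each $T_n$ is finite and hence discrete, and $\alpha_n^\infty$ is continuous, so if $(x^{(k)}, y^{(k)}) \to (x, y)$ in $\mathbb T \times \mathbb T$ with $x^{(k)} \preceq y^{(k)}$, then for every fixed $n$ and all sufficiently large $k$ we have $(x^{(k)})_n = x_n$ and $(y^{(k)})_n = y_n$. Transferring the coordinate-wise inequality to the limit gives $x_n \le_n y_n$ for every $n$, i.e.\ $x \preceq y$, equivalently $x \le y$. I expect the main obstacle to be the uniqueness-of-arcs step, since arcs in topological graphs may be uncountable and one cannot simply cite continuum-theoretic facts; but as sketched above, the argument reduces to the definitional property of arcs together with hereditary unicoherence, so the difficulty is really just careful bookkeeping.
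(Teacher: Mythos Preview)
Your proposal is correct and follows essentially the same approach as the paper: reduce to the coordinatewise order $x_n \le_n y_n$ and invoke the product topology. The paper simply asserts the equivalence $x \le y \Leftrightarrow (\forall n)\, x_n \le_n y_n$ without justification, whereas you supply the details of both directions; your uniqueness-of-arcs argument for $\preceq \Rightarrow \le$ is a reasonable way to fill that gap, and the point you flag as the main obstacle is indeed the only place requiring care.
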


\begin{proof} 
First, note that by Corollary \ref{dendroid}, $\mathbb T$ is a dendroid. 
Next, we claim the order $\le$
is closed on $\mathbb T$. Indeed, let $\{T_n,\alpha_n\}$ be a Fra\"{\i}ss\'e sequence for $\mathbb T$ and take $x,y \in \mathbb T$.
Then $x\leq y$ iff for every $n$, $x_n\leq y_n$. By the definition of the product topology, $\leq $ is closed on $\mathbb T$. Therefore the dendroid $\mathbb T$ is smooth. 
\end{proof}

The Lelek fan can be characterized as a smooth fan (smooth dendroid having exactly one ramification point) with a dense set of endpoints, \cite{Lelek-fan}. In  \cite{B-K}, it is shown that the topological realization of the projective  Fra\"{\i}ss\'e limit for the family of all rooted trees with order-preserving epimorphisms is the Lelek fan.

\begin{definition}
We say that an epimorphism $f\colon S\to T$ between rooted trees is {\it end vertex preserving} if, besides preserving order, we have that an image of an end vertex in $S$ is an end vertex in $T$.
\end{definition}

\begin{remark}\label{m-l-factorization-rem}
    If in Theorem \ref{m-l-factorization}, $G$ and $H$ are rooted trees and $f$ is order-preserving, then $M$ is a rooted tree and $l$, $m$ are order-preserving. If additionally $f$ is end vertex preserving, then so are $l$ and $m$. 
\end{remark}

The following will be used in Section 8.
\begin{lemma}\label{end-h=gf}
   Consider  epimorphisms $f\colon F\to G$ and $g\colon G\to H$ between rooted trees. If the composition $g\circ f\colon F\to H$ is end vertex preserving then $g$ is end vertex preserving.
\end{lemma}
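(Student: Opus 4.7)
The plan is to take any end vertex $y$ of $G$, lift it to an end vertex $e$ of $F$ with $f(e)=y$, and then apply the hypothesis that $g\circ f$ is end vertex preserving to conclude that $g(y)=(g\circ f)(e)$ is an end vertex of $H$. The only mild subtlety is handling the root, since in our conventions the root is not counted as an end vertex.

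Concretely, I would first dispose of trivial cases: if $F$ consists of a single vertex, then by surjectivity and order preservation all three trees are singletons and the statement is vacuous, so assume $|V(F)|\geq 2$. Now let $y$ be an end vertex of $G$; in particular $y\neq r_G$. By surjectivity of $f$ pick $x\in F$ with $f(x)=y$. Because $f$ is order preserving and sends $r_F$ to $r_G$, and because $y\neq r_G$, we have $x\neq r_F$.

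Next I would produce an end vertex of $F$ above $x$. Since $F$ is a finite rooted tree, the set $\{z\in V(F): z\geq_F x\}$ is nonempty and finite, so it contains a $\leq_F$-maximal element $e$. Any such $e$ has no vertex strictly above it in $F$, hence is a leaf of the rooted tree; since $e\geq x>r_F$, it is not the root, so $e$ is an end vertex of $F$ in our sense. By order preservation, $f(e)\geq_G f(x)=y$, and since $y$ is maximal in its chain (being an end vertex of $G$), we conclude $f(e)=y$.

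Finally, applying the hypothesis that $g\circ f$ is end vertex preserving to the end vertex $e$ of $F$ gives that $(g\circ f)(e)=g(y)$ is an end vertex of $H$, as required. There is no real obstacle here; the only point that needs care is that the extracted $e$ is genuinely an end vertex (i.e.\ not the root), which is why we needed $y\neq r_G$ and hence $x\neq r_F$ at the start.
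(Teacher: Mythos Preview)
Your proof is correct and follows essentially the same approach as the paper: given an end vertex $y$ of $G$, use order preservation of $f$ to find an end vertex $e$ of $F$ with $f(e)=y$, then apply the hypothesis to conclude $g(y)=(g\circ f)(e)$ is an end vertex of $H$. Your version is simply more explicit about the root (the paper tacitly relies on its standing assumption that rooted trees have at least two vertices, so the singleton case never arises), but the argument is the same.
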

\begin{proof}
    Let $e$ be an end vertex in $G$.   Because $f$ is order-preserving there is $e' \in f^{-1}(e)$ which is an end vertex of $H$. Since $h$ is end vertex preserving, $g(e)=h(e')$ is an end vertex.
\end{proof}

 Recall the standard amalgamation procedure (Definition \ref{standamal}).

  \begin{theorem}\label{onelight}
Let $A,B,C$ be rooted trees.
 Let $f\colon B\to A$ be light confluent and let $g\colon C\to A$ be confluent. Then $D$ obtained via the standard amalgamation procedure is a rooted tree. The obtained $f_0\colon D\to B$ is confluent and  $g_0\colon D\to C$ is light confluent.
 Moreover, in case $g$ is light then so is $f_0$. In case both $f,g$ are end vertex preserving, then so are $f_0, g_0$.
  \end{theorem}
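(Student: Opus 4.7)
The plan is to invoke Propositions \ref{standard-confluent} and \ref{light} for the routine parts and focus the effort on showing $D$ is a rooted tree. Setting $r_D := (r_B, r_C)$, which lies in $V(D)$ since $f(r_B) = r_A = g(r_C)$, the confluence of $g_0$ follows from Proposition \ref{standard-confluent} (using confluence of $f$) and the lightness of $g_0$ from Proposition \ref{light} (using lightness of $f$); by the symmetric applications with the roles of $f$ and $g$ interchanged, $f_0$ is confluent, and light whenever $g$ is.

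The main obstacle is proving that $D$ is a tree, since Example \ref{rooted-standard-not-tree} shows the standard amalgamation of rooted trees need not be a tree absent the lightness of $f$. The key consequence of lightness of $f$ is that for every $b \in B$, the unique path $r_B = b_0 < b_1 < \ldots < b_n = b$ in $B$ maps under $f$ bijectively onto the unique path $r_A = a_0 < a_1 < \ldots < a_n = f(b)$ in $A$ (lightness forces each $f(b_{i+1}) > f(b_i)$, and $A$ has unique paths); in particular $f^{-1}(r_A) = \{r_B\}$, and each non-root $b \in f^{-1}(a)$ has its parent $b^\circ$ in $f^{-1}(a^\circ)$, where $a^\circ$ is the parent of $a$ in $A$. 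Combined with confluence of $f$, every $b \in f^{-1}(a)$ has, for each child $a_1$ of $a$ in $A$, at least one child in $f^{-1}(a_1)$. Using this, I assign to each $(b, c) \in V(D) \setminus \{r_D\}$ a unique ``parent'' $p(b, c) \in V(D)$: write $c^\circ$ for the parent of $c$ in $C$; if $b = r_B$, then $c \neq r_C$ and $g(c) = g(c^\circ) = r_A$, so set $p(b, c) := (r_B, c^\circ)$; otherwise $a := f(b) \neq r_A$, and since $g$ is order-preserving with $\langle c^\circ, c\rangle \in E(C)$, we have $g(c^\circ) \in \{a^\circ, a\}$, so set $p(b, c) := (b^\circ, c^\circ)$ if $g(c^\circ) = a^\circ$ (sub-case A), else $p(b, c) := (b, c^\circ)$ (sub-case B). In each case $\langle p(b, c), (b, c)\rangle$ is a non-degenerate edge of $D$. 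A short case analysis shows every non-degenerate edge of $D$ has this form: given $\langle (b, c), (b', c')\rangle \in E(D)$ non-degenerate, after orienting so $b \leq b'$ and $c \leq c'$ in the product order, the cases $b = b', c < c'$ and $b < b', c < c'$ recover respectively the sub-case B and sub-case A forms of the parent relation, while the mixed case $b < b', c = c'$ is precluded by lightness of $f$, which would force $f(b) = g(c) = g(c') = f(b')$ for adjacent distinct vertices $b, b'$. This establishes a bijection between $V(D) \setminus \{r_D\}$ and the set of non-degenerate edges of $D$, so $D$ has exactly $|V(D)| - 1$ non-degenerate edges. Connectedness of $D$ follows from iterating $p$: at each step either the height of $f(b)$ in $A$ strictly decreases (sub-case A) or that height remains fixed while the height of $c$ in $C$ strictly decreases (sub-case B or the $b = r_B$ case); hence iteration terminates at $r_D$, giving a path in $D$ from any $(b, c)$ to $r_D$. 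Therefore $D$ is a connected finite graph with $|V(D)| - 1$ non-degenerate edges, hence a tree.

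Assume now that both $f$ and $g$ are end vertex preserving. The contrapositive of $f_0$ being end vertex preserving asserts: if $b$ is not an end vertex of $B$ and $(b, c) \neq r_D$, then $(b, c)$ has a child in $D$. Either $b = r_B$ (forcing $c \neq r_C$ and $g(c) = r_A$, which is not an end vertex of $A$) or $b \neq r_B$ has a child in $B$ (forcing $f(b)$ to have a child in $A$, hence $f(b)$ is not an end vertex of $A$); in both cases $g(c)$ is not an end vertex of $A$, so by end vertex preservation of $g$, $c$ is not an end vertex of $C$ and has a child $c^1$ in $C$. A case split on whether $g(c^1) = g(c)$ or $g(c^1)$ is a child of $g(c)$ (using confluence of $f$ in the latter case to find a child $b^1$ of $b$ in $B$ with $f(b^1) = g(c^1)$) produces a child of $(b, c)$ in $D$, as required. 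The argument for $g_0$ is symmetric, using the end vertex preservation of $f$.
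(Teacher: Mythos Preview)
Your proof is correct and takes a genuinely different route from the paper's. The paper argues connectedness first via arcs to the root, then rules out cycles by introducing the product order $\leq^c_D$ and doing a case analysis on what a minimal cycle would look like; order-preservation of $f_0,g_0$ is then proved separately by showing $\leq_D$ and $\leq^c_D$ coincide. Your parent-function approach is more combinatorial: you exhibit for each non-root vertex a unique parent, show every non-degenerate edge arises this way, and count edges. This is cleaner and has the pleasant side effect that order-preservation of $f_0,g_0$ is automatic (each application of $p$ weakly decreases both coordinates), though you should say so explicitly, since the paper's standing Assumption requires rooted-tree epimorphisms to be order-preserving.

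Two small points. First, when you write ``after orienting so $b\le b'$ and $c\le c'$,'' you are tacitly excluding the anti-diagonal case $b<b'$, $c'>c$; this does follow from lightness of $f$ (if $b<b'$ are adjacent then $f(b)<f(b')$, whence $g(c)<g(c')$ and so $c<c'$), but it deserves a sentence. Second, your final clause ``the argument for $g_0$ is symmetric, using the end vertex preservation of $f$'' is not quite right: the literal symmetric argument would need $g$ to be light to pass from ``$c$ not an end vertex'' to ``$g(c)$ not an end vertex.'' In fact the $g_0$ case is easier and needs neither symmetry nor end-vertex preservation of $f$: if $c$ is not an end vertex and $(b,c)\ne r_D$ then $c\ne r_C$, so $c$ has a child $c^1$, and your own case split on whether $g(c^1)=g(c)$ or $g(c^1)$ is a child of $g(c)$ (using only light confluence of $f$) already produces a child of $(b,c)$.
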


 \begin{proof}
  
{\bf (Connected)}  We first show that $D$ is arcwise connected. Let $r_A$, $r_B$, and $r_C$ be the roots of $A$, $B$ and $C$ respectively and
 $(b_1,c_1), (b_2,c_2)\in D$. Since $f(b_1)=g(c_1)$, $f,g$ are order-preserving, and there is a unique  arc in $A$  joining $r_A$ to $f(b_1)=g(c_1)$, we have that $f|_{[r_B, b_1]}$ and $g|_{[r_C,c_1]}$ are monotone and onto  the arc $[r_A, f(b_1)=g(c_1)]$. Therefore we can find   an arc joining  $(r_B,r_C)$  and $(b_1,c_1)$ in $D$. Similarly, we can find  an arc joining  $(r_B,r_C)$  and $(b_2,c_2)$ in $D$. The union of the two arcs contains an arc joining $(b_1,c_1)$ and $(b_2,c_2)$.

Note that the map $g_0$ is a light confluent epimorphism by Propositions~\ref{light} and \ref{standard-confluent} and $f_0$ is a confluent epimorphism by Propositions~\ref{standard-confluent}. For the same reason, if $g$ is light then so is $f_0$.

{\bf (Tree)} Next, we show that the graph $D$ is a tree. We first show that there is no pair $(b_1,c_1)$ and $(b_2,c_2)$ such that  $b_1 <_B b_2$ and $c_2 <_C c_1$. Indeed, since $f$ is light, $f(b_1) \not = f(b_2)$, so $f(b_1) <_A f(b_2)$, as $f$ is an epimorphism. Then $g(c_1)=f(b_1) <_A f(b_2)=g(c_2)$. However, since $g$ is an epimorphism, we have $g(c_2) \le_A g(c_1)$, a contradiction.   

Now let $(b_1,c_1)\leq^c_D (b_2,c_2)$ if and only if $b_1\leq_B b_2$ and $c_1\leq_C c_2$. Suppose $D$ is not a tree, then there is a cycle in $D$: $d_1,d_2,\ldots, d_n$, that is, $d_i$ is adjacent to $d_{i+1}$ and $d_n$ is adjacent to $d_1$, and all $d_i$ are pairwise distinct. Then either (a) for every $i$, $d_i\leq^c_D d_{i+1}$ and $d_n\leq^c_D d_1$ or (b) for every $i$, $d_{i+1}\leq^c_D d_{i}$ and $d_1\leq^c_D d_n$   or (c) there is $i$ such that  $d_{i+1}\leq^c_D d_{i}$ and $d_{i-1}\leq^c_D d_{i}$ (addition and subtraction modulo $n$).

The cases (a) and (b) are not possible since, by the transitivity of $\leq^c_D$, each of them implies that both $d_1\leq^c_D d_2$ and $d_2\leq^c_D d_1$ hold. The claim below shows that (c) is also impossible, concluding the proof that $D$ is a tree.

   {\bf Claim.} There are no pairwise different $p=(b_1,c_1), q=(b_2,c_2), r=(b_3,c_3)$ such that $p$ and $q$ are adjacent and $p<^c_D q$ as well as $q$ and $r$ are adjacent and $r<^c_D q$.
     \begin{proof}[Proof of Claim] 
         Suppose towards the contradiction that there are such $p,q,r$. Then since $B$ and $C$  are trees we have $b_2=b_3$ or $b_1=b_2$ or $b_1=b_3$, and respectively $c_2=c_3$ or $c_1=c_2$ or $c_1=c_3$. Because $g_0$ is light, only $c_1=c_3$ is possible. Then $f(b_1)=f(b_3)$ and, since $f$ is light, $b_1$ and $b_3$ are  non-adjacent and distinct. If $b_1=b_2$ ($b_2=b_3$), then because $q$ is adjacent to $r$ ($p$ is adjacent to $q$) it follows that $b_1$ and $b_3$ are adjacent. A contradiction.
     \end{proof}

{\bf (Order preserving)} Next, we show that $f_0$ and $g_0$ preserve the ``rooted tree order''. Declare the root of $D$ to be  $r=(r_B,r_C)$. There are two orders on $D$, the usual order on rooted trees given by $(b_1,c_1)\leq_D (b_2,c_2) $ if and only if the arc joining $r$ and $(b_2,c_2)$ contains $(b_1,c_1)$ and the component order given by $(b_1,c_1)\leq^c_D (b_2,c_2)$ if and only if $b_1\leq_B b_2$ and $c_1\leq_C c_2$. We now show that $(b_1,c_1)\leq_D (b_2,c_2) $ if and only if $(b_1,c_1)\leq^c_D (b_2,c_2)$ which implies that $f_0$ and $g_0$ preserve the rooted tree order.

 Suppose that $(b_1,c_1)\leq^c_D (b_2,c_2)$.  Note that $f|_{[r_B,b_2]}$ is monotone and onto $[r_A,f(b_2)]$. Similarly, $g|_{[r_C,c_2]}$ is monotone and onto $[r_A,f(b_2)]$. Therefore we can find  an arc joining $(r_B,r_C)$ and $(b_2,c_2)$, which contains $(b_1,c_1)$. This shows that if $(b_1,c_1)\leq^c_D (b_2,c_2) $ then $(b_1,c_1)\leq_D (b_2,c_2)$.

 Suppose now that $(b_1,c_1) \leq_D (b_2,c_2)$ and $(b_1,c_1)$ and $(b_2,c_2)$ are adjacent. By the definition of adjacent $(b_1,c_1) \not = (b_2,c_2)$. Denote $p=f(b_1)=g(c_1)$ and $q=f(b_2)=g(c_2)$. In the case that $b_1 \not = b_2$ we have, since $f$ is light, that $p\neq q$.  Since $b_1$ and $b_2$ are adjacent, they are comparable wrt $\leq_B$, and analogously, $c_1$ and $c_2$ are comparable wrt $\leq_C$. Therefore $p$ and $q$ are comparable  wrt $\leq_A$. Thus $p<_A q$ or $q<_A p$. In the first case, since $f$ and $g$ are order-preserving, we get $(b_1,c_1)\leq^c_D (b_2,c_2)$. In the second case, we would get $(b_2,c_2)\leq^c_D (b_1,c_1)$. However, the second case is impossible, since it implies $(b_2,c_2)<_D (b_1,c_1)$, as we saw in the previous paragraph.

 Now consider the case that $b=b_1=b_2$ so $c_1 \not = c_2$, since $g_0$ is light. We want to show that $c_1 \leq_C c_2$. Suppose towards a contradiction that $c_2<_C c_1$. Let $p=f(b)=g(c_1)=g(c_2)$. 
 Note that $f|_{[r_B,b]}$ is monotone and onto $[r_A,p]$. Similarly, $g|_{[r_C,c_1]}$ is monotone and onto $[r_A,p]$. Since $c_2\in [r_C,c_1]$ the arc joining $(r_B,r_C)$ and $(b,c_1)$ contains $(b,c_2)$. This gives a contradiction with the fact that  $(b,c_1)\leq_D (b,c_2)$.
 
The general case, where $(b_1,c_1)\leq_D (b_2,c_2)$ but $(b_1,c_1)$ and $(b_2,c_2)$ are not necessarily adjacent, we can reduce to the case just discussed by taking the arc joining  $(b_1,c_1)$  and $(b_2,c_2)$ and considering consecutive pairs in it one by one.

{\bf (End vertex preserving)} Finally, we show that if $f$ and $g$ are end vertex preserving then $f_0$ and $g_0$ are end vertex preserving. Suppose $f$ and $g$ are end vertex preserving and let $(b,c)$ be an end vertex of $D$, i.e. a maximal vertex with respect to $\leq_D$. We have to show that $b$ is an end vertex of $B$ and $c$ is an end vertex of $C$.

 Using that $f$ is light we show first that $c$ is an end vertex of $C$. 
Suppose towards a contradiction that $c$ is not maximal with respect to $\leq_C$. Then there is $c<_C t$ with $\langle c,t\rangle\in E(C)$. In case $g(c)=g(t)$, we have $(b,c)<_D (b,t)$, which contradicts the maximality of $(b,c)$, so suppose that  $g(c)<_A g(t)$. Since $f$ is light and confluent there is $b<_B s$ with  $\langle b, s\rangle\in E(B)$, such that $f(s)=g(t)$. But then $(b,c)<_D (s,t)$, which again contradicts the maximality of $(b,c)$. This concludes the proof that $c$ is maximal wrt $\leq_C$, i.e. it is an end vertex of $C$. Now as $g$ is end vertex preserving, we have that $g(c)$ is an end vertex of $A$. In case there is $b<_B s$ with  $\langle b, s\rangle\in E(B)$, since $f$ is order-preserving, we must have $f(s)=f(b)=g(c)$, which is impossible as $f$ is light. This shows that $b$ is an end vertex of $B$.

 \end{proof}

\begin{definition}\label{def:elc}
An epimorphism $f:G\to H$ between rooted trees is called {\it elementary light confluent} if there is a vertex $v\in G$ and two components $C_1$ and $C_2$ of $G\setminus \{v\}$ such that:
\begin{enumerate}
     \item $f|_{C_1}$ and $f|_{C_2}$ are one-to one;
     \item $f(C_1)=f(C_2)$ is a component of
     $H\setminus f(v)$;
     \item $f|_{ (G\setminus (C_1\cup C_2))}$ is one-to-one.
     \end{enumerate}

 A typical elementary light confluent epimorphism is pictured in Figure \ref{fig:def-elem-light}.
\end{definition}

\begin{figure}[!ht]
	\includegraphics*[scale=1]{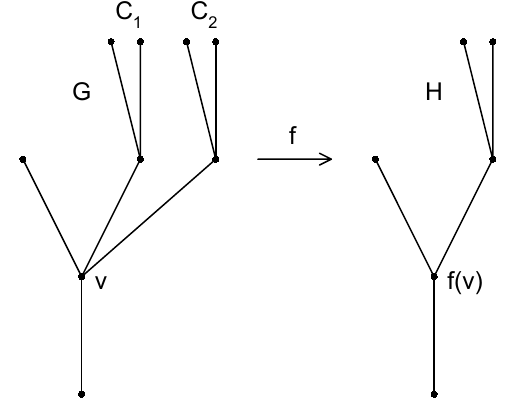}
	\caption{Elementary light confluent epimorphism }
	\label{fig:def-elem-light}
\end{figure}
It will follow from Corollary \ref{elemenlightc} that any light confluent epimorphism decomposes into elementary light confluent epimorphisms.

\section{Kelley topological graphs}

We first recall definitions concerning Kelley continua and adapt these to the setting of topological graphs. Results of this section will be applied in Theorem~\ref{summerizing-confluent}.

\begin{notation}
For a given metric space $X$ with a metric $d$, the $r$-neighborhood of a closed set $A\subseteq X$ is
$N(A,r)=\{x\in X: \text{ there is } a\in A:d(x,a)<r\} $.
The Hausdorff distance $H$ between closed subsets of $X$ is
defined by $H(A,B)=\inf\{r>0: A\subseteq N(A,r) \text{ and } B\subseteq N(A,r)\}$.
\end{notation}

\begin{definition}
A continuum $X$ is said to be {\it Kelley continuum} if for every subcontinuum $K$ of $X$,
every $p\in K$, and every sequence $p_n\to p$ in $X$ there are subcontinua $K_n$ of $X$ such that $p_n\in K_n$ and
$\lim K_n=K$. By compactness, this definition is equivalent to saying that for every $\varepsilon>0$ there is $\delta>0$ such that for each two points $p,q\in X$ satisfying $d(p,q)<\delta$, and for each subcontinuum $K\subseteq X$ such that $p\in K$ there is a subcontinuum $L\subseteq X$ satisfying $q\in L$ and $H(K,L)<\varepsilon$.
\end{definition}
It is well known that every Kelley dendroid is smooth, see \cite{Czuba}.

\begin{definition}
A topological  graph $X$ is called {\it Kelley} if $X$ is connected and for every closed and connected set $K\subseteq X$,
every vertex $p\in K$, and every sequence $p_n\to p$ of vertices in $X$ there are closed and connected sets $K_n$ such that $p_n\in K_n$ and
$\lim K_n=K$. By compactness, this definition is equivalent to saying that for every $\varepsilon>0$ there is $\delta>0$ such that for each two vertices $p,q\in X$ satisfying $d(p,q)<\delta$, and for each closed connected set $K\subseteq X$ such that $p\in K$ there is a closed connected set $L\subseteq X$ satisfying $q\in L$ and $H(K,L)<\varepsilon$. 

\end{definition}
Note that every finite graph is Kelley. Indeed, if we take the discrete metric on finite graphs to have the value of $1$ when vertices are not equal then taking $\delta= 1/2$ in the definition implies $p=q$ and so we can take $K_n$ to equal $K$. 
\begin{observation}\label{Kelley-observation}
If a  topological graph $G$ is Kelley and $E(G)$ is transitive, then its topological realization $|G|$ is a Kelley continuum.
\end{observation}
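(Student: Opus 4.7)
The plan is to verify the sequential form of the Kelley property on $|G|$ directly, by lifting the data along the quotient map $\pi\colon G\to|G|$, applying the Kelley property of $G$, and pushing the resulting sets forward. Since $G$ is Kelley it is connected, so, as already noted for other connectedness properties, $|G|$ is compact, metric, Hausdorff, and connected, hence a continuum. Fix a subcontinuum $K\subseteq|G|$, a point $p\in K$, and a sequence $p_n\to p$ in $|G|$; the task is to produce subcontinua $K_n\ni p_n$ of $|G|$ with $K_n\to K$ in the Hausdorff metric on $|G|$.

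Set $\tilde K=\pi^{-1}(K)$. As stated in the paper before Observation \ref{arcs}, $\pi^{-1}$ sends connected subsets to connected subsets when $E(G)$ is transitive, so $\tilde K$ is a closed connected subset of $G$. The hard part is the lifting step: $\pi$ is generally not open and $\pi^{-1}$ is not Hausdorff-continuous, so one cannot directly lift $p_n\to p$ to a convergent sequence in $G$. I get around this by using only the one-sided estimate $d(\tilde p_n,\pi^{-1}(p))\to 0$, which holds for any choice of $\tilde p_n\in\pi^{-1}(p_n)$: by compactness of $G$ every subsequence of $(\tilde p_n)$ has a further sub-subsequence converging to a point of $\pi^{-1}(p)$. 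Choosing minimizers of the infimum $d(\pi^{-1}(p_n),\pi^{-1}(p))$ (which exist because both sets are compact), I obtain $\tilde p_n\in\pi^{-1}(p_n)$ and $\tilde r_n\in\pi^{-1}(p)\subseteq\tilde K$ with $d(\tilde p_n,\tilde r_n)\to 0$.

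Now apply the $\varepsilon$-$\delta$ form of the Kelley property of $G$ with base vertex $\tilde r_n\in\tilde K$ and nearby vertex $\tilde p_n$: for each $\varepsilon>0$, the corresponding uniform $\delta>0$ guarantees that once $d(\tilde r_n,\tilde p_n)<\delta$, there exists a closed connected $\tilde L_n\subseteq G$ containing $\tilde p_n$ with $H(\tilde K,\tilde L_n)<\varepsilon$. A diagonal choice --- for instance choosing $\tilde L_n$ to minimize $H(\tilde K,\tilde L_n)$ over all closed connected subsets of $G$ containing $\tilde p_n$ (the minimum exists since this constraint cuts out a closed subspace of the compact hyperspace of closed subsets of $V(G)$ in the Hausdorff metric, and Hausdorff limits of closed connected sets in a topological graph remain connected because $E(G)$ is closed) --- then yields $H(\tilde K,\tilde L_n)\to 0$. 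Finally set $K_n=\pi(\tilde L_n)$: each $K_n$ is a subcontinuum of $|G|$ containing $p_n=\pi(\tilde p_n)$, and uniform continuity of $\pi$ between compact metric spaces makes the induced map on hyperspaces continuous in the Hausdorff metric, whence $K_n\to\pi(\tilde K)=K$. This completes the verification of the Kelley property for $|G|$.
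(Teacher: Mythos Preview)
The paper records this as an observation without proof, so there is no argument to compare against; your proof is correct and supplies the expected details. The only nontrivial point is the lifting step, and you handle it properly: rather than attempting to lift $p_n\to p$ to a convergent sequence in $G$ (which would require $\pi$ to be open), you use compactness to obtain $d(\tilde p_n,\pi^{-1}(p))\to 0$ for any choice of lifts, and then invoke the uniform $\varepsilon$--$\delta$ form of the Kelley property with varying base vertex $\tilde r_n\in\tilde K$. Your minimization device for selecting $\tilde L_n$ is slightly heavier than necessary---a standard diagonal choice over $\varepsilon=1/k$ would suffice---but the justification you give (closedness in the hyperspace of the family of closed graph-connected subsets, relying on closedness of $E(G)$) is sound.
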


In \cite[Theorem 3.1]{Ingram} it is shown that if $\{X_i,f_i\}$ is an inverse sequence where the $X_i$'s are Kelley continua and the $f_i$'s are confluent bonding maps then the inverse limit space $\iLim\{X_i,f_i\}$ is a Kelley continuum. The proof of the following theorem is essentially the one given in \cite{Ingram} adapted for the present setting.

\begin{proposition}\label{Kelley-theorem}
If $\mathcal G$ is a projective Fra\"{\i}ss\'e family of connected finite graphs with confluent epimorphisms, then the projective Fra\"{\i}ss\'e limit is Kelley.
\end{proposition}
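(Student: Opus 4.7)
The plan is to adapt Ingram's argument from \cite{Ingram} to the topological graph setting, exploiting that the projections $\alpha_n^\infty \colon \mathbb{G} \to G_n$ from the Fra\"{\i}ss\'e sequence are confluent epimorphisms (Proposition \ref{confluent-projections}) and that each finite graph is trivially Kelley.

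Fix a Fra\"{\i}ss\'e sequence $\{G_n, \alpha_n\}$ for $\mathcal{G}$ so that $\mathbb{G} = \iLim \{G_n, \alpha_n\}$, and equip $\mathbb{G}$ with the standard metric $d(x,y) = \sum_{i=1}^\infty d_i(x_i,y_i)/2^i$ where $d_i$ is discrete. Given $\varepsilon > 0$, choose $n$ with $1/2^{n-1} < \varepsilon$, and set $\delta = 1/2^n$. For any two vertices $p,q \in \mathbb{G}$ with $d(p,q) < \delta$, the formula for $d$ forces $\alpha_n^\infty(p) = \alpha_n^\infty(q)$, since otherwise the $n$-th coordinate would already contribute $1/2^n$ to the sum.

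Now let $K \subseteq \mathbb{G}$ be closed and connected with $p \in K$. Set $K_n = \alpha_n^\infty(K) \subseteq G_n$; by Proposition \ref{connected-image} and compactness, $K_n$ is closed and connected. Since $\alpha_n^\infty$ is a confluent epimorphism and $\alpha_n^\infty(q) = \alpha_n^\infty(p) \in K_n$, there is a connected closed set $L \subseteq \mathbb{G}$ containing $q$ such that $\alpha_n^\infty(L) = K_n$ (take the component of $(\alpha_n^\infty)^{-1}(K_n)$ through $q$). This will be our candidate.

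It remains to bound the Hausdorff distance $H(K,L)$. Every fibre $(\alpha_n^\infty)^{-1}(z)$ has diameter at most $1/2^{n-1}$ (as noted in the Notation block following Definition \ref{definition-Fraisse}). For any $x \in K$ one has $\alpha_n^\infty(x) \in K_n = \alpha_n^\infty(L)$, so there is $y \in L$ with $\alpha_n^\infty(y) = \alpha_n^\infty(x)$, hence $d(x,y) \le 1/2^{n-1} < \varepsilon$; thus $K \subseteq N(L,\varepsilon)$. The symmetric inclusion $L \subseteq N(K,\varepsilon)$ follows identically from $\alpha_n^\infty(L) = K_n = \alpha_n^\infty(K)$. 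Therefore $H(K,L) < \varepsilon$, completing the verification of the Kelley condition. The only subtle point is verifying that $L$ inherits a limiting Hausdorff relation to $K$ purely through the fibre-diameter estimate; this is where confluence does the work by letting us choose $L$ in the preimage of $K_n$ rather than having to build it from scratch.
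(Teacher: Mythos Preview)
Your proof is correct and follows essentially the same approach as the paper's: both exploit that $\alpha_n^\infty(K)=\alpha_n^\infty(L)$ forces $H(K,L)\le 1/2^{n-1}$, and both produce $L$ as a component of $(\alpha_n^\infty)^{-1}(\alpha_n^\infty(K))$ containing $q$. The only organizational difference is that you invoke the confluence of $\alpha_n^\infty$ directly (via Proposition~\ref{confluent-projections}) to obtain $L$ at the limit level, whereas the paper builds $L$ as the inverse limit of components $L_m\subseteq G_m$ taken at each finite stage; the two constructions yield the same set.
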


\begin{proof}

Let $\{G_n,\alpha_n\}$ be a Fra\"{\i}ss\'e sequence for $\mathcal G$ so ${\mathbb G} = \iLim\{G_n,\alpha_n\}$. Let the metric 
on ${\mathbb G}$ be given by $d(x,y)=\sum_{i=1}^\infty d_i(x_i,y_i)/2^i$ where $d_i$ is 
the discrete metric on the finite graphs $G_i$. Note that if $x,y \in {\mathbb G}$ and $N$ 
is a positive integer then $d(x,y)\leq 1/2^N$ if and only 
if $\alpha^\infty_i (x)=\alpha^\infty_i (y)$ for all $i \le N$. Thus, if $K$ and $L$ 
are closed connected graphs in ${\mathbb G}$ such that $\alpha_N(K) = \alpha_N(L)$ then $H(K,L) \leq  1/2^N$. 

Let $\varepsilon>0$ be given and let $N$ be such that $1/2^N < \varepsilon$.
Suppose $K$ is a closed connected graph in ${\mathbb G}$, $p$ is a vertex in $K$, $q$ is a vertex in ${\mathbb G}$, and $d(p,q) < 1/2^N$. 
Then $\alpha^\infty_N(q) \in \alpha^\infty_N(K)$. For each $m>N$ let $L_m$ be the component of $(\alpha_N^m)^{-1}(\alpha_N^\infty(K))$ that contains $\alpha_m^\infty(q)$ and for $m \le N$ let $L_m = \alpha_m^N(\alpha_N^\infty(K))$. Then $L=\iLim\{L_i,\alpha_i|_{L_i}\}$ being the inverse limit of closed connected graphs is a closed connected topological graph in ${\mathbb G}$ containing $q$ and $H(K,L)< \varepsilon$.

\end{proof}

\section{End vertices and ramification vertices in topological graphs}

In this section we define ramification and end vertices for topological graphs and study how they project onto the topological realization of a projective Fra\"{\i}ss\'e limit.

\begin{definition}\label{rampt-topo-dend}
For a topological graph $D$ that is a dendroid, we say that a vertex $v$ has {\it order} at least $n$, in symbols $\ord(v)\ge n$,
if there are arcs $A_1, A_2, \dots A_n$ such that $A_i\cap A_j=\{v\}$ for $i,j\in \{1,2, \dots n\}$ satisfying $i\ne j$. We
define $\ord(v)=n$ if $\ord(v)\ge n$ and $\ord(v)\ge n+1$ is not true. If $\ord(v) \ge n$ for each positive integer $n$, then we say that $v$ has infinite order, in symbols $\ord(v)=\infty$.
Vertices of order 1 are called {\it end vertices} and vertices of order $\ge 3$ are called {\it ramification vertices}.
Note that these definitions agree with Definitions \ref{definition-order} in the case of trees.

We call an end vertex {\it isolated} if it belongs to a non-degenerate edge. Note that, according to this definition, every end vertex in a finite graph is isolated.
\end{definition}

Let us recall a definition from continuum theory.

\begin{definition}
For a continuum $X$ a point $p\in X$ is called a {\it ramification point of $X$ in the classical sense} if there are three arcs $A,B,C$ in $X$ such that
$A\cap B=A\cap C=B\cap C=\{p\}$.
Similarly, a point $p\in X$ is called an {\it endpoint of $X$ in the classical sense} if it is an endpoint of every arc in $X$ that contains $p$.
\end{definition}

Suppose that $D$ is a dendroid, $e$ is an isolated end vertex, and $r$ is a ramification vertex such that $\langle r, e\rangle\in E(D)$. If $E(D)$ is transitive, $\varphi\colon D\to |D|$ is the quotient map, then, since $\varphi(r)= \varphi(e)$, $\varphi(e)$ is not an endpoint. The proposition below captures that a converse to this holds.
\begin{proposition}\label{endpts-to-endpts}
Suppose  $D$ is a topological graph that is a dendroid, the set of edges of $D$ is transitive and $e$ is a non-isolated end vertex of $D$. Let $\varphi$ be the quotient map from $D$ onto its topological realization $|D|$. Then $\varphi(e)$ is an endpoint of $|D|$ in the classical sense.
\end{proposition}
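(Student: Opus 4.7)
The plan is to argue by contradiction. Suppose $\varphi(e)$ is not an endpoint of $|D|$ in the classical sense. Then there exists some arc $\alpha \subseteq |D|$ containing $\varphi(e)$ as an interior point, so I can write $\alpha = [b,c]$ with $b, c \neq \varphi(e)$. My goal is to lift this configuration to $D$ and extract a configuration of two arcs meeting only at $e$, contradicting $\ord(e) = 1$.

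For the lifting step, I pick $b', c' \in V(D)$ with $\varphi(b') = b$ and $\varphi(c') = c$, and consider the unique arc $[b', c']$ in the dendroid $D$ (unique by hereditary unicoherence together with arcwise connectedness). Restricting $E(D)$ to $V([b', c'])$ yields a transitive edge relation, so by Observation~\ref{arcs} the realization $\varphi([b', c'])$ is an arc or a point in $|D|$; since $b \neq c$ it must be an arc from $b$ to $c$. Because $|D|$ is itself a dendroid by Observation~\ref{realization-dendroid}, the arc in $|D|$ joining $b$ to $c$ is unique, so $\varphi([b', c']) = [b, c]$. In particular $\varphi(e) \in \varphi([b', c'])$, so there is some $e^* \in [b', c']$ with $\varphi(e^*) = \varphi(e)$, i.e.\ $\langle e, e^* \rangle \in E(D)$.

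Now the non-isolatedness hypothesis comes in decisively. Since $e$ is a non-isolated end vertex, $e$ belongs to no non-degenerate edge, so the only $y \in V(D)$ with $\langle e, y \rangle \in E(D)$ is $y = e$; as $E(D)$ is an equivalence relation, the $E(D)$-class of $e$ is $\{e\}$, which forces $e^* = e$. Hence $e$ lies on the arc $[b', c']$, and because $b', c' \neq e$ (as $\varphi(b'), \varphi(c') \neq \varphi(e)$), the point $e$ is an interior vertex of this arc. Then $[b', e]$ and $[e, c']$ are two arcs in $D$ whose intersection is $\{e\}$, giving $\ord(e) \geq 2$ and contradicting the assumption that $e$ is an end vertex.

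The main obstacle I anticipate is not the final contradiction but the lifting step: one must verify carefully that $\varphi([b', c'])$ is precisely $[b,c]$ and not some larger continuum, and that the non-isolatedness hypothesis is strong enough to ensure $\varphi^{-1}(\varphi(e)) = \{e\}$ — otherwise the pullback point $e^*$ might land elsewhere on the arc and the interior-point argument would collapse. Both reduce to the cleanness of the prespace structure, so once those are in hand the proof is essentially a one-line dendroid argument.
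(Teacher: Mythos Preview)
Your proof is correct and follows essentially the same approach as the paper: argue by contradiction, use the non-isolatedness of $e$ to conclude $\varphi^{-1}(\varphi(e))=\{e\}$, and then exhibit two arcs in $D$ meeting only at $e$ to contradict $\ord(e)=1$. The only difference is cosmetic—the paper pulls back the two half-arcs $H,K$ of $[b,c]\setminus\{\varphi(e)\}$ and takes arcs from $e$ into each preimage, whereas you lift the endpoints $b,c$, form the single arc $[b',c']$, and split it at $e$; note also that you only need the containment $[b,c]\subseteq\varphi([b',c'])$ (immediate from $|D|$ being a dendroid), not the equality $\varphi([b',c'])=[b,c]$, so the appeal to Observation~\ref{arcs} is harmless but unnecessary.
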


\begin{proof}
 Assume the conclusion is not true.  Then there exists an arc $A \subseteq |D|$ such that $A\setminus\{\varphi(e)\}=H \cup K$ where $H$ and $K$ are nonempty segments such that $\overline H \cap K = H \cap \overline K = \emptyset$. 
The set $\varphi^{-1}(H\cup K)$ is not connected since 
$\varphi(\varphi^{-1}(H\cup K))=H\cup K$ and connectedness is preserved by continuous maps.  However the sets $\varphi^{-1}(H)$ and $\varphi^{-1}(K)$ are each connected.  Let $a$ and $b$ be vertices in $\varphi^{-1}(H)$ and $\varphi^{-1}(K)$ respectively and $A_H$ and $A_K$ be arcs in $D$ joining $a$ to $e$ and $b$ to $e$. The arcs $A_H$ and $A_K$ then lie in $\varphi^{-1}(H)\cup \{e\}$ and $\varphi^{-1}(K)\cup \{e\}$ respectively. These sets are closed and they are connected as $e$ is non-isolated. The intersection of $A_H$ and $A_K$ is $\varphi^{-1}(\varphi(e))=\{e\}$ since $e$ is a non-isolated end vertex of $D$. Thus $\ord(e)\ge 2$ so $e$ is not an end vertex of $D$ contrary to the hypothesis.
\end{proof}

\begin{proposition}\label{end-to-end}
Let $f\colon X\to Y$ be an order-preserving epimorphism between topological graphs, which are smooth dendroids. Then, for every end vertex $y\in Y$ there is an end vertex $x\in X$ such that $f(x)=y$.
\end{proposition}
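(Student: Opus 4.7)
The plan is to exploit the characterization of end vertices in a smooth dendroid as the non-root maximal elements under the order $\leq$ from the root, and then to use compactness together with Zorn's Lemma to produce an end-vertex preimage of $y$.

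First, I would verify the following characterization: a non-root vertex $v$ in a smooth dendroid $Z$ with root $r_Z$ is an end vertex if and only if it is maximal with respect to $\leq$. The easy direction is that if $v$ is an end vertex and $v < w$, then $v$ is an interior vertex of the arc $[r_Z,w]$, which decomposes as $[r_Z,v]\cup [v,w]$ with these sub-arcs meeting only at $v$, so $\ord(v)\geq 2$, a contradiction. For the converse, suppose $v$ is maximal with $\ord(v)\geq 2$, so that there are arcs $A_1,A_2$ with $A_1\cap A_2=\{v\}$. Pick $p_i\in A_i\setminus\{v\}$; by hereditary unicoherence, $[r_Z,v]\cap [r_Z,p_i]$ is a sub-arc $[r_Z,z_i]$, and the unique arc $[v,p_i]$ decomposes as $[v,z_i]\cup [z_i,p_i]$. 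Maximality of $v$ forces $z_i<v$, since otherwise $v=z_i\leq p_i$ with $v\neq p_i$. But then $[z_1,v]$ and $[z_2,v]$ are sub-arcs of $[r_Z,v]$ both terminating at $v$, hence nested, so their intersection properly contains $\{v\}$, contradicting $A_1\cap A_2=\{v\}$.

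With this characterization in hand, by surjectivity of $f$ I pick $x_0\in f^{-1}(y)$; note $x_0\neq r_X$ since $f(r_X)=r_Y\neq y$. Let $C=\{x\in X: x_0\leq x\}$; this set is nonempty, and closed in $X$ because $\leq$ is closed by smoothness of $X$. Chains in $C$ admit upper bounds: for a chain $(x_\alpha)$ in $C$, the closed sets $\{z\in X: x_\alpha\leq z\}$ have the finite intersection property, so compactness of $X$ supplies a common element, which is automatically in $C$. Zorn's Lemma then produces a maximal element $x^*\in C$, and $x^*$ is in fact maximal in all of $X$, since any $z\geq x^*$ satisfies $x_0\leq x^*\leq z$, so $z\in C$ and hence $z=x^*$.

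Finally, since $f$ is order-preserving, $y=f(x_0)\leq f(x^*)$, and maximality of $y$ in $Y$ forces $f(x^*)=y$. Applying the initial characterization to $X$, the non-root maximal element $x^*$ is then an end vertex of $X$, which gives the desired preimage. The main technical obstacle is the ``maximal implies end vertex'' direction in a smooth dendroid, which relies on the junction structure coming from hereditary unicoherence; the rest of the argument is a standard Zorn/compactness construction.
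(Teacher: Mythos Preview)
Your proof is correct and follows the same idea as the paper's: find a $\le$-maximal vertex above some preimage of $y$, then use order-preservation and the maximality of the end vertex $y$ to conclude. The paper's argument is a two-line version of yours that simply asserts the existence of an end vertex $x$ with $z\le x$ and the implication ``$y$ end vertex and $y\le f(x)$ $\Rightarrow$ $f(x)=y$'' without justification; you have filled in exactly those two steps via the characterization of end vertices as maximal elements and the Zorn/compactness argument.
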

\begin{proof}
Let $z$ be any vertex in $X$ such that $f(z)=y$, and let $x$ be an end vertex of $X$ satisfying $z\le x$. Since $f$ is order-preserving we have $f(z)=y\le f(x)$, but $y$ is an end vertex, so $f(z)=f(x)=y$.
\end{proof}

\begin{proposition}\label{ram-to-ram}
Suppose  $D$ is a topological graph that is a dendroid, the set of edges of $D$ is transitive and $r$ is a ramification vertex of $D$. Let $\varphi$ be the quotient map from $D$ onto its topological realization $|D|$. If $\varphi^{-1}(\varphi(r))$ contains no isolated end vertices,  then $\varphi(r)$ is a ramification point of $|D|$ in the classical sense.
\end{proposition}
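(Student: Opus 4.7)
The plan is to start from three arcs $A_1, A_2, A_3$ in $D$ meeting pairwise only at $r$ (which exist by Definition~\ref{rampt-topo-dend} since $r$ is a ramification vertex), and show that, after possibly replacing each $A_i$ by a larger arc lying in the same branch, the images $\varphi(A_1), \varphi(A_2), \varphi(A_3)$ are three non-degenerate arcs in $|D|$ with pairwise intersection $\{\varphi(r)\}$. This exhibits $\varphi(r)$ as a classical ramification point.

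For non-degeneracy, I set $[r] = \varphi^{-1}(\varphi(r))$ and note that, since $E(D)$ is transitive, any two vertices of $[r]$ are adjacent, so $[r]$ induces a complete subgraph of $D$. Hereditary unicoherence then forces $|[r]| \leq 2$: if $r, e_1, e_2 \in [r]$ were three distinct vertices, the closed connected topological graphs $P = (\{r, e_1, e_2\}, \{\langle r, e_1\rangle, \langle e_1, e_2\rangle\})$ and $Q = (\{r, e_2\}, \{\langle r, e_2\rangle\})$ (augmented by reflexive edges) would satisfy the hypotheses of hereditary unicoherence, yet their intersection $(\{r, e_2\}, \text{reflexive edges only})$ is disconnected --- a contradiction. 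Now fix a component $C_i$ of $D \setminus \{r\}$ containing $A_i \setminus \{r\}$ and suppose $V(C_i) \subseteq [r]$. Then $|V(C_i)| \leq 1$, so $C_i = \{e\}$ with $e$ having no neighbor other than $r$ in $D$; hence $e$ is an end vertex of $D$, belonging to the non-degenerate edge $\langle r, e\rangle$, so $e$ is an isolated end vertex in $[r]$, contradicting the hypothesis. Thus $V(C_i) \not\subseteq [r]$. Picking $w_i \in V(C_i) \setminus [r]$ and letting $A_i$ be the arc from $r$ to $w_i$ in $D$ (which lies in $C_i \cup \{r\}$), Observation~\ref{arcs} together with $\varphi(w_i) \neq \varphi(r)$ gives that $\varphi(A_i)$ is a non-degenerate arc.

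For the pairwise intersection, suppose $p \in \varphi(A_i) \cap \varphi(A_j)$ with $p \neq \varphi(r)$. Pick $a \in A_i \setminus \{r\}$ and $b \in A_j \setminus \{r\}$ with $\varphi(a) = \varphi(b) = p$; then $\langle a, b\rangle \in E(D)$ by transitivity, and $a \neq b$. Let $P$ be the topological graph with vertex set $V(A_i|_{[r,a]}) \cup V(A_j|_{[r,b]})$ and edge set equal to the union of the edges of those two subarcs, and let $Q = (\{a,b\}, \{\langle a,b\rangle\})$ (each augmented by reflexive edges); both are closed connected topological graphs with $V \subseteq V(D)$ and $E \subseteq E(D)$. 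Because $A_i \cap A_j = \{r\}$ and $a, b \neq r$, the edge $\langle a, b\rangle$ is not in $E(P)$, so $P \cap Q = (\{a,b\}, \text{reflexive only})$ is disconnected, violating hereditary unicoherence. Hence $\varphi(A_i) \cap \varphi(A_j) = \{\varphi(r)\}$ for $i \neq j$, and so $\varphi(r)$ is a classical ramification point. The main obstacle is the non-degeneracy step: showing that the three branches at $r$ cannot be absorbed into $[r]$ requires combining the triangle obstruction coming from hereditary unicoherence together with transitivity of $E(D)$, and the hypothesis that $[r]$ contains no isolated end vertices; the pairwise-intersection step is then a second, cleaner application of hereditary unicoherence.
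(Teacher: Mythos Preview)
Your approach coincides with the paper's: pick three arcs $A_1,A_2,A_3$ through $r$ with pairwise intersection $\{r\}$, and argue that their images under $\varphi$ are three non-degenerate arcs meeting only at $\varphi(r)$. You supply considerably more detail than the paper does --- in particular, the bound $|[r]|\le 2$ via hereditary unicoherence and the explicit pairwise-intersection argument using the auxiliary graphs $P$ and $Q$ --- both of which the paper simply asserts.

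There is, however, a small gap in your non-degeneracy step. After showing $V(C_i)\not\subseteq[r]$, you \emph{replace} $A_i$ by the arc from $r$ to some $w_i\in V(C_i)\setminus[r]$, and then in the pairwise-intersection argument you invoke ``$A_i\cap A_j=\{r\}$'' for these \emph{new} arcs. That equality would require $C_i\neq C_j$, which you never verify. The replacement is in fact unnecessary: your own ingredients already show that the \emph{original} $A_i$ has non-degenerate image. Indeed, if $\varphi(A_i)$ were a single point then $A_i\subseteq[r]$; since $|[r]|\le 2$ and $|V(A_i)|\ge 2$ this forces $A_i=[r]=\{r,e\}$, and transitivity of $E(D)$ then implies that $r$ is the only neighbor of $e$ in $D$, so $e\in[r]$ is an isolated end vertex --- contrary to hypothesis. (This is exactly the paper's terse argument.) With the replacement step removed, your pairwise-intersection argument applies verbatim to the original arcs and the proof is complete.
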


\begin{proof}
Suppose $r$ is a ramification vertex of $D$ and let $A$, $B$, and $C$ be arcs in $D$ such that $A\cap B=B\cap C=C\cap A=\{r\}$. The images $\varphi(A)$, $\varphi(B)$, and $\varphi(C)$ are arcs, possibly  degenerate, with pairwise intersections equaling $\varphi(r)$.  Suppose $\varphi(A)$ is degenerate then $A$ consists of a single edge $\langle r,e\rangle$ and $A=\varphi^{-1}(\varphi(r))$ contains the isolated end vertex $e$ contrary to the hypothesis.  Thus $\varphi(r)$ is a ramification point of $|D|$.

\end{proof}

\begin{definition}\label{splitting map}
    Let $S,T$ be rooted trees. An order-preserving epimorphism $f\colon S\to T$ will be called a {\it splitting edge map} if $S$ is obtained from $T$ as follows. Let $V(S)=V(T)\cup\{x\}$, where $x$ is a ``new'' vertex, and for some edge 
$\langle a,b\rangle \in E(T)$, we have  
$E(S)=(E(T)\setminus \langle a,b\rangle )\cup \{\langle a,x\rangle , \langle x,b\rangle\}$.
The map $f$ is the identity on $T$ and it maps $x$ either to $a$ or to $b$.
\end{definition}

\begin{theorem}\label{rooted-end}
Suppose that $\mathcal T$ is a projective Fra\"{\i}ss\'e family of rooted trees with a subfamily of confluent epimorphisms, which contains all  splitting edge maps.
Then the projective Fra\"{\i}ss\'e limit of $\mathcal T$ has no isolated end vertices.

\end{theorem}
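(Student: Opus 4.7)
The plan is to assume for contradiction that $e \in \mathbb T$ is an isolated end vertex with non-degenerate edge $\langle e,p\rangle$ (so $p\neq e$) and derive a contradiction from a single application of the Fra\"iss\'e property to an appropriate splitting edge map.

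First I would straighten out the order-theoretic picture. Since $e$ is an end vertex, no vertex can lie strictly above $e$ in the smooth-dendroid order on $\mathbb T$ (else that vertex would give a second direction at $e$), so $p$ must sit strictly below $e$.
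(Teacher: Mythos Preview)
Your plan is the same as the paper's, and your order-theoretic observation that $p<e$ is correct (note, though, the paper's convention that writing $\langle a,b\rangle$ in a rooted tree means $a<b$, so the edge should be written $\langle p,e\rangle$). However, the proposal stops before any of the actual work is done; what you have written is only the setup.

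The missing argument, which the paper carries out, runs as follows. Choose an epimorphism $f_T\colon\mathbb T\to T$ in $\mathcal T^\omega$ with $f_T(p)\neq f_T(e)$, so that $\langle f_T(p),f_T(e)\rangle$ is an honest edge of $T$. Let $f\colon S\to T$ be the splitting edge map that inserts a new vertex $x$ into this edge, with $f(x)=f_T(e)$, and use the Fra\"iss\'e property to obtain $f_S\colon\mathbb T\to S$ with $f\circ f_S=f_T$. Now the edge $\langle p,e\rangle$ must be sent by $f_S$ to an edge of $S$ lying over $\langle f_T(p),f_T(e)\rangle$; the only such edge with first coordinate in $f^{-1}(f_T(p))$ is $\langle a_S,x\rangle$ (where $a_S$ is the copy of $f_T(p)$ in $S$), so $f_S(e)=x$. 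But then $\{e\}$ is an entire component of $f_S^{-1}(\{x,e_S\})$: the only neighbour of $e$ is $p$, and $f_S(p)=a_S\notin\{x,e_S\}$. This component maps onto $\{x\}$, not onto $\{x,e_S\}$, contradicting the confluence of $f_S$ (which holds by Proposition~\ref{confluent-projections}). You need to supply this entire paragraph; without it there is no proof.
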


\begin{proof}

      Let $\mathcal T$ be a family that satisfies the assumptions of the theorem, and let $\mathbb T$ be the projective Fra\"{\i}ss\'e limit of $\mathcal T$. Suppose $e$ is an isolated end vertex of $\mathbb T$ then there exists $a\in \mathbb T$ such that $\langle a,e\rangle\in E(\mathbb T)$. Let $T\in \mathcal T$ and $f_T:\mathbb T \to T$ be an epimorphism such that $f_T(a)\not = f_T(e)$, denote $a_T=f_T(a)$ and $e_T=f_T(e)$, then $\langle a_T,e_T\rangle \in E(T)$. Let $f\colon S\to T$ be the splitting edge map that replaces the edge $\langle a_T,e_T\rangle$ with $\langle a_T,x\rangle$ and $\langle x,e_T\rangle$ and $f(x) = e_T$. Denote the copies of $e_T$ and $a_T$ in $S$ by $e_S$ and $a_S$ respectively.  Let $f_S\colon \mathbb T\to S$ be an epimorphism such that $f_T=f\circ f_S$. 
    Since $f_S$ is edge preserving, we must have $f_S(e)=x$ (it carries $\langle a,e\rangle$ to  $\langle a_S,x\rangle$).
    Then the component of $f_S^{-1}(\{x, e_S\})$ containing $\{e\}$ must be equal to $\{e\}$, which is impossible as $\{e\}$ is a singleton and $f_S$ is confluent. 
  
\end{proof}

\section{Simple- and simple*-confluent epimorphisms and rooted trees}

In this section, we use the results obtained 
earlier to investigate families of rooted trees with confluent order-preserving epimorphisms.

 \subsection{Epimorphisms between rooted trees}

Families of rooted trees with all confluent or even all monotone epimorphisms do not form a projective Fra\"{\i}ss\'e family, they do not amalgamate, as the following example shows. Therefore, we introduce subfamilies of monotone epimorphisms consisting of simple-monotone or   simple*-monotone epimorphisms. Composing these epimorphisms with light confluent epimorphisms we obtain simple-confluent and simple*-confluent epimorphisms, which have the amalgamation property.

\begin{example}\label{not-order-pres}

Let $A$ be the tree with vertices $V(A)=\{a_A,b_A,c_A,x_A\}$ and edges $E(A)=\{\langle x_A,a_A\rangle,  \langle x_A,b_A\rangle, \langle x_A,c_A\rangle\}$ with $x_A$ being the root of $A$, $B$ be the tree with vertices $V(B)=\{a_B,b_B,c_B,x_B,y_B\}$ and edges $E(B)=\{\langle y_B,a_B\rangle,\langle y_B,b_B\rangle,\langle x_B,y_B\rangle, \langle x_B,c_B\rangle\}$ with $x_B$ being the root of $B$, and $C$ be the tree with vertices $V(C)=\{a_C,b_C,c_C,x_C,y_C\}$ and edges $E(C)=\{\langle x_C,a_C\rangle,\langle y_C,b_C\rangle,\langle y_C,c_C\rangle, \langle x_C,y_C\rangle\}$ with $x_C$ being the root of $C$. For epimorphisms $f\colon B \to A$ and $g \colon C \to A$ let $f(x_B)=f(y_B)=x_A=g(x_C)=g(y_C)$ and otherwise map letters correspondingly; see Figure \ref{fig2}.

 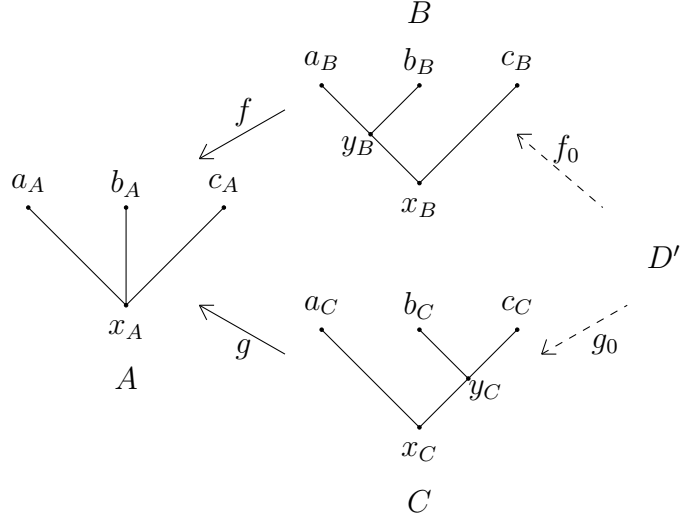
\begin{figure}[!ht] 
\begin{center}
\begin{tikzpicture}[scale=0.65]

  \draw (0,2.5) -- (-2,4.5);
  \draw (0,2.5) -- (0,4.5);
  \draw (0,2.5) -- (2,4.5);
    \filldraw[black] (0,2.5) circle (1pt);
    \filldraw[black] (-2,4.5) circle (1pt);
    \filldraw[black] (0,4.5) circle (1pt);
    \filldraw[black] (2,4.5) circle (1pt);
    \node at (0,2.0) {$x_A$};
    \node at (0,1.0) {$A$};
    \node at (-2,5) {$a_A$};
    \node at (0,5) {$b_A$};
    \node at (2,5) {$c_A$};

    \draw (6,5) -- (4,7);
    \draw (6,5) --(8,7);
    \draw (5,6) -- (6,7);
        \filldraw[black] (6,5) circle (1pt);
        \filldraw[black] (4,7) circle (1pt);
        \filldraw[black] (5,6) circle (1pt);
        \filldraw[black] (6,7) circle (1pt);
        \filldraw[black] (8,7) circle (1pt);
    \node at (6,4.5) {$x_B$};
    \node at (4,7.5) {$a_B$};
    \node at (6,7.5) {$b_B$};
    \node at (6,8.5) {$B$};
    \node at (8,7.5) {$c_B$};
    \node at (4.75,5.75) {$y_B$};

    \draw (6,0) -- (4,2);
    \draw (6,0) --(8,2);
    \draw (7,1) -- (6,2);
        \filldraw[black] (6,0) circle (1pt);
        \filldraw[black] (4,2) circle (1pt);
        \filldraw[black] (7,1) circle (1pt);
        \filldraw[black] (8,2) circle (1pt);
        \filldraw[black] (6,2) circle (1pt);
    \node at (6,-0.5) {$x_C$};
    \node at (6,-1.5) {$C$};
    \node at (4,2.5) {$a_C$};
    \node at (6,2.5) {$b_C$};
    \node at (8,2.5) {$c_C$};
    \node at (7.35,0.75) {$y_C$};

    \node at (11,3.5) {$D'$};

    \draw (1.5,5.5) -- (3.25,6.5);
    \draw (1.5,5.5) -- (1.8,5.5);
    \draw (1.5,5.5) -- (1.65,5.8);
    \node at (2.4,6.45) {$f$};

    \draw (1.5,2.5) -- (3.25,1.5);
    \draw (1.5,2.5) -- (1.8,2.5);
    \draw (1.5,2.5) -- (1.65,2.2);
    \node at (2.4,1.6) {$g$};

    \draw [dashed] (8,6) -- (9.75,4.5);
    \draw (8,6) -- (8.3,6);
    \draw (8,6) -- (8.1,5.7);
    \node at (9,5.7) {$f_0$};

    \draw [dashed] (8.5,1.5) -- (10.25,2.5);
    \draw (8.5,1.5) -- (8.8,1.5);
    \draw (8.5,1.5) -- (8.6,1.8);
    \node at (9.8,1.7) {$g_0$};

\end{tikzpicture}
\caption{Failure of amalgamation for monotone epimorphisms and rooted trees}\label{fig2}
\end{center}
\end{figure}

\end{example}

Clearly, $f$ and $g$ are monotone epimorphisms. By Theorem \ref{amalgamation-monotone}, when $A$, $B$, and $C$ are viewed as unrooted trees, there is an amalgamation. In particular, the tree $D$ having vertices 
$\{a_D,y_D,b_D,x_D,z_D,c_D\}$ and edges $\{\langle y_D,a_D\rangle, \langle x_D,y_D\rangle, \langle x_D,b_D\rangle, \langle x_D,z_D\rangle, \langle z_D,c_D\rangle\}$ is an amalgamation with $f_0(x_D)=y_B$, $f_0(y_D)=y_B$, $f_0(z_D)=x_B$, $g_0(y_D)=x_C$, and $g_0(x_D)=g_0(z_D)=y_C$. No matter which vertex is selected as the root of $D$, either $f_0$ or $g_0$ does not map it to the root of $B$ or $C$. So this is not an amalgamation of rooted trees as one of $f_0$ or $g_0$ is not order-preserving.

  In fact, there is no amalgamation of these rooted trees. Suppose, towards a contradiction, that there is a rooted tree $D'$ and confluent epimorphisms $f_0\colon D'\to B$, $g_0\colon D'\to C$  satisfying  $f\circ f_0= g\circ g_0$. Since $f \circ f_0$ is an epimorphism there exists $t, s\in D'$  such that $\langle t,s\rangle\in E(D')$, $f\circ f_0(t)=x_A$, and $f\circ f_0(s)=b_A$. Then $f_0(t)=y_B$ and $g_0(t)=y_C$. Therefore, the set $Y=\{t\in D'\colon f_0(t)=y_B,\ g_0(t)=y_C\}$ is nonempty. Let $t \in Y$ and consider the arc $[r_{D'},t]$ from $r_{D'}$ to $t$. Let $u$ be the largest vertex on this arc that is not in $Y$. Then there is vertex $t' \in Y \cap$ $[r_{D'},t]$ such that $\langle u,t'\rangle$ is an edge in $D'$. Since $u \not \in Y$,  $u < t'$, $u$ and $t'$ are adjacent, we have $f_0(u)=x_B$ or $g_0(u)=x_C$; without loss of generality, $g_0(u)=x_C$.

  Let  $K$ be the component of $g_0^{-1}(y_C)$ that contains $t'$. Then $t'$ is the least vertex of $K$. If $v$ adjacent to $K$ and $v \not = u$ then $v > t$ for some $t \in K$. Since $t'\in Y$, we have $f_0(t')=y_B$. So, because $f_0$ and $g_0$ are order preserving, we have $f_0(v)\in\{y_B, a_B, b_B\}$ and $g_0(v)\in\{b_C, c_C\}$. Therefore, $f\circ f_0(v)\in\{a_A, b_A\}$ and $g\circ g_0(v)\in\{ b_A, c_A\}$, which implies $f\circ f_0(v)=g\circ g_0(v)=b_A$.  This gives a contradiction with confluence of $g_0$ (applied to the edge $\langle y_C, c_C\rangle$ and the component $K$ of $g_0^{-1}(y_C)$).

\vskip 10 pt

To obtain a projective Fra\"{\i}ss\'e families of  rooted trees we start with the following definitions.

\begin{definition}\label{def:sc}
We will call an epimorphism {\it simple-monotone} if it is a composition of splitting edge maps, see Definition~\ref{splitting map}.
We call an epimorphism a {\it simple-confluent} if it is a composition of light confluent epimorphisms and simple-monotone epimorphisms. Specifically, $f$ is simple-confluent if there are $f_1, f_2, \ldots, f_n$, for some $n$, such that $f=f_1\circ f_2\circ\ldots\circ f_n$, where each $f_i$ is light confluent or simple-monotone.

 \end{definition}

\begin{definition}\label{def:sstarc}
Let $S,T$ be rooted trees. An order-preserving epimorphism $f\colon S\to T$ will be called an {\it adding edge map} if $S$ is obtained from $T$ as follows. Let
$V(S)=V(T)\cup\{y\}$, where $y$ is a ``new'' vertex, and for some vertex  $v\in T$  we have
$E(S)=E(T)\cup\{\langle v,y \rangle\}$.
 The map $f\colon S\to T$ takes $y$ to $v$ and is equal to the identity otherwise.
Define simple*-monotone epimorphisms to be compositions of splitting edge maps and adding edge maps. Let simple*-confluent epimorphisms be compositions of simple*-monotone epimorphisms and light confluent epimorphisms. 
\end{definition}

Note that simple-confluent epimorphisms are end vertex preserving. Also, the epimorphisms in Example \ref{not-order-pres} are neither simple-confluent nor simple*-confluent.

\begin{notation}
The family of all rooted trees and simple-confluent epimorphisms is denoted by $\treend$ and the family of all rooted trees and simple*-confluent  epimorphisms is denoted by $\treecon$. We assume that every rooted tree has at least two vertices.
\end{notation}

\subsection{Amalgamation of simple-confluent epimorphisms}

The goal of this subsection is to prove that the family $\treend$  of all rooted trees with simple-confluent epimorphisms is a projective Fra\"{\i}ss\'e family, see Theorem~\ref{confluent-projective-family}.

\begin{definition}\label{cone-over}

Let $T$ be a tree and $a \in T$. A vertex $b \in T$ is an {\it immediate successor of $a$} if $ \langle a,b\rangle\in E(T)$ and $ a<_T b$. Similarly, $b$ is an {\it immediate predecessor of $a$} if $ \langle b,a\rangle\in E(T)$ and $ b<_T a$. 

We define recursively the {\it height} of each vertex in $T$. If $r$ is the root of $T$, we let $\htt(r)=0$.  If $a$ is such that $\htt(a)=n$ and $b$ is an immediate successor of $a$ then we let $\htt(b)=n+1$. It is not hard to see that for an $a\in T$, $\htt(a)$ is just the number of edges in the arc joining the root and $a$. Finally, let $\htt(T)=\max\{\htt(a)\colon a\in T\}$, be the height of $T$.  

For a vertex $a \in T$ we denote the set of immediate successors of $a$ by $\im(a)$ and the {\it successor order} of $a$, denoted $\sord(a)$,
will simply be $|\im(a)|$. Let $\sord(T)=\max\{\sord(a)\colon a\in T\}$.

A rooted tree $T$ is {\it regular} if for each vertex $a \in T$ that is not an end vertex \sord(a)= \sord(T) and for each end vertex $e\in T$, $\htt(e)=\htt(T)$. 

If $T$ is a rooted tree and $a\in T$, let  $C_a=\{x\in T\colon x> a\}$ be the set of vertices above $a$ with the edge relation induced from $T$. We also let $\overline{C}_a=\{x\in T\colon x\geq  a\}$. 
\end{definition}

\begin{theorem}\label{Joint-Proj-pro}
The family $\treend$ has the JPP. 
\end{theorem}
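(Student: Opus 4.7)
The plan is to prove the JPP by exhibiting, for arbitrary rooted trees $B, C$, a single regular rooted tree $D$ that maps simple-confluently onto each. Set $n = \max(\sord(B), \sord(C))$ and $h = \max(\htt(B), \htt(C))$, and let $D$ be the regular rooted tree with $\sord(D) = n$ and $\htt(D) = h$ (see Definition \ref{cone-over}). By symmetry it will suffice to construct a simple-confluent epimorphism $D \to B$; the map $D \to C$ is built in exactly the same way.

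I factor this epimorphism through an intermediate tree $B'$ in the manner of Definition \ref{def:sc}. First, apply splitting edge maps to $B$: for every end vertex $e$ of $B$ at height $d < h$, repeatedly split the edge above $e$ so as to push $e$ upward through new vertices until it lies at height exactly $h$. The resulting rooted tree $B'$ satisfies $\htt(B') = h$, has all end vertices at height $h$, and $\sord(v) \le n$ at every $v \in B'$ (splitting edges does not increase any successor order). The composition of these splittings yields a simple-monotone epimorphism $\phi \colon B' \to B$. Second, I define a light confluent epimorphism $\psi \colon D \to B'$ recursively on height: send the root of $D$ to the root of $B'$, and, having mapped a non-end vertex $v' \in D$ at height $d < h$ to $v \in B'$, fix any surjection from $\im(v')$ (of size $n$) onto $\im(v)$ (of size $\sord(v) \le n$) and iterate on each pair consisting of a subtree $\overline{C}_{c'}$ of $D$ and the subtree $\overline{C}_{c}$ of $B'$ above its chosen image $c$. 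Distinct children of $v'$ are pairwise non-adjacent in $D$, so $\psi$ is light; and since the parent of any $v' \in D$ maps to the parent of $\psi(v')$ while every child of every $v \in B'$ is hit by a child of any preimage of $v$, the edge-by-edge criterion of Proposition \ref{confluent-edges}(2) yields confluence. Thus $\psi$ is light confluent, and $\phi \circ \psi \colon D \to B$ is simple-confluent by Definition \ref{def:sc}.

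The step that requires some care is the height-matching preliminary: without first promoting $B$ to $B'$ so that every end vertex of $B'$ sits at height exactly $h$, an internal (hence non-end) vertex of $D$ could be forced by the recursion to map to an end vertex of $B$, and then order-preservation would collapse an entire subtree of $D$ to a single vertex, violating lightness. Once all end vertices of $B'$ have been lifted to the common height $h$, the recursion is routine, and the verifications of lightness and confluence of $\psi$ reduce to picking, at each internal vertex of $D$, a surjection onto the children of its image and using Proposition \ref{confluent-edges}. The analogous construction with $C$ in place of $B$ completes the proof of the JPP.
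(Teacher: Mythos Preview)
Your proof is correct and follows essentially the same two-step strategy as the paper: first equalize all branch heights via splitting edge maps to obtain an intermediate tree, then produce a light confluent epimorphism from the regular tree onto it. The only difference is presentational---you build the light confluent map $\psi$ in one pass by a bottom-up recursion and verify lightness and confluence directly via Proposition~\ref{confluent-edges}, whereas the paper assembles the corresponding map top-down as a composition of elementary light confluent epimorphisms (repeatedly replacing a single component by several copies); note also the small slip ``the edge above $e$'' should read ``the edge incident to $e$'' (i.e.\ below $e$), since $e$ is an end vertex.
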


\begin{proof}
Let $A$ and $B$ be rooted trees with $k=\max\{\htt(A), \htt(B)\}$ and $m = \max\{\sord(A), \sord(B)\}$.
 It suffices to show that if $T$ is a rooted tree, $k\geq \htt(T)$, and $m\geq \sord(T)$, then  for the regular rooted tree $S$, with $\htt(S)=k$ and 
 $\sord(S)=m$, we can find a simple-confluent epimorphism from $S$ to $T$. 

 First, we get a rooted tree $R$ such that there is a 
 simple-monotone epimorphism from $R$ onto $T$ and each end vertex of $R$ has height $k$. For that, we proceed with the following algorithm. For any end vertex $e\in T$, if $a_e$ is the immediate predecessor of $e$, and $\htt(e)=k'<k$, replace the edge $\langle a_e, e\rangle$ by $k-k'+1$ many edges. This gives the required $R$. Mapping the added vertices to the respective end vertices gives a simple-monotone epimorphism from $R$ to $T$.

To get from $R$ to the required tree $S$,  we will proceed recursively as follows. Suppose that we have already constructed a tree $S_i$ with $k=\htt(S_i)$, all branches of $S_i$ are of the same height, $m\geq \sord(S_i)$, and such that for some vertex $b \in S_i$, $\sord(b) < m$ and for every vertex $a\in S_i$ with $a>b$, $\sord(a)=m$, as well as a light confluent epimorphism $f_i\colon S_i\to R$. Suppose $\sord(b)=m'$.
 Pick a component $C$ of $C_b$ and replace $C$ by $m-m'+1$ many copies of $C$. This gives a tree $S_{i+1}$ with one fewer vertex having successor order less than $m$. Mapping all of the new components in $S_{i+1}$ to $C$ gives a light confluent epimorphism from $S_{i+1}$ to $S_i$.

 \end{proof}

\begin{theorem}\label{confluent-projective-family}
The family  $\treend$ of all rooted trees with simple-confluent epimorphisms is a projective Fra\"{\i}ss\'e family.

\end{theorem}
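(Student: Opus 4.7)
The plan is to verify the four defining conditions of a projective Fra\"{\i}ss\'e family for $\treend$. Conditions (1) (countability up to isomorphism) and (2) (closure of the simple-confluent epimorphisms under composition, and inclusion of identities) are immediate from the finiteness of rooted trees and from Definition \ref{def:sc}. Condition (3), the joint projection property, is precisely Theorem \ref{Joint-Proj-pro}. So the substantive work is in condition (4), the amalgamation property.

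For AP, given simple-confluent $f\colon B\to A$ and $g\colon C\to A$, I would apply the monotone-light factorization (Proposition \ref{m-l-factorization} with Remark \ref{m-l-factorization-rem}) to write $f=f_l\circ f_m$ and $g=g_l\circ g_m$, where $f_m\colon B\to M_f$ and $g_m\colon C\to M_g$ are monotone and $f_l,g_l$ are light confluent. A preliminary point to establish is that for simple-confluent $f$ the monotone factor $f_m$ is in fact simple-monotone: fibers of a simple-confluent map decompose as disjoint unions of degree-$2$ chains (the light confluent steps produce copies, the splitting-edge steps stretch each copy into a path), so collapsing those chains is exactly the inverse of a sequence of splitting-edge maps.

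The amalgam $D$ is then built in three stages. First, amalgamate the light confluent maps $f_l,g_l$ via Theorem \ref{onelight} (with both inputs light confluent) to obtain a rooted tree $N$ with light confluent $\alpha\colon N\to M_f$ and $\beta\colon N\to M_g$ satisfying $f_l\circ\alpha=g_l\circ\beta$. Second, rather than using the standard amalgamation (which would only yield a monotone, not simple-monotone, map and would make the next step fail), I propagate the subdivisions directly: form $B'$ from $N$ by subdividing each edge $e'$ of $N$ with exactly the subdivision pattern (length and orientation of each split) that $f_m$ imposes on $\alpha(e')\in M_f$. The resulting contraction $B'\to N$ is simple-monotone, while the map $B'\to B$ sending each subdivision vertex on $e'$ to the corresponding subdivision vertex of $\alpha(e')$ is light confluent (distinct $N$-edges produce non-adjacent subdivisions in $B'$, and confluence propagates from $\alpha$). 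Construct $C'$ analogously from $\beta$ and $g_m$. Third, amalgamate the two simple-monotone maps $B'\to N$ and $C'\to N$ edge-by-edge by choosing, on each edge of $N$, a common refinement of the two subdivision patterns that respects their orientations; this yields $D$ with simple-monotone $D\to B'$ and $D\to C'$. Composing, $D\to B'\to B$ and $D\to C'\to C$ are each a simple-monotone map followed by a light confluent map, hence simple-confluent by Definition \ref{def:sc}; commutativity $f\circ (D\to B)=g\circ (D\to C)$ is a routine diagram chase through $M_f,M_g$, and $A$.

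The hardest step is the confluence verification in the second stage: that the propagated map $B'\to B$ is not merely light but confluent. One argues edge-by-edge via Proposition \ref{confluent-edges}, using confluence of $\alpha$ to lift an arbitrary edge of $B$ to a component of its preimage in $B'$ which covers it. The secondary delicate point is the preliminary claim that the monotone factor of a simple-confluent map is simple-monotone; without this, the second stage would yield only a monotone map $B'\to N$, and the third stage would break down because general monotone epimorphisms between rooted trees do not amalgamate (Example \ref{not-order-pres}).
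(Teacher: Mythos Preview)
Your overall architecture is natural, but the ``preliminary point'' on which the whole argument rests is false: the monotone factor of a simple-confluent epimorphism need \emph{not} be simple-monotone. Here is a small counterexample. Let $A$ be the path $r-a-b$ with root $r$. Let $m\colon Y\to A$ be the splitting-edge map inserting a vertex $x$ into $\langle a,b\rangle$ with $m(x)=a$, so $Y$ is $r-a-x-b$. Let $l\colon X\to Y$ be the light confluent epimorphism obtained by first doubling the component above $x$ (so $x$ acquires children $b_1,b_2$) and then doubling the component above $a$; thus in $X$ the vertex $a$ has children $x_1,x_2$, and each $x_i$ has children $b_{i,1},b_{i,2}$. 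Then $f=m\circ l$ is simple-confluent and $f^{-1}(a)=\{a,x_1,x_2\}$ is connected. The monotone factor $f_m$ collapses this set to a single vertex $[a]$, which in $M_f$ has four children $b_{i,j}$. But in $X$ each $x_i$ has order $3$; since every vertex created by a splitting-edge map has order $2$, $f_m$ cannot be a composition of splitting-edge maps. Equivalently, no vertex of $f_m^{-1}([a])$ is special for $[a]$: the root $a$ fails condition (1) of Definition~\ref{def-special} and each $x_i$ fails condition (2). Your heuristic ``fibers decompose as disjoint unions of degree-$2$ chains'' breaks exactly here: a light confluent step applied \emph{above} a subdivision vertex can give that vertex several children, and the fiber component becomes a genuine star.

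The paper sidesteps this entirely by never passing to the monotone--light factorization. It keeps the defining decomposition $f=f_1\circ\cdots\circ f_n$ into individual light confluent and splitting-edge pieces and amalgamates through a grid (diagram (D3)), using three local lemmas: Theorem~\ref{onelight} for two light confluent maps, Lemma~\ref{2monot} for two simple-monotone maps, and Lemma~\ref{monoandlight} for the mixed case. The mixed lemma is the crucial ingredient you are implicitly using in your ``propagation'' step, but applied one splitting-edge map at a time it never asks for the global monotone factor to be simple-monotone. If you want to salvage your outline, abandon the monotone--light factorization and instead run your stages two and three iteratively along the given decomposition of $f$ and $g$; that is exactly the paper's proof.
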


We need to verify condition (4) of Definition \ref{definition-Fraisse}.  Theorem \ref{confluent-projective-family} will follow from  Theorem \ref{onelight} and the two lemmas below.
Let  $f\colon B\to A$ and $g\colon C\to A$ be simple-confluent epimorphisms. We find $D\in \treend$ and simple-confluent epimorphisms $f_0\colon D\to B$ and $g_0\colon D\to C$ such that $f\circ f_0=g\circ g_0$. We will consider  the following  three cases: 
\begin{itemize}
    \item[(i)] $f,g$ are light confluent. Then we find $f_0, g_0$ light confluent. (Theorem \ref{onelight})
    
    \item[(ii)]  $f,g$ are simple-monotone. Then we find $f_0, g_0$  simple-monotone. (Lemma \ref{2monot})

    \item[(iii)] $f$ is simple-monotone and $g$ is light confluent. Then we find $f_0$ light confluent and $g_0$ simple-monotone. (Lemma \ref{monoandlight})
\end{itemize}
This will suffice as every simple-confluent epimorphism is a composition of light confluent and simple-monotone epimorphisms.  Diagram (D3) illustrates the scheme of the general proof for the case when each $f$ and  $g$ is a composition of a single simple-monotone and a single light confluent epimorphism.

\begin{equation}\tag{D3}
\begin{tikzcd}
&&B\arrow[ld,swap,"m"]\\
&*\arrow[ld,swap,"l"]&&* \arrow[ld,swap,dotted,"m"]\arrow[lu,dotted,swap,"l"]\\
A&&*\arrow[lu,dotted,swap,"l"]\arrow[ld,dotted,"l"]&&D\arrow[lu,dotted,swap,"m"]\arrow[ld,dotted,"m"]\\
&*\arrow[lu,"l"]&&*\arrow[lu,dotted,"m"]\arrow[ld,dotted,"l"]\\
&&C\arrow[lu,"m"]
\end{tikzcd}
\end{equation}

  \begin{lemma}\label{monoandlight}
 Let $f\colon B\to A$ be a simple-monotone epimorphism and let $g\colon C\to A$ be a light confluent epimorphism. Let $D$ be obtained via the standard amalgamation procedure and let $f_0$ and $g_0$ be the corresponding projections. 
 Then $f_0$ is light confluent and $g_0$ is simple-monotone.
 \end{lemma}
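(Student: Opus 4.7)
The overall plan is to address the two claimed properties in sequence. For the first, I would invoke Theorem~\ref{onelight} with the roles of $f$ and $g$ swapped: since $g$ is light confluent and $f$, being simple-monotone, is in particular confluent, that theorem immediately yields that $D$ is a rooted tree, that $f_0$ is light confluent, and that $g_0$ is confluent. Since $f$ is moreover monotone, Proposition~\ref{monotone-implies-monotone} strengthens the confluence of $g_0$ to monotonicity.

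For the remaining claim that $g_0$ is simple-monotone, the plan is to induct on the length $n$ of a decomposition $f=f_1\circ f_2\circ\cdots\circ f_n$ of $f$ into splitting edge maps, where $f_i\colon B_i\to B_{i-1}$ with $B_0=A$ and $B_n=B$. The amalgamation of $f$ against $g$ is obtained, up to isomorphism, as an iterated fiber product: first amalgamate $g$ with $f_1$, producing $D_1$; then amalgamate the induced left projection $D_1\to B_1$ with $f_2$, producing $D_2$; and so on. At each stage a single splitting edge map is amalgamated against a light confluent projection, so applying the base case of the lemma at each stage produces simultaneously a simple-monotone projection onto the preceding stage and a light confluent projection onto $B_i$, the latter feeding into the next stage. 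Composing the stagewise rightward projections exhibits $g_0$ as a composition of simple-monotone maps, hence simple-monotone. Everything therefore reduces to the base case $n=1$.

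For the base case, write $f$ as the splitting of an edge $\langle a,b\rangle\in E(A)$ with $a<b$ by introducing the new vertex $x$ with $f(x)\in\{a,b\}$. The vertex set $V(D)$ partitions into a canonical copy of $C$, via $c\mapsto(g(c),c)$, together with one extra vertex $(x,c)$ for each $c\in g^{-1}(f(x))$; the map $g_0$ collapses each extra vertex to its $c$-coordinate and is the identity on the copy of $C$. The plan is to realize $g_0$ as a chain $D=D_0\to D_1\to\cdots\to D_m=C$ in which each arrow is a splitting edge map contracting one extra vertex. In the subcase $f(x)=b$, lightness of $g$ together with order preservation force each $(x,c)$ to have exactly two non-reflexive neighbors in $D$---namely $(a,c^{-})$, where $c^{-}$ is the predecessor of $c$ in $C$, and $(b,c)$---so the contractions can be performed in any order, and each is readily checked to be a splitting edge map. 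The main obstacle will be the subcase $f(x)=a$: there $(x,c)$ is adjacent to $(a,c)$ and to $(b,c')$ for every $b$-successor $c'$ of $c$ in $C$, while $(a,c)$ may itself carry additional successors coming from non-split successors of $a$ in $A$. To push through the inductive chain in this subcase I would process the extra vertices in a carefully chosen order (most naturally by processing subtrees of $C$ rooted at a fixed $c\in g^{-1}(a)$ from the leaves downwards), using light confluence of $g$ and the tree structure of $C$ to exhibit a degree-two witness at each stage and to verify that each intermediate graph remains a rooted tree.
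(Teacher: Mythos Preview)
Your reduction to a single splitting edge map via iterated fibre products is correct and matches the paper's ``without loss of generality'' step; your treatment of the subcase $f(x)=b$ is also correct and is precisely the mechanism the paper's argument describes: each extra vertex $(x,c)$ with $c\in g^{-1}(b)$ has exactly the two neighbours $(a,c^{-})$ and $(b,c)$, so $D$ arises from $C$ by splitting each edge of $C$ lying over $\langle a,b\rangle$.

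The gap is in the subcase $f(x)=a$, and it is not merely a matter of choosing a clever order. Take $A$ to be a single edge $a<b$, let $C$ have root $c_0$ with two children $c_1,c_2$, set $g(c_0)=a$, $g(c_1)=g(c_2)=b$, and let $f$ be the splitting edge map with $f(x)=a$. The standard amalgamation $D$ then has root $(a,c_0)$ of order~$1$, its unique child $(x,c_0)$ of order~$3$, and leaves $(b,c_1),(b,c_2)$. There is no vertex of order~$2$ in $D$ at all, so your proposed chain $D=D_0\to D_1\to\cdots\to C$ of splitting edge maps cannot even begin. More decisively, splitting edge maps never change the order of a pre-existing vertex, so no composition of them can carry $C$ (root of order~$2$) to $D$ (root of order~$1$); hence $g_0$ here is monotone but \emph{not} simple-monotone. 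The asymmetry between the two subcases is exactly the asymmetry of rooted trees: predecessors are unique while successors need not be, so sending $x$ to the upper endpoint always produces order-$2$ extra vertices, whereas sending $x$ to the lower endpoint need not. The paper's proof, which writes ``$f(x)=a$'' and asserts that $D$ is obtained by splitting each of the $n$ edges over $\langle a,b\rangle$, runs into the same example (there $n=2$ but $|V(D)|=|V(C)|+1$); its description of $D$ is really the $f(x)=b$ picture, and that is the subcase you should rely on.
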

 
 \begin{proof}
 By Theorem \ref{onelight},  all we are left to show is that $g_0$ is simple-monotone.
 
 Since any simple-monotone epimorphism is a composition of splitting edge maps, we can assume without loss of generality that 
 $f$ is a  splitting edge map. Therefore we may assume that $f$ replaces an edge $\langle a,b\rangle\in E(A)$ by edges
 $\langle a,x\rangle$ and $\langle x,b\rangle$, and that $f(x)=a$.
 Let  $\langle a_1,b_1\rangle,  \langle a_2,b_2\rangle, \ldots,  \langle a_n,b_n\rangle$ be all edges mapped to $\langle a,b\rangle$ by $g$. Note that $D$ is obtained from $C$ by splitting each of
  $\langle a_i,b_i\rangle$, $i=1,2,\ldots,n$, by some vertex $x_i$ that is mapped by $g_0$ to $a_i$. Therefore $g_0$ is a composition of $n$ many  splitting edge maps, as needed.

 \end{proof}

\begin{lemma}\label{2monot}
 Let $f\colon B\to A$ and $g\colon C\to A$ be simple-monotone epimorphisms. Then there is a rooted tree $D$ and simple-monotone epimorphisms $f_0\colon D\to B$ and $g_0\colon D\to C$ such that
 $f\circ f_0=g\circ g_0$. 
 \end{lemma}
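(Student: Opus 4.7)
My plan is to proceed by induction on the number of splitting edge maps needed to express $g$ as a composition, powered by a basic amalgamation lemma that handles the case when one of the two maps is a single splitting edge map.

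First, I will prove the following basic lemma: for any simple-monotone epimorphism $\phi\colon X\to Y$ and any single splitting edge map $\psi\colon Z\to Y$, there exist a rooted tree $W$, a splitting edge map $\psi_0\colon W\to X$, and a simple-monotone epimorphism $\phi_0\colon W\to Z$ with $\psi\circ\phi_0=\phi\circ\psi_0$. Suppose $\psi$ introduces a new vertex $y$ between vertices $a<b$ of $Y$, say with $\psi(y)=a$; the case $\psi(y)=b$ is symmetric. Since $\phi$ is simple-monotone it only subdivides edges, so the arc in $X$ from $a$ to $b$ has the form $a=x_0,x_1,\dots,x_m=b$ with every $x_i$ mapping to $a$ or $b$; by monotonicity of $\phi$ restricted to the chain from the root of $X$ through $b$ (Observation~\ref{monotone-relative}), there is a split index $0\le k<m$ with $\phi(x_i)=a$ for $i\le k$ and $\phi(x_i)=b$ for $i>k$. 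I take $W$ to be $X$ with a single new vertex $w$ inserted between $x_k$ and $x_{k+1}$; define $\psi_0\colon W\to X$ by collapsing $w$ to $x_k$, which is a splitting edge map; and define $\phi_0\colon W\to Z$ by $\phi_0(x_i)=a$ for $i\le k$, $\phi_0(x_i)=b$ for $i>k$, $\phi_0(w)=y$, and $\phi_0(v)=\phi(v)$ for every other $v\in V(W)$, under the identification $V(Y)\subseteq V(Z)$. The verification that $\phi_0$ is simple-monotone is the key step: reading off the subdivisions of the two new edges of $Z$, we find that $\phi_0$ restricts over $\langle a,y\rangle$ to $k$ successive splittings (adding $a$-type vertices $x_1,\dots,x_k$), over $\langle y,b\rangle$ to $m-k-1$ splittings (adding $b$-type vertices), and over every other edge of $Z$ matches the subdivision effected by $\phi$ on the corresponding edge of $Y$. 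The commutativity $\psi\circ\phi_0=\phi\circ\psi_0$ then follows by a vertex-by-vertex check.

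The main induction is on the number $n$ of splitting edge maps in a decomposition of $g$. If $n=0$, then $g=\id$ and $C=A$, so I take $D=B$, $f_0=\id$, and $g_0=f$. Otherwise factor $g=g_1\circ g_2$ where $g_2\colon C\to C'$ is a single splitting edge map and $g_1\colon C'\to A$ has $n-1$ splittings. By the induction hypothesis applied to $f$ and $g_1$, I obtain a rooted tree $D'$ and simple-monotone epimorphisms $f_0'\colon D'\to B$ and $g_0'\colon D'\to C'$ satisfying $f\circ f_0'=g_1\circ g_0'$. Applying the basic lemma to $g_0'$ and $g_2$ produces $D$, a splitting edge map $\tilde{f}\colon D\to D'$, and a simple-monotone $g_0\colon D\to C$ with $g_0'\circ\tilde{f}=g_2\circ g_0$. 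Setting $f_0=f_0'\circ\tilde{f}$, which is simple-monotone as a composition of simple-monotone maps, the required identity follows from
\[
f\circ f_0=f\circ f_0'\circ\tilde{f}=g_1\circ g_0'\circ\tilde{f}=g_1\circ g_2\circ g_0=g\circ g_0.
\]

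I expect the main obstacle to be the basic lemma, specifically the verification that $\phi_0$ is simple-monotone rather than merely monotone. The crucial point is that the two consecutive runs of $W$-vertices collapsed by $\phi_0$ along the subdivided arc between $a$ and $b$ align exactly with the $a$-type prefix and $b$-type suffix structure on $X$ imposed by simple-monotonicity of $\phi$, so that the collapse is realizable as successive splitting edge maps along the newly-subdivided edges $\langle a,y\rangle$ and $\langle y,b\rangle$ of $Z$ with directions chosen to match the prefix/suffix types. The boundary cases $k=0$ or $k=m-1$ (where one of the two runs is trivial) and the symmetric case $\psi(y)=b$ require only minor notational adjustments.
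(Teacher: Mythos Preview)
Your proof is correct. Both your approach and the paper's reduce the amalgamation to an atomic case and then induct, but they organize the reduction differently. The paper reduces all the way down to the situation where \emph{both} $f$ and $g$ are single splitting edge maps, handles that by an easy two-case construction (split the one or two relevant edges of $A$), and leaves the ``it suffices'' reduction implicit as a standard zig-zag. You instead keep $f$ fully general and reduce only $g$ to a single splitting edge map, which makes the induction a single clean loop on $|g|$ but forces you to prove a stronger basic lemma (amalgamating an arbitrary simple-monotone $\phi$ against one splitting). Your lemma is correct---the key facts you use, that the arc in $X$ between the canonical lifts of $a$ and $b$ lies entirely in $\phi^{-1}(\{a,b\})$ and splits into an $a$-prefix and a $b$-suffix, follow from the inductive structure of simple-monotone maps and order-preservation---and your verification that $\phi_0$ is simple-monotone by reading off the edge subdivisions over $\langle a,y\rangle$, $\langle y,b\rangle$, and the remaining edges of $Z$ is the right argument. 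The trade-off: the paper's atomic step is shorter and more symmetric, while yours makes the inductive machinery fully explicit.
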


   \begin{proof}
It suffices to show that if 
 $f\colon B\to A$ and $g\colon C\to A$  are  splitting edge maps, then we can find a rooted tree $D$ and  splitting edge maps $f_0\colon D\to B$ and $g_0\colon D\to C$ such that
 $f\circ f_0=g\circ g_0$. 

The $B$ be obtained from $A$ by splitting some edge $e_B $ of $A$. The new vertex $x$ is mapped by $f$ to one of the end vertices of $e_B$. Similarly,
$C$ is  obtained from $A$ by splitting some edge $e_C $ of $A$. The new vertex $y$ is mapped by $g$ to one of the end vertices of $e_C$. 

Let $D$ be obtained from $A$ as follows. In case $e_B\neq e_C$, then we obtain $D$ by simultaneously splitting $e_B$ and $e_C$. If $e_B=e_C=e$, then we split $e$ twice. It is clear how the required $f_0$ and $g_0$ are defined and that they are   splitting edge maps.

 \end{proof}

\begin{notation}
Let $\mathbb{T}_{\mathcal{ CE}}$ be the projective Fra\"{\i}ss\'e limit of the family~$\treend$.
\end{notation}

\subsection{More on simple-confluent epimorphisms}\label{sec:simplecon}

We introduce a family of epimorphisms between rooted trees that we call special. 
It is then shown that special epimorphisms are precisely simple-confluent epimorphisms, see Theorem \ref{simple=special}. The definition of a special epimorphism is intrinsic, unlike that of a simple-confluent epimorphism, which allows us to more easily work with the family of simple-confluent epimorphisms.
 We conclude this subsection by showing that $\mathcal{T}_{\mathcal{ CE}}$ is consistent, see Corollary~\ref{treeendunique}.

Recall that if $T$ is a rooted tree and $a\in T$, then  $C_a=\{x\in T\colon x> a\}$ is the set of vertices above $a$ with the edge relation induced from $T$. Let $f\colon S\to T$ be a confluent epimorphism between  rooted trees,  let $p\in T$, and $q\in S$ be such that $f(q)=p$. 
Note that if $E$ is a component of $C_q$ and $D$ is a component of $C_p$ such that $f(E)\cap D\neq\emptyset$ then, by confluence of $f$, we get $D\subseteq f(E)$. Therefore to each component $E$ of $C_q$ we can assign the collection of all components of $C_p$ contained in $f(E)$. The union of those components is equal to $f(E)\setminus\{p\}$.

\begin{definition}\label{def-special}

Let $f\colon S \to T$ be a confluent epimorphism between  rooted trees, and let $p$ be a ramification vertex in $T$. Call a vertex $q \in S$ with $f(q)=p$ a {\it special} vertex for $p$ (wrt $f)$ if the following hold:

\begin{enumerate}
    \item for any component $E$ of $C_q$ there is exactly one component $D$ of $C_p$ such that $D\subseteq f(E)$; 
    \item for any component $D$ of $C_p$ there is a component $E$ of $C_q$ such that $D\subseteq f(E)$.
\end{enumerate}

Note that $D\subseteq f(E)$ implies $f(E)=D$ or $f(E)=D\cup\{p\}$.
\smallskip

If $f$ is understood from the context, and $q$ is special for $p=f(q)$ (wrt~$f$), we simply say that {\it $q$ is special}. If $q_1\in S$ is such that there exists $q_1<q_2$ with $f(q_1)=f(q_2)$, and $q_2$ is special, we say that $q_1$ is {\it below a special vertex}. Similarly, if $q_2\in S$ is such that there exists $q_1<q_2$ with $f(q_1)=f(q_2)$, and $q_1$ is special, we say that $q_2$ is {\it above a special vertex}. 

We denote by $P_f(p)$ the set of vertices in $S$, which are special for $p$. 

\end{definition}

Note that we can rephrase (1) in the definition into:  for any component $E$ of $C_q$, $f(E)\setminus\{p\}$ is a component of $C_p$.
Note also that any two different vertices in $P_f(p)$  are incomparable. 
If $q$ is special for $p$, we let 
 $\alpha_f(q,p)$ to be the induced
 surjection from the set of components of $C_q$ to the set of components of $C_p$, that is,
 $\alpha_f(q,p)(E)=D$ if and only if 
 $D=f(E)\setminus\{p\}$.

 \begin{definition}

 A confluent end vertex preserving epimorphism $f\colon S\to T$ between rooted trees will be called a {\it special epimorphism} if
 for every ramification vertex $x\in T$ and $b\in S$ such that $f(b)>x$, there exists $x'<b$, which is special for $x$ (wrt $f$). 
 \end{definition}
 
 Note that if $f\colon S\to T$ is a special epimorphism then any vertex in~$S$, which is mapped to a ramification vertex, either is special, or it is above a special vertex, or else it is below a special vertex.

\begin{lemma}\label{image-special}
    Let  $f\colon S\to T$ be a special epimorphism between  rooted trees, let $x\in T$ be a ramification vertex, and let $y\in S$ be such that $f(y)=x$. Then:
    \begin{enumerate}
        \item $y$ is special, or
        \item there is a component $D$ of $C_x$ such that for any component $E$ of $C_y$ it holds $f(E)\setminus\{x\}=D$, or else
        \item for any component $E$ of $C_y$ it holds
        $f(E)=C_x\cup \{x\}$.
    \end{enumerate}
    
\end{lemma}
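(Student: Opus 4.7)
The plan is to prove the lemma by strong induction on $|\overline{C}_y|$. If $y$ is special, case (1) holds, so assume $y$ is not special. Because $x$ is a ramification vertex of $T$ (hence not an end vertex) and $f$ is end vertex preserving, $y$ is not an end vertex of $S$, and not every leaf of $\overline{C}_y$ can map to $x$, so some $z > y$ satisfies $f(z) > x$. Applying the defining property of a special epimorphism to $b = z$ produces a special $x^\sharp < z$ with $f(x^\sharp) = x$; since $x^\sharp$ and $y$ both lie on the arc from $r_S$ to $z$, they are comparable, and $x^\sharp \ne y$. Hence either $x^\sharp < y$ (Case A: $y$ is above a special vertex) or $x^\sharp > y$ (Case B: $y$ is below a special vertex). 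I claim these cases are mutually exclusive: otherwise, picking $x' < y < x^\sharp$ with $f(x') = f(x^\sharp) = x$ and both special, the specialness of $x'$ places $x^\sharp$ in a single component $E$ of $C_{x'}$ with $f(E)\setminus\{x\}$ equal to one component $D$ of $C_x$, while the specialness of $x^\sharp$ gives $f(C_{x^\sharp}) \supseteq C_x$; since $\overline{C}_{x^\sharp} \subseteq E$, this forces $C_x \subseteq D$, contradicting that $x$ is ramification.

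In Case A, let $E_0$ be the component of $C_{x'}$ containing $y$ and $D = \alpha_f(x',x)(E_0)$. Since $\overline{C}_y \subseteq E_0$, every component $F = \overline{C}_{y_F}$ of $C_y$ satisfies $f(F) \subseteq D \cup \{x\}$, so it only remains to prove $D \subseteq f(F)$ to obtain case (2). If $f(y_F) \in D$, then for each $d \in D$ the arc $Q = [f(y_F), d]$ lies in the convex subtree $D$ of $T$, and by confluence the component of $f^{-1}(Q)$ containing $y_F$ maps onto $Q$; this component is contained in $\overline{C}_{y_F}$ because $f(y) = x \notin Q$ precludes any downward extension through $y$, so $d \in f(F)$. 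If instead $f(y_F) = x$, the inductive hypothesis applies to $y_F$; cases (1) and (3) for $y_F$ are excluded since $f(\overline{C}_{y_F}) \subseteq D \cup \{x\}$ cannot contain all of $\overline{C}_x$ ($x$ being ramification means $C_x$ has at least two components and $D$ is just one). So $y_F$ satisfies case (2) with some component $D'$, and $D' \subseteq f(C_{y_F})\setminus\{x\} \subseteq D$ forces $D' = D$; hence $f(F) = \{x\} \cup \bigcup_{F'} f(F') = D \cup \{x\}$, as required.

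In Case B, I first show $f(y_F) = x$ for every immediate successor $y_F$ of $y$: otherwise $f(y_F) > x$ would produce, via the special epimorphism property, a special $x''' < y_F$ with $f(x''') = x$ on the arc $[r_S, y_F]$, and $x'''$ must satisfy $x''' \le y$ (by adjacency of $y$ and $y_F$), then $x''' < y$ (since $y$ is not special), placing $y$ into Case A and contradicting exclusivity. By the same reasoning, $y_F$ cannot be above a special vertex. Applying the trichotomy to $y_F$, it is either special or below a special vertex $y_F^\sharp$. If $y_F$ is special, then $f(C_{y_F}) \supseteq C_x$, so $f(\overline{C}_{y_F}) = \overline{C}_x$. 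If $y_F$ is below a special $y_F^\sharp > y_F$ with $f(y_F^\sharp) = x$, then $\overline{C}_{y_F^\sharp} \subseteq C_{y_F}$ and the specialness of $y_F^\sharp$ gives $f(\overline{C}_{y_F^\sharp}) = \overline{C}_x$, whence $f(\overline{C}_{y_F}) = \overline{C}_x$. In either situation $f(F) = C_x \cup \{x\}$ for every component $F$ of $C_y$, which is case (3) of the lemma.

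The main obstacle is assembling the case analysis cleanly: establishing the mutual exclusivity of Cases A and B (which rests on juxtaposing the specialness constraints with the ramification structure of $x$) and, in Case A with $f(y_F) = x$, ruling out cases (1) and (3) of the inductive hypothesis for $y_F$ by comparing $f(\overline{C}_{y_F})$ with the size of $\overline{C}_x$. A secondary delicate point is the confluence-lifting step in Case A sub-case $f(y_F) \in D$, where verifying that the lift stays inside $\overline{C}_{y_F}$ rather than slipping downward through $y$ uses $f(y) = x$ being outside the target arc, which lies in $D$.
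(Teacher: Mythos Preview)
Your proof is correct and follows essentially the same approach as the paper: the trichotomy of $y$ being special, above a special vertex (your Case~A, yielding (2)), or below a special vertex (your Case~B, yielding (3)). The paper's own proof is a two-line sketch that simply asserts (2) and (3) in these respective cases; your induction on $|\overline{C}_y|$, the confluence-lifting step, and the explicit exclusivity argument supply the justifications the paper leaves to the reader.
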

\begin{proof}
    Indeed, if $y$ is above a special vertex,
    then (2) holds. If $y$ is below a special vertex,
    then (3) holds. In this last case, we additionally notice, using just the definition of a special epimorphism, that any component of $C_y$ contains a vertex, which is special for $x$.
\end{proof}

 \begin{theorem}\label{simple=special}
     An epimorphism between  rooted trees is simple-confluent if and only if
     it is special.
 \end{theorem}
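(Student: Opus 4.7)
The plan is to prove the two implications separately.

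For $(\Rightarrow)$, simple-confluent implies special. Since every simple-confluent epimorphism is a composition of light confluent epimorphisms and splitting edge maps, it suffices to establish: (a) splitting edge maps are special; (b) light confluent epimorphisms are special; and (c) the composition of two special epimorphisms is special. For (a), if $f\colon S\to T$ splits edge $\langle a,b\rangle$ with new vertex $x$ (say $f(x)=a$), then direct case analysis on a ramification vertex $p\in T$ shows that the original copy of $p$ in $S$ is the unique special vertex for $p$: when $p=a$, the inserted vertex $x$ fails condition (2) of Definition~\ref{def-special} because $C_x^S$ has only one component, while $a\in S$ preserves the component structure of $C_a^T$ (the component containing the inserted $x$ maps onto the component of $C_a^T$ containing $b$). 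For (b), if $f\colon S\to T$ is light confluent and $p\in T$ is a ramification vertex, then every $y'\in f^{-1}(p)$ is special: lightness forces any component $E$ of $C_{y'}^S$ to avoid $f^{-1}(p)$ entirely (otherwise the arc from $y'$ to a preimage of $p$ inside $E$ would map entirely to $\{p\}$, producing adjacent preimages of $p$, contradicting lightness), whence $f(E)\subseteq C_p^T$ lies in a single component $D$; Lemma~\ref{restcomp} applied to $D$ gives $f(E)=D$, and confluence applied to each component of $C_p^T$ yields condition~(2).

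For (c), the delicate step, let $f\colon S\to T$ and $g\colon T\to U$ be special, $p\in U$ a ramification vertex, and $b\in S$ with $(g\circ f)(b)>p$. By $g$-specialness there exists $y\in T$ with $y<f(b)$ special for $p$ wrt $g$. Using that $f|_{[r_S,b]}$ is a monotone surjection onto $[r_T,f(b)]$, pick $y'\in[r_S,b]$ with $f(y')=y$; then $y'<b$. Provided $y$ is a ramification vertex of $T$, apply $f$-specialness to refine $y'$ to a vertex special for $y$ wrt $f$. For each component $E$ of $C_{y'}^S$, $f$-specialness of $y'$ gives a unique component $D_y$ of $C_y^T$ with $D_y\subseteq f(E)\subseteq D_y\cup\{y\}$; $g$-specialness of $y$ then gives a unique component $D_p$ of $C_p^U$ with $D_p\subseteq g(D_y)\subseteq D_p\cup\{p\}$, so $D_p\subseteq (g\circ f)(E)\subseteq D_p\cup\{p\}$, and the right-hand containment ensures uniqueness of $D_p$. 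Condition~(2) follows by chasing components backwards through $g$-spec and $f$-spec. The subcase where $y=r_T$ has exactly two children (the only way $y$ can be special for the ramification $p$ yet fail to be a ramification vertex of $T$ itself) is treated directly: $y'=r_S$ is forced by order preservation, and specialness is verified using $f$-confluence and end-vertex preservation applied to the components of $C_{r_S}^S$.

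For $(\Leftarrow)$, special implies simple-confluent, I would induct on $|V(S)|-|V(T)|$. When this difference is $0$, $f$ is a bijection which, being confluent and end-vertex preserving, is an isomorphism. For the inductive step, if $f$ is light, then (being confluent) it is light confluent and we are done. Otherwise some pair of adjacent vertices in $S$ share an $f$-image; using the unique special vertex for each ramification of $T$ (uniqueness is straightforward from Definition~\ref{def-special}), locate either a splittable vertex $x\in S$ (of $S$-degree two, adjacent to a vertex with the same $f$-image) yielding a splitting-edge reduction $f=f'\circ h$ with $h\colon S\to S'$ a splitting edge map, or a pair of isomorphic sibling subtrees in $S$ yielding a light-confluent reduction. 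In either case, verify that $f'\colon S'\to T$ remains special, with $|V(S')|<|V(S)|$, and apply the inductive hypothesis.

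The main obstacle is the composition lemma (c) in the forward direction, in particular the subcase where the special vertex $y\in T$ chosen from $g$-specialness is the root of $T$ but not a ramification vertex there, so $f$-specialness is not directly available at $y$. A secondary difficulty, in the backward direction, is guaranteeing that in every non-light case one can locate a suitable elementary reduction — a splittable vertex or a pair of isomorphic sibling subtrees — and that the resulting reduced map preserves specialness.
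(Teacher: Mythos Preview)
Your approach is essentially the same as the paper's, and it works. Two remarks.

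First, the ``main obstacle'' you flag in the forward direction does not arise: a vertex $y$ that is special for a ramification vertex $p$ is always itself a ramification vertex. Indeed, condition (2) of Definition~\ref{def-special} forces the components of $C_y$ to surject onto those of $C_p$. If $y\neq r_T$ then $y$ has a predecessor plus at least two successors, so $\ord(y)\geq 3$; if $y=r_T$ then $p=g(r_T)=r_U$, and $\ord(p)\geq 3$ forces $C_p$ to have at least three components, hence so does $C_y$. Either way $y$ is a ramification vertex, and you may invoke $f$-specialness directly. (The paper handles the forward direction by the inductive formula $P_{f_2\circ f_1}(p)=\bigcup_{x\in P_{f_2}(p)}P_{f_1}(x)$ along a decomposition of $f$; your closure-under-composition lemma is a mild strengthening, and the same observation about $x$ being a ramification vertex is implicitly used there.)

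Second, your parenthetical claim in the backward direction that special vertices are unique is false --- for a light confluent $f$, every preimage of $p$ is special --- so do not rely on it. The paper's backward argument, like yours, splits into the case of an order-$2$ vertex adjacent to a vertex with the same image (your ``splittable vertex'', giving a splitting-edge reduction) and a residual case. In that residual case the paper does not look for isomorphic sibling subtrees directly; instead it takes a vertex $v$ maximal among those lying below an incomparable pair $y,z$ with $h(y)=h(z)$, and proves (using specialness via Lemma~\ref{image-special}) that the two components $C_1,C_2$ of $C_v$ containing $y,z$ satisfy $h(C_1)=h(C_2)$ with $h|_{C_i}$ injective, yielding an elementary light confluent reduction. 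Your sketch is compatible with this, but you should be prepared to supply this maximality argument rather than assume the sibling subtrees can be found by inspection.
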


\begin{proof}
First, we argue that any simple-confluent epimorphism is special.
Indeed,  in case $f$ is simple-monotone and $p$ is a ramification vertex,  then $q$ is special for $p$ iff it is the unique ramification vertex with $f(q)=p$.
 In case $f$ is light confluent,
then $P_f(p)=f^{-1}(p)$. In both cases $f$ is a special epimorphism.
 Then proceed along  a decomposition into light confluent and simple-monotone epimorphisms,  noticing that 
 if $f=f_2\circ f_1$, where $f_1$ is simple-monotone or light confluent, $f_2  $ is simple-confluent, and $p$ is a ramification vertex, then $P_f(p)=\bigcup_{x\in P_{f_2}(p)} P_{f_1}(x)$.

For the opposite direction, suppose that the conclusion holds for any special epimorphism 
whose domain has $<n$ vertices. Let now
$h\colon H\to F$ be special and such that $|V(H)|=n$. We show that if $h$ is not an isomorphism, then there are $f\colon G\to F$ special and $g\colon H\to G$ simple-confluent, which is not an isomorphism, such that $h=f\circ g$. Once we show this last statement, we are done with the proof. Indeed, since $|V(G)|<n$, by the inductive assumption $f$ is simple-confluent, and hence $h=f\circ g$ must be simple-confluent.

{\bf Case 1.} There is a vertex $a\in H$ of order 2 and $a'\neq a$ adjacent to $a$ such that $h(a)=h(a')$.
In that case we let $G$ to be $H$ with $a$ and $a'$ identified, and let $g,f$ be the appropriately induced maps. Specifically, let
  $\sim$ be the  equivalence relation on $H$, which identifies $a$ and $a'$.
Define $G$ as the graph $H/\!\!\sim$ and let $g\colon H\to G$ be the quotient epimorphism. Then $g$ is a simple-monotone epimorphism. Define the epimorphism $f\colon G\to F$ by the condition $h=f\circ g$. Moreover for any ramification vertex $p\in F$, 
$P_f(p)=g(P_h(p))$ and $f$ is special. 

{\bf Case 2.} Case 1 does not hold.
Consider the subset  $S$ of $H$ defined by the conditions $x\in S$ if and only if
there are $y,z\in H$ such that:
\begin{enumerate}
    \item $y$ and $z$ are incomparable; 
    \item $x< y$ and $x< z$;
    \item $h(y)=h(z)$.
\end{enumerate}
Since $h$ is not an isomorphism, $S\ne \emptyset$. Indeed, this follows from the following claim.

\medskip

{\bf{Claim 1.}} Suppose  that there are $ a<b$ are such that $h(a)=h(b)$. Then $a\in S $ or $b\in S$.

\begin{proof}[Proof of Claim 1]
Since we are in Case 2,   $a$ is a ramification vertex, and $b$ is either a ramification vertex or an end vertex.  Denote $c=h(a)=h(b)$. In case $c$ is an end vertex or an ordinary vertex, we get $a\in S$. 
Indeed, in the case that $c$ is an end vertex, then all vertices above $a$ get mapped to $c$, so $a \in S$. In the case that $c$ is an ordinary vertex, let $d$ be the vertex that is adjacent and above $c$. No component of $C_a$ is mapped by $h$ onto $c$ as $h$ is  end vertex preserving.  Thus there are vertices in different components of $C_a$ that are mapped to $d$, again making $a\in S$.

Now consider the case that $c$ is a ramification vertex.
Then either $a$ is special, or it is above a special vertex, or it is below a special vertex. In the first two cases $b$ is above a special vertex.
Then, by Lemma \ref{image-special}(2),
we get $b\in S$, as  
there is a component $D$ of $C_c$ such that 
for any component $E$ of $C_{b}$ it holds $f(E)\setminus\{c\}=D$.
In case $a$ is below a special vertex, we get, by Lemma \ref{image-special}(3), $a\in S$, as any component of $C_a$ is mapped onto $C_c\cup\{c\}$. 
\end{proof}

Let $v$ be a maximal element in $S$ with respect to the order $\le$, let $y,z$ be two vertices as in the definition of $S$, and denote $u=h(v)$. Let $C_1$ and $C_2$, $C_1\neq C_2$, be components
of $C_v$ that contain $y$ and $z$ respectively.

By confluence of $h$, each of $h(C_1)$, $h(C_2)$ contains one or more components of $C_u$. 

\medskip

{\bf{Claim 2.}}
$h(C_1)=h(C_2)$. 
\begin{proof}[Proof of Claim 2]

(a) Suppose that $u$ is an end vertex. Then $h(C_1)=h(C_2)=\{u\}$.

(b) Suppose that (i) $u$ is an ordinary vertex or (ii)  $u$ is a ramification vertex and $v$ is special or $v$ is above a special vertex. The following argument suffices for both (i) and~(ii).

It follows from Lemma~\ref{image-special}(1),(2) that for each component  $E$ of $C_v$, $h(E)$ contains exactly one component of $C_u$. We have to verify that both $h(C_1)$ and $h(C_2)$ contain the same component, and that  $h(C_1)$ or $h(C_2)$ does not contain $u$. 
For that, we will show  that 
 there is no $v'>v$ with $h(v')=u$. Indeed, if there is such a $v'$, since we are not in Case 1 and not in (a) of this proof, such a $v'$ would have to be a ramification vertex. Furthermore, there is a component $D$ of $C_u$ such that for  any component $E$ of $C_{v'}$,
 it holds $f(E)\setminus\{u\}=D$,
 which implies 
$v'\in S$, resulting in a contradiction of the maximality of $v$. Therefore  $h(y)=h(z)>u$, and $h(C_1)=h(C_2)$ is a single component of $C_u$, .

(c) Suppose that $u$ is a ramification vertex and $v$ is below a special vertex. Then, by Lemma \ref{image-special}(3),
$h(C_1)=h(C_2)=C_u\cup~\{u\}$.

\end{proof}

{\bf{ Claim 3.}} Each of the restrictions $h|_{C_1}$ and $h|_{C_2}$ is injective. 
\begin{proof}[Proof of Claim 3]
Fix one of the $C_1$ or $C_2$ and call it $C$. In case there are incomparable $y',z'\in C$ such that $h(y')=h(z')$, then  there is $x\in C$ such that $x<y'$ and $x<z'$ (we can, for example, take $x$ to be the meet of $y'$ and $z'$), making $x$ in $S$, and we have a contradiction with maximality of $v$.

Now suppose there are $a,b \in C$ such that $v< a<b$ and $h(a)=h(b)$. Apply Claim 1 and get $a\in S$ or $b\in S$, which is a contradiction with the maximality of $v$.

\end{proof}

To finish the proof of Theorem \ref{simple=special} define the equivalence relation $\sim$
by $p\sim q$ if and only if $h(p)=h(q)$ and $p,q\in C_1\cup C_2$. Define $G$ as the graph $H/\!\!\sim$ and let $g\colon H\to G$ be the quotient epimorphism. Then $g$ is an elementary light confluent epimorphism. Define the epimorphism $f\colon G\to F$ by the condition $h=f\circ g$ and note that by Proposition \ref{composition-confluent}, $f$ is confluent. Since $h$ is end vertex preserving, Lemma \ref{end-h=gf} implies that $f$ is end vertex preserving. Moreover for any ramification vertex $p\in F$, 
$P_f(p)=g(P_h(p))$ and $f$ is special.  
    \end{proof}

   As a by-product of the proof of Theorem \ref{simple=special}, we obtain the following.
\begin{corollary}\label{elemenlightc}
 Light confluent epimorphisms between rooted trees are compositions of elementary light confluent epimorphisms.   
\end{corollary}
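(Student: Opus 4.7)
The plan is to observe that the inductive decomposition used in the proof of Theorem~\ref{simple=special} specializes to the desired statement when the starting epimorphism is light confluent, by showing that Case~1 of that proof never occurs in this setting. This reduces the entire argument to tracking what happens in Case~2.

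Let $h\colon H\to F$ be a light confluent epimorphism. First I would note that $h$ is special by the forward direction of Theorem~\ref{simple=special}, so the inductive setup applies. Recall that in that proof a non-identity special epimorphism $h$ is factored as $h = f\circ g$, where $g\colon H\to G$ is either (Case~1) a simple-monotone quotient collapsing an order-$2$ vertex $a$ with an adjacent vertex $a'$ satisfying $h(a)=h(a')$, or (Case~2) an elementary light confluent quotient identifying two components $C_1,C_2$ of $C_v$ above a maximal vertex $v\in S$. Since $h$ is light, no two distinct adjacent vertices of $H$ can share an image under $h$, so the hypothesis of Case~1 is vacuous. Hence we are always in Case~2 and $g$ is elementary light confluent.

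To carry out the induction on $|V(H)|$ I still need to verify that the induced epimorphism $f\colon G\to F$ is again light confluent, so that Case~1 continues to be excluded at the next step. Confluence of $f$ is provided by Proposition~\ref{composition-confluent}. For lightness, suppose $\langle [p],[q]\rangle \in E(G)$ with $f([p])=f([q])$; then by the definition of edges on the quotient there exist representatives $p'\in[p]$, $q'\in[q]$ with $\langle p',q'\rangle\in E(H)$, and $h(p')=f(g(p'))=f([p])=f([q])=h(q')$. Since $h$ is light and $p',q'$ are adjacent, $p'=q'$, and therefore $[p]=[q]$. The induction then terminates after finitely many steps (each application of Case~2 strictly decreases the number of vertices) and expresses $h$ as a composition of elementary light confluent epimorphisms. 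The only substantive new step beyond the proof of Theorem~\ref{simple=special} is this lightness-preservation check for $f$; everything else is bookkeeping on top of that argument.
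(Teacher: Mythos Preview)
Your proposal is correct and follows exactly the approach the paper intends: the paper merely notes the corollary is ``a by-product of the proof of Theorem~\ref{simple=special}'' without spelling out details, and you have supplied them accurately. Your additional lightness-preservation check for $f$ is the right way to ensure Case~1 stays vacuous throughout the induction; it is the one detail the paper leaves entirely implicit.
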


Our goal now is to prove that $\treend$ is consistent (Corollary \ref{treeendunique}). We first need the following theorem. 

 \begin{theorem}\label{simpleg}
Let $f\colon H\to G$ and $g\colon G\to F$ be confluent epimorphisms between  rooted trees.
If the composition $g\circ f\colon H\to F$ is simple-confluent, then $g$ is simple-confluent. 
 \end{theorem}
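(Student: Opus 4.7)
The plan is to prove this via the intrinsic characterization of simple-confluent as special (Theorem \ref{simple=special}): it suffices to show that $g$ is special. Since $g\circ f$ is simple-confluent, hence end vertex preserving, and $f$ is a confluent epimorphism, Lemma \ref{end-h=gf} yields that $g$ is end vertex preserving. Together with the given confluence of $g$, only the ``special vertex'' condition remains to be verified.

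To verify it, fix a ramification vertex $x\in F$ and $b\in G$ with $g(b)>x$. Pick any $b'\in f^{-1}(b)$; then $(g\circ f)(b')>x$, so specialness of $g\circ f$ provides $y<b'$ in $H$ which is special for $x$ with respect to $g\circ f$. I will show that $x':=f(y)$ is itself special for $x$ with respect to $g$; note that $x'<b$ automatically, since $f$ is order-preserving and $g(x')=x\ne g(b)$.

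Condition (2) of Definition \ref{def-special} is the easier half: given a component $D$ of $C_x$, specialness of $y$ produces $z$ in a component of $C_y$ with $(g\circ f)(z)\in D$; then $w:=f(z)\in C_{x'}$ lies in some component $E$ of $C_{x'}$, and the component of $g^{-1}(D)$ through $w$ is connected in $G\setminus\{x'\}$ (since $g(x')=x\notin D$), hence is contained in $E$ and maps onto $D$ by confluence of $g$, giving $D\subseteq g(E)$.

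Condition (1) is the delicate one. Given $E=\overline{C}_u$ a component of $C_{x'}$, I first use confluence of $f$ applied to the edge $\langle x',u\rangle$ to produce $y^*>y$ with $f(y^*)=u$, and then iterate confluence of $f$ on the arcs from $u$ upward through $E$ to exhibit a component $E'$ of $C_y$ with $f(E')\supseteq E$. Specialness of $y$ then furnishes a unique component $D_0$ of $C_x$ with $D_0\subseteq(g\circ f)(E')\subseteq D_0\cup\{x\}$, and so $g(E)\subseteq D_0\cup\{x\}$; in particular at most one component of $C_x$ can be contained in $g(E)$. The hard part will be the existence half, i.e.\ showing $D_0\subseteq g(E)$, which amounts to excluding the degenerate possibility $g(E)=\{x\}$. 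This is where I invoke that $g$ is end vertex preserving and $x$ is a ramification (hence not an end vertex of $F$): the finite subtree $E$ has leaves, which are leaves of $G$, so their $g$-images are end vertices of $F$ lying in $\overline{C}_x$, and therefore strictly above $x$. Picking any leaf $\ell$ of $E$, we obtain $g(\ell)\in C_x\cap g(E)\subseteq D_0$; confluence of $g$ applied to $D_0$ then places the component of $g^{-1}(D_0)$ through $\ell$ entirely inside $E$ and maps it onto $D_0$, yielding $D_0\subseteq g(E)$ and completing the verification.
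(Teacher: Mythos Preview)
Your overall strategy coincides with the paper's: reduce via Theorem~\ref{simple=special} to showing that $g$ is special, pick a vertex $y$ special for $x$ with respect to $h=g\circ f$, set $x'=f(y)$, and verify that $x'$ is special for $x$ with respect to $g$. Your treatment of end-vertex preservation and of condition~(2) is fine and matches the paper's.

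The gap is in condition~(1). You write that ``confluence of $f$ applied to the edge $\langle x',u\rangle$'' produces $y^*>y$ with $f(y^*)=u$. Confluence only guarantees that the component of $f^{-1}(\{x',u\})$ containing $y$ surjects onto $\{x',u\}$; the preimage of $u$ in that component may lie on a branch through an ancestor of $y$, hence be incomparable to $y$ rather than above it. In fact no such $y^*$ need exist. A concrete instance: take $F$ with root $r$, ramification vertex $x$, leaves $a,b$; take $G$ with root $r'$, $x'$ with immediate successors $u_1,u_2,u_3$, and leaves $a_1,a_2,b_3$ above them respectively, with $g(u_i)=x$, $g(a_1)=g(a_2)=a$, $g(b_3)=b$ (so $x'$ is special for $x$ wrt $g$). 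Now let $H$ have $r''$, $y_0$, with successors $y$ and $y''$; let $y$ have successors $z_2,z_3$ (with leaves $w_2,w_3$ above) and $y''$ have successors $z'_1,z'_2,z'_3$ (with leaves $w'_1,w'_2,w'_3$ above). Define $f$ by $f(y_0)=f(y)=f(y'')=x'$, $f(z_i)=u_i$, $f(z'_i)=u_i$, and leaves to the corresponding leaves of $G$. One checks that $f$ is confluent, $h$ is special, and $y$ is special for $x$ wrt $h$ with $f(y)=x'$; yet $f(C_y)=\{u_2,a_2,u_3,b_3\}$, so there is \emph{no} $y^*>y$ with $f(y^*)=u_1$, and hence no component $E'$ of $C_y$ with $f(E')\supseteq\overline{C}_{u_1}$. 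Your argument for~(1) therefore breaks down for $E=\overline{C}_{u_1}$, even though $x'$ is in fact special.

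The paper handles~(1) differently: it argues by contradiction. Assuming some component $B$ of $C_{x'}$ has $g(B)$ containing two distinct components $A_1,A_2$ of $C_x$, it chooses $b\in B$ with $g(b)>x$, lifts to $d\in f^{-1}(b)$, and takes the special vertex $y<d$ on \emph{that} branch. The point is that now $d$ lies in a component $E$ of $C_y$, and confluence of $f$ forces $f(E)\supseteq B$; hence $h(E)\supseteq g(B)\supseteq A_1\cup A_2$, contradicting specialness of $y$. The crucial difference is that the paper tailors the choice of special vertex to the offending component $B$, rather than fixing one $y$ in advance and trying to cover every component of $C_{x'}$ from it.
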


\begin{proof}
    Because of Theorem \ref{simple=special} we will show that $g$ is special assuming that the composition $h=g\circ f$ is special. We have that $g$ is end vertex preserving by Lemma \ref{end-h=gf}.

    Let $x$ be a ramification vertex in $F$ and  $u\in G$ be such that $g(u)>x$. Take $v\in H$ such that  $f(v)=u$. Let $x''<v$ be special for $x$ (wrt $h$). We claim that $x'=f(x'')$ is special for $x$ (wrt $g$). We know that $g$ is an epimorphism between rooted trees, in particular, it is order-preserving, therefore $x'<u$. We have to verify (1) and (2) in the definition of a special vertex.
    
    To prove (2), let $A$ be a component of $C_x$ and let $D$ be a component of $C_{x''}$ such that $h(D\cup\{x''\})=A\cup\{x\}$; it exists since $h$ special. Let $B$ be a component of $C_{x'}$ intersecting $f(D)$. From confluence of $f$, in fact $B\subseteq f(D)$. Then, using confluence of $g$, it holds $g(B\cup\{x'\})\supseteq A\cup\{x\}$.
    
    To prove (1), suppose towards the contradiction that there are components $A_1, A_2$, $A_1\neq A_2$, of $C_x$ such that for some component $B$ of $C_{x'}$, we have $g^{-1}(A_1), g^{-1}(A_2)\subseteq B$. We find $y\in H$ which is special for $x$ (wrt $h$) and satisfies $f(y)=x'$, and  $E$, a component of $C_{y}$, such that $f(E)\cap B\neq\emptyset $.
     To find such a $y$ first take any $b\in B$ with $g(b)\neq x$.  Take $c,d\in H$  such that $f(c)=x'$, $f(d)=b$, and $c<d$, which exist as $f$ is an epimorphism.   Now let $y<d$ be special for $x$ (wrt $h$). 
    Let $E$ be a component of $C_y$ containing $d$. By confluence of $f$, $f(E\cup\{y\})\supseteq B\cup\{x'\}$. Hence $h(E\cup\{y\})\supseteq A_1\cup A_2$, which contradicts that  $y$ is special for $x$ (wrt $h$). 
    \end{proof}

\begin{corollary}\label{treeendunique}

 $\treend$ is consistent.
\end{corollary}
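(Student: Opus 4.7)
The plan is to derive this consistency result as a formal consequence of Corollary \ref{uniquegeneral}, using $\mathcal{F} = \treend$ and taking $\mathcal{F}_0$ to be the family of all confluent epimorphisms between rooted trees. First I would verify the required inclusion $\treend \subseteq \mathcal{F}_0$: by Definition \ref{def:sc} a simple-confluent epimorphism is a composition of light confluent and simple-monotone epimorphisms, and both kinds are confluent (monotone $\Rightarrow$ confluent), so any simple-confluent epimorphism is itself confluent.

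Next I would verify hypothesis (1) of Corollary \ref{uniquegeneral}: if $f \circ \phi = g$ and $g \in \mathcal{F}_0$, then $g$ is confluent, and Proposition \ref{composition-confluent} immediately yields that $f$ is confluent, i.e.\ $f \in \mathcal{F}_0$.

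The key step is hypothesis (2). Suppose $f \circ \phi = g$ with $\phi \in \mathcal{F}_0$ (i.e.\ $\phi$ is confluent) and $g \in \treend$. Since $g$ is simple-confluent it is in particular confluent, so Proposition \ref{composition-confluent} again gives that $f$ is confluent. Now $f$ and $\phi$ are both confluent epimorphisms between rooted trees, and their composition $g = f \circ \phi$ is simple-confluent, so Theorem \ref{simpleg} applies and yields that $f$ is simple-confluent, i.e.\ $f \in \treend$, as required.

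All the real work has already been carried out in Theorem \ref{simpleg}, which in turn rests on the intrinsic characterization of simple-confluent epimorphisms as special epimorphisms (Theorem \ref{simple=special}); once those are in hand the derivation of consistency is purely formal and no new obstacle arises. Thus the two hypotheses of Corollary \ref{uniquegeneral} are satisfied, and we conclude that $\treend$ is consistent.
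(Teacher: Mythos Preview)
Your proof is correct and follows exactly the same approach as the paper: apply Corollary~\ref{uniquegeneral} with $\mathcal{F}_0$ the family of confluent epimorphisms between rooted trees and $\mathcal{F} = \treend$, verifying hypothesis~(1) via Proposition~\ref{composition-confluent} and hypothesis~(2) via Theorem~\ref{simpleg}. Your write-up is in fact a bit more careful than the paper's, since you explicitly note that $f$ must first be shown confluent (again by Proposition~\ref{composition-confluent}) before Theorem~\ref{simpleg} can be invoked.
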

\begin{proof}
    Apply Corollary \ref{uniquegeneral} with $\mathcal F_0$ equal to the family of rooted trees with confluent epimorphisms and $\mathcal F$ equal to the family of rooted trees with simple-confluent epimorphisms. The assumption (1) in its statement follows from Proposition \ref{composition-confluent} and (2) follows from Theorem \ref{simpleg}.
\end{proof}

\subsection{Simple*-confluent epimorphisms and the  family  $\treecon$}
We introduce a family of epimorphisms, which we call special*, and show that it is equal to the family of simple*-confluent epimorphisms. Then we prove results parallel to the ones in the the previous subsection. We conclude with Theorem~\ref{summerizing-confluent} which summarizes the properties of $|\mathbb{T}_{\mathcal C}|$, the topological realization of the projective Fra\"{\i}ss\'e limit of the family all rooted trees with simple*-confluent epimorphisms.

\begin{theorem}\label{confluent-projective-family2}
  The  family  $\treecon$ of  all  rooted trees  with 
simple*-confluent epimorphisms  is a projective Fra\"{\i}ss\'e family.

\end{theorem}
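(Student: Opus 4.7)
The proof follows the blueprint of Theorem~\ref{confluent-projective-family}. Countability up to isomorphism, closure under composition, and inclusion of identity maps are immediate from the definition of simple*-confluent epimorphism. Since every splitting edge map is simple*-monotone, $\treend$ is contained in $\treecon$, so the joint projection property for $\treecon$ follows at once from Theorem~\ref{Joint-Proj-pro}.

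For the amalgamation property, given simple*-confluent $f\colon B\to A$ and $g\colon C\to A$, decompose each as a composition of building blocks: light confluent maps, splitting edge maps, and adding edge maps. Proceed by induction on the total number of building blocks in such decompositions, following the diagram-chasing scheme of (D3); the problem reduces to the base case in which $f$ and $g$ are each a single building block. Up to symmetry this yields six cases, and the three in which no adding edge map appears (light + light, splitting + splitting, light + splitting) are already handled by Theorem~\ref{onelight}, Lemma~\ref{2monot}, and Lemma~\ref{monoandlight}, respectively.

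For the new cases I construct $D$ explicitly rather than use the standard amalgamation, which for rooted trees can fail to yield a tree (see Example~\ref{rooted-standard-not-tree}). When both $f$ and $g$ are adding edge maps, say adding leaves $y_f$ and $y_g$ at vertices $v_f, v_g \in A$, take $D = A \cup \{y_f, y_g\}$ with the two new edges; then $D$ is a tree even when $v_f = v_g$, and the projection $f_0$ collapsing $y_g$ to $v_g$ together with $g_0$ collapsing $y_f$ to $v_f$ are both adding edge maps. When $f$ is a splitting edge map and $g$ is an adding edge map, the two operations can be carried out independently in a single tree $D$, yielding $f_0$ as an adding edge map and $g_0$ as a splitting edge map.

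The most delicate case is when $f\colon B\to A$ is an adding edge map (adding leaf $y$ at $v\in A$) and $g\colon C\to A$ is light confluent. Here the standard amalgamation of Definition~\ref{standamal} does produce a tree: writing $g^{-1}(v)=\{c_1,\dots,c_k\}$, the amalgamating graph $D$ is isomorphic to $C$ with new leaves $y_i=(y,c_i)$ attached at $c_i$, and lightness of $g$ prevents any unwanted adjacencies among the new vertices. The projection $g_0\colon D\to C$ collapses each $y_i$ onto $c_i$ and is therefore a composition of $k$ adding edge maps, hence simple*-monotone; the projection $f_0\colon D\to B$ is confluent by Proposition~\ref{standard-confluent} and light because each fiber is either a fiber of $g$ or the antichain $\{y_i\}$, so it is light confluent. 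This exhausts the base cases and completes the verification of the amalgamation property.
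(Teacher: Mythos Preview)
Your proof is correct and follows essentially the same strategy as the paper. The paper organizes the reduction slightly differently---grouping the building blocks into ``light confluent'' versus ``simple*-monotone'' and stating two new lemmas (Lemma~\ref{monoandlight2} and Lemma~\ref{2monot2}) whose proofs are declared analogous to Lemmas~\ref{monoandlight} and~\ref{2monot}---whereas you break simple*-monotone further into splitting edge and adding edge maps and spell out the six base cases explicitly; the underlying constructions (in particular the standard amalgamation when $f$ is an adding edge map and $g$ is light confluent) coincide.
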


It is easy to see that Theorem \ref{Joint-Proj-pro} also shows that the family $\treecon$  has the JPP.
The amalgamation of $\treecon$
follows from Theorem \ref{onelight} and the two lemmas below.

   \begin{lemma}\label{monoandlight2}
 Let $f\colon B\to A$ be a simple*-monotone epimorphism and let $g\colon C\to A$ be a light confluent epimorphism. Let $D$ be obtained via the standard amalgamation procedure and let $f_0$ and $g_0$ be the corresponding projections. 
 Then $f_0$ is light confluent and $g_0$ is simple*-monotone. 
 \end{lemma}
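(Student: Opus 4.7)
The proof proceeds in parallel with that of Lemma~\ref{monoandlight}, the new ingredient being the adding-edge-map case. By Theorem~\ref{onelight}, $D$ is a rooted tree, $f_0$ is light confluent, and $g_0$ is confluent; it remains only to verify that $g_0$ is simple*-monotone.

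First I would reduce to the case where $f$ is a single splitting edge map or a single adding edge map. Writing $f = h_1 \circ \cdots \circ h_n$ as a composition of splitting and adding edge maps, the amalgamation can be carried out one factor at a time: amalgamate $h_1$ with $g$ to obtain $D_1$ with projections $\pi_1\colon D_1 \to B_1$ and $\sigma_1\colon D_1 \to C$; then amalgamate $h_2$ with $\pi_1$ to obtain $D_2$, and so on. A direct check using the definition $V(D) = \{(b,c): f(b) = g(c)\}$ shows that the final $D_n$ is canonically isomorphic to $D$, with the composite projection onto $C$ equal to $g_0$. Since at each stage the intermediate projection $\pi_i$ is light confluent by Theorem~\ref{onelight}, once the base case is established each projection $D_i \to D_{i-1}$ on the $D_{i-1}$-side is simple*-monotone, and hence $g_0$, being a composition of them, is simple*-monotone.

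The splitting edge map base case is exactly Lemma~\ref{monoandlight}. For the adding edge map base case, suppose $f$ adjoins a new vertex $y$ to some $v \in V(A)$ via the edge $\langle v, y\rangle$, with $f(y) = v$. Unpacking $V(D) = \{(b,c): f(b) = g(c)\}$: the pairs with $b \in V(A)$ give an isomorphic copy of $C$ inside $D$ via $c \mapsto (g(c), c)$, and the remaining vertices are $(y, c)$ for $c \in g^{-1}(v)$. Because $g$ is light, no two distinct vertices in $g^{-1}(v)$ are adjacent in $C$, so the only non-degenerate edge in $D$ incident to $(y, c)$ is $\langle (v, c), (y, c)\rangle$. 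Hence $D$ is obtained from the embedded copy of $C$ by attaching, for each $c \in g^{-1}(v)$, a single pendant vertex above $(v, c)$, and $g_0$ collapses each such pendant back to $c$, realizing $g_0$ as a composition of $|g^{-1}(v)|$ adding edge maps.

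The only real subtlety is the reduction step, which boils down to routine unfolding of the vertex and edge formulas for the standard amalgam, combined with the observation that performing the amalgamation stage-by-stage along a factorization of $f$ yields the one-shot amalgam up to canonical isomorphism.
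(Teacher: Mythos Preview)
Your proposal is correct and follows essentially the same approach as the paper: reduce to the case where $f$ is a single splitting edge map or adding edge map, invoke Lemma~\ref{monoandlight} for the former, and handle the latter directly. You simply make explicit the two things the paper leaves implicit---the reduction via iterated standard amalgamations (pullback pasting) and the actual computation in the adding-edge-map case that the paper dismisses as ``analogous to the proof of Lemma~\ref{monoandlight}''.
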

 \begin{proof}

 By Theorem \ref{onelight},  all we are left to show is that $g_0$ is simple*-monotone.

A simple*-monotone epimorphism is a composition of splitting edge maps and adding edge maps, and by Lemma \ref{monoandlight}, we can assume without loss of generality that 
 $f$ is an  adding edge map. Then the proof is analogous to the proof of Lemma \ref{monoandlight}.
 \end{proof}

 \begin{lemma}\label{2monot2}
 Let $f\colon B\to A$ and $g\colon C\to A$ be simple*-monotone epimorphisms. Then there is a rooted tree $D$ and simple*-monotone epimorphisms $f_0\colon D\to B$ and $g_0\colon D\to C$ such that
 $f\circ f_0=g\circ g_0$. 
 \end{lemma}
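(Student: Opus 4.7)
The plan is to mirror the proof of Lemma \ref{2monot}. By definition, a simple*-monotone epimorphism is a finite composition of splitting edge maps and adding edge maps. Using the standard trick of composing amalgamations—if $f = f_1 \circ \tilde f$, first amalgamate $f_1$ against $g$, then amalgamate $\tilde f$ against the resulting map—an induction on the total number of elementary moves appearing in the decompositions of $f$ and $g$ reduces the lemma to the case where each of $f$ and $g$ is a single elementary move. The case where both are splitting edge maps was already handled in Lemma \ref{2monot}, so it remains to treat: (a) both $f$ and $g$ are adding edge maps; and (b) one is a splitting edge map and the other is an adding edge map (the two subcases of (b) are symmetric).

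For case (a), I would let $f$ attach a new leaf $y_f$ at a vertex $v_f \in A$ and $g$ attach a new leaf $y_g$ at a vertex $v_g \in A$, with possibly $v_f = v_g$. I would take $D$ to be the rooted tree obtained from $A$ by attaching both new pendants, define $f_0 \colon D \to B$ to be the identity on $B \subseteq D$ and to collapse $y_g$ to $v_g$, and define $g_0 \colon D \to C$ symmetrically. Both $f_0$ and $g_0$ are adding edge maps (hence simple*-monotone), the roots are preserved, and $f \circ f_0 = g \circ g_0$ because the two sides agree on the common copy of $A$ and on each of the two attached leaves.

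For case (b), I would take $f$ to be the splitting edge map that replaces some edge $\langle a,b\rangle \in E(A)$ by $\langle a,x\rangle, \langle x,b\rangle$ with $f(x) \in \{a,b\}$, and $g$ to be the adding edge map attaching a new vertex $y$ at $v \in A$. Since the two operations act on disjoint local features of the tree (an interior edge versus a new pendant leaf), I can let $D$ be $A$ with both operations performed simultaneously; then $f_0 \colon D \to B$ is the adding edge map attaching a pendant at the appropriate vertex of $B$, and $g_0 \colon D \to C$ is the splitting edge map on the corresponding edge of $C$, sending the new middle vertex of $D$ to the same endpoint of the split that $f$ chose. The verification $f \circ f_0 = g \circ g_0$ is then immediate on each vertex of $D$.

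The only obstacle is routine bookkeeping: checking that the roots are preserved in every constructed object and that the square commutes on all vertices, including in degenerate subcases such as $v \in \{a,b\}$ in case (b) or $v_f = v_g$ in case (a). Because adding a pendant leaf and splitting an interior edge are local modifications at different features of the tree, no genuine interference occurs and the constructions are well defined in every subcase, completing the amalgamation.
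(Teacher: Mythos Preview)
Your proposal is correct and follows exactly the same approach as the paper: the paper's proof simply states that it suffices to handle the case where each of $f$ and $g$ is a single splitting edge map or adding edge map, and then declares the verification ``analogous to the proof of Lemma~\ref{2monot}.'' You have supplied precisely the case analysis the paper omits, and your constructions in cases (a) and (b) are the natural ones.
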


   \begin{proof}
It suffices to show that if each of
 $f\colon B\to A$ and $g\colon C\to A$  is a splitting edge map or an  adding edge map, then we can find a rooted tree $D$, $f_0\colon D\to B$, and $g_0\colon D\to C$ such that each of $f_0$ and $g_0$
is a splitting edge map or an  adding edge map, and
$f\circ f_0=g\circ g_0$.

The proof is analogous to the proof of Lemma~\ref{2monot}.
 \end{proof}

\begin{definition}
Let $f\colon S \to T$ be a confluent epimorphism between  rooted trees, and let $p$ be a ramification vertex. Call a vertex $q \in S$ with $f(q)=p$ a {\it special*}  for $p$ (wrt $f$) if the following hold:
\begin{enumerate}
    \item for any component $E$ of $C_q$ either $f(E)=\{p\}$, or there is exactly one component $D$ of $C_p$ such that $D\subseteq f(E)$; 
    \item for any component $D$ of $C_p$ there is a component $E$ of $C_q$ such that $D\subseteq f(E)$.
\end{enumerate}
\end{definition}

 \begin{definition}

A confluent epimorphism $f\colon S\to T$ between rooted trees will be called a {\it special* epimorphism} if for every ramification vertex $x\in T$ and $b\in S$ such that $f(b)>x$, there exists $x'<b$, which is special* for $x$ (wrt to $f$).
 \end{definition}

Then we have the following analogs of Lemma \ref{image-special}, Theorem \ref{simple=special}, and Theorem
\ref{simpleg}.

\begin{lemma}\label{image-specialstar}
    Let the epimorphism $f\colon S\to T$ between  rooted trees be special*, let $x\in T$ be a ramification vertex, and let $y\in S$ be such that $f(y)=x$. Then:
    \begin{enumerate}
        \item $y$ is special*, or
        \item there is a component $D$ of $C_x$ such that for any component $E$ of $C_y$ it holds $f(E)\setminus\{x\}=D$ or $f(E)=\{x\}$, or else
        \item for any component $E$ of $C_y$ it holds
        $f(E)=C_x\cup \{x\}$ or $f(E)=\{x\}$.
     
    \end{enumerate}
    
\end{lemma}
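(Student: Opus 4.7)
The plan is to mirror the proof of Lemma \ref{image-special}, adapted to handle the new possibility $f(E)=\{x\}$ arising from adding edge maps. If every component $E$ of $C_y$ satisfies $f(E)=\{x\}$, then (2) and (3) both hold vacuously, so assume some component $E_1$ of $C_y$ and some $b_1\in E_1$ satisfy $f(b_1)>x$. The definition of a special* epimorphism then yields a special* vertex $x'<b_1$ for $x$; since $x'$ and $y$ both lie on the totally ordered arc $[r_S,b_1]$, exactly one of $x'=y$, $x'<y$, or $x'>y$ holds, giving rise to cases (1), (2), (3). These three cases are mutually exclusive: a counting argument on the (at least two) components of $C_x$ shows that, for instance, $y$ being simultaneously above and below distinct special* vertices would force $C_x$ to reduce to a single component, contradicting that $x$ is a ramification vertex.

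For case (1) there is nothing to prove. For case (2), let $E_0$ be the component of $C_{x'}$ containing $y$. Since $y\in E_0$ and $f(y)=x$, special*-ness of $x'$ forces $f(E_0)=\{x\}$ or $f(E_0)=D_0\cup\{x\}$ for a unique component $D_0$ of $C_x$. Because $y>x'$, every component $E$ of $C_y$ is contained in $E_0$, so $f(E)\subseteq D_0\cup\{x\}$. The key step is this: if $f(E)\cap D_0\neq\emptyset$ then any component of $E\cap f^{-1}(D_0)$ is in fact a component of $f^{-1}(D_0)$ in all of $S$, since order-preservation prevents any vertex outside $E$ and adjacent to it from mapping into $D_0$; confluence of $f$ then yields that this component maps onto $D_0$, so $f(E)\setminus\{x\}=D_0$. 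Otherwise $f(E)=\{x\}$.

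For case (3), with $x'>y$, the component $E_1$ of $C_y$ containing $b_1$ also contains $x'$ (as $x'$ lies on the arc from $y$ to $b_1$ and $x'>y$). Special*-ness of $x'$ gives $f(\overline{C}_{x'})=C_x\cup\{x\}$, and since $\overline{C}_{x'}\subseteq E_1$ we obtain $f(E_1)=C_x\cup\{x\}$. For any other component $E$ of $C_y$, either $f(E)=\{x\}$, or some $b\in E$ has $f(b)>x$; in the latter case the definition of special* produces a special* vertex $x''<b$ for $x$, which by the exclusivity of the three cases must satisfy $x''>y$, hence $x''\in E$, and the same special*-ness argument gives $f(E)=C_x\cup\{x\}$. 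The main obstacle over Lemma \ref{image-special} is uniformly handling this new $f(E)=\{x\}$ alternative in cases (2) and (3), and verifying that the component-of-$f^{-1}(D_0)$ argument in case (2) correctly isolates $E$ despite the presence of vertices that collapse entirely to $\{x\}$.
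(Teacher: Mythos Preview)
Your proof is correct and follows the same case split the paper intends: the paper does not actually prove this lemma but declares it analogous to Lemma~\ref{image-special} and remarks afterward that (2) corresponds to $y$ being above a special* vertex and (3) to $y$ being below one. You have supplied the details the paper omits, in particular the careful verification in case~(2) that a component of $E\cap f^{-1}(D_0)$ is already a component of $f^{-1}(D_0)$ in $S$, and the exclusivity argument ensuring that in case~(3) the new special* vertex $x''$ must also lie above $y$.
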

Note that unless $y$ and $x$ in the statement of Lemma \ref{image-specialstar} are such that $f(C_y)=\{x\}$, then (2) corresponds to the case when $y$ is above a special vertex, and (3) corresponds to the case when $y$ is below a special vertex.

\begin{theorem}\label{simple=specialstar}
     An epimorphism between  rooted trees is simple*-confluent if and only if it is special*.
 \end{theorem}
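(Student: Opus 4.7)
The plan is to derive Theorem \ref{simple=specialstar} from its analog Theorem \ref{simple=special} by using adding edge maps to mediate between special and special* epimorphisms, reducing the backward direction to the case already handled.

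For the forward direction, it suffices to verify that each generator of a simple*-confluent epimorphism --- splitting edge map, adding edge map, and elementary light confluent epimorphism --- is itself special*, and that the composition of two special* epimorphisms is again special*. The checks for splitting edge maps and light confluent epimorphisms are identical to those in the forward direction of Theorem \ref{simple=special}, since special* is a weakening of special. For an adding edge map $f\colon S\to T$ attaching a new leaf $y$ as a child of $v\in T$, the preimage of $v$ gains the vertex $y$; the only new component of $C_v\subset S$ is $\{y\}$, with $f(\{y\})=\{v\}$ --- precisely the ``$f(E)=\{p\}$'' possibility allowed in the definition of special*. Composition preservation follows by the same book-keeping as in Theorem \ref{simple=special}.

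For the backward direction, the key observation is: \emph{if $h\colon H\to F$ is special* and $e\in H$ is an end vertex of $H$ whose image $h(e)$ is not an end vertex of $F$, then $h(e')=h(e)$, where $e'$ is the immediate predecessor of $e$.} Indeed, pick an immediate successor $s$ of $h(e)$ in $F$ and consider the edge $Q=\{h(e),s\}$. By confluence, the component of $e$ in $h^{-1}(Q)$ must map onto $Q$; since $e$'s only neighbor in $H$ is $e'$, this forces $e'\in h^{-1}(Q)$, and since $h$ is order-preserving with $h(e')\le h(e)<s$, we must have $h(e')=h(e)$.

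Using this observation, whenever $h$ fails to be end vertex preserving we choose such an $e$ and factor $h=h'\circ g$, where $g\colon H\to H\setminus\{e\}$ is the adding edge map sending $e$ to $e'$ (and fixing everything else), and $h'=h|_{H\setminus\{e\}}$. A direct check, using that a leaf is never a cut vertex and that the deleted component $\{e\}$ had $h$-image equal to $\{h(e')\}$, shows $h'$ is again confluent and special*. Iterating the reduction (it strictly decreases $|V(H)|$) produces $h=h_n\circ g_n\circ\cdots\circ g_1$ in which each $g_i$ is an adding edge map and $h_n$ is an end-vertex-preserving special* epimorphism. An end-vertex-preserving special* epimorphism is automatically special: if some component $E$ of $C_q$ satisfied $h_n(E)=\{p\}$ with $p$ ramification, then the maximal vertex of the subtree $E$ would be a leaf of the domain of $h_n$ mapping to the ramification vertex $p$, contradicting end-vertex preservation. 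Hence $h_n$ is simple-confluent by Theorem \ref{simple=special}, and therefore $h$ is simple*-confluent. The main technical point is the routine but slightly fiddly verification that $h'$ inherits the special* structure from $h$, which reduces to tracking how the components of $C_{x'}$ and their $h$-images behave under deletion of a single leaf.
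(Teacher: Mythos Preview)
Your proof is correct and follows the same overall architecture as the paper: reduce the backward direction to Theorem \ref{simple=special} by stripping away the parts of the domain that witness the failure of end-vertex preservation, then observe that an end-vertex-preserving special* epimorphism is automatically special.

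The difference lies in how the stripping is done. The paper locates a vertex $a$ and an entire component $E$ of $C_a$ with $h(E)=\{h(a)\}$, collapses $E\cup\{a\}$ to a point, and argues that this quotient is simple*-confluent (an adding edge map composed with a simple-confluent map onto the single edge). You instead peel off one offending leaf $e$ at a time via a single adding edge map, using your key observation that $h(e)=h(e')$ to ensure the restricted map is still an epimorphism. Your route is a bit more elementary: every reduction step is already a generator of the simple*-confluent class, so no auxiliary argument about collapsing subtrees is needed, and the final decomposition $h=h_n\circ g_n\circ\cdots\circ g_1$ exhibits $h$ directly as (simple-confluent)$\circ$(composition of adding edge maps). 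The paper's route collapses more at each step but pays for it with the extra Claim about the structure of the quotient map. Both arguments terminate for the same reason (strictly decreasing vertex count) and land on the same endgame.

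One small remark on your forward direction: you state the closure of special* under arbitrary composition, which is stronger than what the paper proves (closure under composition with a generator). Your claim is in fact true---the point being that a special* vertex for a ramification $p$ is necessarily itself a ramification vertex, so one can iterate---but this deserves a sentence, since it is not literally ``the same book-keeping'' as in Theorem \ref{simple=special}.
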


\begin{proof}
A proof that every simple*-confluent epimorphism is special* is essentially the same as that every simple-confluent epimorphism is special. 

The opposite direction will follow from the following. 
We prove that a special* epimorphism can be written as 
a composition of a special epimorphism
and a simple*-confluent epimorphism.  Then the conclusion will follow from Theorem \ref{simple=special} since the composition of simple*-confluent and simple-confluent is simple*-confluent.

{\bf{Claim.}}
Let $h\colon H\to F$ be a special* epimorphism. 
Suppose that there is a vertex $a\in H$ and a component $E$ of $C_a$ such that 
$h(E)=\{h(a)\}$. Then there is a rooted tree $G$, a special* epimorphism $f\colon G\to F$, and a simple*-confluent epimorphism $g\colon H\to G$ with $g(E)=\{g(a)\}$ and $h=f\circ g$.
In fact, $g=g_2\circ g_1$, where $g_2$ is an edge adding map, and $g_1$ is a simple-confluent epimorphism.

\begin{proof}[Proof of Claim]
  We let
  $\sim$ to be the  equivalence relation on $H$, which identifies all vertices in $E\cup \{a\}$.
Define $G$ as the graph $H/\!\!\sim$ and let $g\colon H\to G$ be the quotient epimorphism. We will see in a moment that $g$ is a simple*-confluent epimorphism. Define the epimorphism $f\colon G\to F$ by the condition $h=f\circ g$. Then it is easy to see that $f$ is special*. 

We let $g_2\colon G'\to G$ be the map that adds an edge with one of its end vertices being $g(a)$. Denote  $a'=g(a)$ and let $b'$ be the other end vertex of the added edge. Let
$g'_1\colon E\cup\{a\}\to (\{a',b'\}, \langle a', b'\rangle)$ be any simple-confluent epimorphism, and finally, let $g_1\colon H\to G'$ be equal to $g'_1$ on $E$ and to the identity otherwise. Then $g_1$ is simple-confluent as well.
This in particular shows that $g$ is simple*-confluent. 

  \end{proof} 
Let $h\colon H\to F$ be a special* epimorphism. Apply Claim as many times as it is possible and obtain a decomposition $h=h_2\circ h_1$, where $h_1\colon H\to G$ is simple*-confluent, $h_2\colon G\to F$ is special*, and for no ordinary or ramification vertex $a$ there is a component $E$  of $C_a$ with $h_2(E)=\{h_2(a)\}$. Then $h_2$ is special. Indeed, all that is left to show is that $h_2$ is end vertex preserving.
Suppose that it is not the case and let $e$ be an end vertex of $G$ such that $x=h_2(e)$ is not an end vertex. Let $y$ be the immediate predecessor of $e$ and let $E$ be the component of $C_y$ that contains $e$, i.e. $E=\{e\}$.

If $x=h_2(y)=h_2(e)$, then  $h_2(E)=\{x\}$, which is impossible. On the other hand,  if $h_2(y)\neq h_2(e)$, then by the confluence of $h_2$, we have $h_2(E)\supseteq E'$, where $E'$ is the component of $C_{h_2(y)}$ containing $x$. But this is also impossible as $x$ is not an end vertex and $|E'|\geq 2$. 
  
\end{proof}

 The proof of the next theorem is identical to that of Theorem \ref{simpleg}.

 \begin{theorem}\label{simplegstar}
Let $f\colon H\to G$ and $g\colon G\to F$ be confluent epimorphisms between  rooted trees.
If the composition $g\circ f\colon H\to F$ is simple*-confluent, then $g$ is simple*-confluent.
 \end{theorem}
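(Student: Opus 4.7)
The plan is to mirror the proof of Theorem~\ref{simpleg}, invoking Theorem~\ref{simple=specialstar} in place of Theorem~\ref{simple=special}. Since an epimorphism between rooted trees is simple*-confluent if and only if it is special*, it suffices to show that $g$ is special* under the hypothesis that $h=g\circ f$ is special*. Unlike in the simple-confluent case, end vertex preservation is not part of the definition of special*, so the appeal to Lemma~\ref{end-h=gf} that opened the proof of Theorem~\ref{simpleg} simply disappears.

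To produce the required special* vertex, I would fix a ramification vertex $x\in F$ and a vertex $u\in G$ with $g(u)>x$ and exhibit some $x'<u$ in $G$ that is special* for $x$ with respect to $g$. Pick any $v\in H$ with $f(v)=u$; then $h(v)>x$ and, since $h$ is special*, there is $x''<v$ that is special* for $x$ with respect to $h$. Set $x'=f(x'')$, which satisfies $x'<u$ by order-preservation of $f$. Clause~(2) of the special* definition for $x'$ is then immediate: given a component $A$ of $C_x$, the special* property of $x''$ furnishes a component $D$ of $C_{x''}$ with $A\subseteq h(D)$, and confluence of $f$ forces the component $B$ of $C_{x'}$ meeting $f(D)$ to lie inside $f(D)$, so $g(B\cup\{x'\})\supseteq A\cup\{x\}$, as required.

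For clause~(1), I would argue by contradiction: suppose some component $B$ of $C_{x'}$ satisfies $A_1\cup A_2\subseteq g(B)$ for distinct components $A_1,A_2$ of $C_x$, and lift this configuration back into $H$ to violate the special* behaviour of a well-chosen vertex. Concretely, pick $b\in B$ with $g(b)\neq x$, use surjectivity and order-preservation of $f$ to obtain $c,d\in H$ with $f(c)=x'$, $f(d)=b$ and $c<d$, take $y<d$ that is special* for $x$ with respect to $h$, and let $E$ be the component of $C_y$ containing $d$. Confluence of $f$ yields $f(E\cup\{y\})\supseteq B\cup\{x'\}$, hence $h(E)\supseteq A_1\cup A_2$, contradicting that $y$ is special* for $x$ with respect to $h$. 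The main obstacle compared with Theorem~\ref{simpleg} is the extra alternative $f(E)=\{p\}$ that the special* definition permits; but the choice $g(b)\neq x$ is precisely what prevents $h(E)$ from collapsing to $\{x\}$, so this extra case does not rescue the special* property of $y$ and the contradiction still lands.
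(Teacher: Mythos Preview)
Your proposal is correct and follows exactly the approach the paper intends: the paper simply asserts that the proof of Theorem~\ref{simplegstar} is identical to that of Theorem~\ref{simpleg}, and you have spelled out that identical argument via Theorem~\ref{simple=specialstar}, correctly noting the two small differences (the end-vertex-preservation step via Lemma~\ref{end-h=gf} is no longer needed, and the extra alternative $f(E)=\{p\}$ in the special* definition is harmless because the choice $g(b)\neq x$ forces $h(E)\neq\{x\}$).
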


Therefore by Theorem \ref{simplegstar} and Corollary \ref{uniquegeneral}, we have:

\begin{corollary}

 $\treecon $ is consistent.
\end{corollary}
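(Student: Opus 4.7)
The plan is to mimic the proof of Corollary \ref{treeendunique} essentially verbatim, with the roles of ``simple-confluent'' replaced by ``simple*-confluent'' throughout, and with Theorem \ref{simpleg} replaced by its starred analog Theorem \ref{simplegstar}. Concretely, I will apply Corollary \ref{uniquegeneral} with $\mathcal{F}_0$ taken to be the family of all confluent epimorphisms between rooted trees and $\mathcal{F}=\treecon$, the family of all simple*-confluent epimorphisms between rooted trees. The inclusion $\mathcal{F}_0\supseteq \mathcal{F}$ is immediate from the definitions, since by construction every simple*-confluent epimorphism is a composition of light confluent epimorphisms and simple*-monotone epimorphisms, and each factor is confluent (splitting and adding edge maps are easily seen to be confluent, and light confluent epimorphisms are confluent by definition).

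Next I would verify the two hypotheses of Corollary \ref{uniquegeneral}. For hypothesis (1), whenever $f\circ\phi=g$ with $g$ confluent, Proposition \ref{composition-confluent} gives that $f$ is confluent, so $f\in \mathcal{F}_0$. For hypothesis (2), whenever $f\circ\phi=g$ with $\phi\in\mathcal{F}_0$ (i.e.\ $\phi$ confluent) and $g\in\mathcal{F}$ (i.e.\ $g$ simple*-confluent), I need $f$ to be simple*-confluent; but this is exactly the content of Theorem \ref{simplegstar}, since $f$ is automatically confluent by (1) and then Theorem \ref{simplegstar} upgrades it to simple*-confluent.

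Once both hypotheses are verified, Corollary \ref{uniquegeneral} immediately yields that $\mathcal{F}=\treecon$ is consistent, which is the desired statement. There is no real obstacle here: the work has already been done in Theorem \ref{simplegstar} (whose proof is pointed out to be identical to that of Theorem \ref{simpleg}) and in Proposition \ref{composition-confluent}, and Corollary \ref{uniquegeneral} was designed precisely to package this kind of two-family argument. The only minor bookkeeping point is to remark explicitly that $\treecon\subseteq \mathcal{F}_0$, but this is transparent from Definition \ref{def:sstarc}.
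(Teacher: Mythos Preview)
Your proposal is correct and follows exactly the same route as the paper: apply Corollary \ref{uniquegeneral} with $\mathcal{F}_0$ the confluent epimorphisms and $\mathcal{F}=\treecon$, verifying hypothesis (1) via Proposition \ref{composition-confluent} and hypothesis (2) via Theorem \ref{simplegstar}. The paper's own proof is a one-line pointer to Theorem \ref{simplegstar} and Corollary \ref{uniquegeneral}, so your version simply unpacks that pointer.
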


\begin{notation}
Let $\mathbb{T}_{\mathcal C}$ be the projective Fra\"{\i}ss\'e limit of the family~$\treecon$.
\end{notation}

\subsection{The continuum $|\mathbb{T}_{\mathcal C}|$
}
We exhibit several properties of the continuum $|\mathbb{T}_{\mathcal C}|$. We do know, by Proposition \ref{smooth-dendroid}, that $|\mathbb{T}_{\mathcal C}|$ is a smooth dendroid.  However, we do not know a characterization of the continuum $|\mathbb{T}_{\mathcal C}|$ and believe it has not been previously studied. 
On the other hand, in the remaining sections, we show that $|\mathbb{T}_{\mathcal{ CE}}|$ is homeomorphic to the Mohler-Nikiel universal dendroid.

If $\{F_n, f_n\}$ is a Fra\"{\i}ss\'e sequence for $\mathcal{T}_{\mathcal C}$ then $(r_{F_1},r_{F_2},\ldots)$ is a vertex in $\mathbb{T}_{\mathcal C}$ which we designate as the root of $\mathbb{T}_{\mathcal C}$.  We also have the order on $\mathbb{T}_{\mathcal C}$ induced by this root.

Recall the definitions of $C_a$ and $\overline{C}_a$ (see Definition \ref{cone-over}), for a rooted tree $T$ and $a\in T$.

\begin{theorem}\label{only-rampt-or-endpt}
A vertex $r\in \mathbb{T}_{\mathcal C}$ is
either an end vertex or is a ramification vertex or else is adjacent to a ramification vertex of $\mathbb{T}_{\mathcal C}$.
\end{theorem}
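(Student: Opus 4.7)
The plan is to prove the slightly stronger fact that every non-root vertex of $\mathbb{T}_{\mathcal{C}}$ which is not an end vertex is a ramification vertex. The theorem then follows easily: if the root has order $\ge 3$ it is itself a ramification vertex; while if the root has order $2$ its two up-adjacents are non-root and (by Theorem~\ref{rooted-end}, since $\mathbb{T}_{\mathcal{C}}$ has no isolated end vertices) non-end, hence ramification vertices by the main claim, so the root is adjacent to a ramification vertex.

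Assume toward a contradiction that $r$ is non-root, not an end vertex, and not a ramification vertex, so $\ord(r)=2$. Since $r$ is non-root, one of the two arcs at $r$ must lead toward the root, forcing $r$ to have a unique down-adjacent $v_{-1}$ and a unique up-adjacent $v_1$ (a second up-adjacent would produce a third arc at $r$, contradicting $\ord(r)=2$). I choose $\varepsilon>0$ smaller than $\min\{d(r,v_1),d(r,v_{-1})\}$ and apply Theorem~\ref{limit}(3) to obtain an $\varepsilon$-epimorphism $f\colon \mathbb{T}_{\mathcal{C}}\to F$ with $F\in\treecon$, separating the images $r_F:=f(r)$, $v_{1,F}:=f(v_1)$ and $v_{-1,F}:=f(v_{-1})$. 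Attach two new successor vertices $y_1,y_2$ to $r_F$ to form $F'$ and let $\phi\colon F'\to F$ be the corresponding composition of two adding-edge epimorphisms, so $\phi\in\treecon$. By Theorem~\ref{limit}(2) lift $f$ to a confluent epimorphism $g\colon \mathbb{T}_{\mathcal{C}}\to F'$ with $f=\phi\circ g$.

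For each $i\in\{1,2\}$, confluence of $g$ applied to the connected subgraph $\{r_F,y_i\}\subseteq F'$ yields a component $K_i$ of $g^{-1}(\{r_F,y_i\})$ containing $r$ with $g(K_i)=\{r_F,y_i\}$; pick $b_i\in K_i\cap g^{-1}(y_i)$. Because closed connected subsets of a dendroid contain the arcs between their points, $[r,b_i]\subseteq K_i$. The crucial claim is $r\le b_i$: writing $m_i=r\wedge b_i\in[r,b_i]$, order preservation of $g$ forces $g(m_i)\in\{r_F,y_i\}$ and $g(m_i)\le r_F$, so $g(m_i)=r_F$; if $m_i<r$ strictly then, since the arc from $v_{-1}$ to $r$ in the dendroid $\mathbb{T}_{\mathcal{C}}$ contains no vertex other than its endpoints (as $\langle v_{-1},r\rangle\in E$), we must have $m_i\le v_{-1}$, and order preservation along the chain $[m_i,r]$ collapses $g$ to $r_F$ on this chain, yielding $g(v_{-1})=r_F$; but $\phi^{-1}(v_{-1,F})=\{v_{-1,F}\}$ forces $g(v_{-1})=v_{-1,F}\ne r_F$, a contradiction. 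Hence $r<b_i$. Since $v_1$ is the only up-adjacent of $r$, the arc from $r$ to each $b_i$ passes through $v_1$, so $v_1\le b_1\wedge b_2$. Applying the same collapsing argument to the chain $[r,b_1\wedge b_2]$, whose endpoints both map to $r_F$ (as $g(b_1\wedge b_2)\le y_1\wedge y_2=r_F$ in $F'$ and $\ge g(r)=r_F$), gives $g(v_1)=r_F$, contradicting $g(v_1)=v_{1,F}$.

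The main obstacle is the collapsing-chain step: it combines order preservation of $g$ along chains with the graph-theoretic fact that two adjacent vertices of the dendroid $\mathbb{T}_{\mathcal{C}}$ have no intermediate vertices on the arc joining them. Once this fact is deployed, the rest of the argument is mechanical, and one merely has to run it twice — once at level $v_{-1}$ to force $r\le b_i$, and once at level $v_1$ to force the final contradiction.
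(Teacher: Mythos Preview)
Your argument has a genuine gap: you assume that a vertex $r$ of order $2$ in $\mathbb{T}_{\mathcal C}$ has a down-adjacent vertex $v_{-1}$ and an up-adjacent vertex $v_1$ in the edge relation. This is not justified. In the topological graph $\mathbb{T}_{\mathcal C}=\iLim\{F_n,f_n\}$, two vertices $a=(a_n)$ and $b=(b_n)$ are adjacent only when $\langle a_n,b_n\rangle\in E(F_n)$ for \emph{every} $n$; a typical vertex has no non-degenerate edge at all. The order of a vertex in Definition~\ref{rampt-topo-dend} is defined via arcs, not via edges, and an arc in a topological graph need not begin with an edge. So from $\ord(r)=2$ you cannot extract $v_{-1}$ or $v_1$.

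Both occurrences of these vertices are load-bearing. Without $v_{-1}$ you cannot rule out that the $b_i$ you produce is incomparable to $r$: your ``collapsing'' step needs $m_i\le v_{-1}<r$ to squeeze $g(v_{-1})$ between $g(m_i)=r_F$ and $g(r)=r_F$, and there is no substitute vertex to play that role. Without $v_1$ the final contradiction evaporates: you know $g(b_1\wedge b_2)=r_F$, but you have no distinguished vertex strictly between $r$ and $b_1\wedge b_2$ whose $f$-image was separated from $r_F$ by the choice of $\varepsilon$. Your opening paragraph on the root has the same flaw (``its two up-adjacents'').

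The paper avoids this issue entirely by working with a Fra\"{\i}ss\'e sequence $\{F_n,f_n\}$ and building the required arcs as inverse limits of finite arcs, using the trichotomy of Lemma~\ref{image-specialstar} to control how components of $C_{r_n}$ map down. In particular the paper explicitly allows the outcome that $r$ has an immediate successor $x$ and the two constructed arcs meet in $\{r,x\}$, in which case it is $x$, not $r$, that is certified as a ramification vertex. Your ``slightly stronger fact'' would force $r$ itself to be ramification in that situation; whether or not that is actually true in $\mathbb{T}_{\mathcal C}$, your present argument does not establish it.
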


\begin{proof}

We will show that for any vertex $r\in \mathbb{T}_{\mathcal C}$ that is not an end vertex of $\mathbb{T}_{\mathcal C}$ there are two arcs joining $r$ to end vertices of $\mathbb{T}_{\mathcal C}$. If $r$ has no immediate successor, then the intersection of the two arcs will be $\{r\}$, otherwise it will be $r$ and the immediate successor of $r$. These two arcs together with the arc from the root of $\mathbb{T}_{\mathcal C}$ to $r$ witness that $r$ or an immediate successor of $r$ is a ramification vertex of $\mathbb{T}_{\mathcal C}$. Note, by Theorem~\ref{rooted-end}, $r$ is not adjacent to an end vertex of $\mathbb{T}_{\mathcal C}$. 

Fix a Fra\"{\i}ss\'e sequence $\{F_n, f_n\}$ for $\mathcal{T}_{\mathcal C}$. Let $r=(r_n)\in\iLim\{F_n,f_n\}$  be a vertex in  $\mathbb T_{\mathcal C}$ different from an end vertex.  Since $r$ is not an end vertex, 
there is an $s=(s_n)>r$, and hence there is $N$ such that for all $n>N$ we have $s_n>r_n$. That implies that there is $N$ such that for all $n>N$ we have $f_n (C_{r_{n+1}})\neq \{r_n\}$.
Without loss of generality, we can assume that for all $n$, $f_n(C_{r_{n+1}})\neq \{r_n\}$. Note that a subsequence of a Fra\"{\i}ss\'e sequence is again a Fra\"{\i}ss\'e sequence with inverse limit equal to $\mathbb{T}_{\mathcal C}$.

Let $\im^*(r_n)$ denote the set of immediate successors $a$ of $r_n$ such that  $f_{n-1}(\overline{C}_a)\neq \{r_{n-1}\}$. Without loss of generality, by passing to a subsequence of $(r_n)$, we can assume that either (i) each $r_n$ is below a special vertex, or (ii) each $r_n$ is a special vertex, or else (iii) each $r_n$ is above a special vertex. Indeed, by Lemma \ref{image-specialstar}, note that if for some $n$, $r_{n+1}$ is above a special vertex (wrt $f_n$), then for every $m>0$,   $r_{n+m}$ is above a special vertex (wrt $f_n^{n+m}$). Likewise, if for some $n$, $r_{n+1}$ is below a special vertex (wrt $f_n$), then for every $m>0$,   $r_{n+m}$ is below a special vertex (wrt $f_n^{n+m}$). Moreover, if $r_{n+2}$ is special (wrt $f_{n+1}$), $r_{n+1}$ is special (wrt $f_n$), then $r_{n+2}$ is special (wrt $f^{n+2}_n$).  

We claim that there is a sequence $(a_n)$, with $a_n\in\im^*(r_n)$, such that for every $n$,  $f_n(\overline{C}_{a_{n+1}})$ contains $\overline{C}_{a_{n}}$. 
For (i), by Lemma \ref{image-specialstar}(3), any sequence $(a_n)$ with $a_n\in\im^*(r_n)$ works. For (ii),  by Lemma \ref{image-specialstar}(1), we can inductively build such a sequence $(a_n)$.  For (iii),  by Lemma \ref{image-specialstar}(2), for any $n$ there is exactly one $a_n \in\im^*(r_n)$ such that for any $b\in \im^*(r_{n+1})$ we have that $f_n(\overline{C}_b)$ contains $\overline{C}_{a_n}$. This is the required $(a_n)$.

We construct a Fra\"{\i}ss\'e sequence $\{K_n,u_n\}$ for $\mathcal{T}_{\mathcal C}$ with $r=(r'_n)$, $r'_n\in K_n$ together with  arcs $A_n=[r'_n, e^1_n]$, $B_n=[r'_n,e^2_n]$, where $e^1_n$ and $e^2_n$ are end vertices, such that $A_n\cap B_n=\{r'_n,x_n\}$, where $x_n$ is an immediate successor of $r'_n$. We will have that $u_n|_{A_{n+1}}$ is monotone and onto $A_n$, and similarly,  $u_n|_{B_{n+1}}$ is monotone and onto $B_n$.  Then $\iLim A_n$ and $\iLim B_n$ are arcs in $\mathbb{T}_{\mathcal C}$ with intersection equal to  $\{r\}$ or $\{r,a\}$ where $a$ is an immediate successor of $r$.

Given $F_{k_n}$, which contains two arcs $A$ and $B$ joining $r_{k_n}$ with end vertices of $F_{k_n}$ and meet at a vertex $m\geq a_{k_n}$, we construct a tree $K_n$, arcs $A_n, B_n$ in $K_n$, and $g_n\colon K_n\to F_{k_n}$, $g_n \in \mathcal{T}_{\mathcal C}$.   The epimorphism $g_n$ is the composition of a splitting edge map and light confluent map obtained as follows. First, let $g_n' \colon K_n' \to F_{k_n}$ be the splitting edge map that splits the edge $\langle r_{k_n}, a_{k_n}\rangle$. Call the new vertex $x$ and let $g_n'(x)=a_{k_n}$. Then double the $C_x$. That is, let $K_n$ be the graph with $V(K_n)= V(K_n')\setminus V(C_x) \cup (V(C_x^1) \cup V(C_x^2))$ where $C_x^i$ are copies of $C_x$ and let $E(K_n)= E(K_n') \setminus E(C_x) \cup (E(C_x^1) \cup E(C_x^2) \cup \{\langle x, a_{k_n}^1\rangle, \langle x, a_{k_n}^2\rangle\})$ where $a_{k_n}^i$ are copies of $a_{k_n}$ in $C_x^i$. Let $g'' \colon K_n \to K_n'$ be the map that if $y^1$ and $y^2$ are copies of the vertex $y \in C_x$ in $C_x^1$ and $C_x^2$  respectively then $g''(y^i)=y$ and $g''$ is the identity otherwise. Then $g_n=g_n' \circ g_n''$.  The required arcs $A_n$ and $B_n$ join $x$ to end vertices of $K_n$ and are contained in $C^1_x\cup\{x, r_{k_n}\}$ and $C^2_x\cup\{x, r_{k_n}\}$ respectively.

Next, since $\{F_n, f_n\} $ is a Fra\"{\i}ss\'e sequence, take $h_n\colon F_{k_{n+1}}\to K_n$, $h_n \in \mathcal{T}_{\mathcal C}$, such that $f^{k_{n+1}}_{k_n}=g_n h_n$. Denote $r'_n=r_{k_n}\in K_n$. Note that $g_n^{-1}(r_{k_n})=\{r'_n\}$ implies that $h_n(r_{k_{n+1}})=r'_n$. Moreover, $f^{k_{n+1}}_{k_n}(\overline{C}_{a_{k_{n+1}}})$ contains $\overline{C}_{a_{k_n}}$ together with $g_n^{-1}(\overline{C}_{a_{k_n}})=\overline{C}_x $ implies that 
$h_n(\overline{C}_{a_{k_{n+1}}})$ contains $\overline{C}_x $.
Take any arcs $A', B'$  joining $r_{k_{n+1}}$ with end vertices of $F_{k_{n+1}}$,  which meet at a vertex $m'\geq a_{k_{n+1}}$, $h_n|_{A'}$ is monotone and onto $A_n$, and $h_n|_{B'}$ is monotone and onto $B_n$.
This finishes the inductive step of the construction. 
Finally we take $u_n=h_n g_{n+1}$.

\end{proof}

\begin{lemma}\label{limit-dense-endpts}
Let $\mathbb{T}_{\mathcal C}$ be the projective Fra\"{\i}ss\'e limit of the family $\treecon$. Then the set of end vertices is dense in $\mathbb{T}_{\mathcal C}$.
\end{lemma}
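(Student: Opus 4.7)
The plan is to exploit the adding-edge maps, which are available in $\treecon$ but not in $\treend$, to manufacture end vertices arbitrarily close to any given vertex. The strategy is to use the refinement condition (Theorem \ref{limit}(3)) to replace ``near $v$'' by ``in the fiber $f^{-1}(f(v))$ of a sufficiently fine epimorphism,'' then introduce a new end vertex in the target by an adding-edge map, and finally lift that new end vertex to a genuine end vertex of the Fra\"iss\'e limit via Proposition \ref{end-to-end}.

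In detail, I would let $U\subseteq \mathbb{T}_{\mathcal C}$ be an arbitrary nonempty open set, pick $v\in U$, and choose $\varepsilon>0$ so that the $\varepsilon$-ball around $v$ is contained in $U$. By Theorem \ref{limit}(3) applied to $\treecon$, there exist $G\in\treecon$ and an $\varepsilon$-epimorphism $f\colon \mathbb{T}_{\mathcal C}\to G$ in $\mathcal{T}_{\mathcal C}^\omega$. Set $a=f(v)$. Now construct a rooted tree $H$ from $G$ by adjoining a new vertex $b$ together with the edge $\langle a,b\rangle$, and let $g\colon H\to G$ be the corresponding adding-edge map; by Definition \ref{def:sstarc}, $g$ is simple*-monotone, hence simple*-confluent, so $g\in\treecon$.

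By the universality property of the projective Fra\"iss\'e limit (Theorem \ref{limit}(2)), there is an epimorphism $h\colon \mathbb{T}_{\mathcal C}\to H$ in $\mathcal{T}_{\mathcal C}^\omega$ with $f=g\circ h$. The vertex $b$ is a (rooted-tree) end vertex of $H$, and since $\mathbb{T}_{\mathcal C}$ is a smooth dendroid by Proposition \ref{smooth-dendroid} and $h$ is order-preserving, Proposition \ref{end-to-end} produces an end vertex $e\in \mathbb{T}_{\mathcal C}$ with $h(e)=b$. Then $f(e)=g(h(e))=g(b)=a=f(v)$, so $e$ and $v$ both lie in the fiber $f^{-1}(a)$, which has diameter less than $\varepsilon$. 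Hence $e\in U$, proving density.

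The only step that requires a bit of attention is ensuring that the end vertex produced by Proposition \ref{end-to-end} is an end vertex in the dendroid sense used throughout the paper rather than merely a maximal element of some partial order, but this is exactly what that proposition provides once smoothness is in hand. Otherwise the argument is essentially a diagram chase; the conceptual content is isolated in the observation that $\treecon$ allows adding-edge maps, which is precisely why the analogous density statement fails for $\treend$ (where epimorphisms are end vertex preserving and new end vertices cannot be created).
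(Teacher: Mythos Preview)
Your proof is correct and follows essentially the same approach as the paper: choose an $\varepsilon$-epimorphism $f_G\colon\mathbb{T}_{\mathcal C}\to G$, attach a new end vertex $q$ to $f_G(p)$ via an adding-edge map $f^H_G\colon H\to G$, lift through $\mathbb{T}_{\mathcal C}\to H$, and apply Proposition~\ref{end-to-end} to pull $q$ back to an end vertex $e$ lying in the same fiber as $p$. The paper's argument is phrased with a point and an $\varepsilon$ rather than an open set, but the construction, the key maps, and the invocation of Proposition~\ref{end-to-end} are identical.
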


\begin{proof}
Let $p\in \mathbb{T}_{\mathcal C} $ and let $\varepsilon >0$ be given. We want to find an end vertex $e\in\mathbb{T}_{\mathcal C}$ such that
$d(p,e)<\varepsilon$. Let $G$ be a tree and $f_G\colon \mathbb{T}_{\mathcal C}\to G$ be an
$\varepsilon$-epimophism. Define a tree $H$ such that $V(H)=V(G)\cup \{q\}$ and $E(H)=E(G)\cup \{\langle f_G(p),q\rangle\}$, and let
$f^H_G$ be an epimorphism defined by ${f^H_G}|_{G}$ is the identity and $f^H_G(q)=f_G(p)$.  Let $f_H\colon \mathbb{T}_{\mathcal C}\to H$
be an epimorphism such that $f_G^H \circ f_H=f_G$. By Proposition \ref{end-to-end} there is $e\in \mathbb{T}_{\mathcal C}$ which is an end vertex and $f_H(e)=q$.  Then
$d(p,e)<\varepsilon$ since $f_G(p)=f_G(e)$ and $f_G$ is an $\varepsilon$-epimophism.

\end{proof}

We summarize the results of this section in the following theorem.
\begin{theorem}\label{summerizing-confluent}
The projective Fra\"{\i}ss\'e limit $\mathbb{T}_{\mathcal C}$ of the family $\treecon$ has transitive set of edges  and its topological realization $|\mathbb{T}_{\mathcal C}|$
is a Kelley dendroid (thus smooth) with a dense set of endpoints and such that each point is either an endpoint or a ramification point in the classical sense.
\end{theorem}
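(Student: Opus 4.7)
The theorem bundles four assertions: (i) $E(\mathbb{T}_{\mathcal C})$ is transitive, (ii) $|\mathbb{T}_{\mathcal C}|$ is a Kelley dendroid (and hence smooth), (iii) the endpoints are dense in $|\mathbb{T}_{\mathcal C}|$, and (iv) each point of $|\mathbb{T}_{\mathcal C}|$ is either an endpoint or a ramification point in the classical sense. My plan is to prove each of these by assembling the general tools developed earlier in the paper. The recurring hinge is that $\mathbb{T}_{\mathcal C}$ has no isolated end vertices: since $\treecon$ contains every splitting edge map, Theorem~\ref{rooted-end} applies.

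For (i), I would apply Theorem~\ref{transitive}. Given $G\in\treecon$ and $a,b,c\in V(G)$ with $\langle a,b\rangle, \langle b,c\rangle \in E(G)$, let $H$ be the tree obtained from $G$ by splitting $\langle a,b\rangle$ into $\langle a,x\rangle$ and $\langle x,b\rangle$, and let $f^H_G\colon H\to G$ be the splitting edge map sending $x$ to $b$; this belongs to $\treecon$. Then $(f^H_G)^{-1}(a)=\{a\}$, $(f^H_G)^{-1}(b)=\{b,x\}$, and $(f^H_G)^{-1}(c)=\{c\}$, so the unique edge in $H$ joining a preimage of $a$ to a preimage of $b$ is $\langle a,x\rangle$, while $\langle x,c\rangle\notin E(H)$. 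Theorem~\ref{transitive} therefore delivers transitivity.

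With transitivity in hand, I would derive (ii) by combining several earlier results. Corollary~\ref{dendroid} says $\mathbb{T}_{\mathcal C}$ is a dendroid as a topological graph, and Observation~\ref{realization-dendroid} then makes $|\mathbb{T}_{\mathcal C}|$ a dendroid. Because simple*-confluent epimorphisms are in particular confluent, Proposition~\ref{Kelley-theorem} (whose proof only uses confluence of the bonding maps) yields that $\mathbb{T}_{\mathcal C}$ is Kelley as a topological graph, and Observation~\ref{Kelley-observation} lifts this to $|\mathbb{T}_{\mathcal C}|$ being a Kelley continuum. Smoothness of the Kelley dendroid $|\mathbb{T}_{\mathcal C}|$ is then automatic from the classical fact that every Kelley dendroid is smooth.

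Finally, for (iii) and (iv) I would use the absence of isolated end vertices together with Propositions~\ref{endpts-to-endpts} and~\ref{ram-to-ram}. Density of endpoints follows from Lemma~\ref{limit-dense-endpts}: the end vertices are dense in $\mathbb{T}_{\mathcal C}$ and each is non-isolated, so Proposition~\ref{endpts-to-endpts} sends them to classical endpoints of $|\mathbb{T}_{\mathcal C}|$, and density is preserved by the continuous surjection $\varphi$. For the classification, take $x\in |\mathbb{T}_{\mathcal C}|$ and any $p\in\varphi^{-1}(x)$. By Theorem~\ref{only-rampt-or-endpt}, $p$ is either an end vertex, a ramification vertex, or adjacent to one. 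In the latter two cases $\varphi^{-1}(x)$ contains a ramification vertex, and since $\mathbb{T}_{\mathcal C}$ has no isolated end vertices, Proposition~\ref{ram-to-ram} makes $x$ a classical ramification point; in the remaining case every element of $\varphi^{-1}(x)$ is a non-isolated end vertex and Proposition~\ref{endpts-to-endpts} makes $x$ a classical endpoint. No deep obstacle is expected: the theorem is essentially a packaging of prior results, and the only point requiring care is to thread the ``no isolated end vertices'' hypothesis consistently through each invocation.
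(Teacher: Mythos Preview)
Your proposal is correct and follows essentially the same route as the paper: transitivity via Theorem~\ref{transitive} using a splitting edge map, the dendroid/Kelley/smooth conclusions from Corollary~\ref{dendroid}, Proposition~\ref{Kelley-theorem}, and the corresponding observations, and the endpoint/ramification dichotomy from Lemma~\ref{limit-dense-endpts}, Theorem~\ref{only-rampt-or-endpt}, Theorem~\ref{rooted-end}, and Propositions~\ref{endpts-to-endpts} and~\ref{ram-to-ram}. The only cosmetic difference is that for transitivity you insert a single new vertex in the edge $\langle a,b\rangle$ (mapping it to $b$), whereas the paper follows the proof of Theorem~\ref{dense} and inserts two; both verifications of the hypothesis of Theorem~\ref{transitive} go through.
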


\begin{proof}

Proceeding as in the proof of Theorem \ref{dense} we may show that the family $\treecon$ satisfies the hypothesis of Theorem \ref{transitive}. Hence the set of edges is transitive. Knowing that $E(\mathbb{ T}_{\mathcal C})$ is transitive we see from Proposition \ref{smooth-dendroid}, Proposition \ref{Kelley-theorem} and Observation \ref{Kelley-observation} that the topological realization $|\mathbb{ T}_{\mathcal C}|$ is a Kelley dendroid.

To see that the set of endpoints is dense in $|\mathbb{ T}_{\mathcal C}|$, take a nonempty open set $U$ in $|\mathbb{ T}_{\mathcal C}|$.  The set $\varphi^{-1}(U)$, where $\varphi$ is the quotient map, is open in $\mathbb{ T}_{\mathcal C}$. So by Lemma \ref{limit-dense-endpts} there is an end vertex $e\in \varphi^{-1}(U)$. It follows from Theorem \ref{rooted-end} that $\mathbb{ T}_{\mathcal C}$ has no isolated end vertices.  Then by Proposition \ref{endpts-to-endpts}, $\varphi(e)$ is an endpoint in $U$.   Hence the set of endpoints is dense in $|\mathbb{ T}_{\mathcal C}|$.

To complete the argument we need to show that every point in $|\mathbb{ T}_{\mathcal C}|$ is either an endpoint or a ramification point. Note that if $p$ is a point in $|\mathbb{ T}_{\mathcal C}|$ then $\varphi^{-1}(p)$ contains at most two vertices.  If $\varphi^{-1}(p)$ is a single vertex $q$ then, by Theorem \ref{only-rampt-or-endpt}, $q$ is an end vertex or a ramification vertex of $\mathbb{ T}_{\mathcal C}$. If $q$ is an end vertex then Proposition \ref{endpts-to-endpts}  shows that $p$ is an endpoint of $|\mathbb{ T}_{\mathcal C}|$.  If $q$ is a ramification vertex then by  Proposition \ref{ram-to-ram} and Theorem \ref{rooted-end},  $p$ is a ramification point of $|\mathbb{ T}_{\mathcal C}|$.  Similarly,  if $\varphi^{-1}(p)=\{q,r\}$ then it follows from Proposition \ref{endpts-to-endpts} that one of these vertices must be a ramification vertex and hence, by Proposition \ref{ram-to-ram} and Theorem \ref{rooted-end},  $p$ is a ramification point of $|\mathbb{ T}_{\mathcal C}|$. 
\end{proof}

\begin{remark}
Note $|\mathbb{ T}_{\mathcal C}|$ is not a dendrite since it is not embeddable in the Wa\. zewski universal dendrite $D_{\omega}$.

\end{remark}

 \section{Mohler-Nikiel dendroids}\label{M-NSec}

In this section we give some background on the Mohler-Nikiel dendroid. We first recall its construction.

Let $d_1,d_2,d_3, \ldots$ be a (finite or infinite) sequence of numbers in $[0,1)$.
We construct an inverse sequence $\{Y_n,g_n\}$, where each $Y_n$ is a rooted tree and  $g_n$ is a confluent map.
Let $Y_0=[0,1]$. 
Let $Y_1$ be the disjoint union of $Y_0\times\{0\}$ and  $Y_0\times\{1\}$ where $(y,0)$ and $(y,1)$ are identified if and only if $y \le d_1$. Define $g_0\colon Y_1 \to Y_0$ by $g_0((y,j))=y$. 
Suppose that trees $Y_1,\ldots, Y_n$ and the confluent bonding maps $g_{k-1}\colon Y_{k}\to Y_{k-1}$, $k\leq n$, are already given. 
We now construct $Y_{n+1}$ and $g_{n}$. For that first take the disjoint union  $Y_n\times\{0\}$ and  $Y_n\times\{1\}$ of $Y_n$. The tree $Y_{n+1}$ is obtained by identifying  $(y,0)$ and $(y,1)$ for each $y$ such that $g^n_0(y)\leq d_{n+1}$. Let $g_n\colon Y_{n+1}\to Y_n$ map $(y,0)$ and $(y,1)$ to $y$, for each $y\in Y_n$. This finishes the construction of 
$\{Y_n,g_n\}$. 

In case  $D=(d_1,d_2,d_3, \ldots, d_n)$ is finite, we say that $Y_n$ constructed as above is the tree obtained by 
{\it following the 
MN-construction} for $D$ and denote it by $T_D$.

Suppose that  $d_1,d_2,d_3, \ldots$ is a dense sequence of numbers in $[0,1)$ such that 0 is in the sequence and each number that appears in this sequence appears infinitely many times, that is, for each $n$ there exists $m >n$ with $d_n = d_m$. Then 
$\iLim\{Y_n,g_n\}$ is a universal smooth dendroid constructed by Mohler and Nikiel in \cite{Dendroid}. We will denote it by $MN_{(d_n)}$ and call it a {\it Mohler-Nikiel dendroid}.

It seems that it is general ``folk'' knowledge that all Mohler-Nikiel dendroids are homeomorphic but we have been unable to find a proof in the literature. For completeness, we present a proof of this fact below.

\begin{theorem}\label{uniqueMN}
Let $d_1,d_2,d_3, \ldots$ and $e_1,e_2,e_3,\ldots $ be dense sequences of numbers in $[0,1)$ such that 0 is in the sequence and each number that appears in a sequence appears in it infinitely many times. Then
$MN_{(d_n)}$ and $MN_{(e_n)}$ are homeomorphic.
\end{theorem}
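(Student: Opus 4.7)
My plan is to identify each Mohler-Nikiel dendroid with the topological realization of the projective Fra\"{\i}ss\'e limit $\mathbb{T}_{\mathcal{CE}}$ from Section~8, reducing uniqueness to the uniqueness of the Fra\"{\i}ss\'e limit. First I would discretize: each continuum $Y_n$ in the MN-construction is viewed as the topological realization of a finite rooted tree $Y_n^*$ whose vertices are the points of $Y_n$ at the heights $\{0,d_1,\ldots,d_n,1\}$ (the gluing points together with the root and the endpoints). The bonding map $g_n\colon Y_{n+1}\to Y_n$ then induces a simple-confluent epimorphism $g_n^*\colon Y_{n+1}^*\to Y_n^*$: if $d_{n+1}\notin\{d_1,\ldots,d_n\}$, one first splits every edge of $Y_n^*$ that straddles the height $d_{n+1}$ (a simple-monotone move) and then doubles each component above the newly introduced vertices (an elementary light confluent move); if $d_{n+1}$ already occurs among $d_1,\ldots,d_n$, only the doubling step is needed. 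Thus $g_n^*$ lies in $\treend$, and the topological realization of $\iLim\{Y_n^*,g_n^*\}$ is the continuum $MN_{(d_n)}$.

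The heart of the proof is verifying that $\{Y_n^*,g_n^*\}$ is a Fra\"{\i}ss\'e sequence for $\treend$, which by Proposition~\ref{Fraisse equiv} amounts to the following extension property: for every $m$, every finite rooted tree $A$, and every simple-confluent epimorphism $\phi\colon A\to Y_m^*$, there exist $n\geq m$ and a simple-confluent epimorphism $\psi\colon Y_n^*\to A$ such that $\phi\circ\psi=(g_m^n)^*$. Using the intrinsic characterization of simple-confluent epimorphisms (Theorem~\ref{simple=special}) together with the decomposition of light confluent maps into elementary ones (Corollary~\ref{elemenlightc}), I would write $\phi$ as a composition of elementary edge-splittings and elementary light confluent doublings, each localized at a specific height or vertex of $Y_m^*$. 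Every such elementary move can be arranged to appear in some later bonding map $g_k^*$ by exploiting the hypothesis that each value in $(d_n)$ occurs infinitely often and that the sequence is dense: a desired split at a height $h$ is realized as soon as one reaches an index $k$ with $d_k=h$, and subsequent repetitions of $h$ supply further doublings on demand. The extra ``unused'' doublings performed by the MN steps (which double \emph{all} subtrees above a given height, not merely those requested by $\phi$) are absorbed by the projective amalgamation of $\treend$ (Theorem~\ref{confluent-projective-family}), which combines the elementary realizations into a single map $\psi\colon Y_n^*\to A$.

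Once the extension property is established, Theorem~\ref{fund-sequence} combined with consistency of $\treend$ (Corollary~\ref{treeendunique}) yields that $\iLim\{Y_n^*,g_n^*\}$ is isomorphic as a topological graph to $\mathbb{T}_{\mathcal{CE}}$; the parallel argument for $(e_n)$ gives $\iLim\{Z_n^*,h_n^*\}\cong\mathbb{T}_{\mathcal{CE}}$. Passing to topological realizations yields the required homeomorphism $MN_{(d_n)}\cong|\mathbb{T}_{\mathcal{CE}}|\cong MN_{(e_n)}$.

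The main obstacle is the extension step. The MN bonding maps are rigidly \emph{uniform}: at each stage they double every component lying above a fixed height rather than any individually chosen one, whereas an arbitrary simple-confluent $\phi$ may be highly \emph{selective}, doubling only specific components while leaving others intact. Bridging this mismatch is precisely where the infinite-repetition hypothesis is needed: the MN sequence is permitted to over-double, and the amalgamation of $\treend$ then supplies a confluent ``corrective'' map $\psi$ which identifies the unwanted copies so as to match $A$ exactly. Ensuring that this construction yields an honest simple-confluent epimorphism and that the diagram $\phi\circ\psi=(g_m^n)^*$ commutes is the main combinatorial difficulty.
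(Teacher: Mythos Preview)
Your approach is genuinely different from the paper's and is worth understanding as an alternative, but you should know that the paper avoids the Fra\"{\i}ss\'e machinery entirely here. The paper's proof is a short, direct back-and-forth on the \emph{continuum} inverse sequences $\{Y_n,g_n\}$: it observes that any two admissible sequences are related by a permutation composed with an order-preserving bijection of $[0,1)$, and handles these two cases separately. For order-equivalent sequences, a homeomorphism $h\colon[0,1]\to[0,1]$ carrying $d_i$ to $e_i$ induces level-wise homeomorphisms $H_n\colon X_n\to Y_n$ commuting with the bonding maps. For a permutation, the paper builds explicit ladder maps $\alpha_i\colon X_{k_i}\to Y_{l_i}$ and $\beta_i\colon Y_{l_{i+1}}\to X_{k_i}$ (each an MN-map for a suitably extended finite string) using only the infinite-repetition hypothesis and Remark~\ref{rearr}(ii). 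No discretization, no $\treend$, no Fra\"{\i}ss\'e sequence verification.

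Your plan would, if completed, prove Theorems~\ref{uniqueMN} and~\ref{toprealMN} simultaneously, which is conceptually attractive. But be aware of two points. First, the phrase ``a desired split at a height $h$ is realized as soon as one reaches an index $k$ with $d_k=h$'' is not quite right: a split in $\phi$ is combinatorial, not at a specific real height, and what you actually need is density (some $d_k$ lands in the relevant open interval), not exact matching. Second, and more seriously, your absorption step---``the extra doublings are absorbed by the projective amalgamation of $\treend$''---is where all the work hides. Amalgamation produces \emph{some} tree $D$ over $A$ and $Y_n^*$, but you need $D$ to be $Y_n^*$ itself (or map onto it compatibly); this is precisely the content of the extension property, and invoking AP does not deliver it. What you are really proposing is to redo the proof of Theorem~\ref{fraisse} for every MN sequence rather than for the single hand-built sequence $\{A_n,f_n\}$ the paper uses. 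That is feasible, but it is strictly more work than the paper's argument, which sidesteps the issue by first proving~\ref{uniqueMN} cheaply and only then identifying one specific MN dendroid with $|\mathbb{T}_{\mathcal{CE}}|$.
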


We will need the following definition and remark, whose proof is straightforward.
\begin{definition}
Finite sequences of  numbers in $[0,1)$, $d_1,d_2,\ldots,d_n$ and $e_1,e_2,\ldots,e_n$ are said to be {\it order equivalent} if there exists a homeomorphism $h\colon [0,1] \to [0,1]$ such that $h(0)=0$ and $h(d_i)=e_i$ for all $i$, $1 \le i \le n$. Infinite sequences $(d_n)$ and $(e_n)$ are {\it order equivalent} if for any $N$, sequences 
$(d_n)_{n\leq N}$ and $(e_n)_{n\leq N}$ are order equivalent.
\end{definition}

\begin{remark}\label{rearr}
\begin{itemize}
\item[(i)] Let  $D=(d_1,d_2,\ldots,d_n)$ and $E=(e_1,e_2,\ldots,e_n)$ be sequences in $[0,1)$, which are order equivalent.
 Then the trees $T_D$ and $T_E$  are homeomorphic.
 Moreover, there is a canonical homeomorphism 
 $H\colon T_D\to T_E$ induced from  a homeomorphism $h\colon [0,1]\to [0,1]$ satisfying
 $h(0)=0$ and $h(d_i)=e_i$,  $i\leq n$.

\item[(ii)] Let $D=(d_1,d_2,\ldots, d_n)$ be a sequence  in $[0,1)$. Let $\sigma$ be a permutation of $1,2\ldots, n$, and denote $\sigma(D)=(\sigma(d_1),\sigma(d_2),\ldots, \sigma(d_n))$.  Then $T_D=T_{\sigma(D)}$.

 \end{itemize}
\end{remark}

Let $m$ and  $(d_i)_{i\leq m}$ be given.
Suppose that $S$ is obtained by following the MN-construction for $(d_i)_{i\leq m}$, that is,
$S=T_{  (d_i)_{i\leq m} } $.
Given a tree $T$ and a map $f\colon T\to S$ we say that $f$ is a {\it MN-map} 
if there are $d_{m+1}, \ldots, d_n$ such that 
 $T=T_{  (d_i)_{i\leq n} }$ is obtained by following the MN-construction for $(d_i)_{i\leq n}$, and $f=g^n_m$.
In that case we will also say that $f$ is the {\it MN-map from $(d_i)_{i\leq m}$ to $(d_i)_{i\leq n}$}.

\begin{proof}[Proof of Theorem \ref{uniqueMN}]

Let $\{X_n,f_n\}$ be the inverse sequence obtained via the Mohler-Nikiel construction for $(d_n)$ and let $\{Y_n, g_n\}$  be the inverse sequence obtained via the Mohler-Nikiel construction for $(e_n)$. Since there is a permutation $\sigma$ of $\mathbb{N}$ such that $(d_n)$ and $(\sigma(e_n))_n$ are order equivalent, the proof will follow from the following two special cases.

{\bf{Case 1.}} Sequences $(d_n)$ and $(e_n)$ are order equivalent.

For every $N$ take a homeomorphism $h_N\colon [0,1]\to [0,1]$ satisfying
 $h_N(0)=0$ and $h_N(d_i)=e_i$, $i\leq N$.
Let $H_N\colon X_N \to Y_N$ be the homeomorphism induced from $h_N$.
We have for every $m<n$ that  $H_m\circ f^n_m=g^n_m\circ H_n$. Homeomorphisms $H_n$ induce a homeomorphism $H\colon MN_{(d_n)}\to MN_{(e_n)}$.

{\bf{Case 2.}} Let $(e_n)$ be a permutation of $(d_n)$. Let $R$ denote the set of  numbers that occur in $(d_n)$ (equivalently, in $(e_n)$).

We show that  we can construct strictly increasing subsequences $(k_i)$ and $(l_i)$ of natural numbers as well as maps $\alpha_i\colon X_{k_i}\to Y_{l_i}$ and $\beta_i\colon Y_{l_{i+1}}\to   X_{k_i}$ satisfying 
  $\beta_i\circ \alpha_{i+1}= f^{k_{i+1}}_{k_i}$ and $\alpha_i\circ \beta_i= g^{l_{i+1}}_{l_i}$.
  Moreover $\alpha_i$ will be the MN-map from
  $e_1,\ldots, e_{l_{i}}$
to  $e_1,\ldots, e_{l_{i}}, e'_1,\ldots, e'_{l(i)}$ for some $l(i)\in\mathbb{N}$ and  $e'_1,\ldots, e'_{l(i)} $ from $R$.
  Analogously, $\beta_i$ will be the MN-map 
  from $d_1,\ldots, d_{k_i}$ to 
$d_1,\ldots, d_{k_i}, d'_1,\ldots, d'_{k(i)}$ for some $k(i)\in\mathbb{N}$ and $d'_1,\ldots, d'_{k(i)} $ from $R$.

Start with $\alpha_0={\rm{Id}}\colon X_0\to Y_0$. As $X_1$ and $Y_1$ are triods and $X_0=Y_0=[0,1]$, we let $\beta_0\colon Y_1 \to X_0(=Y_0)$ be $g_0$ and $\alpha_1\colon X_1\to Y_1$ be an order-preserving homeomorphism determined by $g_0\circ \alpha_1=f_0$. Then $\beta_0\circ \alpha_{1}= f^{k_{1}}_{k_0}$ and $\alpha_0\circ \beta_0= g^{l_{1}}_{l_0}$,
where $l_0=k_0=0, l_1=k_1=1$.

Suppose that we have already constructed $\alpha_n$,  $ \beta_n$, $n< N$. We explain how to construct $\alpha_{N}$. We have that $\beta_{N-1}$ is the MN-map from $d_1,\ldots, d_{k_{N-1}}$
to  $d_1,\ldots, d_{k_{N-1}}, d'_1,\ldots, d'_k$ for some $k\in \mathbb{N}$ and  $d'_1,\ldots, d'_k $ from $R$. Since each element of $(d_i)$ appears infinitely many times, there are indices $t_1,\ldots, t_k$ with $k_{N-1}<t_1<t_2<\ldots<t_k$ such that 
$d_{t_j}= d'_j$, $j=1,\ldots,k$. Let $k_{N}=t_k$.
List the $n=k_{N}-k_{N-1}-k$ elements from 
$d_{k_{N-1}+1}, d_{k_{N-1}+2}\ldots, d_{k_{N}} $, except $d'_1,\ldots, d'_k $, as   $d''_1,d''_2,\ldots, d''_{n-1},d''_n$. Finally,
let $\alpha_N$ be the MN-map from 
 $d_1,\ldots, d_{k_{N-1}}, d'_1,\ldots, d'_k$ 
to
 $$d_1,\ldots, d_{k_{N-1}}, d'_1,\ldots, d'_k, d''_1,d''_2,\ldots, d''_{n-1},d''_n.$$ 
 
 Then, by Remark \ref{rearr} (ii), $\beta_{N-1}\circ \alpha_N= f^{k_{N}}_{k_{N-1}}$. 
 \end{proof}

\section{The $|\mathbb{T}_{\mathcal{ CE}}|$ is the Mohler-Nikiel universal dendroid}\label{M-N Dend}

The main goal of this section is the following theorem.
\begin{theorem}\label{toprealMN}
The topological realization  of the projective Fra\"{\i}ss\'e limit of $\treend$ is the Mohler-Nikiel universal dendroid.
\end{theorem}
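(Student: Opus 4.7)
The plan is to exhibit a Fra\"{\i}ss\'e sequence $\{F_n,\alpha_n\}$ for $\treend$ whose inverse limit, after passage to the topological realization, is homeomorphic to a Mohler--Nikiel dendroid $MN_{(d_n)}$. Given this, Theorem~\ref{fund-sequence} identifies the inverse limit with $\mathbb{T}_{\mathcal{CE}}$, and Theorem~\ref{uniqueMN} ensures that any admissible choice of $(d_n)$ gives the same continuum, so $|\mathbb{T}_{\mathcal{CE}}|$ coincides with the Mohler--Nikiel universal dendroid.

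First I would fix a sequence $(d_n)$ in $[0,1)$ that is dense, contains $0$, and in which every value appears infinitely many times, and form the Mohler--Nikiel inverse system $\{Y_n,g_n\}$. For each $n$ I would choose a finite set of vertices $V_n\subseteq Y_n$ containing the root, the maximal (top) point of every branch, and every ``splitting point'' (vertex $v\in Y_n$ with $g_0^n(v)\in\{0,d_1,\dots,d_n\}$), arranging the $V_n$ to be nested under $g_n$ and eventually dense along each branch. Letting $F_n$ be the finite rooted subtree of $Y_n$ on the vertex set $V_n$, the restriction $\alpha_n$ of $g_n$ to $F_{n+1}\to F_n$ factors, via a tree $F_n'$ sitting between them, as a composition of splitting edge maps (inserting auxiliary vertices at height $d_{n+1}$ and wherever new points of $V_{n+1}$ live above old edges) followed by elementary light confluent doublings (realizing the MN-identification above the height $d_{n+1}$). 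In particular each $\alpha_n$ is simple-confluent, so $\{F_n,\alpha_n\}$ lies in $\treend^\omega$.

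The main step is to verify, via Proposition~\ref{Fraisse equiv}, that this sequence is Fra\"{\i}ss\'e. Given $m$, $A\in\treend$, and a simple-confluent $\phi\colon A\to F_m$, decompose $\phi$ (using Definition~\ref{def:sc} and Corollary~\ref{elemenlightc}) as $\phi=\phi_r\circ\cdots\circ\phi_1$ where each $\phi_i$ is either a splitting edge map or an elementary light confluent map. For each splitting edge map $\phi_i$ I would realize the new intermediate vertex by arranging for the corresponding $[0,1]$-height $t$ to equal some forthcoming $d_{n+j}$, or, when $t$ is new, by adding a corresponding vertex to some $V_{n+j}$; for each elementary light confluent map $\phi_i$ doubling above a vertex $v$ of height $t$, I would set a subsequent $d$-value equal to $t$ so that the MN-step performs the required doubling of that component (along with other doublings at the same height, which only enrich the tree and are themselves simple-confluent refinements that can be absorbed into $\psi$). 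Interleaving these MN-steps with further refinements of $V_n$ so as to accommodate all of $A$'s structure, and iterating finitely often, produces an $n>m$ and a simple-confluent $\psi\colon F_n\to A$ with $\phi\circ\psi=\alpha_m^n$.

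Topologically, each edge of $F_n$ represents a subarc of $Y_n$, so density of $V_n$ in $Y_n$ guarantees that the natural inclusions $F_n\hookrightarrow Y_n$ assemble into a continuous surjection $\iLim\{F_n,\alpha_n\}\to \iLim\{Y_n,g_n\}=MN_{(d_n)}$ which identifies exactly the vertices connected by an edge; equivalently, it descends to a homeomorphism on the topological realization. The main obstacle lies in the Fra\"{\i}ss\'e verification: the MN-step is rigid, using one fixed height $d_{n+1}$ per level and doubling globally above that height, so realizing a prescribed simple-confluent refinement $\phi$ inside the MN-sequence requires careful bookkeeping. This is made possible by the infinite repetition of values in $(d_n)$, by the density of $(d_n)$ in $[0,1)$, and by the freedom to choose denser vertex sets $V_n$, which together let one ``hit'' any required splitting or doubling at a later stage of the inverse system.
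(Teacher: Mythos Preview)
Your approach—discretizing the Mohler--Nikiel inverse system $\{Y_n,g_n\}$ by finite vertex sets $V_n$ to obtain a candidate Fra\"{\i}ss\'e sequence—is natural and genuinely different from the paper's. The paper instead builds an explicit Fra\"{\i}ss\'e sequence $\{A_n,f_n\}$ of \emph{regular} rooted trees (Theorem~\ref{fraisse}), proves it is Fra\"{\i}ss\'e via the intrinsic characterisation of simple-confluent maps (Theorem~\ref{simple=special}, Propositions~\ref{internchar} and~\ref{unique}), and only then links $\iLim\{A_n,f_n\}$ to $MN$ through a two-dimensional directed system $\{A_{nk}\}$ whose horizontal inverse limits $\mathbb{L}_n$ are geometric trees with $|\mathbb{L}_n|\cong Y_n$.

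There is a genuine gap in your topological-realization step. The inclusions $V_n\hookrightarrow Y_n$ assemble into an \emph{injection} $\iLim\{F_n,\alpha_n\}\hookrightarrow MN_{(d_n)}$, not a surjection: a point $(p_n)\in MN_{(d_n)}$ lies in the image only when $p_n\in V_n$ for every $n$, and since each $V_n$ is finite this is a nowhere-dense Cantor set. But the edge relation on $\iLim F_n$ is nontrivial, so the quotient map to $|\mathbb{T}_{\mathcal{CE}}|$ is not injective; hence your inclusion cannot realise the quotient. What is actually required is a map collapsing each edge of $F_n$ to an arc of $Y_n$ and tracking how these arcs shrink, which is exactly what the paper's horizontal ``double-splitting'' inverse limits $\mathbb{L}_n=\iLim_k A_{nk}$ provide. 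Your plan does not set up this machinery.

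There is also a looser issue in the Fra\"{\i}ss\'e verification. You propose, for an elementary light confluent refinement doubling above a vertex $v$ of height $t$, to ``set a subsequent $d$-value equal to $t$''; but $(d_n)$ is fixed at the outset. When $t\in\{d_j\}$ the infinite repetition of values saves you, but vertices of $F_m$ inserted for density may sit at heights outside $\{d_j\}$. One can repair this by first splitting to introduce a vertex at a nearby $d_j$-height and doubling there, then absorbing the discrepancy into $\psi$, but you must then check that the resulting $\psi$ is still simple-confluent; the sketch does not address this.
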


   Simple-confluent epimorphisms are special, as we showed in Section~\ref{sec:simplecon}.  Recall that if $f\colon S\to T$ is simple-confluent, where $T$ and $S$ are rooted trees and $p$ is a ramification vertex in $T$, then $P_f(p)$ denotes the set of vertices in $S$ which are special for $p$ (wrt $f$). If $q$ is special for $p$, we let $\alpha_f(q,p)$ be the induced surjection from the set of components of $C_q$ to the set of components of $C_p$, that is, $\alpha_f(q,p)(E)=D$ if and only if $D=f(E)\setminus\{p\}$.
 Equivalently, $\alpha_f(q,p)$ maps the immediate successor of $q$ contained in $E$ to the immediate successor of $p$ contained in $D$, which is how we use the notation $\alpha_f(q,p)$ below. We observe that the following holds. 
   
 \begin{lemma}\label{indepe}
Let $f\colon S\to T$ be simple-confluent, where $T$ and $S$ are rooted trees. Suppose that all vertices of $S$ are end vertices or ramification vertices. Let $p_1,\ldots, p_m\in T$ be ramification vertices, 
which form a maximal set of pairwise incomparable vertices in $T$. Then $P_f(p_1)\cup\ldots\cup P_f(p_m)$ is a maximal set of pairwise incomparable vertices in $S$.

\end{lemma}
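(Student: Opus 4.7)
My plan is to invoke Theorem \ref{simple=special} and treat $f\colon S\to T$ as a special epimorphism, making each $P_f(p_i)$ well-defined. The crucial structural tool will be the trichotomy provided by Lemma \ref{image-special}: any preimage of a ramification vertex $x\in T$ is either special for $x$, above a special vertex for $x$, or below a special vertex for $x$.

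For pairwise incomparability, the observation immediately following Definition \ref{def-special} already handles the case of two distinct elements of a single $P_f(p_i)$. For $q\in P_f(p_i)$ and $q'\in P_f(p_j)$ with $i\neq j$, if $q\leq q'$ then order-preservation of $f$ gives $p_i=f(q)\leq f(q')=p_j$, contradicting the incomparability of $p_i$ and $p_j$ in $T$.

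For maximality, fix $s\in S$ and use that $\{p_1,\dots,p_m\}$ is a maximal antichain in $T$ to find some $p_i$ comparable to $f(s)$. There are three cases. If $f(s)>p_i$, the definition of a special epimorphism applied to $s$ directly produces $q<s$ with $q\in P_f(p_i)$. If $f(s)=p_i$, the trichotomy from Lemma \ref{image-special} gives either $s\in P_f(p_i)$, or some $q<s$ in $P_f(p_i)$ (when $s$ is above a special vertex), or some $q>s$ in $P_f(p_i)$ (when $s$ is below a special vertex).

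The main obstacle is the remaining case $f(s)<p_i$, where I must locate a special vertex above $s$. I plan to apply Proposition \ref{confluent-edges} inductively along the edges of the arc $[f(s),p_i]\subseteq T$, lifting step by step from $s$, to obtain $s'\in S$ with $s<s'$ and $f(s')=p_i$ (the inequality is strict since $f(s)\neq p_i$ and $f$ is order-preserving). The trichotomy applied to $s'$ then yields a witness in $P_f(p_i)$: if $s'$ is special, take $q=s'$; if $s'$ is above a special vertex $q<s'$, then $q$ and $s$ both lie below $s'$ and are therefore comparable, while $q\leq s$ would force $p_i=f(q)\leq f(s)<p_i$, so $q>s$; if $s'$ is below a special vertex $q>s'$, then $q>s'>s$. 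In every subcase the required element of $\bigcup_i P_f(p_i)$ is produced, completing the proof.
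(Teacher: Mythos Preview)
Your incomparability argument is correct and in fact cleaner than the paper's: you use only order-preservation of $f$, whereas the paper reasons with components of $C_a$ and $C_b$.

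There is, however, a genuine gap in Case 3 of your maximality argument. You assert that iterating Proposition \ref{confluent-edges} along the arc $[f(s),p_i]$ produces $s'>s$ with $f(s')=p_i$, but confluence does not force the lift to stay above $s$: the connected set $R$ in Proposition \ref{confluent-edges}(2) may move sideways in the tree, so the lifted edge can lie entirely outside the cone $\overline{C}_s$. Concretely, let $T$ have root $r$ with children $v_1,v_2$, each $v_k$ having two leaf children, and take the maximal antichain $\{v_1,v_2\}$. Build $S$ by first splitting $\langle r,v_1\rangle$ with a new vertex $x$ mapping to $r$, then doubling $C_x$, and finally doubling the component containing $v_2$ at the root (so that every vertex of $S$ becomes an end or ramification vertex). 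The resulting $f\colon S\to T$ is simple-confluent. For $s=x$ one has $f(s)=r<v_2$, yet $f(C_x)\subseteq\{v_1\}\cup C_{v_1}$; there is \emph{no} $s'>x$ with $f(s')=v_2$. The lemma still holds here, since the copy of $v_1$ sitting above $x$ lies in $P_f(v_1)$ and is comparable to $x$, but your argument, having committed to $p_i=v_2$, cannot locate this witness.

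The paper avoids the problem by never fixing $p_i$ from $f(s)$. It takes a branch through $s$ with end vertex $e$, uses end-vertex preservation to conclude that $f(e)$ is an end vertex of $T$, and only then selects the $p_j$ comparable to $f(e)$, automatically getting $p_j<f(e)$; the definition of special epimorphism then yields $q\in P_f(p_j)$ with $q<e$, hence $q$ lies on the branch and is comparable to $s$. Your argument is easily repaired along the same lines: in Case 3, pass to an end vertex $e>s$ and choose the antichain element based on $f(e)$ rather than on $f(s)$.
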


 \begin{proof}
Suppose there are vertices $a$ and $b$ in $P_f(p_1)\cup\ldots\cup P_f(p_m)$ which are comparable, say $a < b$. Since any two vertices in a $P_f(p_i)$ are incomparable there are $p_j \not = p_k$ such that  $a \in P_f(p_j)$ and $b \in P_f(p_k)$. If $E$ is a component of $C_a$ then $E$ contains a component of $C_b$, therefore $f(E)$ contains a component of $C_{p_k}$. On the other hand, $f(E)\setminus\{p_j\}$ is equal to a component of $C_{p_j}$. Since $p_j$ and $p_k$ are incomparable, this is impossible.

For maximality we show that every branch in $S$ contains a vertex from $P_f(p_1)\cup\ldots\cup P_f(p_m)$. Let $B$ be a branch in $S$ with end vertex $e$.  By maximality of $p_1,\ldots, p_m$ there is $i$ such that the end vertex $f(e)$ is comparable to $p_i$; hence $p_i< f(e)$. Then there is $q \in P_f(p_i)$ such that $q \le e$, i.e. $q$ is a vertex in $B$. 

 \end{proof}

\bigskip

{\bf{The double splitting edge operation.}} 
Let $T$ be a rooted tree. We let $d_T$ be a map that adds two vertices to each edge of $T$. Specifically, for each $e=\langle a,b\rangle\in E(T)$, we take two new vertices $e^a, e^b$, remove the edge $e$, and add the three edges $\langle a, e^a \rangle$, $\langle e^a, e^b \rangle$, $\langle e^b, b\rangle$. We let $d_T(e^a)=d_T(a)=a$ and $d_T(e^b)=d_T(b)=b$. 
In a situation when $T$ is understood from the context, we will sometimes abuse the notation and simply write $d$ instead of $d_T$. We will call $d$ the {\it double splitting edge operation}.

We will later need the following lemma.
\begin{lemma}\label{doubleamal}
     Let $f\colon B\to A$ be light confluent and let $g\colon C\to A$ be the double splitting edge operation. Then the standard amalgamation produces 
     a rooted tree $D$, the double splitting edge operation $f_0\colon D\to B$ and a light confluent $g_0\colon D\to C$ such that
 $f\circ f_0=g\circ g_0$. 
\end{lemma}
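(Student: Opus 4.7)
The plan is to apply Theorem~\ref{onelight} and then identify $f_0$ explicitly. Since $g\colon C\to A$ is the double splitting edge operation, it is a composition of splitting edge maps and hence simple-monotone; in particular it is confluent and end vertex preserving. Light confluent epimorphisms between rooted trees are also end vertex preserving, since every elementary light confluent epimorphism clearly is, and Corollary~\ref{elemenlightc} reduces the general case to the elementary one. Applying Theorem~\ref{onelight} to $f$ light confluent and $g$ confluent (and using its end-vertex clause), I obtain that $D$ is a rooted tree, $f_0\colon D\to B$ is confluent and end vertex preserving, and $g_0\colon D\to C$ is light confluent and end vertex preserving.

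It remains to identify $f_0$ as the double splitting edge operation on $B$. Using that $g^{-1}(v)=\{v\}\cup\{e^v:e\in E(A),\ e\ni v\}$ for each $v\in V(A)$, the fiber of $f_0$ above $b\in V(B)$, with $v=f(b)$, consists of the ``base'' vertex $(b,v)$ together with one ``shadow'' $(b,e^v)$ for each edge $e\in E(A)$ incident to $v$. I would then do a case analysis on the three kinds of non-degenerate edges present in $C$, namely $\langle v,e^v\rangle$, $\langle e^v,e^{v'}\rangle$, and $\langle e^{v'},v'\rangle$ for $e=\langle v,v'\rangle\in E(A)$, and show, using lightness of $f$ to rule out pairing a non-degenerate $B$-edge with a degenerate $C$-edge, that every edge $\epsilon=\langle b,b'\rangle\in E(B)$ (with $e=\langle f(b),f(b')\rangle$) lifts in $D$ to exactly the length-three path
\[
(b,f(b))-(b,e^{f(b)})-(b',e^{f(b')})-(b',f(b')),
\]
and that these paths exhaust the non-degenerate edges of $D$. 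Comparing with the definition of $d_B$, this identifies $f_0$ with the double splitting edge operation on $B$.

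The main technical step is this edge case analysis together with the bookkeeping when a vertex $b\in V(B)$ has several non-degenerate neighbors in $B$ whose $f$-images coincide: confluence of $f$ guarantees that each edge of $A$ incident to $f(b)$ is realized by some $B$-edge at $b$, while lightness of $f$ prevents unintended identifications among the base vertices. The identity $f\circ f_0=g\circ g_0$ is then immediate from the definition of the standard amalgamation.
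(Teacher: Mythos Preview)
Your approach is the same as the paper's: invoke Theorem~\ref{onelight} to get that $D$ is a rooted tree and $g_0$ is light confluent, and then identify $f_0$ by tracking how each $B$-edge lifts to a length-three path in $D$.

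There is, however, a genuine gap in the step where you ``compare with the definition of $d_B$''. The shadow vertex $(b,e^{v})$ you attach to $b$ depends only on $b$ and on the $A$-edge $e=\langle v,w\rangle$, not on the particular $B$-edge $\epsilon=\langle b,b'\rangle$ lying over $e$. If $b$ has two distinct immediate successors $b_1,b_2$ with $f(b_1)=f(b_2)=w$ --- which lightness of $f$ does \emph{not} forbid, since $b_1,b_2$ lie in $f^{-1}(w)$, not in $f^{-1}(v)$ --- then the two length-three paths you write down share both $(b,v)$ and $(b,e^v)$, and $(b,e^v)$ acquires order $\ge 3$ in $D$. In the domain of $d_B$ every new subdivision vertex has order exactly $2$; hence $f_0$ is not $d_B$, and in this situation is not even simple-monotone. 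A concrete instance: let $A$ be the single edge $\langle r,a\rangle$; let $B$ be the triod with root $r'$ and leaves $a',a''$; and let $f(r')=r$, $f(a')=f(a'')=a$ (an elementary light confluent epimorphism). Then $|V(D)|=6$ while the domain of $d_B$ has $7$ vertices, and $(r',e^r)\in D$ has order $3$. Your sentence ``lightness of $f$ prevents unintended identifications among the base vertices'' targets the wrong coordinate: the collision occurs among the shadow vertices, not the base vertices. The paper's own one-paragraph computation is equally silent on this point.
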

\begin{proof}

Given edges $\langle a,b\rangle$ in $A$ and $\langle a_B,b_B\rangle$ in $B$ such that 
   $f(a_B)=a$ and $f(b_B)=b$ there are edges $\langle a_C,a_C'\rangle$, $\langle a_C',b_C'\rangle$, and $\langle b_C',b_C\rangle$ in $C$ such that $g(a_C)=g(a_C')=a$ and $g(b_C)=g(b_C')=b$. Then $D$ contains the vertices $(a_B,a_C)$, $(a_B,a_C')$, $(b_B,b_C')$, and $(b_B,b_C)$ with edges $\langle (a_B,a_C),(a_B,a_C')\rangle$, $\langle (a_B,a_C'),(b_B,b_C')\rangle$, and $\langle (b_B,b_C'),(b_B,b_C)\rangle$. The epimorphism $f_0$ maps  $(a_B,a_C)$ and $(a_B,a_C')$ to $a_B$ and it maps $(b_B,b_C')$ and $(b_B,b_C)$ to $b_B$ so it is a double  splitting edge operation. The epimorphism $g_0$ is light confluent by Theorem  \ref{onelight}.
\end{proof}
\bigskip

Recall that for a rooted tree $T$ and $a\in T$, we write $C_a=\{x\in T\colon x> a\}$ and $\overline{C}_a=\{x\in T\colon x\geq  a\}$. We will often identify the set of components of $C_a$ with the set of immediate successors $\im(a)$ of $a$.

{\bf{The multiplying branches operation }}(with parameter $n$).
Let $T$ be a rooted tree and let $n$ be divisible by the least common multiple of \{$\sord(a)\colon a\in T\}$. 
We will recursively define a rooted tree $S$ such that $\sord(a)=n$ for every vertex $a\in S$ different from an end vertex, and a light confluent epimorphism $f\colon S\to T$.

First, let $a\in T$ be such that all immediate successors of $a$ are end vertices. Replace $\overline{C}_a$ by $\frac{n}{\sord(a)}$ many copies of $\overline{C}_a$. Specifically, take  $T\setminus C_a$ along with 
$\frac{n}{\sord(a)}$ many copies of $\overline{C}_a$. Identify all those trees at $a$ to obtain a rooted tree $S_a$. Let $f_a$ map each of the copies of $\overline{C}_a$ in $S_a$ to $\overline{C}_a$ in $T$ and otherwise be the identity. Then $f_a$ is a light confluent epimorphism. 

In the recursive step suppose that vertex $a$ is such that all immediate successors of $a$ already have the successor order $n$ or are end vertices. Replace $\overline{C}_a$ by $\frac{n}{\sord(a)}$ many copies, $\overline{C}_a^i$, $i=1,\ldots, \frac{n}{\sord(a)}$, of $\overline{C}_a$ to obtain a `new' tree and let the epimorphism map the copies of $\overline{C}_a^i$ to $\overline{C}_a$ and be the identity otherwise. 

After finitely many steps we will get a rooted tree $S$  such that the successor order of each vertex that is not an end vertex is $n$. We obtained $S$ from $T$ by a sequence of light confluent epimorphisms.

\bigskip

{\bf{The colored adding branches operation}} (with parameter $n$).
Let $T$ be a rooted tree. We now additionally have for each vertex a coloring of its immediate successors. If $a\in T$, let $c_a$ be a function from the set of immediate successors of $a$, $\im(a)$, onto an initial segment of natural numbers $\mathbb{N}$.
Let $$\{n_1,n_2,\ldots, n_l\}=
\{j\in\mathbb{N}\colon \exists a\in T \text{ such that } c_a(\im(a))=\{0,1,\ldots,j-1\}\}.$$ 
Let $$ N=\max\{|c_a^{-1}(i)| \colon a\in T, i\in\mathbb{N}\}$$
Let $n\geq N$ be divisible by the least common multiple of $n_1, n_2,\ldots, n_l$. 

We now proceed similarly (but not exactly) as in the multiplying branches operation. The resulting tree $S$ and the light confluent $f\colon S\to T$  will be such that for any $b\in S$, if $a=f(b)$ and $c_a$ is the coloring into $k$ many colors, then for all $i\in\{0,\ldots,k-1\}$,  there are exactly $\frac{n}{k}$ successors of $b$ mapped to $i$ by $c_a\circ f$.

For the recursive step suppose that we already have light confluent $f'\colon T'\to T$ and 
$b\in T' $  such that all non-end vertices $s\in C_b$ 
are such that if $t=f'(s)$ and $c_t$ is a coloring into $k(t)$ many colors, then for all $i\in\{0,\ldots,k(t)-1\}$,  there are exactly $\frac{n}{k(t)}$ successors of $s$ mapped to $i$ by $c_t\circ f'$.
Let $a=f'(b)$ and $k$ be such that $c_a\circ f'$ is a coloring into $k$ many colors, i.e. that $c_a\circ f'(\im(b))=
\{0,1,\ldots,k-1\}$. For every $i=0,1,\ldots, k-1$ fix $b_i\in \im(b)$  such that $c_a\circ f'(b_i)=i$
and let $m_i=|(c_a\circ f')^{-1}(i)|$.
For each such $i$ replace $\overline{C}_{b_i}$ by  
$\frac{n}{k}-m_i+1$ many copies of $\overline{C}_{b_i}$.

In the obtained rooted tree the successor order of each non-end vertex is equal to $n$.

We emphasize that in case for each vertex we color its immediate successors into exactly one color, then the colored adding branches operation is not the same as the multiplying branches operation.

\bigskip
 
{\bf{The Fra\"{\i}ss\'e sequence.}}
Let $A_1$ be the unique rooted tree on three vertices that has height 1. Let $f_i= d_i\circ u_i\colon A_{i+1}\to A_i$, where $d_i$ is the double splitting edge operation and  $u_i$ is the multiplying branches operation for $n=2^{i+1}$. 
This produces an inverse sequence $\{A_n, f_n\}$. Our goal now is to show that this is a Fra\"{\i}ss\'e sequence.

\begin{theorem}\label{fraisse}
The sequence $\{A_n, f_n\}$ is a Fra\"{\i}ss\'e sequence.
\end{theorem}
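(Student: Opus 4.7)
The plan is to verify that $\{A_n, f_n\}$ satisfies the characterization of a Fra\"{\i}ss\'e sequence given by Proposition \ref{Fraisse equiv}: for every $m$, every rooted tree $B\in\treend$, and every simple-confluent epimorphism $\phi\colon B\to A_m$, we must produce some $n$ and a simple-confluent epimorphism $\psi\colon A_n\to B$ with $\phi\circ\psi=f_m^n$. By Definition \ref{def:sc} and Corollary \ref{elemenlightc}, the map $\phi$ decomposes as a composition of finitely many elementary operations, each of which is either a splitting edge map or an elementary light confluent epimorphism. So I would proceed by induction on the length $k$ of such a decomposition $\phi=\phi_k\circ\cdots\circ\phi_1$.

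The base case $k=0$ is trivial (take $n=m$ and $\psi=\mathrm{id}$). For the inductive step, write $\phi=\phi'\circ\phi_0$ where $\phi_0\colon B\to B'$ is a single elementary operation and $\phi'\colon B'\to A_m$ has decomposition length $k-1$; by induction there exist $n'$ and simple-confluent $\psi'\colon A_{n'}\to B'$ with $\phi'\circ\psi'=f_m^{n'}$. The key claim is the existence of a simple-confluent $\psi\colon A_{n'+1}\to B$ satisfying $\phi_0\circ\psi=\psi'\circ f_{n'}$; once established, this yields $\phi\circ\psi=\phi'\circ\phi_0\circ\psi=\phi'\circ\psi'\circ f_{n'}=f_m^{n'+1}$, completing the induction.

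The construction of $\psi$ exploits the decomposition $f_{n'}=d_{n'}\circ u_{n'}$, where $d_{n'}\colon B_{n'}\to A_{n'}$ is the double splitting edge operation and $u_{n'}\colon A_{n'+1}\to B_{n'}$ is a multiplying branches epimorphism with parameter $2^{n'+1}$. If $\phi_0$ is a splitting edge map inserting a vertex $x$ between adjacent vertices $a<b$ of $B'$ with $\phi_0(x)=a$ (say), then for every edge $\tilde e=\langle\tilde a,\tilde b\rangle$ in $A_{n'}$ with $\psi'(\tilde e)=\langle a,b\rangle$ the double-splitting $d_{n'}$ produces a path $\tilde a-\tilde e^a-\tilde e^b-\tilde b$ inside $B_{n'}$; we define an intermediate map $\sigma\colon B_{n'}\to B$ by sending $\tilde e^a\mapsto x$ and $\tilde e^b\mapsto b$ on such paths, while using $\psi'\circ d_{n'}$ elsewhere, and then set $\psi=\sigma\circ u_{n'}$. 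If instead $\phi_0$ is elementary light confluent, doubling a component $C$ above some vertex $v$, then the multiplication of branches in $u_{n'}$ by a factor of at least $2$ provides the needed copies: I would select two distinguished copies of the relevant preimage subtree in $A_{n'+1}$ to serve as the pair $C_1,C_2$ in $B$, and collapse the remaining copies into $C$ via an elementary light confluent step, yielding $\psi$ directly on $A_{n'+1}$.

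The main obstacle is to verify that the $\psi$ produced in each case is genuinely \emph{simple}-confluent and not merely confluent and order-preserving. I expect to handle this in one of two ways: either by writing down an explicit decomposition of $\psi$ as a finite composition of splitting edge maps (absorbed by $d_{n'}$) and elementary light confluent epimorphisms (absorbed by $u_{n'}$), matching the operations in $\phi_0$ against the basic operations of $f_{n'}$; or, more cleanly, by invoking the intrinsic characterization in Theorem \ref{simple=special} and checking the ``special'' property of $\psi$ by tracking, for each ramification vertex of $B$, the structure of its preimage under $\psi$ across the two layers $d_{n'}$ and $u_{n'}$. The latter route reduces the verification to confirming that the set $P_\psi(p)$ of special vertices has the right image structure at each ramification vertex $p$, which follows from the assumption that $\psi'$ is already special and the local nature of $\phi_0$.
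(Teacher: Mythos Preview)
Your inductive strategy---absorb one elementary factor of $\phi$ per level of the sequence $\{A_n,f_n\}$---is genuinely different from the paper's argument. The paper does not proceed step by step: given $f\colon S\to A_m$ it chooses $n$ large enough once and for all (governed by height and branching parameters $\beta,\gamma$ computed from $f$), builds a single $g=g_1\circ g_2\colon A_n\to S$ with $g_1$ simple-monotone and $g_2$ light confluent, verifies that $f\circ g$ satisfies the rigid structural description of $f_m^n$ recorded in Proposition~\ref{internchar}, and then invokes the uniqueness statement Proposition~\ref{unique} to conclude $f\circ g=f_m^n\circ\alpha$ for an automorphism $\alpha$. The point is that simple-confluence of $g$ is immediate (each factor is by construction simple-monotone or light confluent), and the hard equality $\phi\circ\psi=f_m^n$ is obtained indirectly via the characterization of $f_m^n$.

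Your route, by contrast, front-loads exactly the difficulty the paper's argument is designed to avoid: at every step you must check that the lift $\psi$ is simple-confluent, and your proposal does not do this. In the splitting-edge case your map $\sigma$ is plausibly special---the ramification vertices of $B$ coincide with those of $B'$ and one can argue $P_\sigma(p)=P_{\psi'\circ d_{n'}}(p)$---but this still needs a genuine proof, and your sentence ``follows from $\psi'$ special and the local nature of $\phi_0$'' is not one. The elementary light confluent case is more seriously incomplete. Your description ``select two distinguished copies of the relevant preimage subtree'' does not specify which subtree or why the copies produced by $u_{n'}$ align with it; more importantly, when $\sord_{B'}(v')=1$ the vertex $v$ is a \emph{new} ramification vertex of $B$ with no counterpart in $B'$, so there is no $P_{\psi'}(v')$ to inherit from, and the special-vertex structure for $\psi$ at $v$ must be manufactured from scratch. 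You would need to show that the components of $(\psi'\circ f_{n'})^{-1}(C)$ can be partitioned between $C_1$ and $C_2$ in a way that makes every relevant $q^*\in A_{n'+1}$ special for $v$, and this requires controlling how those components sit relative to the components of $C_{q^*}$---a nontrivial combinatorial check that you have not carried out.

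In short: the paper's global-rigidity approach and your inductive approach are two different proofs, and yours may well be completable, but as written the key step (simple-confluence of $\psi$) is asserted rather than proved, and the light-confluent case in particular needs substantially more than what you have sketched.
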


Let us first understand what maps $f_m^n$ look like.

\begin{proposition}\label{internchar}
If $1\leq m$, then $A_m$ is a regular rooted tree such that $\htt(A_m)=3^{m-1}$ and $\sord(A_m)=2^m$. 

 If $m<n$ and $k=\htt(A_m)$ then there are $l$ and $0= t_0\leq s_0<t_1\leq s_1<\ldots<t_k=s_k=l$ such that the following hold for $h=f^n_m$. 
  \begin{enumerate}
\item For any $a\in A_m$ with $\htt(a)=i$ and $b\in A_n$ with $h(b)=a$ we have that: 
$\htt(b)\in [0,s_0]$ iff $i=0$,  and $\htt(b)\in (s_{i-1}, s_{i}]$ iff $0<i\leq k$.
\item For $b\in A_n$, $a=h(b)$, and $0\leq i< k$, we have $\htt(b)=t_i$ iff $b\in P_{h}(a)$ and $\htt(a)=i$.
\item For any $a\in A_m$, $b\in P_{h}(a)$, and an immediate successor $x$ of $a$, there are exactly
$\frac{\sord(A_n)}{\sord(A_m)}$ many immediate successors of $b$ such that $\alpha_{h}(b,a)$ maps them to $x$.  

  \end{enumerate}
  
\end{proposition}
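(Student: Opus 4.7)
The plan is to prove the result by induction on $n - m$, with the regularity of the $A_i$'s as a preliminary. Regularity is a simple induction on $m$: $A_1$ is trivially regular with $\htt = 1$ and $\sord = 2$, and assuming $A_m$ is regular with $\htt(A_m) = 3^{m-1}$ and $\sord(A_m) = 2^m$, the double splitting operation $d_m$ yields an intermediate tree of height $3^m$ whose non-end vertices have successor order $1$ or $2^m$, after which the multiplying branches operation $u_m$ with parameter $2^{m+1}$ preserves heights while making every non-end successor order equal to $2^{m+1}$; hence $A_{m+1}$ is regular with the expected parameters.

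For the base case $n = m + 1$ of the structural claim, I would read off the parameters for $h = f_m = d_m \circ u_m$ directly. Since $u_m$ preserves heights from $A_m'$ to $A_{m+1}$ (it only replicates subtrees), the heights of $f_m$-preimages of $a \in A_m$ coincide with the heights of $d_m$-preimages of $a$ in $A_m'$: the original copy of $a$ at level $3i$, the incoming-edge midpoint at level $3i - 1$ (when $i > 0$), and each outgoing-edge midpoint at level $3i + 1$ (when $i < k$). Boundary cases give $s_0 = 1$, $s_i = 3i + 1$ for $0 < i < k$, and $s_k = 3k = l$. Direct inspection then identifies the original copy of $a$ at level $3i$ as the unique special-vertex layer: its $2^{m+1}$ immediate successors, being the $u_m$-replicas of outgoing-edge midpoints distributed as two copies per edge $\langle a, b\rangle$, yield components of $C_a$ whose $f_m$-images meet $C_a \subset A_m$ in precisely the single component $\overline{C}_b$ and cover every such $\overline{C}_b$. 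The level-$(3i - 1)$ preimages fail condition (1) (each of their components' images covers all of $\overline{C}_a$; they lie below specials), and the level-$(3i + 1)$ preimages fail condition (2) (all of their components' images lie in a single $\overline{C}_b$; they lie above specials). This yields $t_i = 3i$ and (3) with ratio $\sord(A_{m+1})/\sord(A_m) = 2$.

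For the inductive step, write $f_m^{n+1} = h \circ f_n$ with $h = f_m^n$, and combine the inductive hypothesis with the base case applied to $f_n$ (parameters $\tilde{s}_j = 3j + 1$ for $0 \leq j < 3^{n-1}$, $\tilde{s}_{3^{n-1}} = 3^n$, $\tilde{t}_j = 3j$). For (1), a vertex $b \in A_{n+1}$ with $(h \circ f_n)(b) = a$ at height $i$ satisfies $f_n(b) = c \in A_n$ at height $j \in (s_{i-1}, s_i]$ and then $\htt(b) \in (3j - 2, 3j + 1]$; the union over admissible $j$ is $(3 s_{i-1} + 1, 3 s_i + 1]$, giving $s'_i = 3 s_i + 1$ (with endpoint adjustments, notably $s'_k = 3 s_k = 3^n$). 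For (2), I would first establish the easy direction of a compositional statement: if $f_n(b) \in P_h(a)$ and $b \in P_{f_n}(f_n(b))$, then $b \in P_{h \circ f_n}(a)$, with $\alpha_{h \circ f_n}(b, a) = \alpha_h(f_n(b), a) \circ \alpha_{f_n}(b, f_n(b))$; this is a routine unwinding of the definitions. The converse follows from Lemma \ref{indepe}: applied to $h \circ f_n$ with the maximal antichain $\{p_s\}$ of height-$i$ ramification vertices of $A_m$, the union $\bigcup_s P_{h \circ f_n}(p_s)$ is a maximal antichain in $A_{n+1}$; applied twice (to $h$ and then to $f_n$), the union $\bigcup_{c \in \bigcup_s P_h(p_s)} P_{f_n}(c)$ is also a maximal antichain in $A_{n+1}$ and is contained in the first by the easy direction, so the two coincide by maximality. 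Combined with the base-case identification $\tilde{t}_{t_i} = 3 t_i$, this places every special at height $3 t_i$, so $t'_i = 3 t_i$. Claim (3) then follows from the factored $\alpha$ by multiplying fiber cardinalities: $\sord(A_{n+1})/\sord(A_m) = 2^{n+1-m}$.

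The main obstacle is the compositional statement for special vertices, and in particular its ``only if'' direction, where Lemma \ref{indepe} does the decisive work by reducing it to an equality of two maximal antichains. A secondary concern is the boundary case $m = 1$, where the root of $A_1$ has order $2$ and is not a ramification vertex in the strict sense; this is handled by interpreting ``special'' through the natural extension of conditions (1) and (2) of Definition \ref{def-special} to any vertex with at least one child, under which every step of the argument applies unchanged.
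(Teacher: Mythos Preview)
Your proof is correct and considerably more detailed than the paper's own argument. The paper's proof is essentially a one-line assertion: it simply states the closed-form values $t_i-s_{i-1}=\frac{3^{n-m}+1}{2}$ and $s_i-t_i=\frac{3^{n-m}-1}{2}$ and declares them ``not hard to calculate,'' leaving all verification to the reader. You instead carry out a genuine induction on $n-m$, which is the natural way to substantiate that assertion; your recursions $s'_i=3s_i+1$ (with the endpoint correction $s'_k=3s_k$) and $t'_i=3t_i$ recover exactly the paper's closed forms.

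The one place where you do real work beyond arithmetic is the compositional identification of special vertices. Your maximal-antichain argument via Lemma~\ref{indepe} is correct and elegant; it is worth noting, however, that the paper already records the needed compositional identity $P_{f_2\circ f_1}(p)=\bigcup_{x\in P_{f_2}(p)}P_{f_1}(x)$ inside the proof of Theorem~\ref{simple=special} (for $f_1$ simple-monotone or light confluent, which is all you need here since $f_n=d_n\circ u_n$). Citing that line would shorten your step (2) considerably, though your antichain argument has the virtue of being self-contained and slightly more general. Your handling of the $m=1$ boundary case (root of order $2$) by the obvious extension of ``special'' is also the right move; the paper implicitly relies on the same extension throughout Section~\ref{M-N Dend}.
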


\begin{proof}
    It is not hard to calculate that $l=\htt(A_n)=3^{n-1}$, and 
    $s_0, t_1, s_1, \ldots, s_{k-1}$ such that 
    $$ t_i-s_{i-1}=  \frac{3^{n-m}+1}{2}, \ \  i=1, \dots, k,$$
    and
$$ s_i-t_i =  \frac{3^{n-m}-1}{2}, \ \  i=0,\ldots, k-1,$$
        are as needed.
\end{proof}

 \begin{proposition}\label{unique}
 Let $A_m$ and $A_n$ be  regular rooted trees with $\htt(A_m)=3^{m-1}$, $\htt(A_n)=3^{n-1}$, $\sord(A_m)=2^m$, and $\sord(A_n)=2^n$.
Suppose that $h\colon A_n\to A_m$ is a simple-confluent epimorphism, and let $0= t_0\leq s_0<t_1\leq s_1<\ldots<t_k= s_k=3^{n-1}$, where $k=3^{m-1}$, be determined by
$t_i-s_{i-1}= \frac{3^{n-m}+1}{2}$ for $  i=1, \dots, k,$
    and
$ s_i-t_i =  \frac{3^{n-m}-1}{2}$ for $i=0,\ldots, k-1$.
Suppose that $h$ satisfies 
(1)-(3) of Proposition \ref{internchar}.
 Then $h=f^n_m\circ \alpha$, where $\alpha$ is an isomorphism of $A_n$ (that is, a bijection that preserves the order and the edge relation).
  \end{proposition}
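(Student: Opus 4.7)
My plan is to construct the isomorphism $\alpha\colon A_n \to A_n$ recursively from the root, defining it one level at a time, and verifying at each step that the local structure of $h$ and $f^n_m$ coincide up to a permutation of immediate successors. Begin by setting $\alpha(r_{A_n}) = r_{A_n}$, so that the base identity $h(r_{A_n}) = f^n_m(\alpha(r_{A_n})) = r_{A_m}$ holds. Assume inductively that $\alpha$ has been defined on all vertices of $A_n$ of height at most $j$, that it preserves order and adjacency among them, and that $h(b) = f^n_m(\alpha(b))$ for every such $b$. For $j < \htt(A_n)$ I extend $\alpha$ to height $j+1$ by choosing, for each vertex $b$ of height $j$, a suitable bijection $\sigma_b\colon \im(b)\to \im(\alpha(b))$ and setting $\alpha|_{\im(b)}=\sigma_b$.

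The bijection $\sigma_b$ must satisfy $h(c)=f^n_m(\sigma_b(c))$ for all $c\in\im(b)$. With $a=h(b)=f^n_m(\alpha(b))$ of height $i$ in $A_m$, I split into three cases according to where $j$ sits in the sequence $0=t_0\leq s_0< t_1\leq s_1<\ldots<t_k=s_k$. If $j\ne t_i$ and $j<s_i$, then by Lemma~\ref{image-special} every immediate successor of $b$, and of $\alpha(b)$, maps to $a$, so any bijection suffices. If $j=t_i$, then both $b$ and $\alpha(b)$ are special vertices for $a$, and by property~(3) the immediate successors of each are partitioned into $2^m$ color classes of size $2^{n-m}$, one class per element of $\im(a)$; pick $\sigma_b$ to be any color-preserving bijection. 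If $j=s_i$ with $i<k$, then by Lemma~\ref{image-special}(2) all of $\im(b)$ maps under $h$ to a single child $x$ of $a$, namely the one determined by $\alpha_h(b^*,a)$ at the nearest special ancestor $b^*$ of $b$; the analogous statement holds for $\alpha(b)$ with $x'=\alpha_{f^n_m}(\alpha(b^*),a)$, and the coherence $x=x'$ is what makes the bijection extendable.

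The main obstacle is propagating color compatibility from the special levels $j=t_i$ down to the ``end-of-range'' levels $j=s_i$: the color of a vertex $b$ at heights in $(t_i,s_i]$ is fully determined by the branch of $b^*$ containing $b$, so one needs the color of $\alpha(b)$ to match. This is automatic because $\sigma_{b^*}$ was chosen color-preservingly and colors are inherited down each component of $C_{b^*}$; the combinatorial bookkeeping is essentially Lemma~\ref{indepe} applied to the restrictions of $h$ and $f^n_m$ to each subtree below a special vertex. Once the recursion terminates, $\alpha$ is a bijection of $V(A_n)$ that respects the edge relation and the order by construction (each $\sigma_b$ maps $\im(b)$ onto $\im(\alpha(b))$), hence is an automorphism of $A_n$, and $h=f^n_m\circ\alpha$ holds on every vertex.
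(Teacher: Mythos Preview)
Your proof is correct and is precisely the natural level-by-level construction that the paper leaves implicit: the paper's own proof consists of the single word ``Clear.'' One small remark: the citations of Lemma~\ref{image-special} and Lemma~\ref{indepe} are not really needed, since in each case the fact that the immediate successors of $b$ (and of $\alpha(b)$) map to the required vertex of $A_m$ follows directly from property~(1) of Proposition~\ref{internchar} together with order-preservation of $h$ and $f^n_m$.
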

  \begin{proof}
  Clear.
  \end{proof}

\begin{proof}[Proof of Theorem \ref{fraisse}] 

Fix $m$ and a simple-confluent epimorphism $f\colon S\to A_m$, where $S$ is a  rooted tree, and let $k=\htt(A_m)$.
Without loss of generality, all vertices in $S$ are end vertices or ramification vertices. We will use Proposition \ref{Fraisse equiv} to conclude that $\{A_n,f_n\}$ is a Fra\"{\i}ss\'e sequence.

We first find $n$ large enough and then we 
 find $g\colon A_n\to S$, simple-confluent, such that $h= f\circ g$ satisfies 
(1)-(3) of Proposition \ref{internchar}. By Proposition \ref{unique} this will finish the proof of Theorem \ref{fraisse}.
In fact, we will find such a $g$ as a composition of a simple-monotone epimorphism $g_1$ and a light confluent epimorphism $g_2$. The constructions of $g_1$ and $g_2$ are shown in Claims 1 and 2, respectively.

We now explain the conditions that we require for the $n$. We point out that any $n'>n$ will work as well. 
Fix a branch $B$ of $S$ and note that $f(B)$ is a branch in $A_m$. Let $c_0=r_{A_m}<c_1<\ldots< c_k$ be all vertices of the branch $f(B)$. 
Let $s_i^B$ be the largest vertex in $B$ such that $f(s_i^B)=c_{i}$. For $i<k$, let $t_i^B$ be the unique vertex in $P_f(c_i)\cap B$, which exists by Lemma \ref{indepe}, and for $i=k$ let $t_k^B=s_k^B$.
Let 
$$\beta_B=\max(\{t^B_i-s^B_{i-1}\colon i=1,\ldots, k\}\cup \{s^B_i-t^B_i\colon i=0,\ldots, k-1 \}),$$
and let 
$$\beta= \max\{\beta_B\colon B \textrm{ is a branch in } S\}. $$
To each vertex $b\in S$, which is not an end vertex we assign a coloring $c_b$ of $\im(b)$. In case $b\notin \bigcup_{a\in A_m} P_f(a)$, let the coloring be trivial, i.e. let $c_b\colon \im(b)\to\{0\}$ be the coloring into just one color.
We then let $\gamma_b=\sord(b)$.
In case $b\in P_f(a)$, let $\rho_a$ enumerate $\im(a)$ into $0,1,\ldots, n_a-1$, where $n_a=|\im(a)|$, and take $c_b=\rho_a\circ\alpha_f(b,a)$.
In this case we let 
$$\gamma_b=n_a \times\max\{|c_b^{-1}(i)|\colon i\in\mathbb{N}\}$$ and 
take $\gamma=\max\{\gamma_b\colon b\in S\}$.

Finally, we take any $n$ such that 
$$ \beta\leq \frac{3^{n-m}-1}{2} 
\ \  \textrm{ and } \ \ 
\gamma \leq 2^n.
$$

For $n$ satisfying the above conditions, let $l=\htt(A_n)$ and $N=\sord(A_n)$.
Let $0= t_0\leq s_0<t_1\leq s_1<\ldots<t_k= s_k=3^{n-1}$ be determined by
$t_i-s_{i-1}= \frac{3^{n-m}+1}{2}$ for $  i=1, \dots, k,$
    and
$ s_i-t_i =  \frac{3^{n-m}-1}{2}$ for $i=0,\ldots, k-1$.

\bigskip
 
The following claim gives us the simple-monotone epimorphism $g_1$.

{\bf Claim 1.}
There is a regular rooted tree $R$ with  $\htt(R)=\htt(A_n)$ and simple-monotone epimorphism $g_1\colon R\to S$  such that for $h_1=f\circ g_1$ we have the following.
\begin{enumerate}
\item For any $a\in A_m$ with $\htt(a)=i$ and $b\in R$ with $h_1(b)=a$ we have that: 
$\htt(b)\in [0,s_0]$ iff $i=0$,  and $\htt(b)\in (s_{i-1}, s_{i}]$ iff $0<i\leq k$.
\item For $b\in R$, $a=h_1(b)$, and $0\leq i< k$, we have $\htt(b)=t_i$ iff $b\in P_{h_1}(a)$ and $\htt(a)=i$.
\end{enumerate}

\begin{proof}[Proof of Claim 1]
Fix a branch $B$ of $S$ and $i$. Recall the definitions of $s_i^B$ and $t_i^B$. 
If $i=0,1,\ldots, k-1$, let $\hat{s}_i^B$ be an immediate successors of $s_i^B$. Split the edge $\langle s_i^B, \hat{s}_i^B\rangle\in E(S)$ into
$$(s_i-t_i)-(s_i^B-t^B_i)+1$$
many edges. Repeat that for every immediate successors of $s_i^B$.
If $i=1,2\ldots, k$, let $\hat{t}^B_i$ be the immediate predecessor of $t_i^B$. Split the edge $\langle \hat{t}_i^B, t_i^B\rangle\in E(S)$ into $$(t_i-s_{i-1})-(t^B_i-s^B_{i-1})+1$$ many edges.

Note that those definitions are independent of the choice of $B$. Indeed, in case an edge $e=\langle s_i^B, \hat{s}_i^B\rangle=\langle s_i^{B'}, \hat{s}_i^{B'}\rangle$ is contained in branches $B$ and $B'$, then 
$s_i^B=s_i^{B'}$ and $t^B_i=t^{B'}_i$. Similarly,
in case $e=\langle \hat{t}_i^B, t_i^B\rangle=\langle \hat{t}_i^{B'}, t_i^{B'}\rangle$ is contained in branches $B$ and $B'$, then $t_i^B=t_i^{B'}$ and
$s_{i-1}^B=s_{i-1}^{B'}$.

This defines $R$. Note that for any branch $D$ in $R$ we have $s_i-t_i=s_i^D-t^D_i$
and $t_i-s_{i-1}=t^D_i-s^D_{i-1}$.
Let $g_1 $ take all new vertices in $\langle s_i^B, \hat{s}_i^B\rangle$ to $s_i^B$, all new vertices in $\langle \hat{t}_i^B, t_i^B\rangle$ to $t_i^B$, and be the identity otherwise.
Therefore (1) is satisfied. Furthermore, since $g_1$ is monotone, we have  $b\in P_{f\circ g_1}(a)$  iff $g_1(b)\in P_{f}(a)$ and $\ord(b)=\ord(g_1(b))$, 
whenever $a\in A_m$ is different from an end vertex and  
$b\in R$. Therefore we can also conclude (2).

\end{proof}

Finally, the following claim gives us the light-confluent epimorphism $g_2$.

{\bf Claim 2:}
There is a light confluent epimorphism $g_2\colon A_n\to R$ such that for $h=f\circ g_1\circ g_2$, (1)-(3)  of Proposition \ref{internchar} holds.

\begin{proof}[Proof of Claim 2]
Recall that for any $b\in S$, we have defined a coloring $c_b$ of immediate successors of $b$.
Recall that for the simple-monotone epimorphism $g_1$ we have the order-preserving injection $i_{g_1}$ (see the proof of Lemma \ref{2monot} for the definition) that takes root to the root and end vertices to end vertices.
For $d\in R$, in case $d=i_f(b)$ for some $b\in S$,
let $c_d= c_b\circ\alpha_{g_1}(d,b)$. Otherwise let $c_d$ be the trivial coloring into a single color.
Apply the colored adding branches operation (with parameter $2^n$); this produces $g_2$. Let $h=f\circ g_1\circ g_2$.
Then $g_2$ is light confluent, (1), (2) are still satisfied for $h$,
and we made sure that in addition (3) is satisfied.
\end{proof}

It follows from  Proposition~\ref{unique} that $f^n_m = f\circ (g\circ \alpha)$ where $\alpha$ is an isomorphism of $A_n$. Thus Proposition~\ref{Fraisse equiv} gives us that $\{A_i,f_i\}$ Fra\" iss\' e sequence.
\end{proof}

Since there is no known characterization of the Mohler-Nikiel universal dendroid we have to explicitly find a homeomorphism between it and $|\mathbb{T}_{\mathcal{ CE}}|$.

\begin{proof}[Proof of Theorem \ref{toprealMN}]

Since $\{A_n,f_n\}$ is a Fra\" iss\' e sequence for the family of  trees with simple-confluent epimorphisms we show that the topological realization of $\iLim\{A_n,f_n\}$ is homeomorphic to the Mohler-Nikiel universal dendroid.

Denote $A_{nn}=A_n$.
We now construct $A_{nk}$ for all $k,n$ with $k>n$ and
we construct $f^{mn}_{kl}\colon A_{mn}\to A_{kl}$, whenever $m\geq k$ and $n\geq l$.

Suppose  for $k \ge n$, $A_{nk}$ is known. We then take 
$f^{n(k+1)}_{nk}= d_{A_{nk}}$, where $d_{A_{nk}}$ is the double splitting
edge operation. This in particular defines $A_{n(k+1)}$. Therefore we have just defined  $A_{nk}$ and $f^{n(k+1)}_{nk}$ for all $k,n$ with $k\geq n$.

Recall that  $f_n= d_n\circ u_n\colon A_{n+1}\to A_n$, where $d_n$ is the double splitting edge operation and  $u_n$ is the multiplying branches operation with the parameter $2^n$. 
We let $f^{(n+1)(n+1)}_{n(n+1)}\colon A_{(n+1)(n+1)}\to A_{n(n+1)}$ to be $u_n$, $n=1,2,\ldots$.
 
Suppose for $k \ge n+1$ we have constructed a light confluent epimorphism $f^{(n+1)k}_{nk}\colon A_{(n+1)k}\to A_{nk}$, then apply the standard 
amalgamation procedure to $f^{(n+1)k}_{nk}$  and $d=f^{n(k+1)}_{nk}$, and let $f^{(n+1)(k+1)}_{n(k+1)}$ and $f^{(n+1)(k+1)}_{(n+1)k}$ be the resulting epimorphisms. In particular,
\begin{equation}\label{diag-amal}
    f^{(n+1)k}_{nk}\circ   f^{(n+1)(k+1)}_{(n+1)k}=
f_{nk}^{n(k+1)}\circ f^{(n+1)(k+1)}_{n(k+1)}.
\end{equation} 
By Lemma \ref{doubleamal}, we have $f^{(n+1)(k+1)}_{(n+1)k}=d$ and $f^{(n+1)(k+1)}_{n(k+1)}$ is light confluent, therefore this is consistent with what we have already defined. 
 Thus, we have all $f^{mn}_{kl}\colon A_{mn}\to A_{kl}$, whenever $0\leq m-k\leq 1$ and $0\leq n-l\leq~1$.
 By taking appropriate compositions of such maps (and there are many choices for that by commutativity of diagrams (\ref{diag-amal})), we have $f^{mn}_{kl}\colon A_{mn}\to A_{kl}$ for all $m\geq k$ and $n\geq l$.

 \begin{figure}[!ht]
\begin{tikzcd}
\tikzstyle{arrow} = [thick,<-,>=stealth]
 \node at (0,0) (start) {A_{nk}};
   \node at (0,-3) (stop) {A_{(n+1)k}};
       \node at (4,0) (A) {A_{n(k+1)}};
     \node at (4,-3) (B) {A_{(n+1)(k+1)}};
        \draw [arrow] (start) -- node[anchor=east] {f^{(n+1)k}_{nk}} (stop);
     \draw [arrow] (start) -- node[anchor=south] {f^{n(k+1)}_{nk}} (A);  
     \draw [arrow] (A) -- node[anchor=west] {f^{(n+1)(k+1)}_{n(k+1)}} (B); 
     \draw [arrow] (stop) -- node[anchor=south] {f^{(n+1)(k+1)}_{(n+1)k}} (B);  
\end{tikzcd}
\caption{Construction of $f^{mn}_{kl}.$}
\end{figure}

Let $\mathbb{L}_n=\iLim\{A_{nk}, f^{n(k+1)}_{nk}\}$.  Note that, because the epimorphisms $f^{n(k+1)}_{nk}$ are doubling edge maps, each edge in $A_{nn}$ corresponds to a line segment in the topological realization of $\mathbb{L}_n$. Thus the topological realization of $\mathbb{L}_n$ is the continuum obtained by replacing the vertices and edges in $A_n$ with points and line segments. Let $f^{nk}\colon \mathbb{L}_n\to A_{nk}$,
 $k\geq n$, be the projection maps, i.e. maps for which we have $f^{nk}= f^{n(k+1)}_{nk} \circ f^{n(k+1)}$. Let $g_n\colon \mathbb{L}_{n+1}\to \mathbb{L}_n$ be maps such that for every $n< k$, it holds  
 $f^{nk}\circ g_n=f^{(n+1)k}_{nk} \circ f^{(n+1)k}$.
Denote by $\mathbb{L}$ the inverse limit of $\iLim\{\mathbb{L}_n, g_n\}$.
Then $\mathbb{L}$ is also the inverse limit of the directed system 
$\{A_{kl}, f^{mn}_{kl}\colon A_{mn}\to A_{kl}\}$. The inverse sequence $\{A_n,f_n\}$ is cofinal in that directed system, so $\iLim\{A_n,f_n\}=\iLim\{\mathbb{L}_n, g_n\}$.

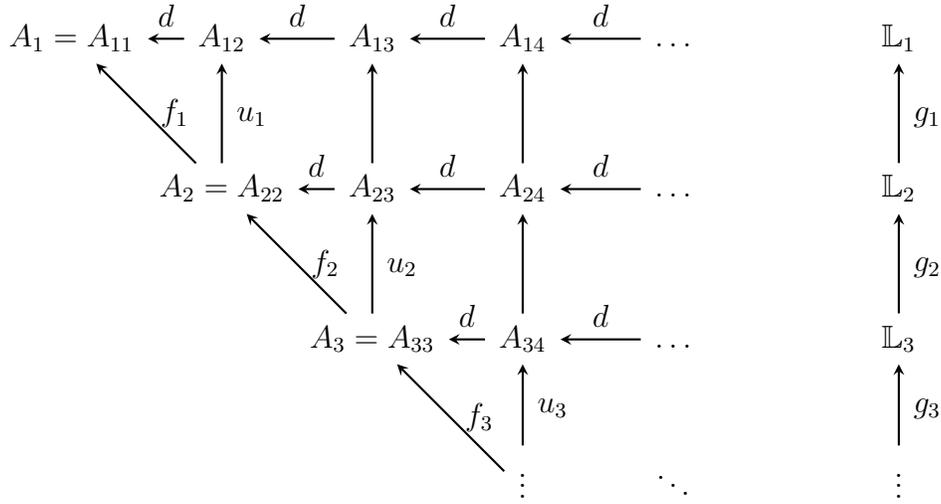
\begin{figure}[!ht]
\begin{tikzcd}
\tikzstyle{arrow} = [thick,<-,>=stealth]
 \node at (0,0) (11) {A_1=A_{11}};
 \node at (2,0) (12) {A_{12}};
  \node at (4,0) (13) {A_{13}};
   \node at (6,0) (14) {A_{14}};
 \node at (8,0) (d1) {\ldots};
  \node at (11,0) (l1) {\mathbb{L}_1};
  
 \node at (2,-2) (22) {A_2=A_{22}};
  \node at (4,-2) (23) {A_{23}};
   \node at (6,-2) (24) {A_{24}};
 \node at (8,-2) (d2) {\ldots};
  \node at (11,-2) (l2) {\mathbb{L}_2};

    \node at (4,-4) (33) {A_3=A_{33}};
   \node at (6,-4) (34) {A_{34}};
 \node at (8,-4) (d3) {\ldots};
  \node at (11,-4) (l3) {\mathbb{L}_3};

     \node at (6,-6) (A) {\vdots};
      \node at (8,-6) (B) {\ddots};
      \node at (11,-6) (C) {\vdots};

   \draw [arrow] (11) -- node[anchor=south] {d} (12);
    \draw [arrow] (12) -- node[anchor=south] {d} (13);
     \draw [arrow] (13) -- node[anchor=south] {d} (14);
      \draw [arrow] (14) -- node[anchor=south] {d} (d1);
       \draw [arrow] (22) -- node[anchor=south] {d} (23);
     \draw [arrow] (23) -- node[anchor=south] {d} (24);
      \draw [arrow] (24) -- node[anchor=south] {d} (d2);
        \draw [arrow] (33) -- node[anchor=south] {d} (34);
      \draw [arrow] (34) -- node[anchor=south] {d} (d3);

       \draw [arrow] (12) -- node[anchor=west] {u_1} (22); 
         \draw [arrow] (23) -- node[anchor=west] {u_2} (33); 
         \draw [arrow] (34) -- node[anchor=west] {u_3} (A); 
          \draw [arrow] (11) -- node[anchor=west] {f_1} (22);
          \draw [arrow] (22) -- node[anchor=west] {f_2} (33);
          \draw [arrow] (33) -- node[anchor=west] {f_3} (A);
        \draw [arrow] (13) --  (23); 
         \draw [arrow] (14) --  (24); 
          \draw [arrow] (24) -- (34); 
          \draw [arrow] (l1) -- node[anchor=west] {g_1} (l2);
          \draw [arrow] (l2) -- node[anchor=west] {g_2} (l3);
          \draw [arrow] (l3) -- node[anchor=west] {g_3} (C);
\end{tikzcd}
\caption{Constructing the Mohler-Nikiel universal dendroid.}
\end{figure}

Now pass to the topological realizations and get
$|\mathbb{L}|=\iLim \{|\mathbb{L}_n|, g^*_n\}$, where $g^*_n$ is the quotient map of $g_n$.

Note that if $u_n$ takes an edge $v\in E(A_{nn})$ to an edge $w\in E(A_{n(n+1)})$, then $g^*_n $ takes the line segment corresponding to $v$ to the line segment corresponding to $w$.

The tree $|\mathbb{L}_1|$ is obtained by taking two disjoint copies of [0,1] and identifying them at 0. 
The tree $|\mathbb{L}_2|$ is obtained from $|\mathbb{L}_1|$ by following the MN-construction for $(\frac{2}{3}, \frac{2}{3}, \frac{1}{3}, \frac{1}{3}, 0)$, and $g^*_1$ is the corresponding MN-map. Continuing this, for each $n$,
the $|\mathbb{L}_n|$ is the rooted tree
obtained from [0,1] by the MN-construction for the non-increasing sequence  $s_n$ in which the number 
$\frac{k}{3^{n-1}}$, for each $k=0,1,\ldots, 3^{n-1}-1$, occurs exactly $n$ many times.
The map $g^*_n\colon |\mathbb{L}_{n+1}|\to |\mathbb{L}_n|$ is the MN-map from $s_n$ to $s'_{n+1}$, where $s'_{n+1}$ is the reordering of $s_{n+1}$ in which $s_n$ is an initial segment followed by all numbers occurring in $s_{n+1}$ and not in $s_n$ ordered in the non-increasing way (taking into account the repetitions).
So $|\mathbb{L}|$ is the Mohler-Nikiel universal dendroid.
\end{proof}

\section{Open problems}
As far as the authors know the continuum $|\mathbb{T}_{\mathcal{ C}}|$ has not been studied.  Some open questions/problems are the following.
\begin{question}
Is $|\mathbb{T}_{\mathcal{ C}}|$ universal in the class of smooth dendroids?
\end{question}

The homeomorphism group $ H(|\mathbb{T}_{\mathcal{ C}}|)$ acts on $|\mathbb{T}_{\mathcal{ C}}|$ via $(h,x)
\mapsto h(x)$. Given $x \in |\mathbb{T}_{\mathcal{ C}}|$, let ${\rm Orb}_{|\mathbb{T}_{\mathcal{ C}}|}(x) =\{h(x) \colon h \in H(|\mathbb{T}_{\mathcal{ C}}|)\}$   be the orbit of $x$ under this action and define the {\it homogeneity degree} of $|\mathbb{T}_{\mathcal{ C}}|$ to be the number of orbits. A continuum having homogeneity degree equal to $n$ is known as a {\it $\frac{1}{n}$-homogeneous} continuum. If $\pi\colon \mathbb{T}_{\mathcal{ C}}\to |\mathbb{T}_{\mathcal{ C}}|$ is the quotient map, we let $r=\pi(r_{\mathbb{T}_{\mathcal{ C}}})$.  We would like to understand orbits of this action of $ H(|\mathbb{T}_{\mathcal{ C}}|)$  on $|\mathbb{T}_{\mathcal{ C}}|$.

\begin{question}
\begin{enumerate}
\item  Is the set of endpoints of $|\mathbb{T}_{\mathcal{ C}}|$ a single orbit?
\item Is ${\rm Orb}_{|\mathbb{T}_{\mathcal{ C}}|}(r)=\{r\}$?
\item What is the  homogeneity degree of $|\mathbb{T}_{\mathcal{ C}}|$?  
\item Are all orbits (possibly excluding the orbit of $r$) dense?

\end{enumerate}
\end{question}
In fact, at the moment we cannot exclude the possibility that  $|\mathbb{T}_{\mathcal{ C}}|$  is a $\frac{1}{3}$-homogeneous continuum.
Hoehn and Pacheco Ju\' arez \cite{HJ} explicitly ask whether there exists a smooth dendroid having homogeneity degree equal to 3, different from a dendrite or a fan. In Theorem~10 of their article they provide several properties that any such dendroid must have.

Since $|\mathbb{T}_{\mathcal{ C}}|$ is a smooth dendroid, the set $E(|\mathbb{T}_{\mathcal{ C}}|)$ of endpoints is a $G_\delta$ set. Indeed, the set of endpoints is the complement of the projection onto the first coordinate of the set $\{(x,y)\colon x<y\}=\bigcup_n \{(x,y)\colon x\leq y \text{ and } d(x,y)\geq \frac{1}{n}\}\subseteq |\mathbb{T}_{\mathcal{ C}}|^2$. The smoothness means that $\{(x,y)\colon x\leq y \}$ is closed. Therefore $\{(x,y)\colon x<y\}$ is $F_\sigma$, and its complement is a $G_\delta$ set. Hence $E(|\mathbb{T}_{\mathcal{ C}}|)$ is a Polish space and it is comeager in $|\mathbb{T}_{\mathcal{ C}}|$.

The set of endpoints of Lelek fan also forms a comeager set, is one-dimensional, almost zero-dimensional, totally disconnected, homogeneous, and in fact it is homeomorphic to the complete Erdös space, see \cite{KOT}. This motivates Question \ref{endpo}(1).

It is not hard to see that we can embed continuum many disjoint triods in the Mohler-Nikiel universal dendroid. Therefore it is not embeddable in the plane $\mathbb{R}^2$, see \cite{Moore}. We expect that the same is true for the continuum $|\mathbb{T}_{\mathcal{ C}}|$.  Since $|\mathbb{T}_{\mathcal{ C}}|$ is one-dimensional, it is embeddable in  $\mathbb{R}^3$. This suggest Question \ref{endpo}(2). 
\begin{question}\label{endpo}
\begin{enumerate}
\item What is the topological dimension of $E(|\mathbb{T}_{\mathcal{ C}}|)$? Is $E(|\mathbb{T}_{\mathcal{ C}}|)$ totally disconnected? Is it almost zero-dimensional?
\item Is $E(|\mathbb{T}_{\mathcal{ C}}|)$ embeddable in $\mathbb{R}^2$?
\end{enumerate}
\end{question}

\begin{problem}
Find a topological characterization for $|\mathbb{T}_{\mathcal{ C}}|$.
\end{problem}

A characterization to the Molher-Nikiel universal dendroid could allow a simpler proof that $|\mathbb{T}_{\mathcal{ CE}}|$ is homeomorphic to the Molher-Nikiel universal dendroid.
\begin{problem}
    Find a topological characterization of the Molher-Nikiel universal dendroid.
\end{problem}

\bigskip

{\bf{Acknowledgements.}}
We would like to thank Alessandro Codenotti for helpful comments on an earlier version of this preprint.


\begin{thebibliography}{10}





\bibitem{Bar-Kub} A.  Bartoš, W. Kubi\'s,
\textit{Hereditarily indecomposable continua as generic mathematical structures}, 	arXiv:2208.06886, 2022.

\bibitem{B-K} D. Bartošová and A. Kwiatkowska, \textit{Lelek fan from a projective Fraïssé limit}, Fund. Math. 231 (2015), no. 1, 57--79.

\bibitem{B-C} G. Basso and R. Camerlo, \textit{Fences, their endpoints, and projective Fra\"{\i}ss\'{e} limit}, Trans. Amer. Math. Soc.  374 (2021), 4501--4535.

\bibitem{CP} J. J. Charatonik,  J. R. Prajs,
{\it On lifting properties for confluent mappings},
Proc. Amer. Math. Soc. 133 (2005), no. 2, 577--585.

\bibitem{CCP} J. J. Charatonik, W. J. Charatonik, J.R. Prajs, 
{\it Hereditarily unicoherent continua and their absolute retracts},
Rocky Mountain J. Math. 34 (2004), no. 1, 83--110.

\bibitem{JJC-Wazewski} J. J. Charatonik, {\it Monotone mappings of universal dendrites},
Topology Appl. 38 (1991), no. 2, 163--187.

\bibitem{JJC-Confluent}  J. J. Charatonik, \textit{Confluent mappings and unicoherence of continua}, Fund. Math. 56 (1964), 213--220.


\bibitem{CKR} W. J. Charatonik, A. Kwiatkowska, R. P. Roe, {\it The projective Fra\"{\i}ss\'e limit of the class of all connected finite graphs with confluent epimorphisms},  arXiv:2206.12400. 

\bibitem{Lelek-fan} W. J. Charatonik, \textit{The Lelek fan is unique}, Houston J. Math. 15 (1989), 27--34.

\bibitem{Co-Kw} A. Codenotti and A. Kwiatkowska,
\textit{Projective Fraïssé limits and generalized Ważewski dendrites},  arXiv:2210.06899, accepted to Fundamenta Mathematicae. 

\bibitem{Czuba} S. T. Czuba, {\it On Dendroids with Kelley's Property},
Proc. Amer. Math. Soc., Vol. 102, No. 3 (Mar., 1988), pp. 728--730. 

\bibitem{Du} B. Duchesne, {\it Topological properties of Ważewski dendrite groups}, 
J. Éc. polytech. Math. 7 (2020), 431--477.

\bibitem{HJ} L. Hoehn, and Y. Pacheco Ju\' arez, \textit{Dendroids with a low homogeneity degree},
Topology Appl. 277 (2020), 107218 

\bibitem{Ingram} W. T. Ingram, \textit{Inverse limits and a Property of J. L. Kelley, II}, Bol. Soc. Mat. Mexicana, 9 (2003), 135--150.

\bibitem{Pseudo} T. Irwin and S. Solecki, \textit{Projective Fra\"{\i}ss\'e Limits and the Pseudo-arc}, Tran. Amer. Math. Soc.  358, (2006), 3077--3096.


\bibitem{SI} S. Iyer, \textit{The homeomorphism group of the universal Knaster continuum},  arXiv:2208.02461, 2022.

\bibitem{KOT} K. Kawamura, L. G. Oversteegen, E. Tymchatyn, \textit{On homogeneous totally disconnected 1-dimensional spaces},
Fund. Math.150(1996), no.2, 97--112.

\bibitem{Knaster} B. Knaster, \textit{Un continue don't tout sous-continu est ind\' ecomposable},
Fund. Math. 3 (1922), 247--286.

\bibitem{K-D3} A. Kwiatkowska, \textit{Universal minimal flows of generalized Wa\. zewski dendrites}, J. Symb. Log.  83, (2018), 1618--1632.

\bibitem{kubis} W.  Kubiś and A. Kwiatkowska, \textit{The Lelek fan and the Poulsen simplex as Fraïssé limits}, Rev. R. Acad. Cienc. Exactas Fís. Nat. Ser. A Mat. RACSAM 111 (2017), no. 4, 967--981.

\bibitem{Moore} R. L. Moore, \textit{Concerning Triods in the Plane and the Junction Points of Plane Continua}, Proceedings of the National Academy of Sciences of the United States of
America, 1928, Vol. 14, No. 1, pp. 85--88.

\bibitem{Nadler} S. Nadler, \textit{Continuum Theory: An Introduction}, Chapman \& Hall/CRC Pure and Applied Mathematics, CRC Press, 2017.

\bibitem{Dendroid} L. Mohler and J. Nikiel, \textit{A universal smooth dendroid answering a question of J. Krasinkiewicz},
Houston J. Math. 14 (1988), no. 4, 535--541.

\bibitem{OP}  L. Oversteegen, J. R. Prajs, 
{\it On confluently graph-like compacta}, Fund. Math.178(2003), no.2, 109--127.

\bibitem{Menger} A. Panagiotopoulos and S. Solecki, \textit{A combinatorial model for the Menger curve}, Journal of Topology and Analysis, 14(01), (2022) 203--229. 

\bibitem{LW} L. Wickman, \textit{Projective Fra\"{\i}ss\'e theory and Knaster continua}, Ph.D. Thesis, University of Florida, 2022.

\end{thebibliography}
\end{document}